\def\E{\mathbb{E}}
\def\var{\mathbb{Var}}
\def\bin{{\rm Bin}}
\def\bl{\boldsymbol{\lambda}}
\def\bz{\boldsymbol{\zeta}}
\def\N{\mathbb{N}}
\def\C{\mathbb{C}}
\def\E{\mathbb{E}}
\def\R{\mathbb{R}}
\def\P{\mathbb{P}}
\def\Z{\mathbb{Z}}
\def\eps{\varepsilon}
\def\cC{\mathcal {C}}
\def\cT{\mathcal {T}}
\def\1{\mathbf{1}}
\def\lam {\lambda}
\def\tce{t_c + \eps}
\def\tce2{t_c + \frac{\eps}{2}}
\def\ER{Erd\H{o}s-R\'{e}nyi }
\def\dist{\mathrm{dist}}
\def\var{\textup{var}}
\newtheorem*{theorem*}{Theorem}
\newtheorem{theorem}{Theorem}
\numberwithin{theorem}{section}
\newtheorem{lemma}[theorem]{Lemma}
\newtheorem{cor}[theorem]{Corollary}
\newtheorem{defn}[theorem]{Definition}
\newtheorem*{defn*}{Definition}
\newtheorem*{prop*}{Proposition}
\newtheorem{conj}{Conjecture}
\newtheorem*{conj*}{Conjecture}
\newtheorem{claim}[theorem]{Claim}
\newtheorem{question}{Question}
\newtheorem*{fact*}{Fact}
\newtheorem{condition}{Condition}
\newtheorem{definition}{Definition}
\newtheorem{remark}[theorem]{Remark}
\numberwithin{equation}{section}
\subjclass[2020]{05C80, 05C30, 60F10}
\begin{document}
\title{Lower tails for triangles  inside the critical window}

\author{Matthew Jenssen}
\author{Will Perkins}
\author{Aditya Potukuchi}
\author{Michael Simkin}

\address{King's College London, Department of Mathematics}
\email{matthew.jenssen@kcl.ac.uk}

\address{Georgia Institute of Technology, School of Computer Science}
\email{math@willperkins.org}

\address{York University, Department of Electrical Engineering and Computer Science}
\email{apotu@yorku.ca}

\address{Massachusetts Institute of Technology, Department of Mathematics}
\email{msimkin@mit.edu}

\date{\today}

\begin{abstract}  
We study the probability that the random graph $G(n,p)$ is triangle-free.  When $p =o(n^{-1/2})$ or $p = \omega(n^{-1/2})$ the asymptotics of the logarithm of this probability are known via Janson's inequality  in the former case and via regularity or hypergraph container methods in the latter case.  We prove for the first time an asymptotic formula for the logarithm of this probability when $p = c n^{-1/2}$ for $c$ a sufficiently small constant. 

More generally, we study  lower-tail large deviations  for triangles in  random graphs: the probability that $G(n,p)$ has at most $\eta$ times its expected number of triangles,  when $p = c n^{-1/2}$ for $c$  and  $\eta \in [0,1)$ constant.  Our results apply for all $c$ if $\eta \ge .4993$ and for $c$  small enough otherwise. 

For $\eta$ small (including the case of triangle-freeness), we prove that a phase transition  occurs as $c$ varies, in the sense of a non-analyticity of the rate function, while for $\eta \ge .4993$ we prove that no phase transition occurs. 

On the other hand for the random graph $G(n,m)$, with $m = b n^{3/2}$, we show that a phase transition occurs in the lower-tail problem for triangles  as $b$ varies for \emph{every} $\eta \in [0,1)$.

Our method involves ingredients from algorithms and statistical physics including the cluster expansion and concentration inequalities for contractive Markov chains. 
\end{abstract}

\maketitle

\section{Introduction}
\label{secIntro}

How rare is it to see atypical  subgraph counts in the \ER random graph $G(n,p)$?    This question of large deviations in random graphs has a rich history in both combinatorics and probability theory.  In combinatorics the question is closely related to asymptotic enumeration of graph classes. Good bounds on large deviation probabilities are also central tools in probabilistic combinatorics and the analysis of algorithms.  In probability theory, this question  is a prototypical example of the theory of non-linear large deviations and a wealth of new techniques have been developed in the area in the last decade.

Both the known results and the techniques applied to the problem depend on several factors, including the subgraph of interest, the edge density $p$ of the random graph, and whether we aim to understand the lower tail (fewer copies of the subgraph than expected) or the upper tail (more copies).

In the dense regime, with $p$ constant, Chatterjee and Varadhan~\cite{chatterjee2011large} proved a very general large deviation principle using Szemer\'{e}di's regularity lemma, which reduces determining the first-order asymptotics of the logarithm of a large deviation probability (that is, determining the \emph{rate function}) to solving an optimization problem over the space of graphons, limiting objects of sequences of dense graphs. Further reductions to variational problems were proved for $p$ tending $0$ sufficiently slowly with $n$~\cite{chatterjee2016nonlinear,eldan2018gaussian,cook2020large,augeri2020nonlinear,kozma2023lower}, as well as extensions to hypergraphs~\cite{cook2024regularity}. Complementing these results are papers that solve the variational problem for a given graph, $p$, and desired deviation~\cite{lubetzky2017variational,zhao2017lower,bhattacharya2020upper,liu2021upper,bhattacharya2017upper}.

On the other hand, for much smaller $p$, near the threshold for the appearance of a single copy of $H$, the number of copies of a (strictly balanced) subgraph $H$ has an approximately Poisson distribution~\cite{erdos1960evolution,bollobas1981threshold}. One might expect Poisson-like large deviation behavior to persist for larger $p$ and Janson's inequality~\cite{janson1987uczak,janson1990poisson} makes this precise for  lower-tail probabilities.

We briefly survey what is known in the special case where $H$ is a triangle (or more generally a clique).  For the lower tail, Janson,  {\L}uczak, and Ruci\'nski~\cite{janson1987uczak} (using Janson's inequality) determined the rate function up to  constant factors. For the upper tail (`the infamous upper tail'~\cite{janson2002infamous}) determining the rate function up to constants took many years to resolve~\cite{kim2000concentration,janson2004upper,chatterjee2012missing,demarco2012tight}, though now  the rate function for the upper-tail has been determined for all $p$, culminating in the work of Harel, Mousset, and Samotij~\cite{harel2022upper}.

For the lower tail however, a complete understanding of  the rate function remains elusive for two reasons. First, the variational problem for lower tails in dense graphs seems much more challenging than that for upper tails~\cite{lubetzky2017variational,zhao2017lower}. Second, the range of applicability of the  main methods for estimating lower tails, namely Janson's inequality on one hand, and the variational approach~\cite{kozma2023lower} or container and regularity-based methods~\cite{luczak2000triangle,balogh2018method} on the other,  leave a gap, in what we call the `critical regime' below. The main contribution of this paper is to address  the rate function inside this critical regime.

Here we focus on the lower-tail problem for triangles.  That is, we want to estimate the probability
\begin{equation}\label{eqLowerTailDef}
    \mathbb{P}_p(X \le \eta \E X)
\end{equation}
where $X$ is the number of triangles in the random graph $G(n,p)$, the probability and expectation are with respect to $G(n,p)$, $\eta \in [0,1)$ is fixed, and $p=p(n)$.  In particular, we aim for first-order asymptotics of the logarithm of the lower-tail probability, $\log \mathbb{P}_p(X \le \eta \E X)$; that is, computing the large deviation rate function of the lower-tail event.

The special case $\eta =0$, that of triangle-freeness, is perhaps the most studied and most influential in combinatorics.
Through the work of Erd\H{o}s, Kleitman, and Rothschild~\cite{erdosasymptotic}, Janson,  {\L}uczak, and Ruci\'nski~\cite{janson1987uczak}, Pr{\"o}mel and Steger~\cite{promel1996asymptotic}, and  {\L}uczak~\cite{luczak2000triangle}, the asymptotics of $\log \P_p(X=0)$ are known when $p$ is either much smaller or much larger than $n^{-1/2}$.  When $p = o(n^{-1/2})$, $-\log \P_p(X=0) = (1+o(1))\binom{n}{3}p^3$; that is, the lower tail exhibits Poisson-like behavior.  On the other hand, when $p =\omega(n^{-1/2})$, $\log \P_p(X=0) =  (1+o(1))\frac{n^2 \log(1-p)}{4}$; that is,  the probability of triangle-freeness matches the probability $G(n,p)$ is bipartite. 
It is worth noting that the methods used to prove these two results are very different. The first is proved using Janson's Inequality. In fact this powerful tool was first introduced in~\cite{janson1987uczak} to address this problem. The second can be proved using (a sparse version of) the regularity lemma~\cite{luczak2000triangle} or using the method of hypergraph containers~\cite{balogh2018method}.

In the intermediate (or `critical') regime, when $p = \Theta(n^{-1/2})$ it is known via~\cite{janson1987uczak} that $-\log \P_p(X=0) = \Theta(n^{3/2})$ but the leading constant is not known.   In the language of large deviation theory, the problem is to determine the rate function
\begin{equation}
 \varphi(c)=   \lim_{n \to \infty} \frac{1}{n^{3/2}} \log \P_p[ X = 0]
\end{equation}
where $p = c/\sqrt{n}$. Even establishing existence of the limit is open, see Question~\ref{QLimitExist} below.  

The story is somewhat similar for the general lower-tail problem for triangles.  For $\eta \in [0,1)$ fixed, when $p = o(n^{-1/2})$, $ -\log \P_p( X \le \eta \E X)=(1+o(1)) (1- \eta +\eta \log \eta)\binom{n}{3}p^3$~\cite{janson1990poisson,janson2016lower}, again matching Poisson behavior.  For larger $p$, the answer is more complicated and not completely understood.  Following~\cite{chatterjee2011large,chatterjee2016nonlinear,eldan2018gaussian}, Kozma and Samotij~\cite{kozma2023lower} established that a variational problem determines the lower-tail rate function when $p = \omega(n^{-1/2})$.  Zhao~\cite{zhao2017lower} solved some cases of this  variational problem\footnote{The variational problem is stated in~\cite{zhao2017lower} in the language of graphons, as formulated by Chatterjee and Varadhan; this is equivalent to a variational problem on random graphs on $n$ vertices with independent edges as stated in~\cite{chatterjee2016nonlinear,kozma2023lower}}, establishing, for instance,  that if $\eta > .1012$ and $p = p(n) \to 0$, then the solution to the variational problem  is a constant (and the problem is said to exhibit `replica symmetry').

As in the triangle-free case, in the critical regime  $p = \Theta(n^{-1/2})$, it is only known that $-\log \P_p(X \le \eta \E X) = \Theta(n^{3/2})$, and both the Poisson paradigm of~\cite{janson1990poisson,janson2016lower} and the variational framework of~\cite{kozma2023lower} break down and fail to identify the leading constant. 

Here for the first time we are able to compute  first-order asymptotics for $\log \P_p ( X \le \eta \E X)$ in this critical regime.  Our method works for $p = c n^{-1/2}$ for $c \in (0, \overline c(\eta))$, where the upper bound $\overline c(\eta)$  ranges from $e^{-1/2}$ in the case $\eta=0$ to $+\infty$ for any $\eta \in [ .4993,1)$.   

The method we use is different than the methods used for the lower tail in both the sub- and super-critical regimes. Via a statistical physics reformulation of the problem, we reduce the estimation of the rate function to the  estimation of the expected edge density in the corresponding conditional distribution.  Via a conditioning argument, we show that this edge density is (approximately) the solution to a fixed point equation. 

Beyond finding the asymptotic formula, we can deduce some structural results about the conditional distribution: we show that the conditional distribution converges in normalized cut metric to an \ER random graph with edge probability $q= q(p,\eta)$, but in contrast to the super-critical regime $p=\omega(n^{-1/2})$, convergence of subgraph counts fails. 

We also justify the use of the term `critical regime' by establishing  existence and non-existence of  phase transitions in the problem, in the sense of non-analyticities of the rate function.  The critical regime is also where global structure emerges in the conditional distribution; we make this precise in Section~\ref{secConditionalIntro}.

We prove analogous results for the lower tail in the random graph $G(n,m)$. We state our results for $G(n,p)$ in Sections~\ref{subsecMainResults}-\ref{secConditionalIntro} and then state our results for $G(n,m)$ in Section~\ref{subsecGnmResults}.

\subsection{Lower tails}
\label{subsecMainResults}

 To state the main results we need a little notation.  The random variable $X$ will always denote the number of triangles in a random graph.  Probabilities and expectations with respect to $G(n,p)$ will be denoted  $\P_p$ and $\E_p$. We use $\eta$ to control the strength of the lower-tail event, i.e., we consider the event $ X \le \eta \E_p X  $.  We will often use the parameterization $p = c /\sqrt{n}$.  We use $W(\cdot)$ to denote the Lambert-W function, the inverse of the function $xe^{x}$ on the positive real axis.  

Our main result determines the asymptotics of the logarithm of the lower-tail event for a subset of $(\eta,c)$ pairs.
Let
\begin{equation}
\label{eqEtaStarDef}
\eta^\ast:= \left(\frac{W(2/e)}{2/e}\right)^{3/2}\approx 0.49928 \,,
\end{equation}
and 
define $\overline c:[0,1]\to \R\cup\{\infty\}$ as follows:
\begin{equation}
\label{eqOverlineCeta}
\overline c(\eta) =
\begin{cases} 
\frac{1}{\sqrt{e(1-\eta/\eta^*)}} & \text{if } \eta < \eta^\ast \\
+ \infty & \text{if } \eta \geq \eta^\ast\, . 
\end{cases}
\end{equation}
\begin{theorem}\label{thmLowerTailAsymp}
 Fix $c>0$ and $\eta\in [0,1)$ satisfying $c< \overline c(\eta)$.
If $p=(1+o(1))c/\sqrt{n}$ then 
\begin{equation}
\label{eqMainThmAsymptotics}
  \lim_{n\to\infty} \frac{1}{n^{3/2}}\log \P_p(X\leq \eta \E X)=\frac{1}{2} \cdot\left[  \frac{W(2 \zeta c^2)^{3/2} +3W(2\zeta c^2)^{1/2}}{3\sqrt{2\zeta}}-\frac{\log(1-\zeta) \eta c^3}{3}-c\right]
\end{equation}
where $\zeta$ is the unique solution in $[0,1]$ to the equation
 \begin{equation}
 \label{eqZetaDef}
    (1-\zeta)\left( \frac{ W(2 \zeta c^2)}{2 \zeta c^2} \right)^{3/2}= \eta\, . 
 \end{equation}
\end{theorem}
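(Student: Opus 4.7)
The plan is to recast the lower-tail event as a tilted Gibbs measure, evaluate its log-partition function via a mean-field fixed point, and then make the analysis rigorous via the statistical-physics tools (cluster expansion and contractive Markov chains) flagged in the abstract. \textbf{Step 1 (Exponential tilting).} For $\lambda\ge 0$ define
$$
\mu_\lambda(G) \;:=\; \frac{\P_p(G)\,e^{-\lambda X(G)}}{Z(\lambda)}, \qquad Z(\lambda) \;:=\; \E_p\!\left[e^{-\lambda X}\right].
$$
A standard exponential-tilting identity, combined with concentration of $X$ under $\mu_\lambda$, gives that if $\lambda^*$ is chosen so that $\E_{\mu_{\lambda^*}}[X] = \eta\,\E_p X$ and $\mu_{\lambda^*}(X\le \eta\E_p X)=\Omega(1)$, then
$$
\log \P_p(X\le \eta\E_p X) \;=\; \log Z(\lambda^*) + \lambda^* \eta\,\E_p X + o(n^{3/2}).
$$
So it suffices to evaluate $\log Z(\lambda)/n^{3/2}$ to first order and to identify $\lambda^*$.

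\textbf{Step 2 (Mean-field fixed point and the Lambert-$W$ expression).} To analyze $\mu_\lambda$ I would condition on all edges except a single edge $e=ij$. Letting $T_e$ denote the number of triangles completed by $e$ in the rest of the graph,
$$
\mu_\lambda(e \in G \mid \text{rest}) \;=\; \frac{p\,e^{-\lambda T_e}}{p\,e^{-\lambda T_e}+(1-p)}.
$$
Assuming the complementary graph resembles $G(n,q)$, one has $T_e \approx \poiss(nq^2)$, and self-consistency of the marginal edge density $q$ forces
$$
q \;=\; p\exp(-\zeta n q^2), \qquad \zeta \,:=\, 1-e^{-\lambda}.
$$
Writing $q=pr$ and using $p=c/\sqrt n$ gives $r=e^{-\zeta c^2 r^2}$, inverted by the Lambert-$W$ function to $r^2=W(2\zeta c^2)/(2\zeta c^2)$. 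The same mean-field picture gives $\E_{\mu_\lambda}[X]\approx (1-\zeta)\binom{n}{3}q^3$ (one factor $1-\zeta=e^{-\lambda}$ from tilting the triangle itself, plus a density reduction absorbed in $q^3$). Equating this with $\eta\binom{n}{3}p^3$ recovers \eqref{eqZetaDef} and identifies $\lambda^*=-\log(1-\zeta)$.

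\textbf{Step 3 (Rigorous justification).} The heuristic of Step 2 must be upgraded in the critical regime $p=c/\sqrt n$, where each edge typically lies in $\Theta(1)$ triangles, so the tilted measure is not a small perturbation of a product measure. The plan is to combine two complementary tools: (i) a cluster/polymer expansion of $\log Z(\lambda)$ whose polymers are connected blobs of triangles, converging by a Koteck\'y--Preiss-type criterion precisely when $c<\overline c(\eta)$ and delivering the mean-field value of $\log Z(\lambda)/n^{3/2}$; and (ii) a single-edge Glauber dynamics for $\mu_\lambda$ analyzed by path coupling, contractive under the same condition and yielding Lipschitz concentration for the edge count and the triangle count. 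Together these justify $\mu_{\lambda^*}(X\le \eta\E_p X)=\Omega(1)$ and turn the mean-field fixed point into a rigorous asymptotic identity for the edge density under $\mu_{\lambda^*}$.

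\textbf{Step 4 (Assembly and main obstacle).} Substituting $\lambda^*=-\log(1-\zeta)$ and the mean-field value of $\log Z(\lambda^*)/n^{3/2}$ into the identity of Step 1, and simplifying via $w:=W(2\zeta c^2)$, $w=2\zeta c^2 r^2$ and $r=e^{-w/2}$ to reexpress $r$ and $r^3$ in terms of $w$ and $\zeta$, produces the closed form \eqref{eqMainThmAsymptotics}. The main obstacle will be Step 3: pushing either the cluster-expansion convergence or the Glauber-dynamics contraction all the way to the sharp threshold $c=\overline c(\eta)$, including the case $\overline c(\eta)=+\infty$ for $\eta\ge \eta^\ast$ where the fixed-point pair $(\zeta,r)$ remains unique for every $c$. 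A coarser analysis of the same scheme would give the theorem only for $c$ smaller than some unspecified absolute constant and would miss both the sharp boundary of validity and the emergence of a phase transition at $c=\overline c(\eta)$ for $\eta<\eta^\ast$.
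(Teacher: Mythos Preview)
Your overall architecture---exponential tilting by triangle count, a self-consistent equation for the edge density, and concentration via contractive Glauber dynamics---matches the paper, and your heuristic in Step~2 correctly recovers both the fixed point $r^2=W(2\zeta c^2)/(2\zeta c^2)$ and the identification of $\zeta$ via~\eqref{eqZetaDef}. The gap is in Step~3(i), and it is the heart of the matter.

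You propose to control $\log Z(\lambda)$ by a cluster expansion whose polymers are ``connected blobs of triangles''. That is a hypergraph (3-body) expansion, and the paper deliberately avoids it: convergence of such expansions at the needed scale is not available and can genuinely fail (the paper flags this when discussing larger cliques). The paper's key idea is to condition not on a single \emph{edge} but on a single \emph{vertex} $v$: given $G_v=H$, the neighborhood of $v$ is distributed according to a \emph{pairwise} antiferromagnetic Ising model $\nu_{H,\lambda,\zeta}(S)\propto \lambda^{|S|}(1-\zeta)^{|E_H(S)|}$. For this 2-body model the standard Koteck\'y--Preiss bound converges precisely when $e\lambda\zeta\Delta(H)<1$, which (after the concentration step forces $\Delta(H)=(1+o(1))c\sqrt n$) becomes $e\zeta c^2<1$ and translates exactly to $c<\overline c(\eta)$. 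Your edge-conditioning heuristic gets the right fixed point, but it does not furnish a tractable expansion; the vertex-conditioning step is what turns the heuristic into a proof and delivers the sharp threshold.

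A second, smaller gap: you never say how you pass from the edge density back to $\log Z$. The paper does not compute $\log Z(\lambda,\zeta)$ by expanding it directly; instead it uses the identity $\lambda\,\partial_\lambda \log Z(\lambda,\zeta)=\E_{\lambda,\zeta}|G|$ and integrates the (now known) expected edge count over $\lambda$. Without this step your Step~4 substitution is circular, since you have only determined the density $q$, not the free energy.
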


Theorem~\ref{thmLowerTailAsymp} immediately gives the following corollary.  For the special case of triangle-freeness, we obtain the log asymptotics  for $c< e^{-1/2}$.
\begin{cor}
\label{thmLogAsymptotics}
    Fix $0 < c < e^{-1/2}$. If $p =(1+o(1)) c n^{-1/2}$, then
    \begin{equation}
     \lim_{n \to \infty}   \frac{1}{n^{3/2}} \log \P_p(X=0) 
     = \frac{1}{2} \left[  \frac{W(2 c^2)^{3/2} +3W(2 c^2)^{1/2}}{3\sqrt{2}} - c\right]\, .
    \end{equation}
\end{cor}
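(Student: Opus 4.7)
The plan is to derive Corollary~\ref{thmLogAsymptotics} directly from Theorem~\ref{thmLowerTailAsymp} by specializing to $\eta = 0$. First I would check that the hypotheses match: evaluating~\eqref{eqOverlineCeta} at $\eta = 0$ gives $\overline c(0) = 1/\sqrt{e}$, so the corollary's assumption $0 < c < e^{-1/2}$ is precisely the strict inequality $c < \overline c(0)$ required to invoke Theorem~\ref{thmLowerTailAsymp}.

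Next I would identify the unique $\zeta \in [0,1]$ solving~\eqref{eqZetaDef} at $\eta = 0$, namely $(1-\zeta)\bigl(W(2\zeta c^2)/(2\zeta c^2)\bigr)^{3/2} = 0$. The function $x \mapsto W(x)/x$ is strictly positive on $(0,\infty)$ and extends continuously to $1$ at $x = 0$ (since $W(x) = x - x^2 + O(x^3)$ near $0$), so the second factor is strictly positive throughout $[0,1]$. Consequently the only root is $\zeta = 1$.

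Finally I would substitute $\zeta = 1$ into the asymptotic formula~\eqref{eqMainThmAsymptotics}. The middle term $-\eta c^3 \log(1-\zeta)/3$ is formally an indeterminate expression of type $0 \cdot (-\infty)$; to resolve it, I would rewrite~\eqref{eqZetaDef} as $1 - \zeta = \eta \cdot \bigl(2\zeta c^2/W(2\zeta c^2)\bigr)^{3/2}$, from which $\log(1-\zeta) = \log \eta + O(1)$ and hence $\eta \log(1-\zeta) \to 0$ as $\eta \to 0$. The first and last terms of~\eqref{eqMainThmAsymptotics} evaluated at $\zeta = 1$ give exactly the expression stated in the corollary.

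Since the derivation is essentially a direct substitution, there is no substantive obstacle: the only point that warrants a moment of care is checking that the middle term vanishes in the $\eta \to 0$ limit, and this follows at once from the fixed-point equation~\eqref{eqZetaDef} as sketched above.
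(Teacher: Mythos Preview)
Your proposal is correct and follows the same approach as the paper, which simply states that Theorem~\ref{thmLowerTailAsymp} ``immediately gives'' the corollary by specialization to $\eta=0$. Your extra care in resolving the indeterminate $\eta\log(1-\zeta)$ term via the constraint~\eqref{eqZetaDef} is appropriate and makes explicit what the paper leaves implicit (namely, that this term is to be read as zero when $\eta=0$, $\zeta=1$).
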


On the other hand, for the case of more moderate lower tails, with $\eta \ge \eta^\ast$, Theorem~\ref{thmLowerTailAsymp} gives the log asymptotics for \emph{all} $c>0$.  The formula given by  Theorem~\ref{thmLowerTailAsymp} interpolates between two very different formulas, one accurate when $p =o(n^{-1/2})$ and the other when $p = \omega(n^{-1/2})$.  The first is the `Poisson bound' of $-\frac{c^3}{6}(1- \eta +\eta \log \eta)$, which treats the random variable $X$ like a Poisson random variable of the same mean. Janson~\cite{janson1990poisson} and Janson and Warnke~\cite{janson2016lower} show this bound gives asymptotics of the log when $c \to 0$.  The second is the `replica symmetric bound', which lower bounds the lower-tail event by considering a random graph $G(n,q)$ with $q = \eta^{1/3}p$ so the expected number of triangles is $\eta \E_p X$.  The results of Kozma and Samotij~\cite{kozma2023lower} along with those of Zhao~\cite{zhao2017lower} show the replica symmetric bound gives asymptotics of the log when $c \to \infty$ (and $\eta > .1012$); see more in Section~\ref{secPhaseTransition}.  See Figure~\ref{fig3compare} for a plot of the formula from Theorem~\ref{thmLowerTailAsymp} against the Poisson bound  and the replica symmetric bound when $\eta =1/2$.

\begin{figure}
    \centering
\includegraphics[width=0.6\linewidth]{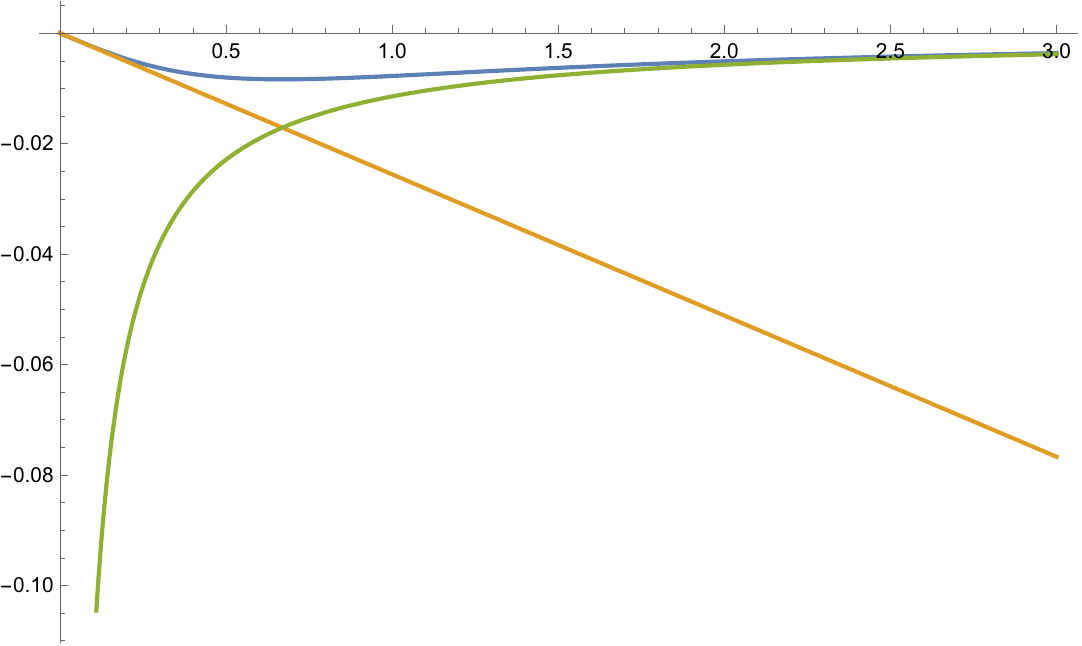}
    \caption{The RHS of Theorem~\ref{thmLowerTailAsymp} for $\eta=1/2$ (blue) plotted as a function of $c$ against the Poisson bound (orange) and the replica symmetric bound (green), with all three functions scaled by a factor $c^{-2}$. }
    \label{fig3compare}
\end{figure}

\subsection{Phase transition and absence of phase transition}
\label{subsecPhaseIntro}

We now describe what Theorem~\ref{thmLowerTailAsymp} tells us about the  existence or non-existence of a phase transition in the lower-tail problem for a given $\eta \in [0,1)$.  The notion of phase transition we consider is the statistical physics definition (the Yang--Lee definition~\cite{yang1952statistical}) involving non-analyticities of thermodynamic functions, see e.g.~\cite{ruelle1999statistical,friedli2017statistical}).

In particular, define the lower-tail rate function (in the critical regime) as
\begin{equation}
\label{eqFetaDef}
    \varphi_\eta(c) :=  \liminf_{n \to \infty} \frac{1}{n^{3/2}} \log \P_{c/\sqrt{n}} (X \le \eta \E X)\,.
\end{equation}
Then we say a \emph{phase transition} occurs at some $c^\ast > 0$ if the function $\varphi_\eta(c)$ is non-analytic at $c= c^\ast$.

\begin{cor}
\label{corPhaseTransitionGnp}
    A phase transition for triangle-freeness in  $G(n,p)$  occurs at $p=c^\ast/\sqrt{n}$ for some $c^\ast \in [1/\sqrt{e}, 4.342)$ in the sense that the function $\varphi_0(c)$ is non-analytic at $c=c^\ast$.

   More generally, there exists $\eta_\ell \approx .0091$ so that for every $\eta \in [0, \eta_\ell)$, there exists $c^\ast_\eta \in (0,\infty)$ so that a phase transition occurs in the lower-tail problem at  $c^\ast_{\eta}$.
\end{cor}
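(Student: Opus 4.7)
The plan is to use Corollary~\ref{thmLogAsymptotics} (respectively Theorem~\ref{thmLowerTailAsymp}) to identify $\varphi_\eta$ with an explicit real-analytic formula on $(0,\overline c(\eta))$, and then obtain a contradiction at a larger value of $c$ by comparing this formula (or its analytic continuation) against an elementary lower bound on $\varphi_\eta$ coming from placing all edges inside a fixed balanced bipartition. The identity theorem for real-analytic functions then forces a non-analyticity in between.

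For $\eta=0$, Corollary~\ref{thmLogAsymptotics} gives $\varphi_0(c)=F_0(c)$ on $(0,1/\sqrt e)$, where
\[
F_0(c):=\frac{1}{2}\left[\frac{W(2c^2)^{3/2}+3W(2c^2)^{1/2}}{3\sqrt{2}}-c\right].
\]
Since the principal branch of the Lambert $W$ function is real-analytic on $(0,\infty)$, so is $F_0$; in particular $\varphi_0$ is analytic throughout $(0,1/\sqrt e)$, which immediately gives $c^\ast\ge 1/\sqrt e$. For the upper bound $c^\ast<4.342$, I would use the trivial construction of placing all edges within a fixed balanced bipartition $A\sqcup B$ of $[n]$, which forces $X=0$, so
\[
\P_p(X=0)\ge (1-p)^{\lfloor n/2\rfloor\lceil n/2\rceil}.
\]
Setting $p=c/\sqrt n$ and letting $n\to\infty$ yields $\varphi_0(c)\ge -c/4$ for every $c>0$. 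Because $W(2c^2)=O(\log c)$, one checks that $F_0(c)\sim -c/2$ as $c\to\infty$, so $F_0$ eventually drops strictly below $-c/4$; a direct numerical evaluation of the (essentially unique) crossing $F_0(c)=-c/4$ places it below $c=4.342$. Picking any $c_0\in(1/\sqrt e,\,4.342)$ with $F_0(c_0)<-c_0/4$, if $\varphi_0$ were real-analytic on $(0,c_0]$ then the identity theorem applied to $\varphi_0$ and $F_0$ (which agree on the open set $(0,1/\sqrt e)$) would force $\varphi_0\equiv F_0$ on $(0,c_0]$, contradicting $\varphi_0(c_0)\ge -c_0/4>F_0(c_0)$. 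Hence $\varphi_0$ is non-analytic at some $c^\ast\in[1/\sqrt e,c_0]\subset[1/\sqrt e,4.342)$.

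For $\eta\in(0,\eta_\ell)$ the same strategy should apply. The implicit equation~\eqref{eqZetaDef} defines $\zeta=\zeta(c,\eta)$ as a real-analytic function of $c$ on $(0,\overline c(\eta))$ by the implicit function theorem (the partial derivative in $\zeta$ is nonzero on the interior of the uniqueness domain), and hence so is $F_\eta(c)$ given by the right-hand side of~\eqref{eqMainThmAsymptotics}. Once $F_\eta$ is analytically continued along the appropriate branch of $\zeta$ past $\overline c(\eta)$, the identity-theorem argument above still runs, using the trivial monotonicity $\varphi_\eta(c)\ge \varphi_0(c)\ge -c/4$. The threshold $\eta_\ell\approx 0.0091$ is by definition the largest $\eta$ for which this analytic continuation of $F_\eta$ still crosses $-c/4$ at some finite $c$; below $\eta_\ell$ the argument produces a phase transition point $c^\ast_\eta\in(0,\infty)$.

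The main obstacle in the $\eta>0$ case will be the analytic continuation of $F_\eta$ beyond $\overline c(\eta)$: the value $\overline c(\eta)$ is exactly where two real branches of~\eqref{eqZetaDef} merge (so the implicit function theorem breaks down there), and one must select and track the correct analytic branch of $\zeta$, then produce quantitative control over the crossing $F_\eta(c)=-c/4$ as a function of $\eta$ in order to pin down the numerical threshold $\eta_\ell\approx 0.0091$. For $\eta=0$ this is avoided entirely because $F_0$ is already globally explicit.
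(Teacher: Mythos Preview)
Your $\eta=0$ argument is essentially the paper's own (informal) proof: identify $\varphi_0$ with the analytic $F_0$ on $(0,1/\sqrt e)$, invoke the bipartite lower bound $\varphi_0(c)\ge -c/4$, and note $F_0$ dips below $-c/4$ before $c=4.342$. One typo: the event ``all edges lie across the bipartition $A\sqcup B$'' has probability $(1-p)^{\binom{|A|}{2}+\binom{|B|}{2}}$, not $(1-p)^{|A||B|}$; both exponents are $\sim n^2/4$ so the bound $-c/4$ survives.

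For $\eta\in(0,\eta_\ell)$ your plan has a real gap, and it is not the analytic-continuation issue you flag. First, the formula $F_\eta$ from~\eqref{eqMainThmAsymptotics} is already globally defined and real-analytic on $(0,\infty)$: Claim~\ref{claim:zeta} shows~\eqref{eqZetaDef} has a unique solution $\zeta(c,\eta)\in[0,1]$ for \emph{every} $c>0$, and its proof gives $\partial(\zeta c^2)/\partial c>0$, so no branches merge at $\overline c(\eta)$; that threshold only marks where the \emph{equality} $\varphi_\eta=F_\eta$ is proved, not where $F_\eta$ breaks down. Second, and more seriously, the bipartite bound $-c/4$ is too weak to reach $\eta_\ell\approx 0.0091$. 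The paper shows (Lemma~\ref{lemClimtBound}) that $F_\eta(c)/c\to -\tfrac12 h(\eta^{1/3})$ as $c\to\infty$, so your crossing $F_\eta(c)<-c/4$ occurs only when $h(\eta^{1/3})>1/2$, i.e.\ $\eta\lesssim 0.0065$. Your assertion that $\eta_\ell$ is ``by definition'' the largest $\eta$ for which $F_\eta$ crosses $-c/4$ is therefore incorrect: $\eta_\ell$ comes from Zhao's solution of the variational problem in~\cite{zhao2017lower}, and the paper reaches it by replacing your bipartite bound with the sharper mean-field lower bound $\varphi_\eta(c)\ge -(1+o(1))\Phi_{n,p}(\eta)/n^{3/2}$ (Lemma~\ref{lemMeanFieldBound}) together with Zhao's result (Lemma~\ref{lemZhaoBound}) that for $\eta<\eta_\ell$ a two-block construction makes $\Phi_{n,c/\sqrt n}(\eta)/(cn^{3/2})$ strictly smaller than $\tfrac12 h(\eta^{1/3})$ in the large-$c$ limit. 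That strict inequality is exactly what produces the contradiction with $F_\eta$ all the way up to $\eta_\ell$.
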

The value $\eta_\ell$ is given explicitly in~\cite{zhao2017lower} as the maximum of a univariate function on the unit interval.\footnote{In fact due to different normalization, $\eta_\ell$ is the cube of the lower threshold value $\approx .209$ from~\cite{zhao2017lower}.}

To see how Theorem~\ref{thmLowerTailAsymp} can be used to prove the existence of a phase transition, consider the triangle-free case $\eta=0$.  A lower bound for $\varphi_0(c)$ is $- \frac{c}{4}$ (by considering the probability of the event that $G$ is bipartite). The function on the RHS in Corollary~\ref{thmLogAsymptotics} becomes smaller than $-c/4$ around $c = 4.342$ (see Figure~\ref{fig:ldrate}), and thus the theorem cannot hold beyond this point. Since the RHS function is an analytic function of $c$, and analytic functions have unique analytic continuations, $\varphi(c)$ must have a non-analytic point  somewhere below $4.342$.  

\begin{figure}
\label{figLDrate}
    \centering
    \includegraphics[width=0.6\linewidth]{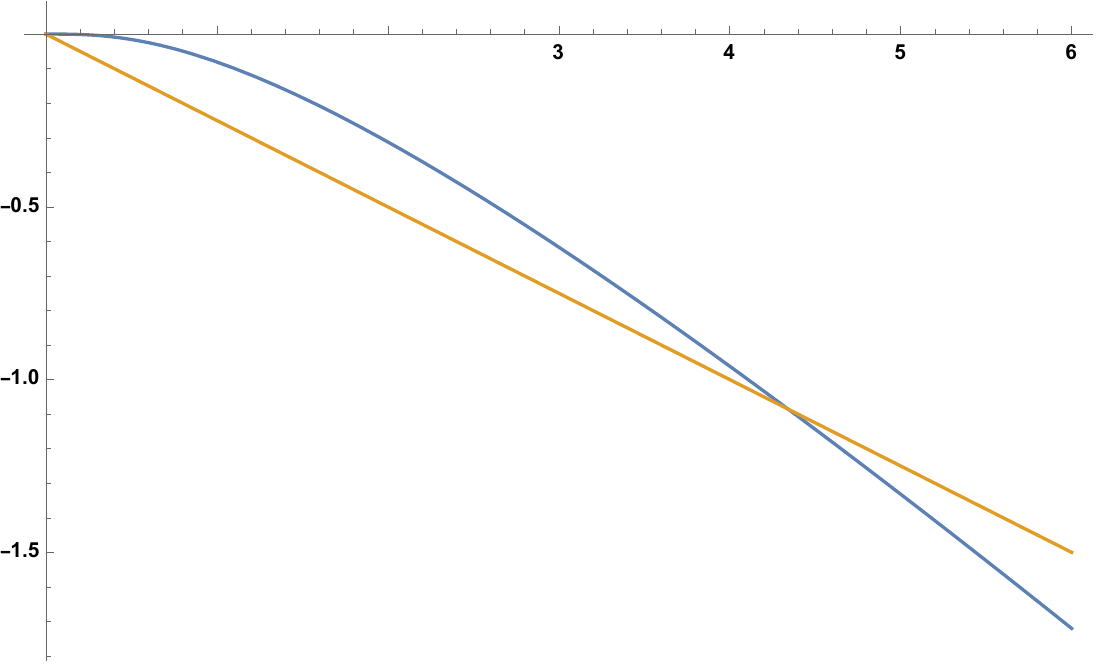}
    \caption{The RHS of Corollary~\ref{thmLogAsymptotics} (blue) plotted against the lower bound of $-c/4$ (orange) on $\varphi_0(c)$ defined in~\eqref{eqFetaDef}.
    After these functions cross, the formula in Corollary~\ref{thmLogAsymptotics} cannot hold. }
    \label{fig:ldrate}
\end{figure}

On the other hand, for moderate lower tails ($\eta \ge \eta^\ast$), the formula~\eqref{eqMainThmAsymptotics} (an analytic function of $c$) in Theorem~\ref{thmLowerTailAsymp} holds for all $c>0$ and so there is no phase transition.

\begin{cor}
\label{corNoPhaseTransitionGnp}
    If $\eta \ge \eta^\ast$, then no phase transition occurs in the lower-tail problem for triangles with $p =\Theta(n^{-1/2})$.  That is, the function $\varphi_\eta(c)$ is an analytic function of $c$ for $c \in (0, \infty)$ (and in fact we can replace the $\liminf$ in~\eqref{eqFetaDef} by a $\lim$).
\end{cor}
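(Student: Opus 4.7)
The plan is to combine Theorem~\ref{thmLowerTailAsymp} with the analytic implicit function theorem. Since $\eta \ge \eta^\ast$ forces $\overline c(\eta) = +\infty$ by~\eqref{eqOverlineCeta}, Theorem~\ref{thmLowerTailAsymp} applies at every $c > 0$ and asserts that the \emph{limit} (not merely the liminf) in the definition~\eqref{eqFetaDef} of $\varphi_\eta(c)$ exists and equals the right-hand side of~\eqref{eqMainThmAsymptotics}. This immediately upgrades the $\liminf$ to $\lim$ and reduces the corollary to showing that the explicit formula in~\eqref{eqMainThmAsymptotics} is an analytic function of $c$ on $(0,\infty)$.

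All non-trivial dependence of that formula on $c$ enters through the implicitly defined quantity $\zeta = \zeta(c)$ from~\eqref{eqZetaDef}. Set
\[
F(\zeta,c) := (1-\zeta)\left(\frac{W(2\zeta c^2)}{2\zeta c^2}\right)^{3/2} - \eta.
\]
The Lambert $W$ function is real-analytic on $(0,\infty)$, as it is the inverse of the real-analytic map $x\mapsto xe^x$ whose derivative is everywhere positive on the positive real axis; consequently $F$ is jointly real-analytic on $\{(\zeta,c) : 0<\zeta<1,\, c>0\}$.

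Next I would check the hypotheses of the analytic implicit function theorem at every $c_0 > 0$. The map $u \mapsto W(u)/u$ is strictly decreasing and positive on $(0,\infty)$: differentiating $W(u)e^{W(u)} = u$ gives $W'(u) = W(u)/(u(1+W(u)))$, and hence $(W(u)/u)' = -W(u)^2/(u^2(1+W(u))) < 0$. Both factors of $F+\eta$ are therefore positive and strictly decreasing in $\zeta$ on $(0,1)$, so $F(\zeta,c)$ runs strictly monotonically from $1-\eta > 0$ at $\zeta=0^+$ to $-\eta<0$ at $\zeta=1^-$. Hence for each $c>0$ there is a unique $\zeta(c) \in (0,1)$ with $F(\zeta(c),c) = 0$, and $\partial_\zeta F(\zeta(c),c) < 0$ strictly. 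The analytic implicit function theorem then yields that $c\mapsto \zeta(c)$ is real-analytic on $(0,\infty)$.

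Finally, at every $c_0 \in (0,\infty)$ we have $\zeta(c_0) \in (0,1)$ and $2\zeta(c_0)c_0^2 > 0$, so each constituent of~\eqref{eqMainThmAsymptotics}, namely $W(2\zeta c^2)^{3/2}$, $\sqrt{2\zeta}$, $\log(1-\zeta)$, and the polynomial factors of $c$, is analytic in $(\zeta,c)$ on a neighborhood of $(\zeta(c_0),c_0)$. Composing with the analytic map $c\mapsto \zeta(c)$ gives that $\varphi_\eta(c)$ is analytic on $(0,\infty)$. I do not foresee any substantive obstacle; the only mild care needed is verifying that no factor in~\eqref{eqMainThmAsymptotics} degenerates (no $\zeta=0$, no $\zeta=1$, no vanishing argument of $W$) along the curve $\zeta(c)$, which is guaranteed by $\zeta(c)\in(0,1)$ for every $c>0$.
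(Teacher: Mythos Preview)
Your proposal is correct and takes the same approach as the paper: since $\eta \ge \eta^\ast$ gives $\overline c(\eta)=+\infty$, Theorem~\ref{thmLowerTailAsymp} furnishes the limit for every $c>0$, and the resulting explicit formula is analytic in $c$. The paper simply asserts this analyticity in passing; your use of the analytic implicit function theorem to verify that $\zeta(c)$ is real-analytic (via strict monotonicity of $F$ in $\zeta$) and then to check that each constituent of~\eqref{eqMainThmAsymptotics} is analytic near the curve $(\zeta(c),c)$ supplies the details the paper omits.
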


In the cases that a phase transition occurs, it would be interesting to prove the phase transition is unique and to locate it precisely; see more in Section~\ref{secOQuestions}.

\subsection{Conditional distributions and convergence in cut metric}
\label{secConditionalIntro}

Beyond determining rate functions, our methods give structural information about the conditional distribution: $G(n,p)$ conditioned on the lower-tail event $\{ X \le \eta \E X \}$.  

For dense graphs ($p$ constant), the result of Chatterjee and Varadhan~\cite{chatterjee2011large} states that whp $G(n,p)$ conditioned on the large deviation event in question is close in cut metric to a graphon achieving the optimum of the variational problem mentioned above; see~\cite{lovasz2012large,chatterjee2017large} for background on graphons, the cut metric, and the relation to large deviations.  For $p =o(1)$, less is known.  For the case $\eta=0$ of triangle-freeness, the result of {\L}uczak~\cite{luczak2000triangle} implies that for $p=\omega(n^{-1/2})$, $G(n,p)$ conditioned on triangle-freeness is close in cut metric to a random bipartite graph; much stronger is the result of Osthus, Pr{\"o}mel, and Taraz showing that for $p \ge (\sqrt{3}+\eps) \sqrt{\log n}\cdot n^{-1/2}$ the conditional measure is bipartite whp; see also~\cite{jenssen2023evolution} for  precise structural information at slightly smaller $p$.  For the more general lower tail, Chin \cite{chin2023structure} has recently proved (following~\cite{kozma2023lower,zhao2017lower}) that when $p =\omega(n^{-1/2})$ and $\eta > .1012$, the conditional measure is close in cut metric and in subgraph counts to $G(n,q)$ where $q= \eta^{1/3} p$.

Here we also determine the typical structure of the lower-tail conditional distribution, but the structure differs in one key respect from the results for $p = \omega(n^{-1/2})$. Our results say that in the setting of Theorem~\ref{thmLowerTailAsymp}, the conditional distribution converges in the normalized cut metric to that of $G(n,q)$ for an explicit $q = q(p,\eta)$ that is different than the replica symmetric $\eta^{1/3} p$.  Because of this, we know that the distribution does not convergence to $G(n,q)$ in terms of small subgraph counts; the typical number of triangles differs by a constant factor from the typical number in $G(n,q)$.

We now give the definitions to state the results precisely.
For a matrix $A\in \R^{n \times n}$, define the cut norm by
\[
\|A\|_\boxempty=\sup_{x,y\in\{0,1\}^n} x^T A y\, .
\]
For a graph $G$, we let $A_G$ denote its adjacency matrix and we let $J=J_n$ denote the $n \times n$ all ones matrix. We note that $\|A_G\|_\boxempty$ coincides with the usual definition of the cut norm of the graphon associated to $G$. 
\newpage

\begin{theorem}
\label{thmLowerTailStructure}
Fix $c>0$ and $\eta\in [0,1)$ satisfying $c< \overline c(\eta)$, and
 and let $\zeta$ be as in Theorem~\ref{thmLowerTailAsymp}.
If $p=(1+o(1))c/\sqrt{n}$ and $G$ is distributed as $G(n,p)$ conditioned on the event $X\leq \eta \E X$, then with probability $1$,
\begin{align}\label{eq:cutnormGnp}
\
\lim_{n\to\infty} \frac{1}{qn^2} \| A_{G} - qJ \|_\boxempty =0 \, ,
\end{align}
 where 
\begin{equation}
\label{eqQform}
    q=\sqrt{\frac{ W(2 \zeta c^2)}{2 \zeta}}  \cdot \frac{1}{\sqrt{n}}\, .
\end{equation}
\end{theorem}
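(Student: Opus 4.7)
The plan is to exploit the structural description of the conditional measure $\mu$ (the law of $G(n,p)$ conditioned on $\{X \le \eta \E X\}$) that emerges from the proof of Theorem~\ref{thmLowerTailAsymp}. The statistical physics reformulation represents the lower-tail probability as a partition function of a Gibbs measure obtained by tilting the product measure $G(n,p)$ so as to suppress triangles. The fixed-point equation~\eqref{eqZetaDef} is precisely the self-consistency condition that identifies the effective edge density under the tilted measure, and~\eqref{eqQform} records that density. In the regime $c < \overline c(\eta)$, this tilted measure has a convergent cluster expansion and is close to a product measure of density $q$ on local scales.

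The first step is to compute edge marginals. Revealing a potential edge $e=\{u,v\}$ and comparing the partition functions of $\mu$ with $e$ present versus absent yields a self-consistency equation for $\mu(e\in G)$ whose solution is $q(1+o(1))$, uniformly over $e$. The second step is to obtain concentration. One sets up a Glauber-type dynamics on graphs whose stationary distribution is (a truncation of) $\mu$, and verifies a path-coupling contraction condition: in the regime $c<\overline c(\eta)$ the triangle-tilt is weak enough that a single edge flip has $O(1/n)$ expected influence on the acceptance probabilities of other flips. A standard concentration inequality for contractive Markov chains then gives that any $1$-Lipschitz function of the edges of $G\sim \mu$ concentrates around its mean with Gaussian tail on the scale $n$.

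Applying this to $e_G(S,T)=\sum_{i\in S,\, j\in T} \mathbf{1}[ij\in E(G)]$ for fixed $S,T\subseteq[n]$, and combining with the marginal computation, gives
\begin{equation}
\P\!\left( \left| e_G(S,T) - q\,|S|\,|T| \right| \geq \eps\, q n^2 \right) \leq e^{-\Omega(n)}
\end{equation}
for every fixed $\eps>0$. A union bound over the $4^n$ choices of $(S,T)$ then yields $\|A_G-qJ\|_\boxempty \le \eps\, q n^2$ with exponentially small failure probability, so Borel--Cantelli upgrades this to the almost-sure statement~\eqref{eq:cutnormGnp}.

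The main technical obstacle is establishing the contractivity (equivalently, convergence of the cluster expansion) throughout the full range $c<\overline c(\eta)$: the triangle-tilt strength becomes singular as $c$ approaches $\overline c(\eta)$, and matching this threshold requires a tight accounting of the combinatorial weight of triangles against the edge cost. Once the contraction is in place, the remaining marginal computation and cut-norm union bound are mechanical consequences of the statistical-physics framework already built to prove Theorem~\ref{thmLowerTailAsymp}.
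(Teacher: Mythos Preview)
Your overall architecture matches the paper's proof in Section~\ref{secCutNorm}: prove concentration for each linear functional $x^T(A_G-M)y$ via a contractive Glauber dynamics, union bound over the $4^n$ pairs $(x,y)$, then apply Borel--Cantelli. However, there are two genuine gaps.

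\textbf{The concentration rate is too weak to beat the union bound.} You claim a Gaussian tail on scale $n$, giving $\P(|e_G(S,T)-q|S||T||\ge \eps q n^2)\le e^{-\Omega(n)}$. But the implied constant here is $\Theta(\eps^2)$ (the deviation is $\eps q n^2 = \Theta(\eps n^{3/2})$ and you are claiming variance proxy $\Theta(n^2)$), so for small $\eps$ the union bound $4^n\cdot e^{-\Theta(\eps^2)n}$ diverges. The paper obtains the sharper bound $e^{-\Omega_\eps(n^{3/2})}$, and this extra factor of $\sqrt n$ in the exponent is not cosmetic: it is what makes the union bound go through. The improvement comes from the fact that in the Glauber dynamics $P_{\lam,\zeta}$ the probability of a nontrivial update at any step is only $D=O(\lam)=O(n^{-1/2})$; the concentration inequality from Theorem~\ref{thm:barbour2019longterm} (Lemma~\ref{lem:Lipschitzconcentration} in the paper) has variance proxy $O(L^2 D/\rho)=O(\lam n^2)=O(n^{3/2})$, not the $O(n^2)$ you would get from a naive bounded-differences argument.

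\textbf{The conditional measure and the Gibbs measure are conflated.} You write that the Glauber dynamics has stationary distribution ``(a truncation of) $\mu$'' where $\mu$ is $G(n,p)$ conditioned on $\{X\le \eta\E X\}$. For $\eta\in(0,1)$ this conditional measure is \emph{not} $\mu_{\lam,\zeta}$ and there is no obvious contractive local dynamics for it. The paper works entirely with $\mu_{\lam,\zeta}$ (where the contraction of Lemma~\ref{lemma:contraction} holds), proves $\mu_{\lam,\zeta}(\mathcal Q\wedge\mathcal B)\le e^{-\Omega_\eps(n^{3/2})}$, and then \emph{transfers} this to the conditional measure via Lemma~\ref{lem:LTtoZ}. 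That transfer lemma only applies to events that are exponentially rare at rate $n^{3/2}$ under $\mu_{\lam,\zeta}$, so it too requires the sharper concentration bound above. Your proposal needs this two-step structure (prove for $\mu_{\lam,\zeta}$, then transfer) made explicit.
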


\begin{figure}
    \centering
\includegraphics[width=0.56\linewidth]{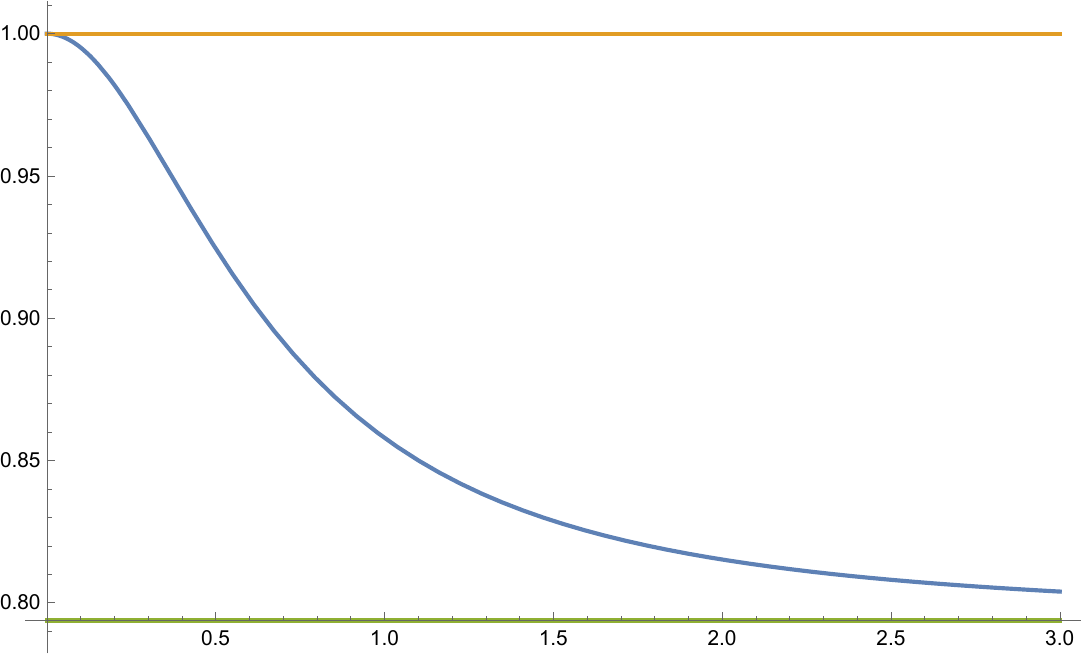}
    \caption{The edge density $q$ (scaled by $p^{-1}$) of the conditional distribution for the lower-tail event when $\eta=1/2$  as a function of $c$ (in blue); the horizontal lines mark the edge density of the unconditioned random graph (in orange) and of the random graph achieving the target number of triangles (in green). This gives another visualization of the interpolation between the Poisson and replica symmetric bounds shown in Figure~\ref{fig3compare}.}
    \label{figDensity}
\end{figure}

It is straightforward to show that the random graph $G(n,q)$ also satisfies \eqref{eq:cutnormGnp}, and so by the triangle inequality $G=G_n$ converges almost surely to $G(n,q)$ in the normalized cut metric.   In Figure~\ref{figDensity} we plot the function $q$ in~\eqref{eqQform} as a function of $c$ for the case $\eta=1/2$.

 Theorem~\ref{thmLowerTailStructure} says that whp the number of edges in any cut of $G$ with linear-sized parts is, to first order, the expected number of edges between parts of these sizes  in $G(n,q)$.   The max cut of $G$ is therefore whp  half the edges of $G$ to first order.  This suggests another way of understanding phase transitions in the lower-tail problem as $c$ varies, via typical sizes of extremal cuts, capturing large-scale structure in the conditional distribution.

To make this precise, let $\mathrm{MC}(G)$ denote the fraction of edges of $G$ in its max cut (setting $\mathrm{MC}(G) =0$ if $G$ has no edges).

\begin{cor}
\label{corMaxCut}
   Fix $c>0$ and $\eta\in [0,1)$ satisfying $c< \overline c(\eta)$. If $p=(1+o(1))c/\sqrt{n}$ and $G$ is distributed as $G(n,p)$ conditioned on the event $X\leq \eta \E X$, then whp $\mathrm{MC}(G)=1/2 + o(1)$.
\end{cor}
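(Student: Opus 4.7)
The plan is to read Corollary~\ref{corMaxCut} directly off Theorem~\ref{thmLowerTailStructure}: the cut-norm closeness of $A_G$ to the rank-one matrix $qJ$ pins down the number of edges across every cut to first order, and in particular forbids any cut from substantially exceeding the trivially-achievable half of the edges.

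First, for any $S \subseteq V(G)$, I would write $e(S, \bar S) = \mathbf{1}_S^T A_G \mathbf{1}_{\bar S}$ and compare with $\mathbf{1}_S^T (qJ) \mathbf{1}_{\bar S} = q|S||\bar S|$. The bound $\|A_G - qJ\|_\boxempty = o(qn^2)$ from Theorem~\ref{thmLowerTailStructure}, interpreted in the two-sided sense standard for the cut norm of graphons, then yields
\[
e(S,\bar S) = q|S||\bar S| + o(qn^2) \le \tfrac{qn^2}{4} + o(qn^2)
\]
uniformly in $S$, almost surely as $n\to\infty$, where I have used $|S||\bar S|\le n^2/4$. Taking $x=y=\mathbf{1}$ in the same cut-norm bound, together with the identity $\mathbf{1}^T A_G \mathbf{1} = 2e(G)$ (since $A_G$ has zero diagonal) and $\mathbf{1}^T (qJ) \mathbf{1} = qn^2$, gives $e(G) = \tfrac{qn^2}{2} + o(qn^2)$. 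Since $q=\Theta(n^{-1/2})$, this is $\Theta(n^{3/2})$ and hence positive almost surely, so $\mathrm{MC}(G)$ is well-defined.

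Finally, combining the two estimates,
\[
\mathrm{MC}(G) = \frac{\max_{S} e(S,\bar S)}{e(G)} \le \frac{qn^2/4 + o(qn^2)}{qn^2/2 + o(qn^2)} = \frac{1}{2} + o(1),
\]
and the matching lower bound $\mathrm{MC}(G) \ge \tfrac{1}{2} - o(1)$ follows by plugging any balanced partition $|S|=\lfloor n/2\rfloor$ into the same display. I do not anticipate a substantive obstacle: the content is all in Theorem~\ref{thmLowerTailStructure}, and the deduction is linear algebra. The only point requiring care is the sign convention in the definition of $\|\cdot\|_\boxempty$ given in the paper, which is one-sided; one must apply the bound to both $A_G-qJ$ and $qJ - A_G$ (equivalently, invoke the equivalence, noted in the paper, with the standard graphon cut norm) to get the two-sided control used above.
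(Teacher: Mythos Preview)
Your proposal is correct and is exactly the intended deduction: the paper does not give a separate proof of Corollary~\ref{corMaxCut} but treats it as immediate from Theorem~\ref{thmLowerTailStructure}, via precisely the linear-algebra reading of the cut norm you describe.

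One remark on the sign issue you flagged. Your two suggested fixes are not quite watertight as stated: Theorem~\ref{thmLowerTailStructure} is only asserted for $A_G-qJ$, not for $qJ-A_G$, and the paper's remark about agreement with the graphon cut norm is made for the nonnegative matrix $A_G$, not for the signed matrix $A_G-qJ$ (indeed, with the paper's one-sided definition $\| -J\|_\boxempty=0$, so no general equivalence holds). The actual resolution is that the proof in Section~\ref{secCutNorm} bounds $\P\bigl(|x^T(A_G-M)y|\ge m\bigr)$ for each fixed $x,y$ via Lemma~\ref{lem:Lipschitzconcentration}, so the two-sided control you need is in fact established there; the one-sided formulation in the theorem statement is just an imprecision.
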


 In particular, if we define 
 \begin{equation}
    g_{\eta}(c) = \liminf_{n \to \infty} \E_{c/\sqrt{n}} \left[ \mathrm{MC}(G) \mid X \le \eta \E X \right] \,,
\end{equation}
 then if $\eta \ge \eta^\ast$ we have $g_\eta(c)=1/2$ for \emph{all} $c>0$.  On the other hand, for the case $\eta=0$, we know that $g_\eta$ must have a non-analytic point.
\begin{cor}
    \label{corMaxCutPhase}
     The function $g_0(c)$ witnesses a phase transition for triangle-freeness in  $G(n,p)$, in the sense that for some $c^\ast >0$,  $g_0(c)$ is non-analytic at $c=c^\ast$.
\end{cor}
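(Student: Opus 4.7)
The approach is a proof by contradiction via the identity theorem for real-analytic functions of one real variable, combined with a bipartite lower bound on the conditional distribution. Suppose $g_0$ were analytic on $(0,\infty)$. By Corollary~\ref{corMaxCut} applied with $\eta = 0$ (for which $\overline c(0) = 1/\sqrt{e}$), for every $c \in (0, 1/\sqrt{e})$ the conditional max cut fraction satisfies $\mathrm{MC}(G) = 1/2 + o(1)$ with high probability, so by bounded convergence $g_0(c) = 1/2$. An analytic function on $(0, \infty)$ that equals $1/2$ on an open subinterval must be identically $1/2$, so the hypothesis forces $g_0 \equiv 1/2$ on $(0,\infty)$. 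A contradiction then amounts to exhibiting a single $c_0 > 0$ with $g_0(c_0) > 1/2$.

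To locate such a $c_0$, the key tool is the deterministic inequality $\mathrm{MC}(G) \ge \tfrac{1}{2} + \tfrac{1}{2} \mathbf{1}[G \text{ is bipartite}]$, valid because bipartite graphs have $\mathrm{MC} = 1$ and all other graphs have $\mathrm{MC} \ge 1/2$. Taking conditional expectations and passing to the liminf gives
\[
g_0(c) \ge \tfrac{1}{2} + \tfrac{1}{2} \liminf_{n \to \infty} \P_{c/\sqrt n}(G \text{ is bipartite} \mid X = 0).
\]
Thus it suffices to find $c_0$ for which the conditional probability of being bipartite is bounded below along some subsequence of $n$. Restricting to a single balanced bipartition yields the trivial lower bound $\P_{c/\sqrt n}(G \text{ bipartite}) \ge (1-p)^{n(n-2)/4} = \exp(-(1+o(1)) c n^{3/2}/4)$, so what is needed is a matching upper bound of the form $\P_{c/\sqrt n}(X = 0) \le C \cdot \P_{c/\sqrt n}(G \text{ bipartite})$ at some constant $c_0$ and constant $C$.

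The hard step is this matching upper bound at a constant value of $c$. For $p = \omega(n^{-1/2})$ it follows from {\L}uczak's theorem~\cite{luczak2000triangle} and its container-method strengthenings~\cite{balogh2018method}, which show that triangle-free graphs at such densities are essentially bipartite and in particular $\P(G \text{ bipartite} \mid X = 0) \to 1$; for fixed constant $c$ these sharp results are not directly in the literature and need adaptation at the constant-scale density. A more robust route is to replace the bipartite event with the softer event $\{G \text{ admits a cut containing at least a } (1 - \eps) \text{-fraction of its edges}\}$ for small fixed $\eps > 0$: this event is easier to lower-bound via explicit near-bipartite configurations and easier to control from above via container-style counting, while still implying $\mathrm{MC}(G) \ge 1 - \eps > 1/2$ whenever it occurs. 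Once any such $c_0$ with $g_0(c_0) > 1/2$ is produced, the identity-theorem argument of the first paragraph fails, forcing a non-analyticity of $g_0$ somewhere in $[1/\sqrt{e}, c_0]$ and completing the proof.
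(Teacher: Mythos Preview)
Your overall skeleton matches the paper's one-sentence proof: $g_0(c)=1/2$ on $(0,1/\sqrt{e})$ by Corollary~\ref{corMaxCut}, so by the identity theorem for real-analytic functions it suffices to exhibit some $c_0$ with $g_0(c_0)>1/2$. The paper obtains this simply by invoking {\L}uczak's theorem~\cite{luczak2000triangle}, which gives $g_0(c)\to 1$ as $c\to\infty$.

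The gap in your proposal is that the second half is not a proof but a discussion of possible approaches (bound the conditional bipartite probability at a fixed constant $c$, or pass to a softer near-bipartite event and use ``container-style counting''), none of which you actually carry out. More to the point, this detour is unnecessary, and stems from your dismissal of {\L}uczak's result as applying only for $p=\omega(n^{-1/2})$ and ``needing adaptation at the constant-scale density.'' A statement of the form ``for every sequence $p(n)$ with $p(n)\sqrt{n}\to\infty$ one has $\E[\mathrm{MC}(G)\mid X=0]\to 1$'' is equivalent, by a standard diagonal argument, to ``for every $\eps>0$ there exists $C_\eps$ such that $\liminf_n \E_{c/\sqrt{n}}[\mathrm{MC}(G)\mid X=0]\ge 1-\eps$ for all $c\ge C_\eps$.'' (If the latter failed, pick $c_k\to\infty$ and $n_k\to\infty$ with $\E_{c_k/\sqrt{n_k}}[\mathrm{MC}\mid X=0]<1-\eps$, and stitch them into a single sequence $p(n)$ with $p(n)\sqrt{n}\to\infty$ along which the conclusion of {\L}uczak fails.) This is exactly $g_0(c)\to 1$, which immediately furnishes the required $c_0$. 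There is no need to lower-bound the conditional bipartite probability at any particular constant $c$, and no container argument is required.
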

That is, the max-cut fraction $g_0(c)$ is an \emph{order parameter} for a phase transition.   Corollary~\ref{corMaxCutPhase} follows from Corollary~\ref{corMaxCut} and the result of {\L}uczak~\cite{luczak2000triangle} that implies that $g_0(c) \to 1$ as $c \to \infty$.  In~\cite{jenssen2023evolution}, it was conjectured that there is exactly one non-analyticity of $g_0(c)$ for $c \in (0,\infty)$; here we prove there is at least one.   See Section~\ref{secOQuestions} and~\cite[Section 1.3]{jenssen2023evolution} for some related questions and conjectures.  For more on order parameters and phase transitions in combinatorial problems see, e.g.~\cite{borgs1990rigorous,bollobas2001scaling,borgs2001birth,biskup2007large}.

\subsection{Lower tails for $G(n,m)$}
\label{subsecGnmResults}

We now consider the lower-tail problem for the random graph $G(n,m)$, a uniformly random graph on $n$ vertices with $m$ edges.  Using results on the typical number of edges in the lower-tail conditional distribution, we can transfer the main result, Theorem~\ref{thmLowerTailAsymp}, to a result for $G(n,m)$, but perhaps  surprisingly the conclusions about phase transitions are different.

Here we will parameterize $m = (1+o(1))\frac{b}{2} n^{3/2}$, so the average degree in $G(n,m)$ is asymptotically $b\sqrt{n}$. Probabilities and expectations with respect to $G(n,m)$ will be denoted $\P_m$ and $\E_m$.

\begin{theorem}\label{thmLowerTailGnm}
Fix $b>0$ and $\eta\in [0,1)$ satisfying $b^2(1-\eta)<W(2/e)$. If $m = (1+o(1))\frac{b}{2} n^{3/2}$,  then
 \begin{equation}
 \label{eqGnmLTform}
      \lim_{n \to \infty}   \frac{1}{n^{3/2}} \log \P_m(X \leq \eta \E X) =-\frac{b^3}{6}(1-\eta+\eta \log\eta)\, . 
    \end{equation}
\end{theorem}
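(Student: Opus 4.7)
The plan is to deduce the theorem from Theorem~\ref{thmLowerTailAsymp} via a transfer argument, exploiting concentration of the edge count in the lower-tail conditional distribution of $G(n,p)$.

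First I would take $p = c/\sqrt{n}$ with $c = b\,e^{(1-\eta)b^2}$. This specific choice is dictated by Theorem~\ref{thmLowerTailStructure}: it is the unique $c$ for which the asymptotic conditional edge density $q$ from~\eqref{eqQform} equals $b/\sqrt{n}$, matching the density of $G(n,m)$. Writing $\eta' = \eta b^3/c^3$ so that $\{X \le \eta \E_m X\} = \{X \le \eta' \E_p X\}$, one checks that the $\zeta$-equation~\eqref{eqZetaDef} at $(c,\eta')$ has the solution $\zeta = 1 - \eta$ with $W(2\zeta c^2) = 2(1-\eta)b^2$. Substituting these relations into~\eqref{eqMainThmAsymptotics} and simplifying yields the clean decomposition
\[
\frac{1}{n^{3/2}}\log \P_p(X \le \eta \E_m X) \;\to\; -\frac{b^3}{6}(1-\eta+\eta\log\eta) \;-\; \frac{c}{2}\phi(b/c),
\]
where $\phi(u) := u\log u - u + 1$. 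The first term is precisely the target rate from~\eqref{eqGnmLTform}, while a standard Stirling calculation identifies the second as $\tfrac{1}{n^{3/2}}\log \P_p(e(G) = m)$. The hypothesis $b^2(1-\eta) < W(2/e)$ is the exact regime in which this transfer strategy can be pushed through, encoding both applicability of Theorem~\ref{thmLowerTailAsymp} at $(c,\eta')$ and enough regularity of the conditional law for the step below.

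Second, the identity $\P_p(X \le \eta \E_m X,\, e(G) = m) = \P_p(e(G) = m) \cdot \P_m(X \le \eta \E_m X)$ gives
\[
\frac{1}{n^{3/2}} \log \P_m(X \le \eta \E_m X) = \frac{1}{n^{3/2}} \log \P_p(X \le \eta \E_m X,\, e(G) = m) + \frac{c}{2}\phi(b/c) + o(1).
\]
The trivial bound $\P_p(X \le \eta \E_m X,\, e = m) \le \P_p(X \le \eta \E_m X)$ combined with the previous display immediately yields the lower bound on the rate function in~\eqref{eqGnmLTform}. For the matching upper bound on the rate (i.e.\ the upper bound on $\log \P_m$), one needs $\P_p(e(G) = m \mid X \le \eta \E_m X) \ge n^{-O(1)}$: under the conditional law, the edge count must put inverse-polynomial mass at the specific value $m$.

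The main obstacle is precisely establishing this last bound, which amounts to a local central limit theorem for $e(G)$ under the conditional measure. Theorem~\ref{thmLowerTailStructure} only delivers concentration at the coarse scale $o(bn^{3/2})$ via cut-norm; what is needed is $O(\sqrt{n})$-scale Gaussian behavior of $e(G)$ around its conditional mean $\approx m$, yielding $\P_p(e(G) = m \mid X \le \eta \E_m X) = \Theta(n^{-1/2})$. I expect this to be proved by running the cluster expansion and contractive Markov chain machinery of the paper on the joint law of the edge count and triangle count in the conditional model, and extracting a Gaussian approximation to second order. The threshold $b^2(1-\eta) < W(2/e)$ should be sharp for the relevant expansion to converge, matching the phase transition in the $G(n,m)$ lower-tail problem announced in the abstract.
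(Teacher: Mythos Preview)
Your setup is exactly the paper's: the same choice $c = b e^{(1-\eta)b^2}$, $\zeta = 1-\eta$, and the same decomposition of the $G(n,p)$ rate into the target $G(n,m)$ rate plus the edge-count cost $\tfrac{c}{2}\phi(b/c)$. The upper bound on $\log\P_m$ goes through as you describe.

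Where you diverge is in the lower bound. You route through Theorem~\ref{thmLowerTailAsymp} and the conditional measure $\P_p(\,\cdot\mid X\le T)$, then ask for $\P_p(e(G)=m\mid X\le T)\ge n^{-O(1)}$ via a local CLT. The paper instead works directly with the partition function $Z(\lambda,\zeta)$ and the Gibbs measure $\mu_{\lambda,\zeta}$: the identity
\[
\P_m(X\le T)\ \ge\ \frac{\lambda^{-m}(1-\zeta)^{-T_0}}{\binom{\binom{n}{2}}{m}}\,Z(\lambda,\zeta)\cdot \mu_{\lambda,\zeta}\bigl(|G|=m,\ X(G)\in[T_0,T]\bigr)
\]
reduces the problem to showing that the last factor is $\exp(-o(n^{3/2}))$. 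This is the content of Lemma~\ref{lemTransferGnm}, and it is proved by elementary combinatorial surgery (Claims~\ref{clmLoweringTriangleCount}--\ref{clmUpwardsConstruction}): starting from a typical $G\sim\mu_{\lambda,\zeta}$, one removes or adds $o(n^{3/2})$ edges---using open pairs to avoid creating triangles, and random edge deletion with a second-moment bound to shed triangles---to land exactly at $m$ edges with triangle count in $[T_0,T]$. No Gaussian approximation or second-order expansion is used.

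So your ``main obstacle'' is real but you have overestimated both its strength and its difficulty: only an $\exp(-o(n^{3/2}))$ point-probability bound is needed (not $n^{-O(1)}$, let alone $\Theta(n^{-1/2})$), and it is obtained by edge surgery in $\mu_{\lambda,\zeta}$ rather than by any local CLT for the conditional law. Your route via the conditional measure could be completed with the same Lemma~\ref{lemTransferGnm} together with the comparison of Lemma~\ref{lem:LTtoZ}, but the paper's direct partition-function route is cleaner and avoids passing through Theorem~\ref{thmLowerTailAsymp} at all.
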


For the special case $\eta =0$, this result implies a formula for $T(n,m)$, the number of triangle-free graphs on $n$ vertices and $m$ edges, accurate up to a factor $\exp\{o(n^{3/2})\}$.  
\begin{cor}\label{corSmallCTnm}
    Fix  $b>0$ satisfying $ b^2 < W(2/e)$. If  $m = (1+o(1)) \frac{b}{2} n^{3/2}$, then
 \begin{equation}
 \label{eqGnmTFform}
      \lim_{n \to \infty}   \frac{1}{n^{3/2}} \log \P_m(X=0) = -\frac{ b^3}{6} \,.
    \end{equation}
  In other words, 
    \begin{equation}
        T(n,m) = \binom{\binom{n}{2}}{m} \exp \left( - \frac{4}{3}\left( \frac{m}{n} \right)^3 + o(n^{3/2})\right).
    \end{equation}
\end{cor}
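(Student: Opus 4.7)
The plan is to deduce both assertions essentially for free from Theorem~\ref{thmLowerTailGnm}, which does the heavy lifting. The first formula is the specialization $\eta=0$ of~\eqref{eqGnmLTform}. With the standard convention $0\log 0 = 0$, the bracket $1-\eta+\eta\log\eta$ collapses to $1$, the hypothesis $b^2(1-\eta)<W(2/e)$ collapses to $b^2<W(2/e)$, and~\eqref{eqGnmLTform} reads exactly
\[
\lim_{n\to\infty}\frac{1}{n^{3/2}}\log \P_m(X=0) = -\frac{b^3}{6},
\]
which is~\eqref{eqGnmTFform}.

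For the enumerative consequence, I would use that $G(n,m)$ is uniform on graphs with $m$ edges, so
\[
\P_m(X=0) \;=\; \frac{T(n,m)}{\binom{\binom{n}{2}}{m}}.
\]
Rearranging and taking logarithms,
\[
\log T(n,m) \;=\; \log\binom{\binom{n}{2}}{m} \;+\; \log \P_m(X=0).
\]
By the first part, $\log \P_m(X=0) = -\frac{b^3}{6}n^{3/2} + o(n^{3/2})$. Finally, since $m = (1+o(1))\tfrac{b}{2}n^{3/2}$, a direct substitution yields
\[
\frac{4}{3}\left(\frac{m}{n}\right)^{3}
= \frac{4}{3}\cdot(1+o(1))\frac{b^3}{8}n^{3/2}
= (1+o(1))\frac{b^3}{6}\,n^{3/2},
\]
so the rate can equivalently be written as $-\tfrac{4}{3}(m/n)^3 + o(n^{3/2})$. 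Exponentiating gives the claimed formula for $T(n,m)$.

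There is no real obstacle: the only thing to check is that the extension of the formula $1-\eta+\eta\log\eta$ to $\eta=0$ and the hypothesis on $b$ are compatible, both of which are immediate. Everything nontrivial is packaged into Theorem~\ref{thmLowerTailGnm}.
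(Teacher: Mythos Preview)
Your proposal is correct and matches the paper's approach: the corollary is presented as an immediate specialization of Theorem~\ref{thmLowerTailGnm} to $\eta=0$ (with the convention $0\log 0=0$), together with the identity $\P_m(X=0)=T(n,m)/\binom{\binom{n}{2}}{m}$ and the substitution $m=(1+o(1))\tfrac{b}{2}n^{3/2}$. The paper does not write out a separate proof, and your verification of the arithmetic converting $\tfrac{b^3}{6}n^{3/2}$ to $\tfrac{4}{3}(m/n)^3$ is exactly what is implicitly used.
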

Solving $b^2 = W(2/e)$ yields $b \approx .48117$ for the threshold up to which the theorem applies.

Note the particularly simple form of the RHS's of~\eqref{eqGnmLTform} and~\eqref{eqGnmTFform} compared to the more complicated formulas for the case of $G(n,p)$  in Theorem~\ref{thmLowerTailAsymp} and Corollary~\ref{thmLogAsymptotics}.  In the $G(n,m)$ case the formulas match the Poisson-type behavior of the lower tail in the sparser regimes $p = o(n^{-1/2})$, covered by~\cite{janson1987uczak,janson1990poisson,janson2016lower}.  These formulas also give an intuitive interpretation for the $G(n,p)$ results in Section~\ref{subsecMainResults}. A simple calculation shows that
\begin{equation}
\label{eqoptimization}
\log \P_p(X \le \eta \E_p X) = (1+o(1)) \max_{m} \left \{ \log \P_p( |G|=m)  + \log \P_m (X \le \eta \E_p X) \right \} \,.   
\end{equation}
The first term in the braces is a simple binomial large deviation probability, while the second, at least in the regime covered by Theorem~\ref{thmLowerTailGnm}, has Poisson-type form.  The more complicated-seeming formula in Theorem~\ref{thmLowerTailAsymp} involving  Lambert-W functions arises by solving this one variable optimization problem.   

As in Section~\ref{subsecPhaseIntro}, we can ask about the existence of  phase transitions in the lower-tail problem for $G(n,m)$ in the critical regime $m = \Theta(n^{3/2})$, in the sense of a non-analyticity of the $G(n,m)$ lower-tail rate function
\begin{equation}
    \label{eqGnmFreeEnergyDef}
    \widehat \varphi_\eta (b) := \liminf_{n \to \infty} \frac{1}{n^{3/2}} \log \P_{\lfloor b n^{3/2}\! /2\rfloor}(X \le \eta \E X) \,.
\end{equation}
   The answer is very different to the case of $G(n,p)$: in the $G(n,m)$ model, for \emph{every} $\eta \in [0,1)$, a phase transition occurs.  

\begin{cor}
    \label{corGnmPhase}
    For all $\eta \in [0,1)$, the function $\widehat \varphi_\eta(b)$ is non-analytic at $b^\ast$ for some $b^\ast \in (0,\infty)$. 
\end{cor}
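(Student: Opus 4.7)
The plan is to combine the formula of Theorem~\ref{thmLowerTailGnm} with a competing (bipartite) lower bound on $\widehat\varphi_\eta$ valid for all $b>0$, and then to force a non-analyticity by invoking uniqueness of analytic continuation. Since $\{X\le \eta\E X\}\supseteq\{X=0\}$ for every $\eta\in[0,1)$, it suffices to lower bound $\P_m(X=0)$. Fixing a balanced bipartition $V=A\sqcup B$ of $[n]$ with $|A|\cdot|B|\ge\lfloor n^2/4\rfloor$, uniformity of $G(n,m)$ gives
\[
\P_m(X=0)\ge \P_m\bigl(E(G)\subseteq A\times B\bigr)=\binom{\lfloor n^2/4\rfloor}{m}\Big/\binom{\binom{n}{2}}{m}.
\]
With $m=(1+o(1))\tfrac{b}{2}n^{3/2}$, the standard estimate $\log\binom{N}{m}=m\log(N/m)+m+O(m^2/N)$ (together with $m^2/n^2=\Theta(n)=o(n^{3/2})$) yields
\[
\log\P_m(X=0)\ge -m\log 2+o(n^{3/2})=-\tfrac{b\log 2}{2}\,n^{3/2}+o(n^{3/2}),
\]
so $\widehat\varphi_\eta(b)\ge -\tfrac{b\log 2}{2}$ for every $b>0$ and every $\eta\in[0,1)$.

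Next, set $F_\eta(b):=-\tfrac{b^3}{6}(1-\eta+\eta\log\eta)$ and $I_\eta:=\bigl(0,\sqrt{W(2/e)/(1-\eta)}\,\bigr)$. By Theorem~\ref{thmLowerTailGnm}, $\widehat\varphi_\eta(b)=F_\eta(b)$ for every $b\in I_\eta$, and $I_\eta$ is a nonempty open subinterval of $(0,\infty)$ for each $\eta\in[0,1)$. A short calculus check gives $1-\eta+\eta\log\eta>0$ on $[0,1)$, so the cubic $F_\eta(b)\to -\infty$ as $b\to\infty$, while $-\tfrac{b\log 2}{2}$ grows only linearly; in particular, there exists $b^\dagger=b^\dagger(\eta)>0$ with $F_\eta(b^\dagger)<-\tfrac{b^\dagger\log 2}{2}$.

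Suppose, for contradiction, that $\widehat\varphi_\eta$ is real-analytic on all of $(0,\infty)$. Then the two real-analytic functions $\widehat\varphi_\eta$ and $F_\eta$ agree on the open set $I_\eta\subseteq(0,\infty)$, so by the identity theorem $\widehat\varphi_\eta\equiv F_\eta$ throughout $(0,\infty)$. Evaluating at $b^\dagger$ yields
\[
-\tfrac{b^\dagger\log 2}{2}\le \widehat\varphi_\eta(b^\dagger)=F_\eta(b^\dagger)<-\tfrac{b^\dagger\log 2}{2},
\]
a contradiction. Hence $\widehat\varphi_\eta$ must be non-analytic at some $b^\ast\in(0,\infty)$, as required. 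The argument is essentially soft; the only real computation is the asymptotic estimate on the ratio of binomials, which is routine. The subtle point to get right is the invocation of Theorem~\ref{thmLowerTailGnm} to guarantee exact equality (not merely a bound) on $I_\eta$, since that is precisely what enables the identity theorem step.
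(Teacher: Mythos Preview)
Your proof is correct and follows essentially the same approach as the paper: combine Theorem~\ref{thmLowerTailGnm} (giving the cubic formula on a nonempty interval) with the bipartite lower bound $\widehat\varphi_\eta(b)\ge -\tfrac{b\log 2}{2}$, then use uniqueness of analytic continuation to force a non-analyticity. Your version is slightly more detailed in verifying the binomial asymptotics and in checking that $1-\eta+\eta\log\eta>0$, but the strategy is identical.
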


Corollary~\ref{corGnmPhase} stands in particular contrast to Corollary~\ref{corNoPhaseTransitionGnp} for $\eta \ge \eta^\ast$.  To get some intuition for the difference in these results, consider strategies for realizing the lower-tail event in $G(n,p)$.  On one hand, there are `local' strategies that lower the typical number of triangles by (in a rough sense) deleting triangles at each vertex. When $p = o(n^{-1/2})$ this strategy is asymptotically optimal and Janson's inequality gives the asymptotics of the logarithm.  When $p = c/\sqrt{n}$ for $c$ small this strategy is still optimal, but its effect is more delicate as we have to consider how the typical vertex degrees change; or in other words, the optimizer of the RHS of~\eqref{eqoptimization} is a constant factor smaller than $\binom{n}{2} p$.  On the other hand, when $c$ is large, a different `global' strategy is effective: be nearly bipartite.  When $\eta \ge \eta^\ast$, the first strategy always outperforms the second.  In $G(n,m)$, however, decreasing the average degree is forbidden, and so this dramatically curtails the effectiveness of the first strategy for large $c$, and the second strategy eventually wins out for any $\eta$, forcing a phase transition.

\subsection{An overview of the method}
\label{secTechniques}

The techniques we use to prove the main  results combine ideas from statistical physics, combinatorics, and computer science.  
We first describe some of  the techniques that go into proving Theorem~\ref{thmLowerTailAsymp} in the special case $\eta=0$, leaving  the more general method to  Section~\ref{secLocalCondition}.

The first step is to reformulate the problem of estimating $\P_p ( X=0)$ to the problem of estimating a statistical physics partition function  via the following identity (here all sums are over labeled graphs on $n$ vertices):
\begin{equation}
\label{eqZlamIntrodef}
    \P_p( X =0) = (1-p)^{\binom{n}{2}} \sum_{G \text{ triangle-free}} \lam^{|G|} =: (1-p)^{\binom{n}{2}} \, Z(\lam)
\end{equation}
where $\lam = \frac{p}{1-p}$, and $|G|$ denotes the number of edges of $G$.  
The function $Z(\lam)$ is the partition function of a hard-core model on a $3$-uniform hypergraph (in which the $\binom{n}{2}$ edges of the complete graph $K_n$ are the vertices and three edges are in a hyperedge if they form a triangle; see e.g.~\cite{balogh2015independent,mousset2020probability,jenssen2023evolution} for more on this formulation).  

The partition function $Z(\lam)$ is the normalizing constant of a probability measure $\mu_\lam$ on triangle-free graphs with vertex set $[n]$ (the \emph{Gibbs measure}), defined by
\begin{equation}
\label{eqmulambdaDef}
    \mu_\lam(G)= \frac{\lam^{|G|}}{Z(\lam) } \, ;
\end{equation}
this measure is exactly the distribution of $G(n,p)$ conditioned on the event $\{X =0 \}$.  

Next we observe  that to approximate $Z(\lam)$ (and thus $\P_p(X=0)$), it suffices to approximate the expectation $\E_{\mu_\lam} |G|$ (which is simply the expected number of edges in $G(n,p)$ conditioned on triangle-freeness). Indeed we have the identity
\begin{align}\label{eqZderivative}
\E_{\mu_\lam}(|G|)= \sum_{G \text{ triangle-free}}|G| \frac{\lam^{|G|}}{Z(\lam)}=\lam (\log Z(\lam))'\, , 
\end{align}
and thus
\begin{align}\label{eqLogZidentity}
   \log Z(\lam)= \int_{0}^\lam \frac{\E_{\mu_\theta} |G|}{\theta} d\theta\, ,
\end{align}
and so approximating the expectation on the whole interval $[0,\lam]$ gives an approximation to $\log Z(\lam)$.
Such a relation between  the hard-core partition function  of a graph and its `occupancy fraction' was used in extremal combinatorics in, e.g.~\cite{davies2017independent,davies2018average}. 

The heart of the method is a \textbf{local conditioning strategy} to estimate $\E_{\mu_\lam} |G|$.  

The first step of this strategy is to show that the measure $\mu_\lam$ on triangle-free graphs admits a strong concentration inequality for Lipschitz functions.  This is proved by showing that a local-update Markov chain (the Glauber dynamics) with stationary distribution $\mu_{\lam}$ is contractive, then applying the approach of~\cite{luczak2008concentration,barbour2019longterm}  deducing concentration inequalities from contraction of dynamics.  One consequence is that under the distribution  $\mu_\lam$, the degrees of the vertices of $G$ are tightly concentrated around a deterministic (but a priori unknown) quantity $d = d(n,p)$. 

We then find a way to write $d$ as a function of itself; this function has a unique fixed point (under the conditions of Theorem~\ref{thmLowerTailAsymp}), and thus we can deduce an asymptotic formula for $d$, which immediately yields an asymptotic formula for $\E _{\mu_\lam} |G|$.  

The fixed point equation is obtained by picking a vertex $v$, and conditioning the measure $\mu_{\lam}$ on $G_v$, the graph $G$ with $v$ removed.  Conditioned on the event $G_v = H$, the distribution of the neighborhood of $v$ in $G$ is exactly that of the \emph{hard-core model} on the graph $H$; this is the probability distribution $\nu_{H,\lam}$ on independent sets in $H$ given by
\begin{equation}
    \nu_{H,\lam}(I) = \frac{\lam^{|I|}}{\Xi_H(\lam)} \quad \text{where} \quad \Xi_H(\lam)= \sum_{I \text{ independent set}}\lam^{|I|}\, .
\end{equation}
Note the similarity of the measure $\nu_{H,\lam}$ to that of $\mu_{\lam}$ defined in~\eqref{eqmulambdaDef}, but crucially while $\mu_{\lam}$ is supported on hypergraph independent sets, $\nu_{H,\lam}$ is supported on graph independent sets.  This (along with the condition on $\lam$ in the theorem) allows us to write an approximation for  the expected size of an independent set from $\nu_{H,\lam}$ as an infinite series with terms depending on $\lam$ and the average degree of $H$.  This gives the desired fixed point equation.  

For the  general case $\eta \in [0,1)$, we proceed in a similar way but consider instead the partition function
\begin{equation}
    Z(\lam,\zeta) := \sum_G \lam ^{|G|} (1-\zeta)^{ X(G)} \, ,
\end{equation}
where as above $X(G)$ denotes the number of triangles of $G$, and $\zeta \in (0,1]$.  This partition function penalizes but does not forbid triangles (unless $\zeta =1$ in which case $Z(\lam,\zeta) = Z(\lam)$).  We will show that in the setting of Theorem~\ref{thmLowerTailAsymp}, up to  a constant shift, $\log Z(\lam,\zeta)$ approximates $\log \P_p(X \le \eta \E X)$ once $\zeta$ is chosen so that the expected number of triangles in the associated Gibbs measure $\mu_{\lam,\zeta}$ is close to the target; that is,  $\E_{\mu_{\lam,\zeta}} X = (1+o(1)) \eta \E_p X$.  In the general case, after local conditioning the distribution of the neighborhood of $v$ is an antiferromagnetic Ising model $\nu_{H,\lam,\zeta}$ on graphs, instead of the hard-core model $\nu_{H,\lam}$ in the $\eta =0 $ case.

\subsection{Future directions}
\label{secOQuestions}

Once the existence of a phase transition has been established (as in the case  $\eta < \eta_{\ell}$ here), there are a host of more detailed questions one can ask to fully understand the phenomenon. Is the phase transition unique? Is it a first- or second-order phase transition? What is the behavior at the critical point? Based on analogies to order--disorder phase transitions in lattices systems, and in particular to the ferromagnetic Potts model on $\Z^d$ (see the survey~\cite{duminil2017lectures}), we make some predictions about the lower-tail phase transition.

\begin{conj}
\label{conjPhaseTransitionBehavior}
    For each $\eta$ sufficiently small (including the interval $[0,\eta_\ell)$), the following hold:
    \begin{itemize}
        \item There is a unique non-analytic point $c^\ast= c^\ast(\eta)$ of the rate function $\varphi_{\eta}(c)$ on the positive real axis.
        \item The phase transition is a first-order phase transition: the first derivative of $\varphi_{\eta}(c)$ is discontinuous at $c^\ast$.
        \item The max-cut fraction is an order parameter for the phase transition and jumps at the critical point: $g_{\eta}(c) =1/2$ for $c<c^\ast$, while $g_\eta(c) > 1/2$ for each $c \ge c^\ast$.
        \item At the critical point, the `ordered' state dominates: there exists $\eps>0$ so that with $p=c^\ast/\sqrt{n}$, whp the max-cut fraction of $G(n,p)$ conditioned on the lower-tail event is at least $1/2+\eps$. 
    \end{itemize}
\end{conj}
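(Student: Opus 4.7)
The plan is to mimic the Pirogov--Sinai treatment of first-order phase transitions in lattice systems (in particular, the ferromagnetic Potts model on $\Z^d$ at low temperature cited via~\cite{duminil2017lectures}), adapted to the hypergraph hard-core reformulation of Section~\ref{secTechniques}. I would define two ``pure phases'' by partitioning the partition function $Z(\lam,\zeta)$ according to whether $\mathrm{MC}(G)\leq 1/2+\delta$ (disordered) or $\mathrm{MC}(G)\geq 1/2+\delta$ (ordered), for some small fixed $\delta>0$. Write $\varphi^{\mathrm{dis}}_\eta(c)$ and $\varphi^{\mathrm{ord}}_\eta(c)$ for the two restricted rate functions. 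The target is to show that each is real-analytic on $(0,\infty)$, that $\varphi_\eta(c)=\min(\varphi^{\mathrm{dis}}_\eta(c),\varphi^{\mathrm{ord}}_\eta(c))+o(1)$, and that these two analytic functions cross transversely at a unique $c^\ast=c^\ast(\eta)$.

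The first step is to extend the disordered analysis of Theorem~\ref{thmLowerTailAsymp} past $\overline c(\eta)$. For $c<\overline c(\eta)$ the formula in~\eqref{eqMainThmAsymptotics} already yields an analytic $\varphi^{\mathrm{dis}}_\eta$; I would reprove it after restricting $\mu_{\lam,\zeta}$ to the event $\{\mathrm{MC}(G)\leq 1/2+\delta\}$ and show that the contractive Glauber chain underlying the method of Section~\ref{secTechniques} still contracts on this restricted state space. Once contraction is established, the local-conditioning fixed-point equation for $d=d(n,p,\eta)$ continues to have a unique solution in the ``disordered branch'' for all $c$, giving an analytic continuation of~\eqref{eqMainThmAsymptotics}. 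Theorem~\ref{thmLowerTailStructure} applied inside this restricted ensemble yields $g_\eta(c)=1/2$ on the disordered side.

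The second step is an ordered-phase expansion. Fix a balanced bipartition $(A,B)$ of $[n]$; conditioning on the event $\mathcal{B}_{A,B}$ that all edges lie between $A$ and $B$ makes $G$ automatically triangle-free, and the measure $\P_p(\,\cdot\mid \mathcal{B}_{A,B})$ is a random bipartite graph with edge-probability $p$, so one can compute $\log\P_p(\mathcal{B}_{A,B}\cap\{X\leq\eta\E X\})$ explicitly via a bipartite binomial large deviation. Allowing a subextensive density of ``defect'' edges inside the parts and summing over bipartitions then sets up a Pirogov--Sinai style cluster expansion, in the spirit of~\cite{luczak2000triangle,balogh2018method,jenssen2023evolution}, whose convergence (uniform in $n$) delivers $\varphi^{\mathrm{ord}}_\eta(c)$ as an analytic function. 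Because the expansion is centred on bipartitions, its typical configuration has $\mathrm{MC}(G)\geq 1/2+\eps$ for some $\eps=\eps(c,\eta)>0$, yielding the order-parameter statement on the ordered side.

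The third step is to combine the two phases. Partitioning the lower-tail event by $\mathrm{MC}(G)\lessgtr 1/2+\delta$ gives $\varphi_\eta(c)=\min(\varphi^{\mathrm{dis}}_\eta(c),\varphi^{\mathrm{ord}}_\eta(c))+o(1)$. Uniqueness of $c^\ast$ and transversality of the crossing would be extracted from the explicit formulas: one checks that $(\varphi^{\mathrm{ord}}_\eta)'(c)<(\varphi^{\mathrm{dis}}_\eta)'(c)$ at every crossing, which forces a unique intersection and gives a jump discontinuity in $\varphi'_\eta$ at $c^\ast$, i.e.\ a first-order transition. At $c=c^\ast$ itself, one shows that the ordered branch strictly dominates the disordered branch by an additional entropic contribution $\Theta(n)$ coming from the choice of bipartition; this produces an $\eps>0$ so that whp $\mathrm{MC}(G)\geq 1/2+\eps$, as predicted in the final bullet. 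In my view the single biggest obstacle is the extension in the first step: past $\overline c(\eta)$ the unrestricted Gibbs measure $\mu_{\lam,\zeta}$ is dominated by ordered configurations and the Glauber chain on the full state space does not mix rapidly, so proving contraction on the cut-restricted ensemble (or an equivalent decay-of-correlations property sufficient to drive the fixed-point argument) is the crux; the ordered-phase cluster expansion of step two is technically delicate but follows a well-understood template, whereas a disordered-phase contraction estimate valid \emph{everywhere} below $c^\ast$ appears to require new ideas.
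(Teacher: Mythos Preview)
The statement you are attempting to prove is a \emph{conjecture} in the paper (Conjecture~\ref{conjPhaseTransitionBehavior}), not a theorem; the paper offers no proof. The authors explicitly present these four bullets as predictions drawn from analogies with order--disorder transitions in lattice systems, and in the surrounding text they list even the existence of the limit defining $\varphi_\eta(c)$ (Question~\ref{QLimitExist}) as open. So there is nothing to compare your attempt against.

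Evaluated on its own terms, your proposal is a plausible research outline in the Pirogov--Sinai spirit, but it contains genuine gaps that you yourself flag without resolving. The central one is your step one: the contraction estimate underpinning Section~\ref{sec:concentration} (Lemma~\ref{lemma:contraction}) requires $2\zeta c^2<1$ (Condition~\ref{condition2}), and this fails precisely beyond $\overline c(\eta)$. Restricting the state space to $\{\mathrm{MC}(G)\le 1/2+\delta\}$ does not obviously repair this, since the one-step coupling bound depends on vertex degrees and local codegrees, not on any global cut quantity; moreover, single-edge Glauber updates do not preserve such a constraint, so even defining a reversible chain on the restricted space with the correct stationary measure is already nontrivial. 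Without contraction (or some substitute correlation-decay input) the fixed-point derivation of $\varphi^{\mathrm{dis}}_\eta$ has no foundation past $\overline c(\eta)$.

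Your step two also has a real gap: the ordered-phase cluster expansions in the literature you cite (e.g.\ \cite{jenssen2023evolution}) require $p$ of order at least $\sqrt{\log n}/\sqrt{n}$, whereas you need convergence at $p=c/\sqrt{n}$ for $c$ arbitrarily close to $c^\ast$ from above, where the defect density is not small. And in step three, both the transversality of the crossing and the claimed $\Theta(n)$ entropic advantage of the ordered phase at $c^\ast$ are asserted without argument; since you never actually compute $\varphi^{\mathrm{ord}}_\eta(c)$, you cannot check a derivative inequality between the two branches. In short, what you have written is a research plan for an open problem, not a proof.
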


A starting point towards proving Conjecture~\ref{conjPhaseTransitionBehavior} would be to show that the lower-tail rate function exists in general; that is, that the liminf defining $\varphi_\eta(c)$ in~\eqref{eqFetaDef} can be replaced by a limit.  
\begin{question}
\label{QLimitExist}
    Does the  limit 
    \begin{equation}
    \label{eqLim1}
     \lim_{n \to \infty} \frac{1}{n^{3/2}} \log \P_{c/\sqrt{n}}(X \le \eta \E X)
    \end{equation}
    exist for all $c>0$ and $\eta \in [0,1)$?
\end{question}
This is not known even in the special case $\eta =0$.  
In the study of spin glasses and spin models on sparse random graphs, the existence of such a limit can often be proved without knowing its value, e.g., in~\cite{guerra2002thermodynamic,bayati2013combinatorial}, but we do not know how to apply those approaches here.

A very natural  question would be to study the non-existence probability and lower tails for larger cliques.  For avoiding a copy of $K_{r+1}$, the critical regime is $p = \Theta \left( n^{- \frac{2}{r+2}}  \right)$: for much smaller $p$ the asymptotics of the logarithm are given by Janson's Inequality~\cite{janson1987uczak,janson1990poisson,janson2016lower} while for  much larger $p$, the logarithm of the non-existence probability is asymptotic to the logarithm of the probability of being $r$-partite (which has been proved using hypergraph containers~\cite{luczak2000triangle,balogh2015independent,saxton2015hypergraph}).  Moving beyond triangles would require at least one significant new idea: the local conditioning strategy described above yields a pairwise interacting model (the hard-core model on a graph) when applied to avoiding triangles; when applied to avoiding larger cliques, the resulting model would be a hypergraph hard-core model for which the necessary convergence results are lacking or even false~\cite{galvin2024zeroes,zhang2023note}.

One could also further consider forbidden subgraphs $H$ beyond cliques. If $H$ is non-bipartite, the `critical density' is $p = \Theta(n^{-1/m_2(H)})$,   where $m_2(H) = \max_{F \subseteq H; |E(F)| \ge 2} \frac{|E(F)|-1 }{|V(F)|-2  } $ is the $2$-density of $H$.

Returning to triangle-freeness, the story does not  end at asymptotics of the log probability. 
Much sharper results, and in particular first-order asymptotics of $\P_p(X=0)$, are known when $p$ is sufficiently smaller or sufficiently larger than $n^{-1/2}$. 
 For $p \le n^{-1/2-\eps}$ with $\eps>0$ constant, asymptotics of $\P_p(X=0)$ are known via Janson's Inequality~\cite{janson1987uczak} and extensions~\cite{wormald1996perturbation,stark2018probability,mousset2020probability}. For $p \ge c  \sqrt{ \log n} \cdot n^{-1/2}$, asymptotics of $\P_p(X=0)$ are known by either showing that for sufficiently large $c$, $G(n,p)$ conditioned on triangle-freeness is bipartite whp~\cite{erdosasymptotic,promel1996asymptotic,osthus2003densities}, or by quantifying how close a typical graph drawn from the conditional distribution  is to being bipartite~\cite{jenssen2023evolution}.

 One can also study related algorithmic questions, in particular about the existence of efficient algorithms to sample from the lower-tail conditional distribution.  In~\cite{jenssen2024sampling} the current authors give efficient sampling algorithms for $G(n,p)$ conditioned on triangle-freeness when $p<c/\sqrt{n}$ and when $p> C/\sqrt{n}$, for constants $ c,C>0$.

\subsection{Organization}

Section~\ref{secPrelim} contains some preliminaries on Gibbs measures and partition functions, the cluster expansion, and tools relating Markov chains and concentration  of measure.  In Section~\ref{secLocalCondition} we prove the main theorem on lower tails in $G(n,p)$, generalizing the strategy outline in Section~\ref{secTechniques}; we postpone proofs of some lemmas to later sections.  In Section~\ref{sec:concentration} we prove concentration results for the statistical physics models we work with, using the connection to Markov chains.  In Section~\ref{secRegularity}, we apply the cluster expansion to deduce facts about the anti-ferromagnetic Ising model on  nearly regular graphs without too many short cycles.  In Section~\ref{secLowerTailPartition} we relate statistical physics partition functions to the lower-tail probability and prove our results on the lower tail for $G(n,m)$.   In Section~\ref{secCutNorm} we prove the results on the typical structure of graphs drawn from the conditional measure. In Section~\ref{secPhaseTransition} we deduce our results on phase transitions.  Finally in Appendix~\ref{secPinnedCluster} we prove a result on convergence of the cluster expansion. 

\section{Preliminaries}
\label{secPrelim}

\subsection{Notation}
We let $\mathbb{N}=\{0,1,\ldots,\}$.
For a graph $G = (V,E)$ we denote its number of edges by $|G|$, its number of triangles by $X(G)$, its maximum degree by $\Delta(G)$, its minimum degree by $\delta(G)$, and its average degree by $\overline d(G)$.  For $v\in V$ we let $d_G(v)$ denote the degree of the vertex $v$ in $G$, and   let $t_G(v)$ denote the number of triangles of $G$ incident to $v$. For $u,v\in V$ we will often write $uv$ for the pair $\{u,v\}$ and we let $d_G(uv)$ denote the codegree (number of common neighbors) of $u$ and $v$. We will often identify $G$ with its edge set and write $uv\in G$ to mean $uv\in E$. Most graphs considered in this paper will have $n$ or $n-1$ vertices unless specified otherwise. 

We use $\P_p$ and $\E_p$ to refer to probabilities and expectations with respect to the \ER random graph $G(n,p)$ (each of the $\binom{n}{2}$ edges included independently with probability $p$). 
When we work with the random graph $G(n,m)$ (a uniformly random graph on $n$ vertices with $m$ edges), we use the notation $\P_m$, $\E_m$ (slightly abusing notation, but the random graph will always be clear from context).  When we write an expectation inside a probability, the expectation will always be with respect to the outer probability measure unless otherwise noted, so $\P_p( X \le \E X)= \P_p (X \le \E_p X)$. Given a probability measure $\mu$, we write $X\sim \mu$ to denote that $X$ is a random sample from $\mu$.

All asymptotic notation is to be understood with respect to the limit $n \to \infty$. If the implicit constant in asymptotic notation $O, \Omega$ etc. depends on some parameter $\delta$, we denote this with a subscript e.g $O_\delta(\cdot), \Omega_\delta(\cdot)$.
We say a sequence of events $A_n$ holds `with high probability' (abbreviated `whp') with respect to a probability measure $\P$ if $\mathbb P(A_n) = 1-o(1)$. We say that a sequence of events $A_n$ holds `with very high probability' (abbreviated `wvhp') if $\mathbb P(A_n) = 1-n^{-\omega(1)}$ i.e. the probability that $A_n$ fails is superpolynomially small. 

For two functions $f,g:\mathbb{N}\to \mathbb{R}$ we understand $f(n)\leq g(n)$ to mean that the inequality holds for $n$ sufficiently large.

In the next section we introduce a statistical physics model with parameters $\lam \ge 0$, $\zeta \in (0, 1]$.  We will always take $\lam = p/(1-p)$.

\subsection{Partition functions and Gibbs measures}
\label{subsecPartitionFunctions}

We will work with  a couple of different statistical physics models and their associated partition functions.  

The first model is a probability distribution on the set of labeled graphs on $n$ vertices.  The partition function and corresponding Gibbs measure are
\begin{align}
\label{eqZlamBetaDef}
    Z(\lam,\zeta) &= \sum_{G} \lam^{|G|} (1-\zeta)^{ X(G)} \\
    \label{eqMulamBetaDef}
    \mu_{\lam,\zeta}(G) &= \frac{\lam^{|G|} (1-\zeta)^{ X(G)}}  {Z(\lam,\zeta)} \,.
\end{align}
Here $\lam \ge 0$ is the activity, and $\zeta \in [0, 1]$ governs how much triangles are penalized in the measure.  When $\zeta = 1$, $\mu_{\lam,\zeta}$ is supported on triangle-free graphs and coincides with the distribution $\mu_\lam$ defined in~\eqref{eqmulambdaDef}.   We will denote probabilities and expectations with respect to $\mu_{\lam,\zeta}$ by $\P_{\lam,\zeta}$ and $\E_{\lam,\zeta}$ respectively.

Our main task will be to find an asymptotic formula for $\log Z(\lam,\zeta)$ and relate this to the lower-tail probability $\P_p (X \le \eta \E X)$, where $\lam = p/(1-p)$ and $\zeta$ will be chosen so that the expected number of triangles in a graph drawn from $\mu_{\lam,\zeta}$ is close to the target $\eta \E_p X $.  

In the course of the proof, we will also consider Gibbs measures that are probability distributions on vertex subsets of some  graph $H$.  

For a graph $H=(V,E)$ and $S\subseteq V$, let $E(S)=\{xy\in E: x,y\in S\}$, the edge set of the induced subgraph $H[S]$.
For $\lam \ge 0$ and $\zeta \in [0, 1]$,  define
\begin{align}
\label{eqXilamBetaDef}
    \Xi_H(\lam,\zeta) &= \sum_{S\subseteq V} \lam^{|S|} (1-\zeta)^{ |E(S)|}\\
    \label{eqNulamBetaDef}
    \nu_{H,\lam, \zeta}(S) &=\frac{\lam^{|S|} (1-\zeta)^{ |E(S)|}}{\Xi_H(\lam, \zeta)} \,.
\end{align}
The model $\nu_{H,\lam, \zeta}$ is an  asymmetric anti-ferromagnetic Ising model (with external magnetic field)\footnote{The more standard statistical physics parametrization of the anti-ferromagnetic Ising model would be in terms of an inverse temperature $\beta \in [0, +\infty]$; here we have taken $\zeta = 1- e^{-\beta}$.}; one can also think of it  as a positive-temperature hard-core model in which edges are penalized but not forbidden (unless $\zeta=1$ when the model is exactly the hard-core model).    We will denote probabilities and expectations with respect to $\nu_{H,\lam,\zeta}$ by $\P_{H,\lam,\zeta}$ and $\E_{H,\lam,\zeta}$ respectively.

We will use the following facts about derivatives of log partition functions (generalizing~\eqref{eqZderivative}).  
\begin{lemma}
\label{lemDerivativeIdentities}
The following identities hold for any $\lam \ge0$, $\zeta \in [0,1]$, and any graph $H$.
    \begin{align}
        \E_{\lam,\zeta}|G| &= \lam  \frac{\partial}{\partial \lam}\log Z(\lam,\zeta)  \\
        \E_{H,\lam,\zeta} |S| &= \lam  \frac{\partial}{\partial \lam}\log \Xi_H(\lam,\zeta) \\
        \E_{H,\lam,\zeta} |E(S)| &=   - (1-\zeta) \frac{\partial}{\partial \zeta} \log \Xi_H(\lam,\zeta) \,. 
    \end{align}
\end{lemma}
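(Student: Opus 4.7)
The plan is that all three identities follow from the same elementary template: directly differentiate the partition function termwise, then divide by the partition function to read off the expectation. This is the standard log-derivative trick for Gibbs measures, $\frac{\partial}{\partial \theta}\log Z = Z^{-1}\frac{\partial Z}{\partial \theta}$, combined with a suitable factor to convert the resulting moment into the desired expectation. Since $Z(\lam,\zeta)$ and $\Xi_H(\lam,\zeta)$ are finite sums of nonnegative analytic functions of $(\lam,\zeta)$ on $[0,\infty)\times[0,1]$, differentiation under the sum is automatically justified.

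For the first identity, I would differentiate~\eqref{eqZlamBetaDef} with respect to $\lam$ to obtain $\frac{\partial Z}{\partial \lam}=\sum_G |G|\,\lam^{|G|-1}(1-\zeta)^{X(G)}$, so that
\begin{equation}
\lam\frac{\partial}{\partial\lam}\log Z(\lam,\zeta)=\frac{1}{Z(\lam,\zeta)}\sum_G |G|\,\lam^{|G|}(1-\zeta)^{X(G)}=\E_{\lam,\zeta}|G|,
\end{equation}
by the definition~\eqref{eqMulamBetaDef} of $\mu_{\lam,\zeta}$. The second identity is proved identically, differentiating~\eqref{eqXilamBetaDef} in $\lam$ and dividing by $\Xi_H(\lam,\zeta)$ to recognize the resulting average as $\E_{H,\lam,\zeta}|S|$ via~\eqref{eqNulamBetaDef}.

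For the third identity, the only nuance is the minus sign and the $(1-\zeta)$ factor. Using $\frac{\partial}{\partial\zeta}(1-\zeta)^{|E(S)|}=-|E(S)|(1-\zeta)^{|E(S)|-1}$, I would compute
\begin{equation}
-(1-\zeta)\frac{\partial}{\partial\zeta}\Xi_H(\lam,\zeta)=\sum_{S\subseteq V}|E(S)|\,\lam^{|S|}(1-\zeta)^{|E(S)|},
\end{equation}
and then divide by $\Xi_H(\lam,\zeta)$ to identify the right-hand side with $\E_{H,\lam,\zeta}|E(S)|$.

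There is no real obstacle here; the statement is a bookkeeping lemma whose sole purpose is to record, in one place, the three first-moment identities that will be used throughout the paper (in particular to set up the integration identity~\eqref{eqLogZidentity} and its analogues). The only place where one must be slightly careful is in writing the derivative in $\zeta$ so that the factor of $-(1-\zeta)$ cancels the $(1-\zeta)^{-1}$ produced by differentiating the exponent, giving the clean form stated.
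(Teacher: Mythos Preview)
Your proof is correct and matches the paper's approach: the paper simply states that the proof is immediate from the definitions of $Z(\lam,\zeta)$ and $\Xi_H(\lam,\zeta)$, and your write-up spells out precisely that immediate termwise-differentiation computation.
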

The proof is immediate from the definitions of $Z(\lam,\zeta)$ and $\Xi_H(\lam,\zeta)$.

Another simple but very useful fact about both $\mu_{\lam,\zeta}$ and $\nu_{H,\lam,\zeta}$ is that they are both stochastically dominated by independent $p$-percolation (edge percolation in the first case, vertex percolation in the second case).

\begin{lemma}\label{lemStochDom}
For every $\lam \ge 0$, $\zeta\in [0,1]$, the distribution $\mu_{\lam,\zeta}$ is stochastically dominated by the distribution of $G(n,p)$ where $\lam = p/(1-p)$.  That is, there is a coupling of the two distributions $G \sim \mu_{\lam,\zeta}$ and $G' \sim G(n,p)$ so that with probability $1$, $G \subseteq G'$.

Likewise, for every graph $H$ and every $\lam \ge 0$, $\zeta\in [0,1]$, the distribution $\nu_{H,\lam,\zeta}$ is stochastically dominated by a random subset $S_p \subseteq V(H)$ in which each vertex is included independently with probability $p$.
\end{lemma}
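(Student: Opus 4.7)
The plan is to prove both statements via Holley's inequality (see, e.g., Grimmett's book on probability on graphs), which says that if $\mu_1, \mu_2$ are positive probability measures on the Boolean lattice $\{0,1\}^V$ satisfying
\begin{equation}
\mu_1(\omega \wedge \omega') \, \mu_2(\omega \vee \omega') \;\geq\; \mu_1(\omega)\, \mu_2(\omega')
\end{equation}
for all configurations $\omega, \omega'$, then $\mu_2$ stochastically dominates $\mu_1$, and hence there is a coupling with $\omega \subseteq \omega'$ almost surely.

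For the vertex-level claim, set $\mu_1 = \nu_{H,\lam,\zeta}$ and let $\mu_2$ be the product measure in which each $v \in V(H)$ is included independently with probability $p = \lam/(1+\lam)$. Writing these out, $\mu_1(S) \propto \lam^{|S|}(1-\zeta)^{|E(S)|}$ and $\mu_2(S) \propto \lam^{|S|}$. The Holley criterion reduces, after cancellation (using $|A \cap B| + |A \cup B| = |A|+|B|$), to the inequality $(1-\zeta)^{|E(A \cap B)|} \geq (1-\zeta)^{|E(B)|}$, which holds because $A \cap B \subseteq B$ forces $E(A \cap B) \subseteq E(B)$ and $1 - \zeta \in [0,1]$. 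Invoking Holley then gives the desired coupling.

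For the graph-level claim, the proof is essentially identical after reinterpreting the state space. Take $\mu_1 = \mu_{\lam,\zeta}$ and $\mu_2$ the distribution of $G(n,p)$, both viewed as measures on subsets of $E(K_n)$; then $\mu_1(G) \propto \lam^{|G|}(1-\zeta)^{X(G)}$ and $\mu_2(G) \propto \lam^{|G|}$. The Holley criterion now reduces to $(1-\zeta)^{X(A \cap B)} \geq (1-\zeta)^{X(B)}$, which holds because $A \cap B \subseteq B$ implies any triangle contained in $A \cap B$ is also contained in $B$.

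There is no real obstacle; the entire content of the lemma is that the penalty factor $(1-\zeta)^{(\cdot)}$ is monotone nonincreasing in the underlying configuration, which is exactly the content of the FKG/Holley framework. As an alternative that might be useful later, one can equivalently construct the coupling directly via a grand coupling of the Glauber dynamics for the two measures: when resampling a site $v$ given the current state, the conditional probability of including $v$ in $\nu_{H,\lam,\zeta}$ is $\lam(1-\zeta)^{d_S(v)}/(1+\lam(1-\zeta)^{d_S(v)}) \leq \lam/(1+\lam) = p$, so using the same uniform random variable for both chains preserves containment $S \subseteq S'$ at every step, and the same argument applies in the edge/triangle setting.
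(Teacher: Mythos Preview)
Your proof via Holley's inequality is correct and takes a genuinely different route from the paper. The paper instead samples $G$ one edge at a time in a fixed order, observing that the conditional probability of including each edge given the history is at most $p=\lam/(1+\lam)$, and couples this sequential reveal edge-by-edge with $G(n,p)$; this is essentially the ``alternative'' Glauber-coupling remark you make at the end, carried out as a single sweep. The paper's argument is more elementary in that it does not invoke an external theorem and yields an explicit constructive coupling. Your Holley argument, by contrast, packages the monotonicity of the penalty $(1-\zeta)^{(\cdot)}$ into a single lattice condition and would extend verbatim to any Gibbs measure whose Boltzmann weight is a nonincreasing function of the configuration.

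One small caveat worth addressing: Holley's inequality as usually stated (including in the reference you cite) requires both measures to be strictly positive, which fails at $\zeta=1$ where $\mu_{\lam,1}$ is supported on triangle-free graphs and $\nu_{H,\lam,1}$ on independent sets. This is easily patched---take $\zeta\to 1^-$ and use that stochastic domination passes to weak limits, or cite a version of Holley valid for measures with zeros---but as written your argument only covers $\zeta\in[0,1)$.
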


\begin{proof}
Sample $G \sim\mu_{\lam,\zeta}$ by sampling one edge at a time with the correct conditional probability given the previous history. The probability of including an edge $e$ in $G$ is then
\[
\frac{\lam (1 - \zeta)^{t(e)}}{1+\lam (1 - \zeta)^{t(e)}}\leq \frac{\lam}{1+\lam}=p.
\]
where $t(e)$ denotes the number of triangles $e$ forms with edges included in previous steps.
 We can therefore couple the sampling with an edge-by-edge sampling of $G(n,p)$ so that an edge is present in the sample from $\mu_{\lam,\zeta}$ only if it is present in the sample from $G(n,p)$.  The proof for $\nu_{H,\lam,\zeta}$ is identical.
\end{proof}

We can use stochastic domination to obtain a high probability bound on the maximum degree of $G$ drawn from $\mu_{\lam,\zeta}$ and for the size of the random subset drawn from $\nu_{H,\lam,\zeta}$. The following corollary is immediate from Lemma~\ref{lemStochDom} and Chernoff's bound. 

\begin{cor}\label{cor:SDdegbound}
Let $\lambda > 0$, $\zeta \in [0,1]$, and $p = \frac{\lambda}{1 + \lambda}$. For any $\kappa > 0 $ we have
\begin{align*}
&\mathbb{P}_{\lambda,\zeta}\left(\Delta(G) > np(1 + \kappa)\right) \leq n \cdot \exp\left(-\frac{\kappa^2 np}{2 + \kappa}\right),~\text{and} \\
& \mathbb{P}_{H,\lambda,\zeta}\left(|S| > np(1 + \kappa)\right) \leq n \cdot \exp\left(-\frac{\kappa^2 np}{2 + \kappa}\right) \,.
\end{align*}
\end{cor}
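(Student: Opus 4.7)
The plan is to reduce both bounds to the same standard multiplicative Chernoff inequality for a binomial random variable via the stochastic dominations already established in Lemma~\ref{lemStochDom}. Specifically, I will invoke the one-sided Chernoff tail
\[
\P(\bin(N,p) > (1+\kappa)Np) \le \exp\!\left(-\frac{\kappa^2 Np}{2+\kappa}\right),
\]
applied to the binomial distribution dominating the relevant random quantity in each of the two settings.

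For the first inequality, take a coupling $G \subseteq G'$ of the form provided by Lemma~\ref{lemStochDom}, with $G \sim \mu_{\lambda,\zeta}$ and $G' \sim G(n,p)$. This coupling implies $d_G(v) \le d_{G'}(v)$ for every vertex $v$, hence $\Delta(G) \le \Delta(G')$, so it suffices to control $\Delta(G')$ in the Erd\H{o}s--R\'enyi model. In $G(n,p)$ each vertex degree is $\bin(n-1,p)$, which is stochastically dominated by $\bin(n,p)$; the Chernoff bound above with $N=n$ controls each single-vertex deviation event, and a union bound over the $n$ vertices of $G$ contributes the leading factor $n$ and yields the first claim exactly as stated.

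For the second inequality, the same Lemma gives a coupling $S \subseteq S_p$ with $S \sim \nu_{H,\lambda,\zeta}$ and $S_p \subseteq V(H)$ a $p$-percolated vertex subset. Since $H$ has at most $n$ vertices, $|S_p|$ is stochastically dominated by $\bin(n,p)$, and the Chernoff bound applies directly to $|S_p|$ with $N=n$, with no union bound required; the factor of $n$ in the stated bound is not strictly necessary here but is harmless and gives a uniform presentation matching the first inequality.

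There is essentially no obstacle: the argument is immediate once one has Lemma~\ref{lemStochDom} and the standard multiplicative Chernoff tail. The only small care required is the one-line monotonicity observation that $\bin(n-1,p)$ is stochastically dominated by $\bin(n,p)$, which is needed before invoking the Chernoff estimate at mean $np$ rather than $(n-1)p$ in the first part.
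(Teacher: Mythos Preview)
Your proposal is correct and matches the paper's approach exactly: the paper states the corollary is ``immediate from Lemma~\ref{lemStochDom} and Chernoff's bound'' without further elaboration, and you have spelled out precisely this argument (stochastic domination to pass to $G(n,p)$ or $p$-percolation, then the multiplicative Chernoff tail plus a union bound for the first inequality). Your observation that the factor $n$ is superfluous in the second bound is also correct.
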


We can also use stochastic domination to prove a lower bound on the expected number of edges in a sample from $\mu_{\lam,\zeta}$. 
\begin{lemma}\label{lemDegLB}
For $\lam\ge0$, $\zeta \in [0,1]$, we have
\[
\E_{\lam,\zeta}|G|\geq \binom{n}{2}  p(1-p^2)^n\, ,
\]
where $p=\lam/(1+\lam)$.
\end{lemma}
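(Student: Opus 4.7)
The plan is to lower bound, for each pair $\{u,v\}$, the edge probability $\P_{\lam,\zeta}(uv \in G)$ by $p(1-p^2)^n$; summing over the $\binom{n}{2}$ pairs then yields the claim. Writing $H := G \setminus \{uv\}$ and integrating out the indicator $\1[uv \in G]$ gives the identity
\begin{equation}
\P_{\lam,\zeta}(uv \in G) \;=\; \frac{\lam y}{1+\lam y}, \qquad y := \E_\nu\!\left[(1-\zeta)^{d_H(uv)}\right],
\end{equation}
where $\nu(h) \propto \lam^{|h|}(1-\zeta)^{X(h)}$ is the Gibbs measure of exactly the same form as $\mu_{\lam,\zeta}$ but with ground set of pairs $K_n\setminus\{uv\}$, and $d_H(uv)$ is the codegree of $u,v$ in $H$. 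The identity comes from $Z(\lam,\zeta) = Z_\nu(1+\lam y)$ (obtained by splitting the sum defining $Z$ according to whether $uv$ is present) combined with $\P_{\lam,\zeta}(uv \in G) = \lam Z_\nu y / Z(\lam,\zeta)$.

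The key step is to lower bound $y$ via stochastic domination. The coupling argument in the proof of Lemma~\ref{lemStochDom}, applied now with ground set $K_n\setminus\{uv\}$ in place of $K_n$, shows that $\nu$ is stochastically dominated by $G(n,p)$. Since $(1-\zeta)^{d_H(uv)}$ is a \emph{decreasing} function of $H$, domination gives
\begin{equation}
y \;\geq\; \E_{G(n,p)}\!\left[(1-\zeta)^{d_H(uv)}\right] \;=\; (1-p^2\zeta)^{n-2},
\end{equation}
where the equality uses that under $G(n,p)$ the indicators $\1[wu, wv \in H]$ for $w \neq u,v$ are independent, each Bernoulli$(p^2)$. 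From $y \leq 1$ we obtain $\lam y/(1+\lam y) \geq \lam y/(1+\lam) = py$, and therefore
\begin{equation}
\P_{\lam,\zeta}(uv \in G) \;\geq\; p(1-p^2\zeta)^{n-2} \;\geq\; p(1-p^2)^n,
\end{equation}
which upon summation over pairs $\{u,v\}$ yields the lemma.

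The argument is essentially algebraic; the one point requiring care is to apply stochastic domination in the correct direction, namely to a decreasing function, so that $\nu \preceq G(n,p)$ translates into a \emph{lower} (rather than upper) bound on $y$. Observe that the approach in fact produces the slightly stronger bound $\E_{\lam,\zeta}|G| \geq \binom{n}{2}p(1-p^2\zeta)^{n-2}$, which is what makes the bound insensitive to $\zeta$ in the stated form.
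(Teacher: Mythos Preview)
Your proof is correct and rests on the same idea as the paper's: lower bound the marginal edge probability via stochastic domination by $G(n,p)$ applied to a decreasing quantity. The paper packages this by restricting to ``open'' pairs (those with codegree zero), for which the edge probability is exactly $p$, and bounds the number of open pairs; you instead write out the exact marginal $\P(uv\in G)=\lam y/(1+\lam y)$ and bound $y=\E_\nu[(1-\zeta)^{d_H(uv)}]$ as a decreasing function under domination. Your route is slightly more computational but yields the sharper $\zeta$-dependent bound $\binom{n}{2}p(1-p^2\zeta)^{n-2}$, which the paper's argument (throwing away all positive-codegree contributions) does not see; for the purposes of the lemma this refinement is not needed, so the two proofs are essentially equivalent.
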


\begin{proof}
Call a pair $\{u,v\}\in \binom{[n]}{2}$ \emph{open} if $G\cup \{u,v\}$ has no triangles including the edge $\{u,v\}$. Let $Y$ denote the number of open pairs of $G$. By Lemma~\ref{lemStochDom}, 
$\E(Y)\geq \binom{n}{2}(1-p^2)^{n-2}$.
Conditioned on the event that a pair is open, the probability that it is an edge of $G$ is $p$ and so $\E|G|\geq p\E(Y)$. The result follows. 
\end{proof}

In particular, Corollary~\ref{cor:SDdegbound} and Lemma~\ref{lemDegLB} together tell us that when $\lam = \Theta(n^{-1/2})$, then $\E_{\lam,\zeta}|G| = \Theta(n^{3/2})$.

\subsection{Cluster expansion}\label{subseccluster}

To understand the asymmetric Ising model $\nu_{H,\lam,\zeta}$, we will use the cluster expansion.  The cluster expansion is a formal power series for $\log \Xi_H(\lam,\zeta)$ around $\lam=0$.  Conveniently, the terms of the cluster expansion have a nice combinatorial interpretation (see e.g.~\cite{scott2005repulsive,faris2010combinatorics}). A \textit{cluster} $\Gamma=(v_1, \ldots, v_k)$ is a tuple of vertices from $H$ such that the induced graph $H[\{v_1, \ldots, v_k\}]$ is connected. We let $\cC(H)$ denote the set of all clusters of $H$. We call $k$ the size of the cluster and denote it by $|\Gamma|$. Given a cluster $\Gamma$ of size $k$, the \emph{incompatibility graph} $H_\Gamma$, is the graph on vertex set $\{1,\ldots,k\}$ with an edge between $i, j$ if either $v_i, v_j$ are adjacent in $H$ or $i \ne j$ and $v_i=v_j$.  In particular, by the definition of a cluster, the incompatibility graph $H_\Gamma$ is connected. If $e=\{i,j\}\in E(H_\Gamma)$ we set $w_e=-\zeta$ if $v_i, v_j$ are adjacent in $H$ and we set $w_e=-1$ if $v_i=v_j$.

As a formal power series, the cluster expansion is the infinite series
\begin{align}\label{eqclusterexp}
\log \Xi_H(\lam,\zeta) = \sum_{\Gamma\in \cC(H)} \phi_{H,\zeta}(\Gamma) \lam^{|\Gamma|} \,,
\end{align}
where
\begin{align}
\label{eqUrsell}
\phi_{H,\zeta}(\Gamma) &= \frac{1}{|\Gamma|!} \sum_{\substack{A \subseteq E(H_\Gamma):\\ \text{$(V(H_\Gamma),A)$ connected}}}  \prod_{e\in A} w_e \, .
\end{align}
If the graph $H$ is clear from the context we will often write $\phi_\zeta(\Gamma)$ in place of $\phi_{H,\zeta}(\Gamma)$.

 We will use the following lemma which gives a sufficient condition for convergence and  bounds the error in truncating the cluster expansion. Given $\gamma,\Delta\geq 0$, let 
 \begin{align}
 \label{eqRxDef}
     R(\gamma,\Delta)=\{(\lam,\zeta)\in \C \times \C: |1-\zeta|< 1 \textup{ and } e|\lam|(1+|\zeta|\Delta)< \gamma\}\, .
 \end{align}

\begin{lemma}\label{lemClusterTail}
Fix $\gamma\in[0,1)$ and let $H$ be a graph on $n$ vertices with maximum degree $\Delta$. If $(\lam,\zeta)\in R(\gamma,\Delta)$ and $k$ is a positive integer then
\[
 \sum_ {\substack{\Gamma\in\cC(H): \\ |\Gamma|\geq k}} \left|\phi_\zeta(\Gamma) \lam^{|\Gamma|}\right| 
\leq en|\lam| \gamma^{k-1}(1-\gamma)^{-1}\, .
\]
In particular, the cluster expansion~\eqref{eqclusterexp} converges uniformly  for $(\lam,\zeta)\in R(\gamma,\Delta)$ and the function $\log \Xi_H(\lam,\zeta)$ is analytic on $R(\gamma,\Delta)$.
\end{lemma}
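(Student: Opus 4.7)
The plan is to apply a tree--graph bound to $\phi_{H,\zeta}(\Gamma)$ and then estimate the resulting sum over labelled spanning trees by a breadth-first walk argument. First, I would invoke the tree--graph inequality (see e.g.~\cite{scott2005repulsive}) to obtain
\begin{equation}
|\phi_{H,\zeta}(\Gamma)| \leq \frac{1}{|\Gamma|!} \sum_{T \in \mathcal{T}(H_\Gamma)} \prod_{e \in E(T)} |w_e|,
\end{equation}
where $\mathcal{T}(H_\Gamma)$ denotes the set of spanning trees of the incompatibility graph. Writing $m = |\Gamma|$, I would parametrize $\Gamma = (v_1, \ldots, v_m) \in V(H)^m$, swap the order of summation, and for each labelled tree $\tau$ on $\{1,\ldots,m\}$ estimate $\sum_\Gamma \prod_{e \in \tau} |w_e|$ by rooting $\tau$ at vertex $1$ and processing it in BFS order. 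For each non-root vertex $j$ with parent $i$, summing $|w_{ij}|$ over the choice of $v_j \in V(H)$ contributes $1$ when $v_j = v_i$ (the repeated-vertex weight) and $|\zeta|$ for each of the at most $\Delta$ neighbours of $v_i$ in $H$, totalling at most $1 + |\zeta|\Delta$. Together with the $n$ choices for the root $v_1$, this gives the uniform bound $\sum_\Gamma \prod_{e \in \tau}|w_e| \leq n(1+|\zeta|\Delta)^{m-1}$, independent of the particular tree $\tau$.

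Combining this with Cayley's formula ($m^{m-2}$ labelled trees on $m$ vertices) and the elementary inequality $m^m \leq e^m m!$, one obtains
\begin{equation}
\sum_{|\Gamma|=m}\bigl|\phi_{H,\zeta}(\Gamma)\lambda^{|\Gamma|}\bigr| \leq \frac{m^{m-2}}{m!}\, n(1+|\zeta|\Delta)^{m-1}|\lambda|^m \leq \frac{en|\lambda|}{m^2}\,\alpha^{m-1},
\end{equation}
where $\alpha := e|\lambda|(1+|\zeta|\Delta) < \gamma$ by hypothesis of $(\lambda,\zeta)\in R(\gamma,\Delta)$. Summing over $m \geq k$ and using $1/m^2 \leq 1$ against the geometric series $\sum_{m\geq k}\alpha^{m-1} = \alpha^{k-1}/(1-\alpha) \leq \gamma^{k-1}/(1-\gamma)$ yields the stated tail bound $en|\lambda|\gamma^{k-1}/(1-\gamma)$. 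The $k=1$ case gives uniform absolute convergence of~\eqref{eqclusterexp} on compact subsets of $R(\gamma,\Delta)$, and since each truncation is a polynomial in $(\lambda,\zeta)$, analyticity of $\log \Xi_H$ on $R(\gamma,\Delta)$ then follows from Weierstrass's theorem on uniform limits of analytic functions.

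The one delicate step is justifying the tree--graph inequality of Step 1 in this mixed setting: the weights $w_e$ take two distinct forms ($-\zeta$ on genuine $H$-edges and $-1$ on repeated-vertex pairs) and $\zeta$ is allowed to be complex with $|1-\zeta|<1$, so the standard real-valued Penrose identity does not apply verbatim. A clean alternative that sidesteps this issue is to verify the Koteck\'{y}--Preiss criterion directly using the same BFS walk bound, which produces both absolute convergence of~\eqref{eqclusterexp} and the same $\gamma^{k-1}$ tail decay without appealing to Penrose's identity at all; this is the route I would actually recommend implementing.
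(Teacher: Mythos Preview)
Your approach is essentially identical to the paper's: both apply the Penrose tree--graph bound, swap to a sum over labelled trees on $\{1,\dots,m\}$, count clusters compatible with a fixed tree by an iterative/BFS argument giving $n(1+|\zeta|\Delta)^{m-1}$, use Cayley's formula and $m^m\le e^m m!$, and finish with a geometric sum. Your worry about the complex-weight case is unnecessary: the Penrose bound holds for complex edge weights satisfying $|1+w_e|\le 1$ (see \cite{penrose1967convergence} and the discussion in \cite{faris2010combinatorics}), and the hypothesis $|1-\zeta|<1$ of $R(\gamma,\Delta)$ is exactly this condition for $w_e=-\zeta$, while $w_e=-1$ gives $|1+w_e|=0$; so the Koteck\'y--Preiss detour is not needed.
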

The proof of the above lemma is similar to that of \cite[Lemma 4.1]{jenssen2023evolution} and so we defer it to Appendix~\ref{secPinnedCluster}.

We will also need to control the convergence of derivatives of the cluster expansion with respect to $\lam$ and $\zeta$. For this we appeal to the following fact, which is a  simple consequence of  Cauchy's Integral Formula. For $r>0$ and $w\in \C$, we let $\overline B_r(w)\subseteq \C$ denote
the closed disc of radius $r$ with center $w$, and $\partial B_r(w)$ denote its boundary.

\begin{lemma}
\label{lemAnalyticDerivativesBound}
   Let $U\subseteq \C$ be open and let  $f,g:U\to\C$, be two analytic functions.   If $r>0$ and $w\in U$ are such that $\overline B_{r}(w)\subset U$, then 
\begin{align}\label{eqLocalUnifTail}
|f'(w)-g'(w)|\leq \frac{1}{ r}\sup_{z\in \partial B_r(w)}|f(z)-g(z)| \,.
\end{align}
\end{lemma}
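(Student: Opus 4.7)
The plan is to apply Cauchy's integral formula for the derivative to the function $h := f - g$, which is analytic on $U$ (as a difference of analytic functions). Since $\overline{B}_r(w) \subset U$, the circle $\partial B_r(w)$ and the disc it bounds lie in the domain of analyticity, so we are entitled to use the integral representation
\begin{equation}
h'(w) = \frac{1}{2\pi i} \oint_{\partial B_r(w)} \frac{h(z)}{(z-w)^2}\, dz.
\end{equation}

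Next I would bound the right-hand side by the standard ML-estimate. On $\partial B_r(w)$ we have $|z - w| = r$, so $|(z-w)^{-2}| = r^{-2}$, and the contour has length $2\pi r$. Setting $M := \sup_{z \in \partial B_r(w)} |h(z)| = \sup_{z \in \partial B_r(w)} |f(z) - g(z)|$, we get
\begin{equation}
|h'(w)| \le \frac{1}{2\pi} \cdot \frac{M}{r^2} \cdot 2\pi r = \frac{M}{r},
\end{equation}
which is exactly the claimed inequality since $h'(w) = f'(w) - g'(w)$.

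There is essentially no obstacle here: the only thing to verify is that Cauchy's formula applies, which is immediate from the hypothesis $\overline{B}_r(w) \subset U$ and analyticity of $f$ and $g$ on $U$. The proof is short enough that in the actual write-up one could simply cite Cauchy's integral formula and compute the ML-bound in one or two lines.
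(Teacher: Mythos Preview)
Your proposal is correct and follows essentially the same approach as the paper: apply Cauchy's integral formula for the derivative to $f-g$ on the circle $\partial B_r(w)$ and bound the resulting contour integral by the ML-estimate. The only cosmetic difference is that you name $h := f-g$ explicitly, whereas the paper writes $f(z)-g(z)$ directly in the integrand.
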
 
\begin{proof}
Using Cauchy's Integral formula we can write
\begin{align}
    |f'(w)-g'(w)|&= \frac{1}{2 \pi} \left | \oint_{\partial B_r(w)}  \frac{f(z) - g(z)}{ (z-w)^2  }  \, d z \right|  \\
    &\le \frac{2 \pi r}{2 \pi r^2} \sup_{z \in \partial B_r(w)} |f(z) - g(z) | = \frac{1}{ r} \sup_{z \in \partial B_r(w)} |f(z) - g(z) | \,. \qedhere
\end{align}
\end{proof}

In Section~\ref{secRegularity} we will use Lemma~\ref{lemAnalyticDerivativesBound} along with Lemma~\ref{lemClusterTail} and the identities from Lemma~\ref{lemDerivativeIdentities}  to approximate $\E_{H,\lam,\zeta} |S|$ and $\E_{H,\lam,\zeta} |E(S)|$ for certain graphs $H$.

\subsection{Markov chains and concentration of measure}
\label{subsecMCMC}
We will use  Markov chains in our analysis of the distributions $\mu_{\lam,\zeta}$ and $\nu_{H,\lam,\zeta}$. For a finite state space $\Omega$, a Markov chain defined by a kernel $P : \Omega \times \Omega \rightarrow [0,1]$ is a sequence of random variables $(Y_i)_{i \in \mathbb{N}}$ such that for each $i \geq 1$ and $z \in \Omega$, we have
\[
\mathbb{P}(Y_i = z|Y_{i-1},\ldots,Y_0) = \mathbb{P}(Y_i = z |Y_{i-1}) = P(Y_{i-1},z).
\]
We often use $P$ to refer to a Markov chain as well as its kernel. For $y \in \Omega$, we use $Py$ to denote the random variable distributed as
\[
\mathbb{P}(Py = z) = P(y,z)
\]
for $z\in\Omega$.
Thus a Markov chain $P$ on $\Omega$ is determined by random variables $\{Py\}_{y \in \Omega}$. We extend this notation to the case where $Y$ is a random variable: the distribution of $PY$ is given by $\mathbb{P}(PY = z) = \E P(Y,z)$. We denote $P^1Y = PY$ and $P^tY = P\cdot P^{t-1}Y$ for $t > 1$. A distribution $\nu$ is said to be stationary for the Markov chain $P$ if for $Y$ distributed according to $\nu$, we have $PY \stackrel{d}= Y$.  

Suppose that the state space $\Omega$ is also equipped with a distance  $\dist(\cdot,\cdot)$. Given two $\Omega$-valued random variables $Y,Z$, the \emph{Wasserstein $1$-distance} between $Y, Z$ is
\[
W_1(Y, Z)= \inf_\pi\E_\pi(\dist(Y,Z))
\]
where the infimum is taken over all couplings $\pi$ of $Y, Z$.

For $\rho > 0$, we say that a Markov chain $P$ is $\rho$-contractive with respect to $\dist(\cdot,\cdot)$ on $S \subseteq \Omega$ if 
\[
\frac{W_1(Py,Pz)}{\dist(y,z)} \leq (1 - \rho) ~\text{for every }~y,z \in S,~y \neq z.
\]
When $P$ is $\rho$-contractive on the whole state space $\Omega$, we simply say that it is $\rho$-contractive. Contractive Markov chains arise in the study of mixing times~\cite{bubley1997path,dyer2003randomly}. It is known that if $P$ is $\rho$-contractive on $\Omega$, then the stationary distribution $\nu$ is unique, and for $t > 0$ and any $y \in \Omega$ we have
\begin{equation}\label{eqn:mixingtime}
W_1\left(\nu,P^ty\right) \leq \text{diam}(\Omega)\cdot (1 - \rho)^t.
\end{equation}

Contractive Markov chains have recently been studied by Ollivier~\cite{ollivier2007curvature} (in the language of~\cite{ollivier2007curvature}, `$\rho$-contractive' is equivalent to `coarse Ricci curvature at least $\rho$') and, independently, by Luczak~\cite{luczak2008concentration} in the context of proving concentration of measure of well behaved functions on $\Omega$. This line of work was further developed in~\cite{paulin2015concentration,barbour2019longterm}, and provides  concentration inequalities suited to our purposes. 

Our results crucially rely on the following concentration result for contraction Markov chains. For a Markov chain $(Y_i)_{i \in \mathbb{N}}$ on $\Omega$ and $y\in \Omega$ we let
\[
\mathbb{P}_y(\cdot) = \mathbb{P}(\cdot~|Y_0 = y)~\text{and }\mathbb{E}_y[\cdot] = \mathbb{E}[\cdot~|Y_0 = y].
\]
More generally for a distribution $\mu$ on $\Omega$, we let
\[
\mathbb{P}_{\mu}(\cdot) = \mathbb{E}[\mathbb{P}_{Y_0}(\cdot)]~\text{and }\mathbb{E}_{\mu}[\cdot] = \mathbb{E}\left[\mathbb{E}_{Y_0}[\cdot]\right]
\]
where $Y_0$ is distributed according to $\mu$.

For $y\in\Omega$ we denote $N(y) = \{z : P(y,z) > 0\}$ and for $S\subseteq \Omega$ we let $S^{+}=S\cup\left(\cup_{z\in S}N(z) \right)$.

We record the following special case of \cite[Theorem 2.3]{barbour2019longterm}.

\begin{theorem}\label{thm:barbour2019longterm}
Let $P=(Y_i)_{i \in \mathbb{N}}$ be a Markov chain on $\Omega$ equipped with distance $\dist( \cdot,\cdot)$, and let $f : \Omega \rightarrow \mathbb{R}$ be a $L$-Lipschitz function with respect to $\dist$. Suppose that $\Omega_{\text{typ}}\subseteq \Omega_{\text{con}} \subseteq \Omega$ are such that:
\begin{enumerate}
\item $\dist(y,z) \leq 1$ for all $y,z\in\Omega$ such that  $P(y,z) > 0$, \label{MC1}
\item $\P(Py\neq y) \leq D$ for $y \in \Omega_{\text{typ}}$,\label{MC2} 
\item $\Omega_{\text{typ}}^+  \subseteq \Omega_{\text{con}}$,\label{MC3}
\item $P$ is $\rho$-contractive on $\Omega_{\text{con}}$.\label{MC4} 
\end{enumerate}
Let $k$ be a positive integer and let
\[\mathcal{A}_k = \{Y_i \in \Omega_{\text{typ}}~\forall\,  i<k\}, \text{ and } e_k = \max_{y\in \Omega_{\text{typ}}^+}\mathbb{P}_y(Y_i\not \in \Omega_{\text{con}} \textup{ for some }i<k)\, .
\]

Then for all $y\in \Omega_{\text{typ}}$ and $m \geq 0$,
\[
\mathbb{P}_y(\{|f(Y_k) - \mathbb{E}_y[f(Y_k)]| > m\}\land \mathcal{A}_k) \leq 2\exp\left(-\frac{m^2}{4L^2(D\rho^{-1} + 12k^3e_k^2) + 4mL(1 + 6ke_k)}\right)\, .
\]
\end{theorem}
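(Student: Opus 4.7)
The plan is to decompose the centered random variable $f(Y_k) - \mathbb{E}_y[f(Y_k)]$ as a Doob martingale with respect to the filtration $\mathcal{F}_i = \sigma(Y_0, \ldots, Y_i)$ and then apply a Bernstein/Freedman-type inequality. Setting $M_i := \mathbb{E}_y[f(Y_k) \mid \mathcal{F}_i]$, we have
\begin{equation*}
f(Y_k) - \mathbb{E}_y[f(Y_k)] = \sum_{i=1}^{k} (M_i - M_{i-1}),
\end{equation*}
so it suffices to bound the increments pointwise and to control the sum of their conditional second moments.

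The key observation is that, writing $g_j(z) := \mathbb{E}_z[f(Y_j)]$, the Markov property gives $M_i = g_{k-i}(Y_i)$. On the event that $Y_{i-1}\in\Omega_{\text{con}}$, iterated contraction yields $W_1(P^{k-i}y, P^{k-i}z) \leq (1-\rho)^{k-i}\dist(y,z)$ for neighbouring $y,z$ in $\Omega_{\text{con}}$, so by Kantorovich--Rubinstein duality $g_{k-i}$ is $L(1-\rho)^{k-i}$-Lipschitz there. Combined with hypothesis~(\ref{MC1}) this gives $|M_i - M_{i-1}| \leq 2L(1-\rho)^{k-i}$ on $\mathcal{A}_k$. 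Moreover, by hypothesis~(\ref{MC2}), whenever $Y_{i-1}\in\Omega_{\text{typ}}$ the chain actually moves with probability at most $D$, so the conditional second moment of $M_i - M_{i-1}$ is at most $4L^2(1-\rho)^{2(k-i)}D$. Summing the geometric series bounds the total conditional variance on $\mathcal{A}_k$ by $4L^2 D/\rho$.

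To handle trajectories that leave $\Omega_{\text{con}}$, I would truncate at the first exit time $\tau$ from $\Omega_{\text{con}}$ and work with the stopped martingale $(M_{i\wedge\tau})_i$, using hypothesis~(\ref{MC3}) to ensure that from $\Omega_{\text{typ}}^+$ the exit probability within $k$ steps is at most $e_k$. The contribution of excursions outside $\Omega_{\text{con}}$ to the martingale variance is bounded by $O(k^3 L^2 e_k^2)$ via a union bound over the exit step combined with the trivial estimate $|g_{k-i}(y)-g_{k-i}(z)| \leq L\,\text{diam}(\Omega)$ backed off to $O(L)$ after controlling how far the chain can drift in $k$ steps, and inflates the uniform increment bound to $2L(1+6k e_k)$. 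Applying Freedman's inequality to the stopped martingale and combining with $\mathbb{P}_y(\tau\leq k)\leq e_k$ on the complement then yields the claimed concentration estimate on $\mathcal{A}_k$.

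The main obstacle I expect is the careful bookkeeping around the stopping time together with the geometrically decaying scale of the increments. On $\mathcal{A}_k$ we know each successor $Y_i$ lies in $\Omega_{\text{typ}}^+\subseteq\Omega_{\text{con}}$, so contraction applies step-by-step, but paths that temporarily leave $\Omega_{\text{con}}$ must be absorbed into the analysis without destroying the $(1-\rho)^{k-i}$ decay in the Lipschitz constant of $g_{k-i}$. Obtaining the specific numerical constants in the exponent (rather than a qualitatively similar bound) requires Freedman's inequality rather than a naive Azuma estimate, since the increments have different scales at different times; this is essentially the technical heart of~\cite{barbour2019longterm} and is what prevents a one-line reduction from classical martingale concentration.
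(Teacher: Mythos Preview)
The paper does not prove this theorem: it is quoted verbatim as a special case of \cite[Theorem~2.3]{barbour2019longterm} and used as a black box. So there is no ``paper's own proof'' to compare against.

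Your sketch is nonetheless the right outline and is essentially the method of \cite{barbour2019longterm}: Doob martingale $M_i=g_{k-i}(Y_i)$, Lipschitz constant of $g_{k-i}$ controlled by iterated contraction, conditional variance bounded via the laziness parameter $D$, and Freedman's inequality to exploit the geometric decay of the increment scales. One point deserves care: your handling of the excursion terms is too loose as written. The $12k^3 e_k^2$ in the variance and the $6k e_k$ in the increment bound do not fall out of a stopping-time argument plus ``trivial diameter bound backed off to $O(L)$'': they come from a more delicate comparison in which one couples the actual chain to a chain restricted to $\Omega_{\text{con}}$, bounds the coupling discrepancy in $W_1$ by $ke_k$ per step (this is where the factor $e_k$, not $e_k^2$, first enters the Lipschitz estimate for $g_{k-i}$), and only then squares to get the variance contribution. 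Simply stopping at $\tau$ and invoking $\P_y(\tau\le k)\le e_k$ would give an additive $e_k$ outside the exponential rather than the stated bound on $\mathcal{A}_k$. If you want to reconstruct the exact constants, you will need to follow that coupling argument rather than the stopping-time route you describe.
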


In the setting of Theorem~\ref{thm:barbour2019longterm}, we show a generalization of~\eqref{eqn:mixingtime} using a standard ``burn-in'' argument.

\begin{lemma}\label{lem:mixingtime}
In the setting of Theorem~\ref{thm:barbour2019longterm},
let $y_0\in\Omega_{\text{typ}}$ and let  $P=(Y_i)_{i \in \mathbb{N}}$ where we set $Y_0=y_0$.
Let $\nu$ denote the stationary distribution of $P$. For $k \geq 1$, we have
\[
W_1(\nu,Y_k) \leq \textup{diam}(\Omega)\left((1 - \rho)^{k} + 2k e_k + \nu(\Omega\setminus \Omega_{\text{typ}})\right).
\]
As a consequence, we have
\[
\left|\nu(f) - \mathbb{E}\left[f(Y_k)\right]\right| \leq L\cdot \textup{diam}(\Omega)\left((1 - \rho)^{k} + 2k e_k + \nu(\Omega\setminus \Omega_{\text{typ}}\right)\, ,
\]
where $\nu(f)$ denotes the expectation of $f$ with respect to $\nu$.
\end{lemma}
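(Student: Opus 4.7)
The plan is to prove the Wasserstein bound by a single coupling argument and then read off the Lipschitz statement as an immediate consequence. Let $(Y'_i)_{i\ge0}$ be a second Markov chain with kernel $P$ started from $Y'_0 \sim \nu$; by stationarity, $Y'_i \sim \nu$ for every $i$. I would construct a joint coupling of $(Y_i)$ and $(Y'_i)$ step by step: at each time $i$, if both $Y_i$ and $Y'_i$ lie in $\Omega_{\textup{con}}$, use the $\rho$-contractive coupling so that $\E[\dist(Y_{i+1}, Y'_{i+1}) \mid Y_i, Y'_i] \leq (1-\rho)\dist(Y_i, Y'_i)$; otherwise use any valid coupling of $PY_i$ and $PY'_i$. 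Both marginals are preserved.

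Let $G_i = \{Y_j, Y'_j \in \Omega_{\textup{con}} \text{ for all } j \leq i\}$ and set $G = G_{k-1}$. Since $\mathbf{1}_{G_i}$ is measurable with respect to the history and forces both chains to currently lie in $\Omega_{\textup{con}}$, conditioning gives $\E[\dist(Y_{i+1}, Y'_{i+1})\mathbf{1}_{G_i}] \leq (1-\rho)\E[\dist(Y_i, Y'_i)\mathbf{1}_{G_i}] \leq (1-\rho)\E[\dist(Y_i, Y'_i)\mathbf{1}_{G_{i-1}}]$, and iterating yields $\E[\dist(Y_k, Y'_k)\mathbf{1}_G] \leq (1-\rho)^k\textup{diam}(\Omega)$. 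Using the trivial bound $\dist(Y_k,Y'_k)\leq \textup{diam}(\Omega)$ on $G^c$ gives $\E[\dist(Y_k, Y'_k)] \leq \textup{diam}(\Omega)\bigl((1-\rho)^k + \P(G^c)\bigr)$.

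A union bound splits $\P(G^c)$ into escapes of the two chains. For $(Y_i)$, since $y_0 \in \Omega_{\textup{typ}} \subseteq \Omega_{\textup{typ}}^+$, the definition of $e_k$ gives escape probability at most $e_k$. For $(Y'_i)$, condition on $Y'_0$: on $\{Y'_0 \in \Omega_{\textup{typ}}\}$ the same definition yields conditional escape probability $\leq e_k$, and the complement has $\nu$-mass $\nu(\Omega\setminus\Omega_{\textup{typ}})$; hence escape probability $\leq e_k + \nu(\Omega\setminus\Omega_{\textup{typ}})$. Combining, $\P(G^c) \leq 2e_k + \nu(\Omega\setminus\Omega_{\textup{typ}}) \leq 2ke_k + \nu(\Omega\setminus\Omega_{\textup{typ}})$, which proves the claimed Wasserstein bound.

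For the Lipschitz consequence, for any coupling $\pi$ of $Y_k$ with $Z \sim \nu$ the $L$-Lipschitz property gives $|\E[f(Y_k)] - \nu(f)| = |\E_\pi[f(Y_k)-f(Z)]| \leq L\,\E_\pi[\dist(Y_k, Z)]$; taking the infimum over $\pi$ gives $L\cdot W_1(\nu, Y_k)$, and substituting the first bound finishes the proof. The only genuinely delicate step is the iteration of contraction on the good event: one must work with expectations weighted by the indicator of the past-measurable event $G_i$, so that the contractive bound is applied inside the expectation only when both chains are in $\Omega_{\textup{con}}$. The remaining pieces — splitting $\P(G^c)$ and exploiting stationarity to bound the $Y'$ chain's escape by $e_k+\nu(\Omega\setminus\Omega_{\textup{typ}})$ rather than $k\cdot\nu(\Omega\setminus\Omega_{\textup{typ}})$ — are routine.
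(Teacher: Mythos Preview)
Your proof is correct and follows essentially the same burn-in coupling strategy as the paper: couple the chain started at $y_0$ with one started from $\nu$, apply $\rho$-contraction whenever both chains remain in $\Omega_{\textup{con}}$, and bound the escape probability by $e_k$ per chain plus the $\nu$-mass of $\Omega\setminus\Omega_{\textup{typ}}$. Your indicator-weighted iteration is in fact a slightly cleaner execution than the paper's additive recursion and even yields the sharper bound $2e_k$ before you relax it to $2ke_k$ to match the stated conclusion.
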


\begin{proof}
First we observe that
\begin{align*}
W_1(\nu,Y_k) &  \leq \nu(\Omega\setminus \Omega_{\text{typ}}) \cdot \text{diam}(\Omega) + \max_{z_0 \in \Omega_{\text{typ}}}W_1(P^kz_0,Y_k).
\end{align*}
Thus it suffices to show that for any $z_0 \in \Omega_{\text{typ}}$ we have
\[
W_1\left(P^kz_0,Y_k\right) \leq \text{diam}(\Omega)\left((1 - \rho)^k + 2k e_k\right).
\]
Let us define $Z_i=P^iz_0$ for $i\geq 1$. For $t \in [k]$, define the event $\mathcal{E}_t = \{Y_t,Z_t \in \Omega_{\text{con}}\}$ and $\mathcal{F}_t  = \bigcap_{i \leq t}\mathcal{E}_i$. Then under optimal coupling, we have
\begin{align*}
\mathbb{E}[\dist(Y_t,Z_t)] & \leq \mathbb{E}[\dist(Y_t,Z_t)|\mathcal{E}_{t-1}] + \mathbb{P}(\overline{\mathcal{E}_{t-1}})\cdot \text{diam}(\Omega) \\
& \leq \mathbb{E}[\dist(Y_t,Z_t)|\mathcal{E}_{t-1}]  + \mathbb{P}(\overline{\mathcal{F}_{k-1}})\cdot \text{diam}(\Omega)  \\
& \leq (1-\rho) \mathbb{E}[\dist(Y_{t-1},Z_{t-1})] + 2e_k\cdot \text{diam}(\Omega) .
\end{align*}
The second inequality holds since $\mathcal{F}_{k-1} \subseteq \mathcal{F}_{t-1} \subseteq \mathcal{E}_{t-1}$. The third inequality holds since $P$ is $\rho$-contractive on $\Omega_{\text{con}}$, and since on $y_0 \in \Omega_{\text{typ}}$ we have 
\[
\mathbb{P}(\overline{\mathcal{F}_{k-1}}) \leq \mathbb{P}\left(\exists~t<k~\text{s.t.}~Y_t \not\in \Omega_{\text{con}}\right) + \mathbb{P}\left(\exists~t<k~\text{s.t.}~Z_t \not\in \Omega_{\text{con}}\right) \leq 2e_k.
\]
Thus, using $\dist(y_0,z_0) \leq \text{diam}(\Omega)$, we have
\[
\mathbb{E}[\dist(Y_k,Z_k)] \leq \text{diam}(\Omega)\left((1 - \rho)^k + 2ke_k\right) \,. 
\]
We now prove the second half of the lemma. For $z\in \Omega$, let $\pi(z)$ denote a  coupling of the random variables $P^kz$ and $Y_k=P^ky_0$ such that $W_1(P^kz, Y_k)=\E_{\pi(z)}\dist(P^kz,Y_k)$. We note that since $\nu$ is the stationary distribution of $P$ we have $\nu(f)=\E_{z\sim \nu}\E[f(P^kz)]$. Since $f$ is $L$-Lipschitz we conclude that
\begin{align*}
|\nu(f) - \mathbb{E}[f(Y_k)]| & = |\E_{z\sim\nu}\mathbb{E}[f(P^kz)] - \mathbb{E}[f(Y_k)]| \\
&= \left|\E_{z\sim\nu}\left(\mathbb{E}_{\pi(z)}[f(P^kz) - f(Y_k)]\right)\right|\\
&\leq \max_{z \in \Omega_{\text{typ}}}\mathbb{E}_{\pi(z)}|f(P^kz) - f(Y_k)| + \nu(\Omega\setminus \Omega_{\text{typ}})\cdot L\cdot \text{diam}(\Omega) \\
&\leq L\cdot \max_{z \in \Omega_{\text{typ}}}\mathbb{E}_{\pi(z)} \dist(P^kz, Y_k) + \nu(\Omega\setminus \Omega_{\text{typ}})\cdot L\cdot \text{diam}(\Omega)\\
& \leq L\cdot \text{diam}(\Omega)\left((1 - \rho)^k + 2ke_k + \nu (\Omega\setminus \Omega_{\text{typ}})\right) \,. 
\qedhere
\end{align*}

\end{proof}

We continue using the notation of Theorem~\ref{thm:barbour2019longterm}. Suppose that $\nu$ is the stationary distribution of $P$. We say that $P$ is reversible if for any $x,y \in \Omega$, it holds that $\nu(x)P(x,y) = \nu(y)P(y,x)$ (these are the `detailed balance' equations). Often, we would like to understand (usually upper-bound) the probability of an event for a random $Y \in \Omega$ sampled from the distribution $\nu$. One natural, though indirect, approach  to sample $Y$ from $\nu$ is to first sample $Y_0$ from $\nu$, run $P = \{Y_i\}_{i \geq 0}$ for $k$ steps, and then set $Y = Y_k$. Since $\nu$ is stationary for $P$, this gives the correct distribution for $Y$.

\begin{lemma}\label{lem:Gibbsprobability}
In the setting of Theorem~\ref{thm:barbour2019longterm}, let us further suppose that $P$ is reversible with stationary distribution $\nu$, and let $\tilde{\Omega}_{\text{typ}} \subseteq \Omega_{\text{typ}}$. Then for any event $\mathcal{Q} \subseteq \Omega$, we have
\begin{equation}
\nu\left( \mathcal{Q}\wedge \tilde{\Omega}_{\text{typ}}\right) \leq \frac{\mathbb{P}_{\nu}\left(\{Y_k \in \mathcal{Q}\} \land \mathcal{A}_k\right)}{\min_{y \in \tilde{\Omega}_{\text{typ}}}\mathbb{P}_y(\mathcal{A}_k)}.
\end{equation}
\end{lemma}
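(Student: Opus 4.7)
The plan is to exploit the reversibility of $P$ with respect to $\nu$ to relate $\mathbb{P}_\nu(\{Y_k\in\mathcal{Q}\}\wedge \mathcal{A}_k)$ to a sum of contributions from starting states $Y_0\in\mathcal{Q}\cap\tilde{\Omega}_{\text{typ}}$, each weighted by $\mathbb{P}_y(\mathcal{A}_k)$, after which the minimum over $\tilde{\Omega}_{\text{typ}}$ gives the desired bound.

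The first step is to use iterated detailed balance: for every path $(y_0,\ldots,y_k)$ one has
\[
\nu(y_0)\prod_{i=0}^{k-1}P(y_i,y_{i+1})=\nu(y_k)\prod_{i=0}^{k-1}P(y_{i+1},y_i),
\]
so the joint distribution of $(Y_0,Y_1,\ldots,Y_k)$ under $\mathbb{P}_\nu$ is invariant under the time-reversal $Y_i\mapsto Y_{k-i}$. Applied to the event $\{Y_k\in\mathcal{Q}\}\wedge\mathcal{A}_k=\{Y_k\in\mathcal{Q},\,Y_0,\ldots,Y_{k-1}\in\Omega_{\text{typ}}\}$, this yields
\[
\mathbb{P}_\nu(\{Y_k\in\mathcal{Q}\}\wedge\mathcal{A}_k)=\mathbb{P}_\nu(Y_0\in\mathcal{Q},\,Y_1,\ldots,Y_k\in\Omega_{\text{typ}}).
\]

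In the second step, I restrict $Y_0$ to the subset $\mathcal{Q}\cap\tilde{\Omega}_{\text{typ}}$ (which only decreases the probability) and condition on $Y_0$ via the Markov property. For each $y\in\tilde{\Omega}_{\text{typ}}\subseteq\Omega_{\text{typ}}$ the condition $Y_0\in\Omega_{\text{typ}}$ is automatic, so the inner probability $\mathbb{P}_y(Y_1,\ldots,Y_k\in\Omega_{\text{typ}})$ is identified with $\mathbb{P}_y(\mathcal{A}_k)$. Bounding the resulting sum below by $\nu(\mathcal{Q}\cap\tilde{\Omega}_{\text{typ}})\cdot\min_{y\in\tilde{\Omega}_{\text{typ}}}\mathbb{P}_y(\mathcal{A}_k)$ and rearranging then produces the claimed inequality. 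The main subtlety is the reversibility identity, which is what turns an event involving $Y_k$ into one involving $Y_0$ and thereby exposes $\nu(\mathcal{Q}\cap\tilde{\Omega}_{\text{typ}})$ as an explicit factor; the remaining steps are routine applications of the Markov property and the definition of the minimum.
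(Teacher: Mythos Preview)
Your approach is exactly the paper's: invoke time-reversal of $(Y_0,\ldots,Y_k)$ under $\mathbb{P}_\nu$ to swap $\{Y_k\in\mathcal{Q}\}$ for $\{Y_0\in\mathcal{Q}\}$, restrict to $Y_0\in\tilde{\Omega}_{\text{typ}}$, condition on $Y_0$, and pull out the minimum.

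There is one small index slip (which the paper's one-line proof also glosses over): since $\mathcal{A}_k=\{Y_0,\ldots,Y_{k-1}\in\Omega_{\text{typ}}\}$, for $y\in\Omega_{\text{typ}}$ one has $\mathbb{P}_y(Y_1,\ldots,Y_k\in\Omega_{\text{typ}})=\mathbb{P}_y(\mathcal{A}_{k+1})\le\mathbb{P}_y(\mathcal{A}_k)$ rather than equality, so the argument as written literally delivers the inequality with $\mathcal{A}_{k+1}$ in the denominator. This is immaterial in every application of the lemma in the paper, where $\min_{y\in\tilde{\Omega}_{\text{typ}}}\mathbb{P}_y(\mathcal{A}_k)=1-o(1)$.
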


\begin{proof}
 For $i\geq 0$, let $\mathcal{Q}_i$ denote the event $\{Y_i \in \mathcal{Q}\}$. 
Since $\tilde{\Omega}_{\text{typ}} \subseteq \Omega_{\text{typ}}$ and $P$ is reversible, we have
\begin{equation}
\mathbb{P}_{\nu}(\mathcal{Q}_k \land \mathcal{A}_k)  = \mathbb{P}_{\nu}(\mathcal{Q}_0 \land \mathcal{A}_k) 
 \geq \mathbb{P}_{\nu}(\mathcal{Q}_0 \land (Y_0 \in \tilde{\Omega}_{\text{typ}}))\min_{y \in \tilde{\Omega}_{\text{typ}}}\mathbb{P}_y (\mathcal{A}_k) \, .  \qedhere
\end{equation}
\end{proof}

We now define a pair of Markov chains that are central to our arguments. Recall that a Markov chain $P$ on $\Omega$ is determined by the random variables $\{Py\}_{y \in \Omega}$. Given $\lambda \ge 0$ and $\zeta \in [0,1]$ we define the Markov chain $P_{\lam,\zeta}$ as follows.

\begin{definition}[The Markov chain $P_{\lambda,\zeta}$]\label{defn:Plamzeta}
The state space $\Omega_n$ is the set of all labelled graphs on vertex set $[n]$. For any $y \in \Omega$, we set
\[P_{\lambda,\zeta} y = P^e_{\lambda,\zeta} y\] 
where $e \in \binom{[n]}{2}$ is chosen uniformly at random, and
\[
P_{\lambda,\zeta}^e y = \begin{cases}
y \cup \{e\} &\text{w.p. $\frac{\lambda (1 -\zeta)^{d(e)}}{1 + \lambda (1 - \zeta)^{d(e)}}$, and}\\
y \setminus \{e\} &\text{w.p. $\frac{1}{1 + \lambda (1 - \zeta)^{d(e)}}$,}
\end{cases}
\]
where $d(e)=d_y(e)$ denotes the number of triangles in the graph $y\cup \{e\}$ that contain $e$.
\end{definition}
Thus in one step, $P_{\lambda,\zeta}$ chooses a uniformly random $e \in \binom{[n]}{2}$ and resamples the potential edge $e$ according to $\mu_{\lambda,\zeta}$ conditioned on the graph $y\backslash\{e\}$. One may check that $\mu_{\lambda,\zeta}$ is the unique stationary distribution of $P_{\lambda,\zeta}$. In fact, $P_{\lambda,\zeta}$ belongs to a broad class of Markov chains called {\em Glauber dynamics}, which may be tailored to any multivariate distribution. Given a graph $H$, $\lam \ge0$ and $\zeta\in[0,1]$ we also use the following instantiation of the Glauber dynamics for $\nu_{\lambda,\zeta}^H$.

\begin{definition}[The Markov chain $P_{H,\lambda,\zeta}$]\label{defn:PHlamzeta}
The state space is $\Omega_H = 2^{V(H)}$. For any $y \in \Omega$, we set
\[P_{H,\lambda,\zeta} y = P^u_{H,\lambda,\zeta} y\] 
where $u \in V(H)$ is chosen uniformly at random, and
\[
P_{H,\lambda,\zeta}^u y = \begin{cases}
y \cup \{u\} &\text{w.p. $\frac{\lambda (1 - \zeta)^{d(v)}}{1 + \lambda (1 - \zeta)^{d(v)}}$, and}\\
y \setminus \{u\} &\text{w.p. $\frac{1}{1 + \lambda (1 - \zeta)^{d(v)}}$}
\end{cases}
\]
where $d(v)=d_y(v)$ denotes the degree of $v$ in the subgraph of $H$ induced by $y\cup \{v\}$.
\end{definition}
Thus in one step, $P_{H,\lambda, \zeta}$ chooses a uniformly random $u \in V(H)$ and resamples membership of $u$ conditioned on $y\backslash \{u\}$. One may again check that $\nu_{H,\lambda,\zeta}$ is the unique stationary distribution of $P_{H,\lambda, \zeta}$. For both $P_{\lambda,\zeta}$ and $P_{H,\lambda,\zeta}$, we will work with pairs of parameters $(\lambda,\zeta)$ that ensure the contraction criterion with respect to the Hamming distance (denoted hereafter by $\dist( \cdot,\cdot)$) which we establish with the help of the following. 
\begin{lemma}\label{lem:PCneighbors}
Let $P$ be either $P_{\lambda,\zeta}$ or $P_{\lambda, \zeta, H}$, and let $S \subseteq \Omega$ be closed under taking vertex subsets or edge subsets, respectively. Furthermore, suppose there is a $\rho > 0$ such that every $y,z \in S$ with $\dist(y,z) = 1$ admits a coupling of $(Py,Pz)$ such that
\begin{equation}\label{eqn:1stepcoupling}
\mathbb{E}[\dist(Py,Pz)] \leq 1 - \rho.
\end{equation}
Then $P$ is $\rho$-contractive on $S$.
\end{lemma}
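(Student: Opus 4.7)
The plan is a standard path coupling argument in the style of Bubley--Dyer, reducing contraction on arbitrary pairs to contraction on Hamming neighbors. Fix $y, z \in S$ with $\dist(y, z) = k$. The goal is to exhibit a path $y = y_0, y_1, \ldots, y_k = z$ in the Hamming graph whose vertices all lie in $S$ and whose consecutive states differ in exactly one coordinate; then the triangle inequality for $W_1$ combined with the one-step hypothesis finishes the argument.

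To construct such a path, let $A = y \setminus z$ and $B = z \setminus y$, viewed as edge sets in the $P_{\lambda,\zeta}$ case or as vertex sets in the $P_{H,\lambda,\zeta}$ case. Deleting the elements of $A$ from $y$ one at a time yields a descending chain $y = y_0 \supseteq y_1 \supseteq \cdots \supseteq y_s = y \cap z$, and then adding the elements of $B$ one at a time produces an ascending chain $y \cap z \subseteq y_{s+1} \subseteq \cdots \subseteq y_k = z$. Each $y_i$ with $i \le s$ is a subset of $y$, and each $y_j$ with $j \ge s$ is a subset of $z$; hence by the closure-under-subsets hypothesis on $S$, every $y_i$ lies in $S$, and consecutive states differ by exactly one coordinate by construction.

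By the one-step contraction hypothesis, for each $i$ there is a coupling of $(Py_i, Py_{i+1})$ with expected Hamming distance at most $1 - \rho$, so $W_1(Py_i, Py_{i+1}) \le 1 - \rho$. Applying the triangle inequality for the Wasserstein distance along the path gives
\[
W_1(Py, Pz) \le \sum_{i=0}^{k-1} W_1(Py_i, Py_{i+1}) \le k(1-\rho) = (1-\rho)\dist(y, z),
\]
which is precisely $\rho$-contractivity of $P$ on $S$.

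I do not anticipate any real obstacle here. The only subtle point is making sure the interpolating path stays inside $S$, and this is exactly what the closure-under-subsets assumption delivers; indeed, this is the reason the hypothesis appears in the statement rather than being a vacuous condition. The rest is the textbook gluing-of-couplings/triangle-inequality step that underlies every path coupling proof.
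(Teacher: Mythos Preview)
Your proof is correct and follows essentially the same path-coupling approach as the paper: both construct the monotone path through $y\cap z$ so that every intermediate state is a subset of $y$ or of $z$ and hence lies in $S$, then chain the one-step bounds. The only cosmetic difference is that you invoke the triangle inequality for $W_1$ directly, whereas the paper phrases it as gluing the one-step couplings into a single coupling of $(Py,Pz)$; these are equivalent.
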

\begin{proof}
This is essentially the path coupling technique of Bubley and Dyer~\cite{bubley1997path} but applied just to a subset of the state space with the above closure property.
For any $y,z \in \Omega$ such that $\dist(y,z) = s$, there is a sequence $y = y_0,y_1,\ldots,y_s = z$ such that for every $i \in [s]$, either $y_i \subseteq y_0$ or $y_i \subseteq y_s$. In particular, if $y,z \in S$, then each $y_i \in S$. Consider the coupling between $(Py,Pz)$ obtained by sampling each $Py_i$ conditioned on $Py_{i-1}$ for $i \in [s]$ given by the coupling satisfying~\eqref{eqn:1stepcoupling}. We then have
\[
\mathbb{E}[\dist(Py,Pz)] \leq \sum_{i = 1}^s\mathbb{E} [\dist(Py_{i-1},Py_i)] \leq s(1-\rho) \,. \qedhere
\]
\end{proof}

\section{Local Conditioning}
\label{secLocalCondition}

In this section we give the proof of Theorem~\ref{thmLowerTailAsymp}, modulo the proofs of three lemmas that we defer to Sections~\ref{sec:concentration},~\ref{secRegularity}, and~\ref{secLowerTailPartition}.

Throughout this section we will work under the following condition of Theorem~\ref{thmLowerTailAsymp}.

\begin{condition}
\label{ConditionThm1}
    Let $\eta \in [0,1)$  and $c \in (0, \overline c(\eta))$ be fixed.  Define $\zeta$ via~\eqref{eqZetaDef}. Suppose $p =(1+o(1))c/\sqrt{n}$, so that with $\lam = p/(1-p)$ we also have $\lam =(1+o(1))c/\sqrt{n}$.
\end{condition}
This condition gives bounds on the parameters $c, \zeta$ which will allow us to apply the cluster expansion in what follows.

\begin{claim}\label{claim:zeta}
    The equation~\eqref{eqZetaDef} defines $\zeta$ uniquely, and under Condition~\ref{ConditionThm1} we have 
    \begin{equation}
    \label{eqZetaCondition}
        e \zeta c^2  < 1 \,.
    \end{equation}
\end{claim}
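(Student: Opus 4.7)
The plan is to study the function $f(\zeta) := (1-\zeta)\bigl(W(2\zeta c^2)/(2\zeta c^2)\bigr)^{3/2}$ on $[0,1]$ via the identity $W(x)\,e^{W(x)}=x$, which rewrites $W(x)/x = e^{-W(x)}$ and hence
\[
f(\zeta) = (1-\zeta)\,e^{-(3/2)W(2\zeta c^2)}.
\]
Since $W$ is strictly increasing on $[0,\infty)$ with $W(0)=0$, the exponential factor is strictly decreasing in $\zeta$ and equals $1$ at $\zeta=0$ (interpreted as a limit). Combined with the strictly decreasing factor $1-\zeta$, this shows $f$ is a strictly decreasing bijection $[0,1]\to[0,1]$, so~\eqref{eqZetaDef} has a unique solution $\zeta\in[0,1]$ for every $\eta\in[0,1)$.

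For the bound $e\zeta c^2 < 1$, I would chain two monotonicity translations. First, since $x\mapsto xe^x$ is strictly increasing, $e\zeta c^2 < 1$ is equivalent to $2\zeta c^2 < 2/e$. Second, since $x\mapsto (W(x)/x)^{3/2}=e^{-(3/2)W(x)}$ is strictly decreasing and $\eta^\ast = (W(2/e)/(2/e))^{3/2}$, this is equivalent to $(W(2\zeta c^2)/(2\zeta c^2))^{3/2} > \eta^\ast$. Using~\eqref{eqZetaDef}, this rearranges to the simple statement $\zeta > 1-\eta/\eta^\ast$.

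If $\eta\ge\eta^\ast$ then $1-\eta/\eta^\ast\le 0$ and the bound follows from $\zeta>0$ (which holds since $\eta<1$). If $\eta<\eta^\ast$, I use monotonicity of $f$ once more: $\zeta > 1-\eta/\eta^\ast$ is equivalent to $\eta = f(\zeta) < f(1-\eta/\eta^\ast)$. A direct substitution yields
\[
f(1-\eta/\eta^\ast) = \frac{\eta}{\eta^\ast}\left(\frac{W(u)}{u}\right)^{3/2}, \qquad u := 2(1-\eta/\eta^\ast)c^2,
\]
so the required inequality reduces to $(W(u)/u)^{3/2} > \eta^\ast$, i.e.\ (by the same monotonicity as before) $u < 2/e$, which is exactly $c^2 < 1/\bigl(e(1-\eta/\eta^\ast)\bigr) = \overline c(\eta)^2$; this is the standing assumption of Condition~\ref{ConditionThm1}. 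I do not anticipate any genuine obstacle: once the identity $W(x)/x = e^{-W(x)}$ is noted, every step is a monotonicity/algebra manipulation, and the only subtlety is treating the trivial case $\eta\ge\eta^\ast$ (where $\overline c(\eta)=\infty$) separately.
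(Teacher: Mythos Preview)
Your argument is correct and takes a genuinely different route from the paper. The paper proves $e\zeta c^2<1$ by studying how $\zeta$ varies with $c$ for fixed $\eta$: it computes $\partial \zeta/\partial c$ via implicit differentiation of~\eqref{eqZetaDef}, shows that $\zeta c^2$ is strictly increasing in $c$, and then checks that at the endpoint $c=\overline c(\eta)$ (when $\eta<\eta^\ast$) or in the limit $c\to\infty$ (when $\eta\ge\eta^\ast$) one has $\zeta c^2=1/e$ or $\zeta c^2\le 1/e$ respectively. You instead work at fixed $c$ and chain monotonicity equivalences: using $W(x)/x=e^{-W(x)}$, you reduce $e\zeta c^2<1$ to $\zeta>1-\eta/\eta^\ast$, then evaluate $f$ at $\zeta_0=1-\eta/\eta^\ast$ and reduce to $c<\overline c(\eta)$ directly. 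Your route is more elementary (no calculus, only monotonicity of $W$ and algebra), while the paper's approach yields auxiliary facts---that $\zeta c^2$ is increasing in $c$ and that $\lim_{c\to\infty}\zeta c^2$ exists---which are reused later in the proof of Lemma~\ref{lemClimtBound}. One cosmetic point: your remark ``since $x\mapsto xe^x$ is strictly increasing'' before the step $e\zeta c^2<1\iff 2\zeta c^2<2/e$ is superfluous (that step is just multiplying by $2$); the monotonicity you actually use is that of $x\mapsto W(x)/x$, which you invoke correctly in the next sentence.
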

\begin{proof}
    For any fixed $c>0$, the LHS of~\eqref{eqZetaDef} is a continuous, decreasing function of $\zeta$ mapping $[0,1]$ to $[0,1]$ and thus there is a unique solution $\zeta=\zeta(c,\eta)$.   

   We will show that $\zeta c^2$ is a strictly increasing function of $c$. We then prove that $(i)$ if $\eta<\eta^\ast = \left(\frac{W(2/e)}{2/e}\right)^{3/2}$ and $c=\overline c(\eta)$, then $\zeta c^2=1/e$ and $(ii)$  $\ell:=\lim_{c\to\infty}\zeta c^2$ exists and if $\eta\geq\eta^\ast$ then $\ell\leq 1/e$. The inequality~\eqref{eqZetaCondition} then follows. 
   
   To show that $\zeta c^2$ is increasing, we differentiate~\eqref{eqZetaDef} with respect to $c$ and obtain
\[
\frac{\partial \zeta}{\partial c}=-\frac{6 \, W(2 \zeta c^2 ) \, (1 - \zeta) \, \zeta}{2 c \zeta+ c \, W(2 \zeta c^2 ) \, (3 - \zeta) }
\]
and so 
\[
\frac{\partial}{\partial c}(\zeta c^2)= 
c^2 \frac{\partial \zeta}{\partial c} + 2\zeta c=\frac{4 c \zeta^2 (1 + W(2 \zeta c^2 ))}{2 \zeta + (3 - \zeta) W(2 \zeta c^2 )}>0\, .
\]
Suppose now that $\eta<\eta^\ast$ and let $c=\overline c(\eta)=(e(1-\eta/\eta^*))^{-1/2}$. With this choice of $c$ \eqref{eqZetaDef} yields $\zeta=1-\eta/\eta^\ast$ and so $\zeta c^2=1/e$.

Next  observe that as $x\to\infty$, $W(x)/x\to 0$. Since $\eta>0$, it follows from~\eqref{eqZetaDef} that as $c\to\infty$ we must have that $\zeta c^2$ stays bounded (and so in particular $\zeta\to 0$). Since $\zeta c^2$ is increasing in $c$ we conclude that $\ell=\lim_{c\to\infty}\zeta c^2$ exists (and $\ell<\infty$).  Taking the limit $c\to \infty$ in~\eqref{eqZetaDef} we conclude that 
\[
\left( \frac{ W(2 \ell)}{2 \ell} \right)^{3/2}= \eta\, .
\]
Since $W(x)/x$ is decreasing in $x$ and $\eta\geq \eta^\ast=\left(\frac{W(2/e)}{2/e}\right)^{3/2}$, we conclude that $\ell\leq 1/e$ as desired. 
\end{proof}

The proof of Theorem~\ref{thmLowerTailAsymp} has two steps.  First, we estimate the expected and typical number of edges and triangles in a sample from $\mu_{\lam, \zeta}$ using `local conditioning'.  This step will rely on the concentration inequalities from Markov chains described in Section~\ref{subsecMCMC}.  

Second, we reduce the estimation of the lower-tail probability to the estimation of the expected edge density in the Gibbs measure $\mu_{\lam,\zeta}$, and then apply the results of the first step.  In this step we  see that $\zeta$ is chosen so that the expected number of triangles in a sample from $\mu_{\lam,\zeta}$ is asymptotically $\eta \E_p X$, the target number of triangles in the lower-tail event.  

\subsection{Triangle and edge density estimation via local conditioning}

The main technical results towards the proof of Theorem~\ref{thmLowerTailAsymp} are the following formulas for first-order asymptotics of the expected and typical number of edges and triangles in a sample from $\mu_{\lam,\zeta}$.
\begin{lemma}\label{lemDensityEstimate}
Under Condition~\ref{ConditionThm1}, 
\begin{align}
\label{eqExpectedEdgesFormula}
\E_{\lam,\zeta}|G| &= (1+o(1)) \frac{1}{2} \sqrt{\frac{W(2\zeta c^2)}{2\zeta}} n^{3/2} 
\intertext{and}
\label{eqExpectedTrianglesFormula}
\E_{\lam,\zeta} X(G) &= (1+o(1)) \frac{1-\zeta}{6} \left(  \frac{W(2\zeta c^2)}{2\zeta}\right)^{3/2}    n^{3/2} \,.
\end{align}
Moreover, $|G|=(1+o(1))\E_{\lam,\zeta}|G|$ and $X(G)=(1+o(1))\E_{\lam,\zeta} X(G)$ with high probability with respect to $\mu_{\lam,\zeta}$.
\end{lemma}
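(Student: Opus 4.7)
The plan is to follow the local-conditioning strategy outlined in Section~\ref{secTechniques}. The first step is to establish concentration of the degrees $d_G(v)$ under $\mu_{\lam,\zeta}$, by invoking Theorem~\ref{thm:barbour2019longterm} applied to the Glauber dynamics $P_{\lam,\zeta}$ of Definition~\ref{defn:Plamzeta}. I take $\Omega_{\textup{typ}}$ to be the set of graphs with maximum degree $O(\sqrt n)$ and few short cycles---both of which hold wvhp under $\mu_{\lam,\zeta}$ by stochastic domination (Lemma~\ref{lemStochDom} and Corollary~\ref{cor:SDdegbound})---and verify $\rho$-contractivity of $P_{\lam,\zeta}$ on a suitable enlargement $\Omega_{\textup{con}}$ via the path-coupling reduction of Lemma~\ref{lem:PCneighbors}. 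Because a single-edge discrepancy rarely propagates in the sparse regime $\lam=c/\sqrt n$, I expect $\rho = \Omega(1/n^2)$. Applying Theorem~\ref{thm:barbour2019longterm} to the $1$-Lipschitz functionals $d_G(v)$, combined with Lemmas~\ref{lem:mixingtime} and~\ref{lem:Gibbsprobability} to pass between chain expectations and $\mu_{\lam,\zeta}$-expectations, then yields $d_G(v) = (1+o(1))\bar d$ wvhp simultaneously for all $v$, where $\bar d := \E_{\lam,\zeta}[d_G(v)]$; in particular $|G|=n\bar d/2$ is concentrated.

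The heart of the proof is the derivation of an asymptotic formula for $\bar d$ via local conditioning. Fixing a vertex $v$ and writing $H$ for the induced subgraph of $G$ on $[n]\setminus\{v\}$, a direct calculation from~\eqref{eqMulamBetaDef} shows that the conditional law of $N_G(v)$ given $H$ is exactly the antiferromagnetic Ising model $\nu_{H,\lam,\zeta}$ of~\eqref{eqNulamBetaDef}: each potential neighbour $u$ contributes a factor of $\lam$, and each edge $uu'\in H$ with $u,u'$ both adjacent to $v$ contributes $(1-\zeta)$, corresponding to the new triangle $vuu'$. Hence $\E_{\lam,\zeta}[d_G(v)\mid H] = \E_{H,\lam,\zeta}|S|$. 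By the concentration step, the marginal of $H$ is supported on graphs that are nearly $\bar d$-regular with few short cycles, and on such graphs the cluster-expansion machinery of Section~\ref{secRegularity}---built on Lemmas~\ref{lemClusterTail} and~\ref{lemAnalyticDerivativesBound} together with the derivative identities of Lemma~\ref{lemDerivativeIdentities}---produces an explicit asymptotic $\E_{H,\lam,\zeta}|S| = (1+o(1))\,n\,F(\lam,\zeta,\bar d)$ valid whenever $(\lam,\zeta)\in R(\gamma,O(\bar d))$ for some $\gamma<1$. Averaging yields the self-consistent equation $\bar d = (1+o(1))\,n\,F(\lam,\zeta,\bar d)$.

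Substituting $\lam=c/\sqrt n$ and $\bar d = y\sqrt n$ reduces this to a one-variable equation for $y$ whose unique positive solution, under the cluster-expansion condition $e\zeta c^2<1$ guaranteed by Claim~\ref{claim:zeta}, is $y = \sqrt{W(2\zeta c^2)/(2\zeta)}$; the Lambert-$W$ function appears because the leading-order behaviour of $F$ is essentially exponential in $\lam\bar d\zeta$. This gives~\eqref{eqExpectedEdgesFormula} via $\E_{\lam,\zeta}|G| = n\bar d/2$. The formula~\eqref{eqExpectedTrianglesFormula} for $\E X(G)$ follows analogously: the number of triangles through $v$ equals $|E(H[N_G(v)])|$, whose conditional expectation is $-(1-\zeta)\partial_\zeta\log\Xi_H(\lam,\zeta)$ by Lemma~\ref{lemDerivativeIdentities}; a parallel cluster-expansion estimate, now differentiating in $\zeta$ via Lemma~\ref{lemAnalyticDerivativesBound}, evaluates this at $\bar d = y\sqrt n$ to yield the claimed formula after summing over $v$ and dividing by three. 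Concentration of $X(G)$ is then obtained from concentration of the individual triangle counts $t_G(v)$ together with the edge count, again via Theorem~\ref{thm:barbour2019longterm} on $\Omega_{\textup{typ}}$, where $t_G(v)$ is $O(\sqrt n)$-Lipschitz per single-edge update.

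The main obstacle I anticipate is coordinating the cluster-expansion approximation with the set of typical graphs $H$: Lemma~\ref{lemClusterTail} controls convergence in terms of $\Delta(H)$ rather than $\bar d(H)$, so producing a uniform formula requires the quantitative bound $\Delta(H) = O(\bar d)$ on the support of the conditional marginal, coming from the stochastic-domination tools of Section~\ref{subsecPartitionFunctions}, with enough slack that the effective parameter $\gamma$ in Lemma~\ref{lemClusterTail} remains bounded away from $1$ under Condition~\ref{ConditionThm1}. A secondary hurdle is the contractivity proof for $P_{\lam,\zeta}$: a single-edge update can in principle affect triangle constraints across many pairs, so path coupling succeeds only on the restricted set $\Omega_{\textup{typ}}$, and one must verify via a union bound and stochastic domination that the chain remains inside $\Omega_{\textup{typ}}$ long enough for Theorem~\ref{thm:barbour2019longterm} to yield useful tail estimates.
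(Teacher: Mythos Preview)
Your proposal is correct and follows essentially the same route as the paper: degree concentration via contractivity of $P_{\lam,\zeta}$ on a bounded-degree set (the paper's Lemma~\ref{lemma:contraction} and Lemma~\ref{lem:fullconcentration}), local conditioning to reduce to the Ising model $\nu_{H,\lam,\zeta}$ (Lemma~\ref{lemLocalCond}), a cluster-expansion formula for $\E_{H,\lam,\zeta}|S|$ and $\E_{H,\lam,\zeta}|E(S)|$ on nearly-regular graphs with few short cycles (Lemma~\ref{lemOccTree}), and the resulting fixed-point equation whose solution is $\sqrt{W(2\zeta c^2)/(2\zeta)}$. One small point: for the concentration of $X(G)$ the paper does not go through $t_G(v)$ but instead uses a McShane--Whitney extension of $X$ from the bounded-codegree set to all of $\Omega_n$, since $X$ (and likewise $t_G(v)$) is only $O(\lam^2 n)$-Lipschitz on that restricted set; your sketch will need the same device or something equivalent.
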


From the statement of Lemma~\ref{lemDensityEstimate} we can see how the choice of $\zeta$ in~\eqref{eqZetaDef} arises: under Condition~\ref{ConditionThm1}, $\E_{\lam,\zeta} X(G) = (1+o(1)) \eta \E_p X(G)$.

To prove Lemma~\ref{lemDensityEstimate}, we will use the local conditioning strategy described in Section~\ref{secTechniques}, now generalized to $\mu_{\lam,\zeta}$: we will pick a vertex $v$ among the $n$ vertices, and condition the measure $\mu_{\lam,\zeta}$ on all edges not incident to $v$.  Using concentration inequalities and the cluster expansion, we will be able to write two expressions for $\E_{\lam,\zeta} |G|$, giving an equation that determines this value to first order.

Given a graph $G$ and vertex $v\in V(G)$, we denote by $G_v$  the graph $G-\{v\}$. Recall that $d_G(v)$ denotes the degree of $v$ in $G$, and $t_G(v)$ denotes the number of triangles of $G$ incident to $v$.
Our estimates of $\E_{\lam,\zeta}|G|$  and $\E_{\lam,\zeta}X(G)$ rely on the following fact which allows us to analyze vertex degrees and triangle counts in a sample from $\mu_{\lam,\zeta}$. Recall that for a graph $H$, $\nu_{H,\lam,\zeta}$ denotes the anti-ferromagnetic Ising measure on $H$, defined in~\eqref{eqNulamBetaDef}. 
\begin{lemma}\label{lemLocalCond}
Let $\lam\ge 0$, $\zeta \in [0, 1]$, and let $G\sim \mu_{\lam,\zeta}$. Let $v\in V(G)$. Conditioned on the event $\{G_v=H\}$, $d_G(v)$ is distributed as $|S|$ where $S\sim \nu_{H,\zeta,\lam}$, and $t_G(v)$ is distributed as $|E(S)|$, the number of edges of $H$ induced by the random set $S$.  
\end{lemma}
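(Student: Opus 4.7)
The plan is essentially to unfold the definition of $\mu_{\lam,\zeta}$ and observe that after conditioning on $G_v = H$, the only remaining randomness in $G$ is the set $S := N_G(v) \subseteq V(H)$ of neighbors of $v$, and the induced weight on $S$ matches that defining $\nu_{H,\lam,\zeta}$. Since $d_G(v) = |S|$ and $t_G(v) = |E(S)|$, the two distributional statements in the lemma will both follow from the single statement $N_G(v) \mid \{G_v = H\} \sim \nu_{H,\lam,\zeta}$.

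The key decomposition is to write the exponents $|G|$ and $X(G)$ according to which side of the partition $V(G) = V(H) \cup \{v\}$ each edge or triangle meets. Every edge of $G$ either lies in $H$ or joins $v$ to a vertex of $S$, so $|G| = |H| + |S|$. Every triangle of $G$ either lies in $H$ or contains $v$; in the latter case its other two vertices lie in $S$ and are joined by an edge of $H$, so there are exactly $|E(S)|$ such triangles and $X(G) = X(H) + |E(S)|$. Combining these identities yields the factorization
\[
\lam^{|G|}(1-\zeta)^{X(G)} = \bigl(\lam^{|H|}(1-\zeta)^{X(H)}\bigr) \cdot \lam^{|S|}(1-\zeta)^{|E(S)|}.
\]

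The first factor is constant on the event $\{G_v = H\}$, so after conditioning on this event it cancels against the normalizing constant. Comparing what remains with the definitions~\eqref{eqXilamBetaDef} and~\eqref{eqNulamBetaDef}, the normalizing sum over $S \subseteq V(H)$ of the second factor is exactly $\Xi_H(\lam,\zeta)$, and so the conditional law of $N_G(v)$ is $\nu_{H,\lam,\zeta}$. Reading off $|S|$ and $|E(S)|$ gives the claimed distributions of $d_G(v)$ and $t_G(v)$. There is no real obstacle in the proof: it is a one-line factorization once the edge and triangle counts are split along the partition $V(H) \cup \{v\}$. This clean factorization is precisely what makes the local conditioning strategy outlined in Section~\ref{secTechniques} effective, since it reduces a global calculation on $\mu_{\lam,\zeta}$ to understanding the simpler pairwise-interacting model $\nu_{H,\lam,\zeta}$.
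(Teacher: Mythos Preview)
Your proposal is correct and follows essentially the same approach as the paper: both arguments observe that, conditioned on $G_v=H$, the weight $\lam^{|G|}(1-\zeta)^{X(G)}$ factors into a part depending only on $H$ and the factor $\lam^{|S|}(1-\zeta)^{|E(S)|}$ governing the neighborhood $S=N_G(v)$. Your write-up is in fact more detailed than the paper's, explicitly spelling out the decompositions $|G|=|H|+|S|$ and $X(G)=X(H)+|E(S)|$, whereas the paper simply asserts the resulting proportionality.
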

\begin{proof}
Note that $V(H)= V(G)\backslash\{v\}$. Let $E(v)=\{\{v,x\}: x\in V(H)\}$, the set of all potential edges incident to $v$. Let $\psi : V(H) \to E(v)$ denote the bijection $x\mapsto \{v,x\}$. Then for any $S\subseteq V(H)$,
\[
\P(G \cap E(v)= \psi(S) \mid G_v=H)\propto \lam^{|S|}(1-\zeta)^{|E(S)|} \,. \qedhere
\]
\end{proof}

The above lemma allows us to access information about the degree of a vertex $v$ (and the number of triangles it is incident to) in $G\sim \mu_{\lam,\zeta}$ by studying $\nu_{H,\zeta,\lam}$ on typical instances $H$ of the graph $G_v$. The advantage here is that while the model $\mu_{\lam,\zeta}$ has three-way interactions (penalized substructures are sets of three edges forming  a triangle), the Ising model $\nu_{H,\lam,\zeta}$ has only pairwise interactions, and we can analyze this model precisely using the cluster expansion. 

Given a graph $H$ and $\lam \ge 0$, $\zeta\in [0,1]$, let
\begin{equation}
\label{eqAlphaHdef}
    \alpha_{H,\lam,\zeta}=\frac{1}{|V(H)|}\E_{H,\lam,\zeta}(|S|)
\end{equation}
be the expected fraction of vertices of $H$ in the set $S$ drawn from the measure $\nu_{H,\lam,\zeta}$.  Let
\begin{equation}
\label{eqRhoHdef}
    \rho_{H,\lam,\zeta} = \frac{1}{|V(H)|}\E_{H,\lam,\zeta}(|E(S)|)
\end{equation}
be the expected number of edges induced by $S$ normalized by the number of vertices of $H$.

We record the following immediate corollary of Lemma~\ref{lemLocalCond}. 
\begin{cor}\label{corLocalCond}
Let $\lam \ge 0$, $\zeta\in [0,1]$, and let $G\sim \mu_{\lam,\zeta}$. Then
\begin{align}
   \E_{\lam, \zeta} |G| &=\frac{n-1}{2}\sum_{v\in V(G)} \E_{\lam,\zeta}(\alpha_{G_v,\lam,\zeta}) \, ,
    \intertext{and}
    \E_{\lam,\zeta} X(G) &=\frac{n-1}{3} \sum_{v\in V(G)} \E_{\lam,\zeta}(\rho_{G_v,\lam,\zeta}) \,.
\end{align}
\end{cor}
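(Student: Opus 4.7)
The proof is a direct application of Lemma~\ref{lemLocalCond} combined with two elementary counting identities, so the plan is straightforward. First I would rewrite both quantities of interest as sums of vertex-local statistics, using the handshake lemma and the observation that every triangle is incident to exactly three vertices:
\begin{equation}
|G| = \tfrac{1}{2}\sum_{v\in V(G)} d_G(v), \qquad X(G) = \tfrac{1}{3}\sum_{v\in V(G)} t_G(v).
\end{equation}

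Next, for each fixed $v$, I would take the expectation and condition on the $\sigma$-field generated by $G_v$, using the tower property:
\begin{equation}
\E_{\lam,\zeta}[d_G(v)] = \E_{\lam,\zeta}\bigl[\E_{\lam,\zeta}[d_G(v)\mid G_v]\bigr],
\end{equation}
and analogously for $t_G(v)$. By Lemma~\ref{lemLocalCond}, on the event $\{G_v = H\}$ the random variable $d_G(v)$ has the distribution of $|S|$ with $S\sim\nu_{H,\lam,\zeta}$, while $t_G(v)$ has the distribution of $|E(S)|$. Invoking the definitions~\eqref{eqAlphaHdef} and~\eqref{eqRhoHdef} and noting that $|V(G_v)|=n-1$, the conditional expectations equal $(n-1)\alpha_{G_v,\lam,\zeta}$ and $(n-1)\rho_{G_v,\lam,\zeta}$ respectively.

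Summing over $v\in V(G)$ and exchanging expectation and sum yields the two claimed identities. There is no substantive obstacle; the corollary is essentially a bookkeeping consequence of Lemma~\ref{lemLocalCond}, recorded in this form because both sides will be matched to estimates coming from the cluster expansion applied to typical instances of $G_v$ in the subsequent sections.
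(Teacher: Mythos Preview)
Your proof is correct and is exactly the intended derivation; the paper does not even spell out a proof, simply recording the statement as an ``immediate corollary of Lemma~\ref{lemLocalCond}'', and your argument via the handshake/triangle identities and conditioning on $G_v$ is precisely that immediate deduction.
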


To analyze $\alpha_{G_v,\lam,\zeta}$ and $\rho_{G_v,\lam,\zeta}$ we will use the cluster expansion. To apply this, we require that the maximum degree of $G_v$ is not too large. We will control the maximum degree of $G$ (hence also $G_v$) using stochastic domination (Lemma~\ref{lemStochDom}).  This allows us to conclude that if $G\sim \mu_{\lam,\zeta}$, then $\Delta(G)\leq (1+o(1))n\lam$ whp. Therefore, if $\lam, \zeta$ are such that $\lam \zeta \le 1/(\gamma n\lam)$ for some $\gamma>e$ (\textit{cf.}~Lemma~\ref{lemClusterTail}) we are in a position to apply the cluster expansion to analyze the Ising model $\mu_{H,\lam,\zeta}$ for any subgraph $H\subseteq G$.

Define the functions
\begin{align}\label{eqalphadef}
    \alpha(\Delta,\lam,\zeta) &= \frac{W(\zeta \lam \Delta)}{\zeta \Delta}  
    \intertext{and}
    \label{eqrhodef}
    \rho(\Delta,\lam,\zeta) &=  \frac{1}{2} \Delta (1-\zeta) \, \alpha(\Delta,\lam,\zeta)^2  \,.  
\end{align}

For a graph $H$, recall that 
$\delta(H)$ denotes its minimum degree and $\Delta(H)$ its maximum  degree. 
Let $C_\ell$ denote the cycle on $\ell$ vertices, and let $C_\ell(H)$ denote the number of copies of $C_{\ell}$ in $H$. The following lemma shows that if a graph $H$ `looks like' a regular tree (in the sense that it is approximately regular and contains few short cycles) then $\alpha_{H,\lam,\zeta}$ and $\rho_{G,\lam,\zeta}$ essentially only depend on $\Delta, \lam, \zeta$ (provided that these parameters are in the cluster expansion regime).

\begin{lemma}\label{lemOccTree}
Fix $M> 0$, $\gamma \in [0,1)$, and $\vartheta>0$. Fix $\zeta \in (0,1]$ and let $H$ be a graph on $n$ vertices such that $\Delta = \Delta(H)=\omega(1)$ and 
\begin{equation}\label{eq:HAlmostReg}
 \delta(H) = (1+o(1)) \Delta \,.   
\end{equation}
Let $k=\omega(1)$ and suppose that  for all $\ell\leq k$
\[
C_\ell(H)\leq nM^{\ell}  \Delta^{\ell-2}\, .
\]
Then if 
\begin{equation}
  \vartheta \le  e\lam \zeta \Delta \le \gamma \, ,
\end{equation}
we have the following:
\begin{align}\label{eqTreeAlph}
 \alpha_{H,\lam,\zeta}=(1+o(1)) \alpha(\Delta,\lam,\zeta) \,, 
 \intertext{and}
 \label{eqTreeRho}
 \rho_{H,\lam,\zeta} = (1+o(1)) \rho(\Delta,\lam,\zeta) \, .
\end{align}
\end{lemma}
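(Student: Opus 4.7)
My plan is to analyze $\alpha_{H,\lam,\zeta}$ and $\rho_{H,\lam,\zeta}$ via the cluster expansion of $\log \Xi_H(\lam,\zeta)$, exploiting Lemma~\ref{lemDerivativeIdentities} which expresses both quantities as logarithmic derivatives:
\begin{equation}
\alpha_{H,\lam,\zeta}=\frac{\lam}{n}\,\partial_\lam\log\Xi_H(\lam,\zeta),\qquad \rho_{H,\lam,\zeta}=-\frac{1-\zeta}{n}\,\partial_\zeta\log\Xi_H(\lam,\zeta).
\end{equation}
It therefore suffices to approximate $\log\Xi_H$ uniformly on a small polydisc around $(\lam,\zeta)$ and then pass to first partials via Cauchy's estimate (Lemma~\ref{lemAnalyticDerivativesBound}). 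The hypothesis $e\lam\zeta\Delta\leq\gamma<1$, combined with $\lam\to 0$ and $\zeta\Delta\to\infty$ (which follows from $\vartheta\leq e\lam\zeta\Delta$), ensures that a polydisc of radius $r=\Theta(\lam)$ centered at $(\lam,\zeta)$ remains inside the cluster-expansion region $R(\gamma',\Delta)$ for some $\gamma'\in(\gamma,1)$, so Lemma~\ref{lemClusterTail} applies uniformly on this polydisc.

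Next I truncate the cluster expansion at a slowly growing cutoff $k_0=\omega(1)$ with $k_0\leq k$; by Lemma~\ref{lemClusterTail} the tail is $O(n|\lam|\gamma^{k_0})=o(n|\lam|)$ uniformly on the polydisc. For each $k\leq k_0$, the truncated contribution $\sum_{|\Gamma|=k}\phi_{H,\zeta}(\Gamma)\lam^k$ is a linear combination, indexed by isomorphism types of connected $k$-vertex graphs, of subgraph counts in $H$. I split this into \emph{tree-type} clusters (whose underlying subgraph is a tree) and \emph{cycle-type} clusters. For the tree-types, the near-regularity hypothesis $\delta(H)=(1+o(1))\Delta(H)$ forces the count of rooted copies in $H$ to equal $(1+o(1))\,n$ times the corresponding per-root count on the infinite $\Delta$-regular tree $T_\Delta$. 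Cycle-type clusters must contain a cycle of some length $\ell\leq k$; bounding them by $C_\ell(H)\cdot\Delta^{k-\ell}$ (crudely counting connected extensions of the cycle) with the weight $|\lam\zeta|^k$, and using $C_\ell(H)\leq n M^\ell\Delta^{\ell-2}$, the cycle contribution at level $k$ is $O\bigl(n\Delta^{-2}(M\lam\zeta\Delta)^k\bigr)$. Geometric summation over $k$ (which is available once $\gamma'$ is chosen sufficiently small relative to $M$) yields a cumulative cycle error of $O(n/\Delta^2)=o(n\lam)$.

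Combining these estimates gives $\log\Xi_H(\lam,\zeta)=n\cdot F_{T_\Delta}(\lam,\zeta)+o(n\lam)$ uniformly on the polydisc, where $F_{T_\Delta}$ is the per-vertex cluster-expansion sum on $T_\Delta$. Lemma~\ref{lemAnalyticDerivativesBound} with $r=\Theta(\lam)$ then converts this into $\partial_\lam\log\Xi_H=n\,\partial_\lam F_{T_\Delta}+o(n)$, and analogously for $\partial_\zeta$. The closed-form expressions $\alpha(\Delta,\lam,\zeta)=W(\zeta\lam\Delta)/(\zeta\Delta)$ and $\rho(\Delta,\lam,\zeta)=\tfrac12\Delta(1-\zeta)\alpha(\Delta,\lam,\zeta)^2$ follow from the standard antiferromagnetic-Ising tree recursion on $T_\Delta$: in the regime $\lam\to 0$ with $\zeta\lam\Delta$ bounded, the leading-order mean-field equation $\alpha\approx\lam\,(1-\zeta)^{\Delta\alpha}$ reduces (using $(1-\zeta)^{\Delta\alpha}\sim e^{-\zeta\Delta\alpha}$ in the cluster regime) to $x e^x = \zeta\lam\Delta$ with $x=\zeta\Delta\alpha$, inverting to $x=W(\zeta\lam\Delta)$; the edge-density formula reflects that an edge of $T_\Delta$ is present in $S$ only if both endpoints lie in $S$, an event whose leading-order probability under the Ising weighting is $(1-\zeta)\alpha^2$.

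The main obstacle I anticipate is calibrating the Cauchy radius $r$ in Lemma~\ref{lemAnalyticDerivativesBound}: choosing $r$ too small makes the derivative bound $\tfrac{1}{r}\cdot(\text{error on }\log\Xi_H)$ too weak to beat the target error $\alpha\sim 1/\Delta$, while choosing $r$ too large pushes the disc outside the convergence region $R(\gamma',\Delta)$. The sweet spot $r=\Theta(\lam)$ exploits both the smallness of $\lam$ and the geometric decay $\gamma^{k_0}$ in the cluster-expansion tail, and relies crucially on the sub-critical condition $e\lam\zeta\Delta\leq\gamma<1$ built into the hypothesis. Concurrently, the combinatorial bookkeeping identifying tree-type cluster sums on $H$ with those on $T_\Delta$ must be done uniformly over the complex polydisc rather than just at the real point $(\lam,\zeta)$, which is why the near-regularity and short-cycle bounds are needed in the form of a global structural hypothesis on $H$ rather than merely a local one.
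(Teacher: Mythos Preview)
Your overall strategy matches the paper's: truncate the cluster expansion, split into tree-type and cycle-type clusters, show the tree part dominates and equals an explicit analytic function, then pass to derivatives via Cauchy's estimate. Two concrete gaps remain.

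\textbf{The $\zeta$-radius.} Taking a single polydisc radius $r=\Theta(\lam)$ in both variables is fatal for $\rho$. With a uniform $o(n\lam)$ error on $\log\Xi_H$, Cauchy's estimate in the $\zeta$-direction with radius $\Theta(\lam)$ gives $\partial_\zeta\log\Xi_H = n\,\partial_\zeta F + o(n\lam)/\Theta(\lam)=n\,\partial_\zeta F+o(n)$, hence $\rho_{H,\lam,\zeta}=\rho(\Delta,\lam,\zeta)+o(1)$. But $\rho(\Delta,\lam,\zeta)=\Theta(\lam)\to 0$, so this is not $(1+o(1))\rho$. The fix (which the paper uses) is to take $r_\zeta=\Theta(1)$: since $\lam\Delta=\Theta(1)$, perturbing $\zeta$ by a small constant only moves $e|\lam|(1+|\zeta|\Delta)$ by $\Theta(1)\cdot e\lam\Delta=\Theta(1)$, so one stays inside $R(\gamma',\Delta)$ for $\gamma'\in(\gamma,1)$; and $\zeta\in(0,1]$ fixed keeps $|1-\zeta|<1$ under a constant perturbation. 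With $r_\zeta=\Theta(1)$ the error becomes $o(\lam)$, which is $o(\rho)$.

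\textbf{The identification of the tree contribution.} Your mean-field heuristic $\alpha\approx\lam(1-\zeta)^{\Delta\alpha}\sim\lam e^{-\zeta\Delta\alpha}$ uses $(1-\zeta)^{\Delta\alpha}\sim e^{-\zeta\Delta\alpha}$, i.e.\ $\log(1-\zeta)\approx-\zeta$, which is false for fixed $\zeta\in(0,1]$ (and $\Delta\alpha=\Theta(1)$ here). The correct identification is purely combinatorial: a tree-type cluster on $\ell$ distinct vertices has Ursell weight $(-\zeta)^{\ell-1}/\ell!$, and by Cayley's formula plus near-regularity the number of ordered such clusters is $(1+o(1))n\Delta^{\ell-1}\ell^{\ell-2}$, so the per-vertex tree sum is exactly $\sum_{\ell\ge1}\frac{(-\zeta)^{\ell-1}\ell^{\ell-2}}{\ell!}\Delta^{\ell-1}\lam^\ell$. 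One then checks directly (via the Taylor series $W(x)=\sum_{\ell\ge1}\frac{(-1)^{\ell-1}\ell^{\ell-1}}{\ell!}x^\ell$) that this series equals $f(\Delta,\lam,\zeta):=\frac{1}{2\zeta\Delta}W(\zeta\lam\Delta)(2+W(\zeta\lam\Delta))$ and that $\lam\partial_\lam f=\alpha(\Delta,\lam,\zeta)$, $-(1-\zeta)\partial_\zeta f=\rho(\Delta,\lam,\zeta)$. The tree recursion on $T_\Delta$ is a helpful heuristic for guessing $\alpha$, but it does not furnish the proof.
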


\begin{remark}
Note that the RHS of~\eqref{eqTreeAlph} and~\eqref{eqTreeRho} depend only on $\Delta, \lam,\zeta$. Once suitable definitions are given, this suggests (via the connection of Gibbs measures measures on locally tree-like graphs to Gibbs measures on infinite trees, e.g.~\cite{dembo2013factor,SS14}) that $\alpha(\Delta,\lam,\zeta)$ should equal $(1+o(1))\alpha_{\mathbb T_{\Delta},\lam,\zeta}$, where $\alpha_{\mathbb T_\Delta,\lam,\zeta}$ is the probability that the root of the infinite $\Delta$-regular tree is occupied under $\nu_{\mathbb T_\Delta,\lam,\zeta}$ (when $\lam < \lam_c(\Delta,\zeta)\sim e/(\Delta \zeta ))$ this probability is well defined). In fact, this is how we first arrived at the function $\alpha(\Delta,\lam,\zeta)$. 
\end{remark}

We will prove Lemma~\ref{lemOccTree} in Section~\ref{secRegularity}. We will also need the following concentration inequalities for the degrees, number of edges, and the number of triangles in $G$, respectively, which we will prove in Section~\ref{sec:concentration}. 

Recall that we say a sequence of events $A_n$ holds with very high probability (wvhp) with respect to a probability measure $\P$ if $\mathbb P(A_n) = 1-n^{-\omega(1)}$.

\begin{lemma}
\label{lem:fullconcentration}
Under Condition~\ref{ConditionThm1}, the following events hold wvhp with respect to $G \sim \mu_{\lam,\zeta}$.
\begin{enumerate}[itemsep=10pt]
\item\label{degconcentration} 
$\Delta(G) - \delta(G) \leq 4\sqrt{n\lam}\log n$,
\item\label{edgeconcentration} $||G|- \mathbb{E}_{\lam,\zeta}|G||\leq  n\sqrt{\lambda}\log n$, and
\item\label{triangleconcentration} $ \left | X(G) - \E_{\lam,\zeta} X(G) \right | \leq  n^2\lambda^{3/2}\log^3 n$.
\end{enumerate}
\end{lemma}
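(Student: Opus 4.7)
The plan is to apply the Markov-chain concentration result Theorem~\ref{thm:barbour2019longterm} to the edge-Glauber dynamics $P_{\lam,\zeta}$ of Definition~\ref{defn:Plamzeta} (whose stationary distribution is $\mu_{\lam,\zeta}$), and, for the finer degree bound, also to the vertex-Glauber dynamics $P_{H,\lam,\zeta}$ of Definition~\ref{defn:PHlamzeta}.

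\emph{Contraction setup.} Using the path-coupling criterion of Lemma~\ref{lem:PCneighbors}, I would first show that $P_{\lam,\zeta}$ is $\rho$-contractive with $\rho=(1-2c^2\zeta+o(1))/\binom{n}{2}$ on the set $\Omega_{\text{con}}$ of graphs with $\Delta(G)\le (1+\eps)np$ for small $\eps>0$. The one-step calculation is: for $G_1=G_2\cup\{e_0\}$, choosing $e=e_0$ in the Glauber step reduces the Hamming distance by $1$, while for $e\ne e_0$ the resampling kernels on $G_1,G_2$ can disagree only when $e$ is incident to an endpoint of $e_0$ whose other endpoint is adjacent in $G_2$ to the remaining vertex of $e_0$; there are at most $2\Delta$ such $e$, each contributing expected distance at most $(1+o(1))\lam\zeta$. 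Under Condition~\ref{ConditionThm1} we have $\Delta\lam\zeta \le (1+o(1))c^2\zeta<1/e<1/2$, so the expected Hamming-distance change is strictly negative. Take $\Omega_{\text{typ}}\subset \Omega_{\text{con}}$ to be the analogous set with $\eps/2$ in place of $\eps$; Corollary~\ref{cor:SDdegbound} gives $\mu_{\lam,\zeta}(\Omega_{\text{typ}}^c)=n^{-\omega(1)}$ and a chain-union bound of the same flavor controls the escape probability $e_k$ over polynomially many steps. A one-step flip-probability calculation, dominated by the $\Theta(n^{3/2})$ present edges (each flipping with constant probability), yields $D=O(n^{-1/2})$.

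\emph{Concentration of $|G|$ and $X(G)$.} These are direct applications of Theorem~\ref{thm:barbour2019longterm}. The edge count $|G|$ is $1$-Lipschitz in the Hamming distance, so the theorem gives a sub-Gaussian tail at scale $L\sqrt{D\rho^{-1}\log n}=\Theta(n^{3/4}\log n)=\Theta(n\sqrt\lam\log n)$. The triangle count $X(G)$ is $\Delta(G)$-Lipschitz (flipping an edge $e$ changes $X$ by the codegree $d_G(e)$ of its endpoints), and on $\Omega_{\text{typ}}$ this Lipschitz constant is $O(np)=O(n\lam)$, giving concentration at scale $O(n\lam\cdot n^{3/4}\log n)=O(n^{2}\lam^{3/2}\log^3 n)$. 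In both cases Lemma~\ref{lem:mixingtime} replaces the chain mean by the Gibbs mean $\E_{\lam,\zeta}$.

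\emph{Degree range.} The direct application to $d_G(v)$ with $L=1$ only yields the too-coarse scale $n^{3/4}\log n$, because the edge-Glauber chain touches any fixed vertex only with probability $O(1/n)$ per step. To obtain the desired $\Theta(n^{1/4}\log n)$ scale I would use Lemma~\ref{lemLocalCond}: conditional on $G_v=H$, $d_G(v)$ is distributed as $|S|$ with $S\sim\nu_{H,\lam,\zeta}$. Applying Theorem~\ref{thm:barbour2019longterm} to the vertex chain $P_{H,\lam,\zeta}$, for which an analogous path-coupling computation yields $\rho_H=\Theta(1/n)$ and $D_H=O(n^{-1/2})$, gives concentration of $|S|$ within $O(n^{1/4}\log n)$ of the conditional mean $n\alpha_{H,\lam,\zeta}$. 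Vertex symmetry of $\mu_{\lam,\zeta}$ makes $\E d_G(v)$ independent of $v$, and Lemma~\ref{lemOccTree}, applied to typical $G_v$ (whose approximate regularity and scarcity of short cycles follow from the stochastic-domination bounds of Corollary~\ref{cor:SDdegbound}), shows that $\alpha_{G_v,\lam,\zeta}$ is itself concentrated on a value independent of $v$ to first order. A union bound over $v$ then yields $|d_G(v)-\E d_G(v)|\le 2\sqrt{n\lam}\log n$ wvhp, hence $\Delta(G)-\delta(G)\le 4\sqrt{n\lam}\log n$. The main obstacle is precisely this last step: the edge-Glauber chain is too slow for individual-vertex statistics, forcing the detour through the Ising measure $\nu_{G_v,\lam,\zeta}$ and through the cluster-expansion-based regularity result Lemma~\ref{lemOccTree} used to control the conditional mean.
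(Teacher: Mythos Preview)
Your approach to items~(\ref{edgeconcentration}) and~(\ref{triangleconcentration}) matches the paper's: apply the contractive-chain concentration (Theorem~\ref{thm:barbour2019longterm}) via the edge-Glauber dynamics to $|G|$ and to a Lipschitz surrogate for $X$. One technical point you elide: Theorem~\ref{thm:barbour2019longterm} requires $f$ to be globally $L$-Lipschitz, whereas $X$ is only $O(n\lam)$-Lipschitz on $\Omega_{\text{typ}}$. The paper handles this by a McShane--Whitney extension of $X|_{\Omega_{\text{Lip}}}$ (and actually uses the sharper codegree bound $L=\lam^2 n\log^2 n$ rather than the max-degree bound, though your coarser $L=O(n\lam)$ still suffices for the stated scale).

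Your approach to item~(\ref{degconcentration}) has a genuine circularity. You invoke Lemma~\ref{lemOccTree} on $H=G_v$ to pin the conditional mean $n\alpha_{G_v,\lam,\zeta}$ to a vertex-independent value, but the hypotheses of Lemma~\ref{lemOccTree} include $\delta(H)=(1+o(1))\Delta(H)$, which is exactly the degree concentration you are trying to establish. Your claim that approximate regularity of $G_v$ ``follows from the stochastic-domination bounds of Corollary~\ref{cor:SDdegbound}'' is incorrect: stochastic domination by $G(n,p)$ yields only upper bounds on degrees, never lower bounds. Indeed, in the paper's logical order, Lemma~\ref{lem:fullconcentration} is proved \emph{first} and is then used (in Corollary~\ref{cor:GHProps}) to verify the hypotheses of Lemma~\ref{lemOccTree}; you have reversed this dependency.

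The paper avoids the circularity by never attempting to identify the conditional mean. For a non-adjacent pair $u,v$, it conditions on $G_{uv}=H$ and on $\{uv\notin G\}$; under this conditioning $d_G(u)$ and $d_G(v)$ are \emph{i.i.d.}\ copies of $|S|$ with $S\sim\nu_{H,\lam,\zeta}$. Concentration of $|S|$ about its (unknown) mean (Lemma~\ref{lem:concmuH}, which only needs $\Delta(H)\lam\zeta$ bounded away from $1$ and hence follows from the stochastic-domination degree bound alone) then gives $|d_G(u)-d_G(v)|\le 2\sqrt{n\lam}\log n$ wvhp. A union bound over pairs, together with the observation that on the high-probability event $\Omega_{\text{deg}}$ the complement of $G$ has diameter at most $2$, converts this pairwise bound into $\Delta(G)-\delta(G)\le 4\sqrt{n\lam}\log n$.
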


Before turning to the proof of Lemma~\ref{lemDensityEstimate}, we record the following corollary to Lemmas~\ref{lemOccTree} and~\ref{lem:fullconcentration}.

\begin{cor}\label{cor:GHProps}
Under Condition~\ref{ConditionThm1}, the following events hold wvhp with respect to $G \sim \mu_{\lam,\zeta}$. Setting $\overline \Delta:=\E_{\lambda,\zeta} \Delta(G)$,
\begin{enumerate}
\item  $\delta(G) = (1+o(1))\Delta(G) =(1+o(1))\overline\Delta$,
\item $\alpha_{G_v,\lam,\zeta}=(1+o(1))\alpha(\overline\Delta,\lam,\zeta)$ for all $v\in V(G)$,
\item $\rho_{G_v,\lam,\zeta}=(1+o(1))\rho(\overline\Delta,\lam,\zeta)$ for all $v\in V(G)$.
\end{enumerate}
In particular, $\E_{\lambda,\zeta}(\alpha_{G_v,\lam,\zeta})=(1+o(1))\alpha(\overline\Delta,\lam,\zeta)$ and $\E_{\lambda,\zeta}(\rho_{G_v,\lam,\zeta})=(1+o(1))\rho(\overline\Delta,\lam,\zeta)$ for all $v\in V(G)$.
\end{cor}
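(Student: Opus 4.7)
The plan is to combine the degree concentration results in Lemma~\ref{lem:fullconcentration} with the tree-like occupancy estimates in Lemma~\ref{lemOccTree}, using continuity of $\alpha(\Delta,\lam,\zeta)$ and $\rho(\Delta,\lam,\zeta)$ in $\Delta$ to pass from $\Delta(G_v)$ to $\overline\Delta$. Throughout, Condition~\ref{ConditionThm1} and Claim~\ref{claim:zeta} ensure we land in the convergent regime for the cluster expansion.

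First I would establish that $\overline d(G)$, $\delta(G)$, $\Delta(G)$, and $\overline\Delta$ are mutually $(1+o(1))$-factors of one another. The bound $\Delta(G)\le(1+o(1))np=\Theta(\sqrt n)$ wvhp follows from stochastic domination (Corollary~\ref{cor:SDdegbound}), while Lemma~\ref{lemDegLB} combined with $p^2n=\Theta(1)$ gives $\E_{\lam,\zeta}|G|=\Theta(n^{3/2})$, and Lemma~\ref{lem:fullconcentration}(\ref{edgeconcentration}) then implies $\overline d(G)=\Theta(\sqrt n)$ wvhp. Part~(\ref{degconcentration}) of the same lemma yields $\Delta(G)-\delta(G)=O(n^{1/4}\log n)=o(\overline d(G))$, so $\delta(G)=(1+o(1))\Delta(G)=(1+o(1))\overline d(G)$ wvhp. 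Since $\Delta(G)\le n-1$ deterministically and the complementary event has probability $n^{-\omega(1)}$, these approximations survive taking expectations, giving $\overline\Delta=(1+o(1))\cdot 2\E_{\lam,\zeta}|G|/n$ and hence $\Delta(G)=(1+o(1))\overline\Delta$ wvhp. This establishes part~(1).

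Next I would verify the hypotheses of Lemma~\ref{lemOccTree} on $H=G_v$ simultaneously for all $v$. The degree condition for $G_v$ inherits from the previous paragraph since $\delta(G)-1\le\delta(G_v)\le\Delta(G_v)\le\Delta(G)$, and in particular $\Delta(G_v)=(1+o(1))\overline\Delta=\omega(1)$. The parameter condition $\vartheta\le e\lam\zeta\Delta(G_v)\le\gamma<1$ follows from the sandwich $\Omega(\sqrt n)\le\Delta(G_v)\le(1+\kappa)c\sqrt n$ wvhp (for any fixed $\kappa>0$) together with $\lam=(1+o(1))c/\sqrt n$: the upper bound becomes $e\zeta c^2(1+\kappa)(1+o(1))$, strictly below $1$ for small $\kappa$ by Claim~\ref{claim:zeta}. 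The short-cycle bound $C_\ell(G_v)\le nM^\ell\Delta^{\ell-2}$ for all $\ell\le k$ with some $k=\omega(1)$ would come from stochastic domination by $G(n,p)$ together with tail estimates for subgraph counts at $p=\Theta(n^{-1/2})$; obtaining these at the wvhp level (rather than merely whp) is the main technical obstacle, and I would expect to require Janson-type concentration inequalities or Kim--Vu polynomial concentration applied to cycle counts in $G(n,p)$.

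Given these hypotheses, Lemma~\ref{lemOccTree} yields $\alpha_{G_v,\lam,\zeta}=(1+o(1))\alpha(\Delta(G_v),\lam,\zeta)$ and $\rho_{G_v,\lam,\zeta}=(1+o(1))\rho(\Delta(G_v),\lam,\zeta)$ for every $v$. Because $\zeta\lam\Delta$ stays bounded away from both $0$ and a value strictly less than $1$, and the Lambert-$W$ function is smooth in this range, a direct first-order expansion using $W'(y)=W(y)/(y(1+W(y)))$ converts $\Delta(G_v)=(1+o(1))\overline\Delta$ into $\alpha(\Delta(G_v),\lam,\zeta)=(1+o(1))\alpha(\overline\Delta,\lam,\zeta)$, and similarly for $\rho$; union-bounding over the $n$ vertices preserves the wvhp property and proves parts~(2) and~(3). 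For the expectation statement, the deterministic bounds $\alpha_{G_v,\lam,\zeta}\le 1$ and $\rho_{G_v,\lam,\zeta}\le\Delta(G_v)/2\le n/2$ control the exceptional event's contribution by $n\cdot n^{-\omega(1)}$, which is negligible compared to $\alpha(\overline\Delta,\lam,\zeta)=\Theta(1/\sqrt n)$ and $\rho(\overline\Delta,\lam,\zeta)=\Theta(1/\sqrt n)$, yielding the final assertion.
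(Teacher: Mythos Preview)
Your proposal is correct and follows essentially the same approach as the paper. The step you flag as the ``main technical obstacle''---wvhp control of cycle counts---is handled in the paper precisely by the tool you suggest: Vu's concentration inequality for subgraph counts in $G(n,p)$, which gives $\P_p(C_\ell(H)\ge 2n^\ell p^\ell)\le\exp(-c_\ell(np)^{\ell/(\ell-1)})$; choosing $k=\omega(1)$ slowly enough that $c_\ell\ge 1/\log n$ for all $\ell\le k$ and union-bounding yields the required wvhp bound.
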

\begin{proof}
All expectations and probabilities in the following proof will be with respect to $\mu_{\lam,\zeta}$.  Recall that we let $\overline d(G)$ denote the average degree of $G$. 
By Part (2) of Lemma~\ref{lem:fullconcentration} and Lemma~\ref{lemDegLB} we have that $\E \overline d(G)=\Omega(n^{1/2})$ and so by Part (2) of Lemma~\ref{lem:fullconcentration} we have $\overline d(G)=(1+o(1))\E \overline d(G)$ wvhp. The first item now follows from (1) of Lemma~\ref{lem:fullconcentration}.

For the second and third items we fix $v\in V(G)$ and show that $H=G_v$ satisfies the conditions of Lemma~\ref{lemOccTree} wvhp. Henceforth, any assertion we make about $H$ is to be understood to hold with probability wvhp.

First note that since $H$ and $G$ differ in a single vertex we have $\delta(H)=(1+o(1))\Delta(H)$ by the item (1). Moreover, by item (1) and Lemmas~\ref{lemStochDom} and~\ref{lemDegLB}, we have
\begin{align}\label{eq:DeltaHBd}
\Omega(np)=\Delta(H)\leq (1+o(1))np\, .
\end{align}
Next let $k=\omega(1)$ and let $\ell\leq k$. By Lemma~\ref{lemStochDom}  and a concentration inequality for subgraph counts in $G(n,p)$ due to Vu~\cite{vu2001large},
\begin{equation}
  \P_p \left(C_\ell(H)\geq 2n^\ell p^\ell \right)\leq \exp\left(-c_\ell (np)^{\ell/(\ell-1)} \right)  
\end{equation}
for some $c_\ell>0$. We choose $k$ growing sufficiently slowly so that $c_\ell\geq 1/\log(n)$ for all $\ell\leq k$ and $n$ sufficiently large. Applying a union bound over $\ell\leq k(\leq n)$ we conclude that $C_\ell(H)\leq 2n^\ell p^\ell$ for all $\ell\leq k$. 
We conclude from the lower bound in~\eqref{eq:DeltaHBd} that there exists a constant $M>0$ such that $C_\ell(H)\leq  M^\ell n\Delta(H)^{\ell-2}$ for all $\ell\leq k$. Moreover, by~\eqref{eq:DeltaHBd} 
\[
\Omega(1)\leq \lam \zeta \Delta(H) \leq (1+o(1))c^2\zeta\, .
\]
By Claim~\ref{claim:zeta} we conclude that $H$ satisfies the conditions of Lemma~\ref{lemOccTree} and so 
\[
\alpha_{H,\lam,\zeta}=(1+o(1))\alpha(\Delta(H),\lam,\zeta)=(1+o(1))\alpha(\overline\Delta,\lam,\zeta)\, ,
\]
and similarly 
\[
\rho_{H,\lam,\zeta} = (1+o(1)) \rho(\overline\Delta,\lam,\zeta) 
\]
as desired. 

The final claim follows by noting that $\alpha_{H,\lam,\zeta}\leq 1$ and $\rho_{H,\lam,\zeta}\leq n$ deterministically.
\end{proof}

With this corollary in hand we can  prove  Lemma~\ref{lemDensityEstimate}. 
\begin{proof}[Proof of Lemma~\ref{lemDensityEstimate}]

Let $\overline\Delta=\E_{\lam,\zeta} \Delta(G)$ as in Corollary~\ref{cor:GHProps}. Recall that $\overline\Delta=\Theta(\sqrt{n})$ so that we can parameterize $\overline\Delta=b\sqrt{n}$ for $b=\Theta(1)$. 
We conclude from Corollaries~\ref{corLocalCond} and~\ref{cor:GHProps} that 
\begin{equation}
    b\sqrt{n} = (1+o(1))n  \cdot\alpha(b \sqrt{n},\lam,\zeta)   \, .
\end{equation}
Applying the definition of the function  $\alpha$ in~\eqref{eqalphadef},  we have
\[
b = (1+o(1))\frac{1}{b\zeta}W(b\zeta c) \implies b^2\zeta = (1+o(1))W(b\zeta c).
\]
This implies
\[
\zeta b^2 e^{\zeta b^2}=(1+o(1))b\zeta c \implies 2\zeta c^2 = (1+o(1))2\zeta b^2 e^{2\zeta b^2} \implies 2\zeta b^2 = (1+o(1))W(2\zeta c^2),
\]
or $b = (1+o(1))\sqrt{\frac{W(2\zeta c^2)}{2 \zeta }  }, $
which yields the formula~\eqref{eqExpectedEdgesFormula}.

Turning to triangles, again by Corollaries~\ref{corLocalCond} and~\ref{cor:GHProps}, we have 
\begin{align}
    \E_{\lam,\zeta} X(G) &= (1+o(1)) \frac{n^2}{3} \rho( b \sqrt{n}, \lam, \zeta) \\
    &= (1+o(1)) \frac{n^2}{3} \frac{1}{2} b \sqrt{n} \cdot (1-\zeta) \left (\frac{ W(\zeta \lam b \sqrt{n})} {\zeta b \sqrt{n} }  \right) ^2  \\
    &= (1+o(1))  \frac{1-\zeta}{6}  \sqrt{\frac{W(2\zeta c^2)}{2 \zeta }  }  \left (\frac{ W(\zeta c b )} {\zeta b  }  \right) ^2   n^{3/2}   \\
     &= (1+o(1))  \frac{1-\zeta}{6}  \sqrt{\frac{W(2\zeta c^2)}{2 \zeta }  }  \left (\frac{ \zeta  b^2 } {\zeta b  }  \right) ^2   n^{3/2}   \\
    &= (1+o(1)) \frac{1-\zeta}{6} \left(  \frac{W(2\zeta c^2)}{2\zeta}\right)^{3/2}    n^{3/2} \,,
\end{align}
giving~\eqref{eqExpectedTrianglesFormula}.

The concentration statements in the lemma follow immediately from Lemma~\ref{lem:fullconcentration}. 
\end{proof}

\subsection{Reduction to triangle and edge density estimation}

We now show how the estimates in Lemma~\ref{lemDensityEstimate} can be used to approximate the lower-tail probability. 
 
First we relate the lower-tail probability to the partition function $Z(\lam,\zeta)$ and  relate the lower-tail conditional measure to the Gibbs measure $\mu_{\lam,\zeta}$.

For the special case $\eta=0$, we have an identity.
\begin{lemma}
    \label{lem:LTtoZhardcore}
    Suppose $p\in[0,1]$ and set $\lam = p/(1-p)$. Then
    \begin{align}
        \P_p(X=0) = (1-p)^{\binom{n}{2}} Z(\lam, 1)\,,
    \end{align}
    and the measures $\mu_{\lam, 1}$ and $G(n,p)$ conditioned on the event $\{X = 0\}$ are identical. 
\end{lemma}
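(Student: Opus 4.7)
The plan is to unwind the definitions directly; neither assertion requires more than bookkeeping. Specializing~\eqref{eqZlamBetaDef} to $\zeta=1$, the weight $(1-\zeta)^{X(G)}$ becomes $0^{X(G)}$, which (under the convention $0^0=1$) vanishes unless $G$ is triangle-free. Hence $Z(\lam,1) = \sum_{G \text{ triangle-free}} \lam^{|G|}$, which is exactly the hard-core partition function on the triangle hypergraph already introduced in~\eqref{eqZlamIntrodef}.

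To obtain the first claim, I would expand $\P_p(X=0)$ as a sum over labelled triangle-free graphs $G$ on $[n]$, each contributing the standard $G(n,p)$ weight $p^{|G|}(1-p)^{\binom{n}{2}-|G|}$. Pulling out the common factor $(1-p)^{\binom{n}{2}}$ and using $\lam = p/(1-p)$ rewrites the remaining sum as $Z(\lam,1)$, yielding the claimed identity.

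For the second claim, I would expand the conditional probability
\begin{equation*}
\P_p(G \mid X=0) = \frac{p^{|G|}(1-p)^{\binom{n}{2}-|G|}\,\1[X(G)=0]}{\P_p(X=0)}
\end{equation*}
and substitute the identity just obtained into the denominator. The factors of $(1-p)^{\binom{n}{2}}$ cancel, and collecting the remaining $p/(1-p)$ terms leaves exactly $\lam^{|G|}\1[X(G)=0]/Z(\lam,1)$, which is the definition of $\mu_{\lam,1}(G)$ from~\eqref{eqMulamBetaDef}.

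The only subtle point worth flagging is the convention $0^0=1$ used when evaluating $(1-\zeta)^{X(G)}$ at $\zeta=1$ on triangle-free $G$; beyond that there is no genuine obstacle.
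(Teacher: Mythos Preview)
Your proposal is correct and matches the paper's approach exactly: the paper simply cites~\eqref{eqZlamIntrodef} and observes $Z(\lam)=Z(\lam,1)$ and $\mu_\lam=\mu_{\lam,1}$, which is precisely the direct unwinding of definitions you spell out. Your version is in fact more explicit than the paper's one-line justification.
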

This follows immediately from~\eqref{eqZlamIntrodef}, noting that $Z(\lam) = Z(\lam, 1)$ and $\mu_\lam = \mu_{\lam, 1}$.

For the case $\eta\in (0,1)$ we have an approximate identity.  
\begin{lemma}\label{lem:LTtoZ}
Under Condition~\ref{ConditionThm1},
\begin{align}\label{eq:PFnTransfer}
 \frac{1}{n^{3/2}}\log \P_p(X\leq \eta \E X)= -\frac{c}{2} - \log (1 -\zeta) \eta \frac{c^3}{6} +  \frac{1}{n^{3/2}}\log Z(\lam,\zeta) +o(1)\,.
\end{align}
Moreover, if $\eps>0$ is fixed and $\mathcal{E}_n$ is any event such that
\begin{equation}
    \mu_{\lam,\zeta}(\mathcal{E}_n) \le e^{- \eps n^{3/2}} \,,
\end{equation}
then
\begin{equation}
    \P_{p} ( \mathcal{E}_n | X \le \eta \E X )\le e^{-\eps n^{3/2}/2} \, .
\end{equation}
\end{lemma}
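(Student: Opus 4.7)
The plan is a tilting argument: rewrite $\P_p(X\le \eta \E_p X)$ using $\P_p(G)=(1-p)^{\binom{n}{2}}\lam^{|G|}$ and the identity $\lam^{|G|}=\lam^{|G|}(1-\zeta)^{X(G)}\cdot (1-\zeta)^{-X(G)}$, which identifies $\mu_{\lam,\zeta}$ as the natural exponential tilt of $G(n,p)$ biasing toward few triangles. By Lemma~\ref{lemDensityEstimate}, $\zeta$ is precisely tuned so that $\E_{\lam,\zeta}X(G)=(1+o(1))\eta \E_p X$, making this the correct tilt. The two explicit terms on the RHS of~\eqref{eq:PFnTransfer} will arise from $\log(1-p)^{\binom{n}{2}}=-cn^{3/2}/2+o(n^{3/2})$ and $\log(1-\zeta)^{-\eta \E_p X}=-\log(1-\zeta)\cdot \eta c^3 n^{3/2}/6+o(n^{3/2})$, using $\E_p X=(1+o(1))c^3 n^{3/2}/6$.

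For the upper bound I will use the Markov-type inequality $\1[X(G)\le \eta \E_p X]\le (1-\zeta)^{X(G)-\eta \E_p X}$, valid since $0<1-\zeta<1$. Integrating against $\P_p$ yields
\[
\P_p(X\le \eta \E_p X)\le (1-p)^{\binom{n}{2}}(1-\zeta)^{-\eta \E_p X}Z(\lam,\zeta),
\]
and taking logs gives the upper half of~\eqref{eq:PFnTransfer}. For the matching lower bound I restrict to the event $A_\delta=\{|X(G)-\eta \E_p X|\le \delta n^{3/2}\}\cap \{X(G)\le \eta \E_p X\}$ for small $\delta>0$. On $A_\delta$ we have $X(G)\ge \eta \E_p X-\delta n^{3/2}$, giving $\lam^{|G|}\ge (1-\zeta)^{-\eta \E_p X+\delta n^{3/2}}\lam^{|G|}(1-\zeta)^{X(G)}$, and summing and identifying the right-hand side as $Z(\lam,\zeta)\mu_{\lam,\zeta}(A_\delta)$ gives
\[
\P_p(X\le \eta \E_p X)\ge (1-p)^{\binom{n}{2}}(1-\zeta)^{-\eta \E_p X+\delta n^{3/2}}Z(\lam,\zeta)\mu_{\lam,\zeta}(A_\delta).
\]
Taking logs and sending $\delta\to 0$ matches the upper bound, provided $\mu_{\lam,\zeta}(A_\delta)=e^{-o(n^{3/2})}$.

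The main obstacle is establishing this last probability lower bound. Lemma~\ref{lemDensityEstimate} only gives $\E_{\lam,\zeta}X(G)=(1+o(1))\eta \E_p X$, so a priori the expected value could sit slightly above $\eta \E_p X$ and push almost all of the $\mu_{\lam,\zeta}$-mass above the threshold. I will resolve this by perturbing to $\zeta'=\zeta+\tau_n$ for some $\tau_n=o(1)$ chosen so that $\E_{\lam,\zeta'}X(G)$ is strictly below $\eta \E_p X$ by more than the concentration window in Lemma~\ref{lem:fullconcentration}(\ref{triangleconcentration}). Then $\mu_{\lam,\zeta'}(A_\delta)=1-o(1)$ by concentration, and since $\partial_\zeta \log Z(\lam,\zeta)=-\E_{\lam,\zeta}X/(1-\zeta)=O(n^{3/2})$ by Lemma~\ref{lemDerivativeIdentities}, the $o(1)$ shift in $\zeta$ changes $\log Z$ by $o(n^{3/2})$ and leaves the asymptotic formula unchanged. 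A similar continuity argument handles the $(1-\zeta)^{-\eta \E_p X}$ prefactor.

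For the second assertion, the same Markov-type inequality applied to $\mathcal{E}_n\cap\{X\le \eta \E_p X\}$ yields
\[
\P_p(\mathcal{E}_n\cap\{X\le \eta \E_p X\})\le (1-p)^{\binom{n}{2}}(1-\zeta)^{-\eta \E_p X}Z(\lam,\zeta)\mu_{\lam,\zeta}(\mathcal{E}_n)\le (1-p)^{\binom{n}{2}}(1-\zeta)^{-\eta \E_p X}Z(\lam,\zeta)e^{-\eps n^{3/2}}.
\]
Dividing by the $e^{\pm o(n^{3/2})}$-sharp approximation to $\P_p(X\le \eta \E_p X)$ just established in the first part of the lemma, all prefactors cancel and leave $e^{-\eps n^{3/2}+o(n^{3/2})}\le e^{-\eps n^{3/2}/2}$ for $n$ large.
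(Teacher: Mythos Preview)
Your upper bound and second assertion are identical to the paper's proof. For the lower bound, however, your route is genuinely different and in fact more elementary for this particular lemma.

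The paper does not perturb $\zeta$. Instead it invokes Lemma~\ref{lemTransferGnm}, a separate combinatorial construction (Claims~\ref{clmLoweringTriangleCount}--\ref{clmUpwardsConstruction} in Section~\ref{secPointProbs}) which shows directly that $\mu_{\lam,\zeta}(X(G)\in[T-\eps n^{3/2},T])=e^{-o(n^{3/2})}$ by exhibiting many graphs in this window via edge deletions and additions. Your perturbation argument sidesteps this entirely: by nudging $\zeta$ to $\zeta'=\zeta+\tau_n$ you push $\E_{\lam,\zeta'}X$ just below the target so that concentration (Lemma~\ref{lem:fullconcentration}(\ref{triangleconcentration})) alone gives $\mu_{\lam,\zeta'}(A_\delta)=1-o(1)$, and then the $O(n^{3/2})$ bound on $\partial_\zeta\log Z$ absorbs the $o(1)$ shift. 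This is correct and cleaner here, though you should make two points explicit: (i) the concentration in Lemma~\ref{lem:fullconcentration} holds uniformly for $\zeta'$ in a neighborhood of $\zeta$ (its proof depends on $\zeta$ only through $\xi=1-2\zeta c^2$), so the $n$-dependent $\zeta'$ is admissible; and (ii) you need $\E_{\lam,\zeta'}X$ not just below $T$ but also above $T-\delta n^{3/2}$, so your $\tau_n$ must be chosen via the intermediate value theorem to land in a two-sided window---this is where $\tau_n=o(1)$ actually comes from, using monotonicity of $\zeta\mapsto\E_{\lam,\zeta}X$ and Lemma~\ref{lemDensityEstimate} at nearby fixed values of $\zeta$.

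What each approach buys: yours is shorter and purely analytic for Lemma~\ref{lem:LTtoZ} in isolation. The paper's Lemma~\ref{lemTransferGnm} is more work but is needed independently for the $G(n,m)$ results (Theorem~\ref{thmLowerTailGnm}), where one must hit an exact edge count $|G|=M$ in addition to a triangle window; your perturbation idea does not obviously extend to that setting since hitting a single value of $|G|$ exactly is a point-probability estimate, not a concentration statement.
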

Importantly, this lemma relies on the fact that under Condition~\ref{ConditionThm1} (via Lemma~\ref{lemDensityEstimate}), the typical number of triangles in a sample from $\mu_{\lam,\zeta}$ is close to $\eta \E_p X$, the target in the lower-tail event.  We prove  Lemma~\ref{lem:LTtoZ} in Section~\ref{secLowerTailPartition}.

Next, we estimate $Z(\lam,\zeta)$ by integrating the expected number of edges.
\begin{lemma}\label{lemZlambetaEst}
Under Condition~\ref{ConditionThm1},
\[
\lim_{n\to\infty} \frac{1}{n^{3/2}}\log Z(\lam,\zeta)=   \frac{W(2 \zeta c^2)^{3/2} +3W(2\zeta c^2)^{1/2}}{6\sqrt{2\zeta}}\, .
\]
\end{lemma}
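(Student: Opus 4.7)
The plan is to integrate the identity from Lemma~\ref{lemDerivativeIdentities}, $\E_{\theta,\zeta}|G| = \theta\,\partial_\theta \log Z(\theta,\zeta)$. Since $Z(0,\zeta)=1$, the fundamental theorem of calculus gives
\begin{equation}
\log Z(\lam,\zeta) \;=\; \int_0^\lam \frac{\E_{\theta,\zeta}|G|}{\theta}\,d\theta,
\end{equation}
and after the change of variable $\theta = c'/\sqrt n$ this becomes $\log Z(\lam,\zeta) = \int_0^{c_n} \E_{c'/\sqrt n,\zeta}|G|/c'\,dc'$, where $c_n := \lam\sqrt n = (1+o(1))c$. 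Inserting the asymptotic formula of Lemma~\ref{lemDensityEstimate} into the integrand and computing the resulting integral explicitly will produce the claimed formula.

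To make this rigorous I would divide by $n^{3/2}$, set
\begin{equation}
f_n(c') \;:=\; \frac{\E_{c'/\sqrt n,\zeta}|G|}{c'\, n^{3/2}},
\end{equation}
and apply the dominated convergence theorem on $[0,c]$. Pointwise convergence $f_n(c') \to (2c')^{-1}\sqrt{W(2\zeta c'^2)/(2\zeta)}$ for each $c' \in (0,c]$ follows directly from Lemma~\ref{lemDensityEstimate}. One subtlety here is that Lemma~\ref{lemDensityEstimate} is stated under Condition~\ref{ConditionThm1} which fixes both $\eta$ and $c$ together; inspecting its proof, however, the key analytic constraint is $e\zeta c^2 < 1$ (Claim~\ref{claim:zeta}), and this is inherited by every smaller $c' \in (0,c]$ for the same $\zeta$, so the conclusion applies throughout the integration range. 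A uniform, integrable envelope is supplied by stochastic domination (Lemma~\ref{lemStochDom}), which gives $\E_{c'/\sqrt n,\zeta}|G| \le \binom{n}{2}\cdot c'/\sqrt n$ and hence $f_n(c') \le \tfrac12 + o(1)$ for all $c' \in (0,c]$.

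Finally, I would evaluate the limit integral by the substitution $u = W(2\zeta c'^2)$. The defining identity $ue^u = 2\zeta c'^2$ yields $dc'/c' = (1+u)/(2u)\,du$ and $\sqrt{W(2\zeta c'^2)/(2\zeta)} = \sqrt{u/(2\zeta)}$, so the integral reduces to
\begin{equation}
\frac{1}{4\sqrt{2\zeta}}\int_0^{W(2\zeta c^2)} \bigl(u^{-1/2} + u^{1/2}\bigr)\,du \;=\; \frac{W(2\zeta c^2)^{3/2} + 3\,W(2\zeta c^2)^{1/2}}{6\sqrt{2\zeta}},
\end{equation}
which matches the stated formula. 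The only substantive obstacle is the limit--integral interchange, since Lemma~\ref{lemDensityEstimate} only provides pointwise (in $c'$) asymptotics; as outlined, this is resolved by the stochastic-domination envelope together with dominated convergence, after which the computation is a routine change of variables.
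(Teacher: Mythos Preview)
Your proposal is correct and follows essentially the same route as the paper: integrate the identity $\log Z(\lam,\zeta)=\int_0^\lam \E_{\theta,\zeta}|G|/\theta\,d\theta$, substitute the edge-count asymptotic from Lemma~\ref{lemDensityEstimate}, and evaluate the resulting integral. You are in fact more careful than the paper in two places---you explicitly justify the limit--integral interchange via dominated convergence with the stochastic-domination envelope, and you note that the hypothesis needed from Lemma~\ref{lemDensityEstimate} (namely $e\zeta c'^2<1$) is inherited for all $c'\in(0,c]$---whereas the paper leaves both points implicit.
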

\begin{proof}
Using Lemma~\ref{lemDerivativeIdentities}, we have 
\begin{align}\label{eqExppartialInt}
\log Z(\lam,\zeta) = \int_{0}^\lam \frac{\E_{\theta,\zeta}|G|}{\theta} d\theta\, .
\end{align}
We then apply Lemma~\ref{lemDensityEstimate},
\begin{align}
    \log Z(\lam,\zeta) &=  \int_{0}^{c(1+o(1))} \frac{\E_{x/\sqrt{n},\zeta}(|G|)}{x} dx \\
& =(1+o(1))\frac{n^{3/2}}{2} \int_{0}^{c(1+o(1))} \sqrt{\frac{W(2\zeta x^2)}{2\zeta x^2}} dx\\
&= (1+o(1))n^{3/2} \cdot\left[  \frac{W(2 \zeta c^2)^{3/2} +3W(2\zeta c^2)^{1/2}}{6\sqrt{2\zeta}}\right]\, . \qedhere
\end{align}
\end{proof}

We can now prove Theorem~\ref{thmLowerTailAsymp}.
\begin{proof}[Proof of Theorem~\ref{thmLowerTailAsymp}]
  We fix $c>0$, $\eta\in[0,1)$ and $\zeta = [0,1)$ satisfying Condition~\ref{ConditionThm1}.   Lemma~\ref{lem:LTtoZ}  (or Lemma~\ref{lem:LTtoZhardcore} in the case $\eta=0$) gives
  \begin{align}
       \frac{1}{n^{3/2}}\log \P_p(X\leq \eta \E X)= -\frac{c}{2} - \log (1 -\zeta) \eta \frac{c^3}{6} +  \frac{1}{n^{3/2}}\log Z(\lam,\zeta) +o(1)
  \end{align}
  then applying Lemma~\ref{lemZlambetaEst} to estimate $\log Z(\lam,\zeta)$  we obtain
  \begin{align}
       \frac{1}{n^{3/2}}\log \P_p(X\leq \eta \E X) &=-\frac{c}{2} - \log (1 -\zeta) \eta \frac{c^3}{6} +  \frac{W(2 \zeta c^2)^{3/2} +3W(2\zeta c^2)^{1/2}}{6\sqrt{2\zeta}} +o(1)\, . \qedhere
  \end{align}
\end{proof}

It remains to prove Lemmas~\ref{lem:fullconcentration}, \ref{lemOccTree} and \ref{lem:LTtoZ} which we do in Sections~\ref{sec:concentration}, \ref{secRegularity} and \ref{secLowerTailPartition} respectively.

\section{Concentration from contraction}\label{sec:concentration}
In this section, we prove Lemma~\ref{lem:fullconcentration} under the following quantitatively weaker condition.
\begin{condition}\label{condition2}
 $\zeta \in (0,1]$  and $c>0$ are fixed such that $\xi:=1-2\zeta c^2>0$ and $\lam =(1+o(1))c/\sqrt{n}$.
\end{condition}
In particular by Claim~\ref{claim:zeta}, Condition~\ref{ConditionThm1} implies Condition~\ref{condition2}.
The proof of Lemma~\ref{lem:fullconcentration} relies on concentration inequalities for the measures $\mu_{\lam,\zeta}$, $\nu_{H,\lam,\zeta}$ which follow from  the fact that the Markov chains $P_{\lam, \zeta}$ and $P_{H,\lam,\zeta}$ (introduced in Section~\ref{subsecMCMC}) contract for suitable choices of $H,\lam,\zeta$. We begin by establishing these contraction properties and then go on to prove the relevant concentration inequalities. 

\subsection{Contraction}
Recall the definition of $P_{\lam,\zeta}$ (Definition~\ref{defn:Plamzeta}) as a Markov chain on $\Omega_n$, the set of all labeled graphs on vertex set $[n]$. Let 
\[
\Omega_{\text{con}} := \left\{y\in\Omega_n : \Delta(y) \leq \lambda n + \sqrt{\lam n} \log n\right\}\, .
\]
\begin{lemma}\label{lemma:contraction}
Let $\lam,\zeta$ satisfy Condition~\ref{condition2} and let $P=P_{\lam,\zeta}$.
For $y,z \in \Omega_{\text{con}}$, we have
\begin{equation}
\frac{W_1(Py,Pz)}{\dist(y,z)} \leq 1-\frac{\xi}{n^2}.
\end{equation}
\end{lemma}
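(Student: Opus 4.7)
My plan is to invoke Lemma~\ref{lem:PCneighbors} (path coupling) to reduce to the case where $y,z\in\Omega_{\text{con}}$ differ in a single edge $e^\ast$, say $y=z\cup\{e^\ast\}$. This is justified because $\Omega_{\text{con}}$ is closed under taking edge subsets: deleting edges cannot increase the maximum degree.

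Next, I would construct the natural coupling of $Py$ and $Pz$ where the same uniformly random edge $e\in\binom{[n]}{2}$ is chosen in both chains and then the Bernoulli resample is coupled optimally. I would split into three cases. If $e=e^\ast$ (probability $1/\binom{n}{2}$), the resampling distribution depends only on $y\setminus\{e^\ast\}=z\setminus\{e^\ast\}=z$, so the two chains can be made identical and the Hamming distance drops to $0$. If $e\ne e^\ast$ and $e$ lies in no triangle whose third edge is $e^\ast$, then $d_y(e)=d_z(e)$, the Bernoulli probabilities match, and the distance remains $1$. Otherwise we have $d_y(e)=d_z(e)+1$ (call this event $\Delta_e=1$), the resample probabilities $p_y(e),p_z(e)$ differ, and with probability $|p_y(e)-p_z(e)|$ the chains now additionally disagree on $e$, making the distance $2$. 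Assembling these contributions yields
\[
\E[\dist(Py,Pz)] \;\le\; 1 - \frac{1}{\binom{n}{2}} + \frac{1}{\binom{n}{2}}\sum_{e:\,\Delta_e=1}|p_y(e)-p_z(e)|.
\]

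To bound the sum I would use two estimates. Combinatorially, if $e^\ast=\{a,b\}$, then $\Delta_e=1$ forces $e$ to share an endpoint with $e^\ast$ and its other endpoint to be a $z$-neighbor of the opposite endpoint of $e^\ast$; hence the number of such $e$ is at most $d_z(a)+d_z(b)\le 2\Delta(z)\le 2(\lambda n+\sqrt{\lambda n}\log n)$. Analytically, writing $x=\lambda(1-\zeta)^{d_z(e)}$, a direct calculation gives
\[
|p_y(e)-p_z(e)| \;=\; \frac{x\zeta}{(1+x)\bigl(1+x(1-\zeta)\bigr)} \;\le\; x\zeta \;\le\; \lambda\zeta.
\]
Multiplying and using $\lambda^2 n = (1+o(1))c^2$ yields $\sum_{e:\Delta_e=1}|p_y(e)-p_z(e)| \le 2\zeta c^2(1+o(1))$. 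Since $1/\binom{n}{2}=2/(n(n-1))$, I obtain
\[
\E[\dist(Py,Pz)] \;\le\; 1 - \frac{2}{n(n-1)}\bigl(1-2\zeta c^2\bigr)(1+o(1)) \;=\; 1 - \frac{2\xi}{n(n-1)}(1+o(1)) \;\le\; 1 - \frac{\xi}{n^2}
\]
for $n$ sufficiently large, giving the claim.

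The main obstacle is already contained in the two bounds above. The combinatorial count is the place where the cutoff defining $\Omega_{\text{con}}$ enters: any relaxation of the max-degree bound would worsen the prefactor of $\lambda^2 n = c^2$. Once $\Delta(z)\le(1+o(1))\lambda n$ is available, the fact that $\xi>0$ is a fixed positive constant absorbs the $(1+o(1))$ loss with room to spare.
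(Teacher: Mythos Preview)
Your proposal is correct and follows essentially the same approach as the paper: reduce to Hamming-adjacent pairs via Lemma~\ref{lem:PCneighbors} (valid since $\Omega_{\text{con}}$ is downward closed), use the natural coupling with the same update edge and optimally coupled resample, count the at most $d_z(a)+d_z(b)\le 2(\lambda n+\sqrt{\lambda n}\log n)$ edges where the codegree differs, and bound each discrepancy by $\lambda\zeta$. The final arithmetic matching $1-\frac{\xi}{n^2}$ is the same as in the paper.
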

\begin{proof}
For  $x \in \Omega_n$ and $e\in \binom{[n]}{2}$ we let 
\begin{align}\label{eq:betaMarginal}
p_x(e):=\frac{\lam (1-\zeta)^{ d_{x}(e)}}{1+\lam (1 - \zeta)^{ d_{x}(e)}}\, ,
\end{align}
where $d_x(e)$ denotes the number of triangles in the graph $x\cup \{e\}$ that contain $e$.

By Lemma~\ref{lem:PCneighbors}, it suffices to consider the case where $\dist(y, z)=1$, i.e., the graphs $y, z$ differ in one edge. 
Suppose WLOG that $\{i,j\}$ is an edge of $y$, but not $z$. We couple $Py, Pz$ so that they attempt the same updates. More precisely, draw $u\sim U[0,1]$ and $e\in \binom{[n]}{2}$ uniformly at random. If $u\leq p_y(e)$ then we set $Py=y\cup \{e\}$, otherwise set $Py=y\backslash\{e\}$, and similarly for $z$. 

Under this coupling, we have $\dist(Py,Pz)=\dist(y,z)$ unless $(i)$ the update was performed on pair $\{i,j\}$ in which case the distance decreases by $1$, or $(ii)$ an edge (necessarily adjacent to $i$ or $j$) was included in $Py$ but not in $Pz$ in which case the distance increases by $1$. Thus under this coupling
\begin{align}
\E \left[\dist (Py,Pz)\right] 
\leq
1-\frac{1}{\binom{n}{2}}
&+\frac{1}{\binom{n}{2}}\sum_{\ell \in N_{z}(i)}|p_z(\ell j)-p_y(\ell j)|\\
&+\frac{1}{\binom{n}{2}}\sum_{k\in N_{z}(j)}|p_z(ki)-p_y(ki)|\, ,
\end{align}
where  $N_z(i)$ denotes the neighborhood of the vertex $i$ in the graph $z$.
Note that if  $\ell \in N_{z}(i)$, then $d_{y}(\ell j)=d_{z}(\ell j)+1$. It follows from~\eqref{eq:betaMarginal}, and the inequality $|y/(1+y)-z/(1+z)|<|y-z|$ for $y,z>0$, that
\begin{align}
|p_z(\ell j)-p_y(\ell j)|\leq  \lam (1 - \zeta)^{d_{z}(\ell j)}-\lam (1 - \zeta)^{ d_{y}(\ell j)}
&=\lam (1 - \zeta)^{d_{z}(\ell j)}\zeta \leq \lambda \zeta \, .
\end{align}
Similarly
\(
|p_z(k i)-p_y(k i|Y)|\leq \lam \zeta \, ,
\)
for all $k \in N_{z}(j)$. Recalling the definition of Wasserstein distance and the assumption $z\in \Omega_{\text{con}}$, we conclude that 
\begin{align}
W_1(Py,Pz)\leq\E \left[\dist (Py, Pz)\right] & \leq 1-\frac{1}{\binom{n}{2}}+\frac{1}{\binom{n}{2}}(d_{z}(i)+d_{z}(j))\lam \zeta \\
& \leq 1 + \frac{-1 + 2(\lambda n + \sqrt{\lambda n}\log n)\lambda \zeta}{\binom{n}{2}} \leq 1-\frac{\xi}{2\binom{n}{2}}\, ,  
\end{align}
where for the final inequality we used Condition~\ref{condition2}.
\end{proof}

We now consider the Markov chain $P_{H,\lam,\zeta}$ from Definition~\ref{defn:PHlamzeta}. Recall that given a graph $H$, $P_{H,\lam,\zeta}$ is a Markov chain on state space $\Omega_H=2^{V(H)}$.

\begin{lemma}\label{lem:contractionH}
Let $\lam\ge0, \zeta\in[0,1]$ and let $H$ be a graph with maximum degree $\Delta$. Let
 $P=P_{H,\lam,\zeta}$.
Then for $y,z\in\Omega_H$ we have
\begin{equation}
\frac{W_1(Py,Pz)}{\dist(y,z)} \leq 1-\frac{1-\Delta \lam \zeta}{|V(H)|}\, .
\end{equation}
\end{lemma}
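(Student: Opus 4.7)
The plan is to mirror the proof of Lemma~\ref{lemma:contraction} almost verbatim, with vertices playing the role of edges and the hard-core style single-site update replacing the edge update. First I would reduce to the case $\dist(y,z) = 1$ via Lemma~\ref{lem:PCneighbors}, which applies since $\Omega_H = 2^{V(H)}$ is trivially closed under taking subsets and the claimed bound is independent of the ambient state. So assume $y = z \cup \{i\}$ for some $i \in V(H)\setminus z$.

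Next I would set up the natural identity coupling of $(Py, Pz)$: draw a uniformly random $u \in V(H)$ and a uniform $U \in [0,1]$, and use $U$ to resample membership of $u$ in both chains according to the marginals
\[
p_w(u) = \frac{\lam (1-\zeta)^{d_w(u)}}{1 + \lam (1-\zeta)^{d_w(u)}}, \qquad w \in \{y,z\},
\]
where $d_w(u)$ is the degree of $u$ in the subgraph of $H$ induced by $w \cup \{u\}$. Then I would analyze the three cases for $u$:
\textbf{Case $u = i$:} Since $y \cup \{i\} = y$ and $z \cup \{i\} = y$ as well, we have $d_y(i) = d_z(i)$, so the two resampling probabilities coincide and the coupling yields $Py = Pz$; the distance drops by $1$ deterministically on this event, which has probability $1/|V(H)|$.
\textbf{Case $u \notin N_H(i) \cup \{i\}$:} The neighborhood of $u$ in $H$ does not contain $i$, so $d_y(u) = d_z(u)$, the marginals coincide, and the distance is preserved.
\textbf{Case $u \in N_H(i)$:} Here $d_y(u) = d_z(u) + 1$, so the marginals differ and there is a chance that $u$ ends up in one of $Py, Pz$ but not the other, raising the distance by $1$.

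To bound the discrepancy in the third case, I would use the elementary inequality $|a/(1+a) - b/(1+b)| \le |a-b|$ for $a,b \ge 0$ together with $(1-\zeta)^{d_z(u)} - (1-\zeta)^{d_y(u)} = (1-\zeta)^{d_z(u)} \zeta \le \zeta$, giving $|p_y(u) - p_z(u)| \le \lam \zeta$. Summing the three contributions and bounding $|N_H(i)| \le \Delta$ yields
\[
W_1(Py, Pz) \le \E[\dist(Py, Pz)] \le 1 - \frac{1}{|V(H)|} + \frac{\Delta \lam \zeta}{|V(H)|} = 1 - \frac{1 - \Delta\lam\zeta}{|V(H)|},
\]
which is exactly the claimed contraction.

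There is no real obstacle here; the argument is structurally identical to Lemma~\ref{lemma:contraction} and only requires the standard observation that the Markov chain updates at $u$ depend on the rest of the configuration solely through the degree $d_\cdot(u)$, which changes by at most $1$ between neighboring states that differ at $i$ only when $u$ is a neighbor of $i$. No hypothesis on $\lam, \zeta$ is needed beyond what is stated (the bound is just not useful unless $\Delta \lam \zeta < 1$).
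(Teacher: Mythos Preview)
The proposal is correct and follows essentially the same approach as the paper's proof: reduce to adjacent states via Lemma~\ref{lem:PCneighbors}, use the identity coupling at a common update site $u$, and split into the cases $u=i$, $u\in N_H(i)$, and $u\notin N_H(i)\cup\{i\}$, bounding the discrepancy in the middle case by $\lam\zeta$ exactly as the paper does. Your case analysis is slightly more explicit than the paper's (in particular you spell out why $d_y(i)=d_z(i)$ when $u=i$), but the argument is structurally identical.
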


\begin{proof}The proof is similar to that of Lemma~\ref{lemma:contraction}.
For  $x \in \Omega_H$ and $v\in V(H)$ we let 
\begin{align}\label{eq:betaMarginal}
p_x(v):=\frac{\lam (1-\zeta)^{ d_{x}(v)}}{1+\lam (1 - \zeta)^{ d_{x}(v)}}\, ,
\end{align}
where $d_x(v)$ denotes the degree of $v$ in the subgraph of $H$ induced by $x\cup \{v\}$.

Again by Lemma~\ref{lem:PCneighbors}, it suffices to consider the case where $\dist(y, z)=1$ i.e., $y$ and $z$ differ in one vertex. Suppose WLOG that $u$ is contained in $y$ but not $z$. As in Lemma~\ref{lemma:contraction} we couple $Py, Pz$ so that they attempt the same updates. 

We have $\dist(Py,Pz)=\dist(y,z)$ unless $(i)$ the update was performed on $u$ in which case the distance decreases by $1$, or $(ii)$ a vertex (necessarily adjacent to $u$ in $H$) was included in $Py$ but not in $Pz$ in which case the distance increases by $1$. Thus, under this coupling
\begin{align*}
\E \left[\dist (Py, Pz)\right] & \leq 1-\frac{1}{|V(H)|}+\frac{1}{|V(H)|}\sum_{w\in N_H(u)}|p_y(w)-p_z(w)|\, . 
\end{align*}
Note that for $w\in N_H(u)$, $d_y(w)=d_z(u)+1$ and so as in the proof of Lemma~\ref{lemma:contraction} we have $|p_y(w)-p_z(w)|\leq \lam\zeta$. 
Thus
\[
W_1(P_y, P_z)\leq \E \left[\dist (Py, Pz)\right] 
\leq 
 1-\frac{1}{|V(H)|}+\frac{\Delta \lam\zeta}{|V(H)|}\, . \qedhere
\]
\end{proof}

\begin{remark}
Although we don’t need this fact, we note that Lemma~\ref{lemma:contraction} and Corollary~\ref{lem:mixingtime} imply that under Condition~\ref{condition2}, $P_{\lam,\zeta}$ has mixing time $O(n^2 \log n)$. Similarly Lemma~\ref{lem:contractionH} implies that if $\Delta(H)\lam\zeta$ is bounded away from 1 (from above) then $P_{H,\lam,\zeta}$ has mixing time $O(n \log n)$.
\end{remark}

\subsection{Concentration}
We now turn to the proof of Lemma~\ref{lem:fullconcentration} (under the weaker Condition~\ref{condition2}).  This requires the following concentration inequalities for $\nu_{H,\lam,\zeta}$  and $ \mu_{\lam,\zeta}$ that follow from the contraction properties of the Markov chains $P_{H,\lam,\zeta}$  and $P_{\lam,\zeta}$ proved in the previous section. 

Recall that we say a sequence of events $A_n$ holds with very high probability (wvhp) with respect to a probability measure $\P$ if $\mathbb P(A_n) = 1-n^{-\omega(1)}$.

\begin{lemma}\label{lem:concmuH}
Fix $\theta\in(0,1]$.
Let $H$ be a graph on $(1+o(1))n$ vertices with maximum degree at most $\Delta$. Let $\lam>0$, $\zeta\in(0,1]$ satisfy $1-\lam\zeta\Delta\geq \theta$.
If $S\sim \nu_{H,\lam,\zeta}$, then wvhp 
\[
\left ||S| - \mathbb{E}|S| \right| \leq \sqrt{\lambda n}\log n\, .
\]
\end{lemma}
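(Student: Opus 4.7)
\textit{Plan.} I apply the Markov chain concentration inequality Theorem~\ref{thm:barbour2019longterm} to the Glauber dynamics $P=P_{H,\lam,\zeta}$ (Definition~\ref{defn:PHlamzeta}) with the $1$-Lipschitz function $f(S)=|S|$, and transfer the bound to $\nu_{H,\lam,\zeta}$ via Lemmas~\ref{lem:mixingtime} and~\ref{lem:Gibbsprobability}. The contraction input is Lemma~\ref{lem:contractionH}: the hypothesis $1-\lam\zeta\Delta\ge\theta$ gives $\rho$-contraction on all of $\Omega_H=2^{V(H)}$ with $\rho\ge\theta/|V(H)|$, so I take $\Omega_{\text{con}}=\Omega_H$ (forcing $e_k=0$) and $\Omega_{\text{typ}}=\{y\in\Omega_H:|y|\le 2np\}$. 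For $y\in\Omega_{\text{typ}}$, one Glauber step flips the chosen coordinate with probability at most $|y|/|V(H)|+\lam=O(\lam)$, so $D=O(\lam)=O(n^{-1/2})$ in condition~(2) of the theorem.

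\textit{Chain concentration and mixing.} Plugging $L=1$, $D=O(\lam)$, $\rho^{-1}=O(n/\theta)$, and $e_k=0$ into Theorem~\ref{thm:barbour2019longterm} gives, for $y_0\in\Omega_{\text{typ}}$,
\[
\P_{y_0}\!\bigl(\{|f(Y_k)-\E_{y_0}f(Y_k)|>m\}\wedge\mathcal{A}_k\bigr)\le 2\exp\!\left(-\frac{m^2}{O(\sqrt{n}/\theta)+4m}\right),
\]
which with $m=\tfrac12\sqrt{\lam n}\log n=\Theta(n^{1/4}\log n)$ has exponent $\Omega(\theta\log^2 n)$ and is therefore $n^{-\omega(1)}$. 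Taking $k=Cn\log n$ for a large constant $C$ and using $\textup{diam}(\Omega_H)=O(n)$, $e_k=0$, together with $\nu_{H,\lam,\zeta}(\Omega_H\setminus\Omega_{\text{typ}})=n^{-\omega(1)}$ (by Corollary~\ref{cor:SDdegbound}), Lemma~\ref{lem:mixingtime} yields $\bigl|\E_{y_0}f(Y_k)-\E_{H,\lam,\zeta}|S|\bigr|=n^{-\omega(1)}\ll m$ for every $y_0\in\Omega_{\text{typ}}$. Combining these two displays via the triangle inequality and Lemma~\ref{lem:Gibbsprobability} with $\tilde\Omega_{\text{typ}}=\Omega_{\text{typ}}$ and bad event $\mathcal{Q}=\bigl\{\bigl||S|-\E_{H,\lam,\zeta}|S|\bigr|>\sqrt{\lam n}\log n\bigr\}$ bounds $\nu_{H,\lam,\zeta}(\mathcal{Q}\wedge\Omega_{\text{typ}})\le n^{-\omega(1)}/\min_{y_0\in\Omega_{\text{typ}}}\P_{y_0}(\mathcal{A}_k)$; adding $\nu_{H,\lam,\zeta}(\Omega_H\setminus\Omega_{\text{typ}})=n^{-\omega(1)}$ then completes the proof provided $\min_{y_0\in\Omega_{\text{typ}}}\P_{y_0}(\mathcal{A}_k)\ge 1/2$.

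\textit{Main obstacle.} The main technical point is this trajectory estimate, since $k=\Theta(n\log n)$ exceeds the mixing time and a direct union bound over steps against $\nu_{H,\lam,\zeta}(\Omega_H\setminus\Omega_{\text{typ}})$ is unavailable before mixing. I handle it via the monotone coupling of $(Y_i)$ with the $\zeta=0$ chain $(\tilde Y_i)$, whose Glauber updates are independent Bernoulli$(p)$ resamples; this coupling satisfies $Y_i\subseteq\tilde Y_i$ pointwise for all $i$. Starting from $y_0\in\Omega_{\text{typ}}$, each coordinate of $\tilde Y_i$ is independently either its initial value (if not yet updated) or an independent Bernoulli$(p)$, so $\E|\tilde Y_i|\le|y_0|+pn=O(np)$ with Chernoff-type concentration; a union bound over the $k$ steps gives $\P_{y_0}(\max_{i<k}|\tilde Y_i|>2np)=n^{-\omega(1)}$, hence $\P_{y_0}(\mathcal{A}_k^c)=n^{-\omega(1)}$, finishing the argument.
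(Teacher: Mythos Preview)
Your setup is the paper's: apply Theorem~\ref{thm:barbour2019longterm} to $P_{H,\lam,\zeta}$ with $\Omega_{\text{con}}=\Omega_H$ (so $e_k=0$) and contraction from Lemma~\ref{lem:contractionH}, then use Lemma~\ref{lem:mixingtime} to replace $\E_{y_0}f(Y_k)$ by $\E_{H,\lam,\zeta}|S|$. The difference is in the final transfer to $\nu=\nu_{H,\lam,\zeta}$, and your version has a genuine gap there. You invoke Lemma~\ref{lem:Gibbsprobability} with $\tilde\Omega_{\text{typ}}=\Omega_{\text{typ}}=\{|y|\le 2np\}$, which forces you to bound $\min_{y_0\in\Omega_{\text{typ}}}\P_{y_0}(\mathcal A_k)$. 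Your claimed bound $\P_{y_0}(\max_{i<k}|\tilde Y_i|>2np)=n^{-\omega(1)}$ via Chernoff plus a union bound is false at the boundary: if $|y_0|=2np$ then $|\tilde Y_0|=2np$ and already $\P(|\tilde Y_1|=2np+1)=(1-|y_0|/|V(H)|)\,p\sim p=\Theta(n^{-1/2})$, so neither the per-step probability nor the union bound over $k=\Theta(n\log n)$ steps is $n^{-\omega(1)}$. (The coordinates of $\tilde Y_i$ are also not unconditionally independent, only conditionally on the update sequence, though this is a secondary issue.)

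The paper sidesteps this entirely and does \emph{not} use Lemma~\ref{lem:Gibbsprobability}. After obtaining, for each $y\in\Omega_{\text{typ}}$, the bound $\P_y(\{|\,|Y_k|-\kappa\,|>m\}\wedge\mathcal A_k)=n^{-\omega(1)}$ (where $\kappa=\E_{H,\lam,\zeta}|S|$), it simply averages over $y\sim\nu$: for $y\notin\Omega_{\text{typ}}$ the event $\mathcal A_k$ already fails at $i=0$, so the integrand is zero, giving $\P_\nu(\{|\,|Y_k|-\kappa\,|>m\}\wedge\mathcal A_k)=n^{-\omega(1)}$. Then, crucially, under $\P_\nu$ each $Y_i\sim\nu$ by stationarity, so $\P_\nu(\overline{\mathcal A_k})\le k\cdot\nu(\Omega\setminus\Omega_{\text{typ}})=n^{-\omega(1)}$ via Corollary~\ref{cor:SDdegbound}. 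Combining, $\nu(|\,|S|-\kappa\,|>m)=\P_\nu(|\,|Y_k|-\kappa\,|>m)=n^{-\omega(1)}$. This avoids any trajectory estimate from a fixed starting point. If you want to repair your route instead, take $\tilde\Omega_{\text{typ}}$ strictly inside $\Omega_{\text{typ}}$ (e.g.\ thresholds $np+\tfrac{1}{2}\sqrt{\lam n}\log n$ and $np+\sqrt{\lam n}\log n$) so that for $y_0\in\tilde\Omega_{\text{typ}}$ there is a gap of order $\sqrt{\lam n}\log n$ to the $\Omega_{\text{typ}}$ boundary; then the conditional Chernoff bound on $|\tilde Y_i|=|y_0\cap N_i|+\bin(|U_i|,p)$ does give $n^{-\omega(1)}$ per step and the union bound goes through.
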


Recall that we let $\Omega_n$ denote the set of all labeled graphs on vertex set $[n]$.

\begin{lemma}\label{lem:Lipschitzconcentration}
Let $f : \Omega_n \rightarrow \mathbb{R}$ be any $L$-Lipschitz function where $L=n^{O(1)}$. 
Then under Condition~\ref{condition2} we have for $m>1$
\[
\mathbb{P}_{\lambda,\zeta}\left(\left(|f(G) - \mathbb{E}(f)| \geq m\right)\land \left\{\Delta(G) \leq \lambda n + \frac{\sqrt{\lambda n}\log n}{100}\right\}\right) \leq 4\exp\left(-\frac{m^2}{48L^2\lam n^2/\xi + 16mL}\right).
\]
\end{lemma}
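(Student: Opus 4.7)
The plan is to apply Theorem~\ref{thm:barbour2019longterm} to the Glauber dynamics $P = P_{\lam,\zeta}$ and then transfer the resulting per-initial-state concentration to concentration under $\nu := \mu_{\lam,\zeta}$ via Lemma~\ref{lem:Gibbsprobability}, with Lemma~\ref{lem:mixingtime} used to reconcile $\E(f) = \nu(f)$ with the conditional expectations $\E_y[f(Y_k)]$ that the theorem provides. Set
\[
\Omega_{\text{con}} := \{y : \Delta(y) \le \lam n + \sqrt{\lam n}\log n\}, \qquad \Omega_{\text{typ}} = \tilde\Omega_{\text{typ}} := \{y : \Delta(y) \le \lam n + \sqrt{\lam n}\log n/100\}.
\]
Since a single Glauber update changes the graph by at most one edge, and hence alters $\Delta$ by at most one, we have $\Omega_{\text{typ}}^+ \subseteq \Omega_{\text{con}}$ and $\dist(y,Py) \le 1$, verifying conditions (1) and (3). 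Lemma~\ref{lemma:contraction} supplies the contraction rate $\rho = \xi/n^2$ on $\Omega_{\text{con}}$ (condition (4)); and for $y \in \Omega_{\text{typ}}$ a step changes the state either by deleting an edge of $y$ (probability at most $|y|/\binom{n}{2} \le (1+o(1))\lam$) or by adding a non-edge (probability at most $\lam$), so $D := 3\lam$ works for condition (2).

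Choose $k = C n^2 \log^2 n / \xi$ for a sufficiently large constant $C$, so that $(1-\rho)^k = n^{-\omega(1)}$. The main technical task is to prove
\[
e_k := \max_{y \in \Omega_{\text{typ}}^+} \P_y(Y_i \notin \Omega_{\text{con}} \text{ for some } i < k) \le n^{-\omega(1)}.
\]
We handle this vertex-by-vertex: for each $v \in [n]$, the one-dimensional degree process $D_t := d_{Y_t}(v)$ has bounded $\pm 1$ increments, per-step variance $O(\lam/n)$, and drift $\E[D_{t+1} - D_t \mid D_t = d] \le -c(d - \lam n)/n^2$ once $d$ exceeds the stationary value $\approx \lam n$. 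Dividing $[0,k]$ into blocks of length $\Theta(n^2)$ --- over which the degree mixes to within $O(\sqrt{\lam n})$ of $\lam n$ with super-polynomial concentration, via a Bernstein/Doob supermartingale argument on the block-wise maxima --- and union-bounding over the $O(k/n^2)$ blocks and the $n$ choices of $v$ yields the claim.

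Given the bound on $e_k$, Theorem~\ref{thm:barbour2019longterm} applied with threshold $m/2$ gives, for every $y \in \Omega_{\text{typ}}$,
\begin{align*}
\P_y\!\left(\{|f(Y_k) - \E_y f(Y_k)| \ge m/2\} \land \mathcal{A}_k\right) \le 2\exp\!\left(-\frac{m^2}{48L^2 \lam n^2/\xi + 16mL}\right),
\end{align*}
the stated denominator arising since $4L^2(D/\rho + 12k^3 e_k^2) + 4(m/2)L(1+6ke_k) = 12 L^2 \lam n^2/\xi + 4mL + n^{-\omega(1)}$ which, divided into $(m/2)^2 = m^2/4$, yields the right-hand side. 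Lemma~\ref{lem:mixingtime} combined with $\nu(\Omega\setminus \Omega_{\text{typ}}) \le n^{-\omega(1)}$ (from Corollary~\ref{cor:SDdegbound} applied with $\kappa = \log n/(100\sqrt{\lam n})$) gives $|\nu(f) - \E_y f(Y_k)| \le L \cdot \binom{n}{2} \cdot n^{-\omega(1)} < 1/2 \le m/2$ for every $y \in \Omega_{\text{typ}}$, any $m > 1$, and any $L = n^{O(1)}$. Hence $\{|f(Y_k) - \nu(f)| \ge m\} \subseteq \{|f(Y_k) - \E_y f(Y_k)| \ge m/2\}$.

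Finally, Lemma~\ref{lem:Gibbsprobability} with $\mathcal{Q} = \{x : |f(x) - \nu(f)| \ge m\}$ transfers the per-$y$ bound to one under $\nu$; the denominator $\min_{y \in \tilde\Omega_{\text{typ}}} \P_y(\mathcal{A}_k) \ge 1 - k \cdot n^{-\omega(1)} \ge 1/2$ costs a further factor of $2$, matching the overall prefactor $4$ in the statement. The main obstacle in executing this plan is the bound $e_k \le n^{-\omega(1)}$: controlling the running maximum of the degree process over polynomially many Glauber steps requires combining the negative drift near the stationary degree with the low per-step variance, via the block-wise supermartingale argument sketched above.
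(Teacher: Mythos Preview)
Your overall architecture matches the paper's: verify the hypotheses of Theorem~\ref{thm:barbour2019longterm} for the Glauber dynamics, bound $e_k$, use Lemma~\ref{lem:mixingtime} to replace $\E_y f(Y_k)$ by $\nu(f)$, and invoke Lemma~\ref{lem:Gibbsprobability}. The paper proceeds identically, except that it uses \emph{three} nested sets $\tilde\Omega_{\text{typ}}\subsetneq\Omega_{\text{typ}}\subsetneq\Omega_{\text{con}}$ (thresholds $\tfrac{1}{100},\tfrac{1}{10},1$ times $\sqrt{\lam n}\log n$), while you take $\tilde\Omega_{\text{typ}}=\Omega_{\text{typ}}$. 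This collapse is where your argument fails.

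The denominator in Lemma~\ref{lem:Gibbsprobability} is $\min_{y\in\tilde\Omega_{\text{typ}}}\P_y(\mathcal A_k)$ with $\mathcal A_k=\{Y_i\in\Omega_{\text{typ}}\ \forall i<k\}$. With your choice $\tilde\Omega_{\text{typ}}=\Omega_{\text{typ}}$, take $y$ on the boundary: some vertex $v$ has $d_y(v)=\lfloor\lam n+\sqrt{\lam n}\log n/100\rfloor$. At this level the up and down step probabilities for $D_t=d_{Y_t}(v)$ are both $\approx 2\lam/n$, with only a relative bias of order $\log n/\sqrt{\lam n}=o(1)$ towards decrease. Thus the biased walk hits $D_0+1$ with probability $p_+/p_-=1-o(1)$, and since $k$ is large enough for many effective steps, $\P_y(\mathcal A_k)\to 0$, not $\ge 1/2$ as you claim. (Your brief justification ``$1-k\cdot n^{-\omega(1)}$'' implicitly assumes a per-step escape probability of $n^{-\omega(1)}$, which is false at the boundary.) The paper avoids this by taking $\tilde\Omega_{\text{typ}}$ strictly inside $\Omega_{\text{typ}}$, so that the chain started in $\tilde\Omega_{\text{typ}}$ has a gap of order $\sqrt{\lam n}\log n$ before it can exit $\Omega_{\text{typ}}$; its Claim~\ref{claim:exittime} (a balls-in-bins coupling, requiring the gap condition $c_2>6c_1$) then gives $\min_y\P_y(\mathcal A_k)=1-o(1)$. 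Your drift/supermartingale sketch for $e_k$ is a legitimate alternative to the paper's balls-in-bins argument, but it too relies on a gap between the initial threshold and the exit threshold, so it cannot be reused when those coincide. The fix is simply to introduce the intermediate set $\Omega_{\text{typ}}$ with threshold, say, $\sqrt{\lam n}\log n/10$, and run your argument twice: once for $e_k$ (gap $1/10\to 1$) and once for $\P_y(\mathcal A_k)$ (gap $1/100\to 1/10$).
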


We first prove Lemma~\ref{lem:fullconcentration} assuming Lemmas~\ref{lem:concmuH} and~\ref{lem:Lipschitzconcentration} and then turn to the proofs of these two results. 

\begin{proof}[Proof of Lemma~\ref{lem:fullconcentration}]
Let $G$ denote a sample from $\mu_{\lam,\zeta}$. All probabilities will be with respect to $\mu_{\lam,\zeta}$ unless specified otherwise.
We first prove item \eqref{degconcentration}. Let
\begin{align}\label{eqdegdef}
\Omega_{\text{deg}}=\{G\in \Omega_n : \Delta(G) \leq \lambda n + \sqrt{\lambda n}\log n/100\}\, ,
\end{align}
 and note that
\begin{align}\label{eqn:degconcstart}
\mathbb{P}_{\lambda, \zeta}(\Delta(G) - \delta(G) \geq 4\sqrt{n\lam}\log n) &\leq 
\mathbb{P}_{\lambda,\zeta}(\{\Delta(G) - \delta(G) \geq 4\sqrt{n\lam}\log n\}\wedge \Omega_{\text{deg}})+\P_{\lambda,\zeta}(\overline{\Omega_{\text{deg}}})\, .
\end{align}
By Corollary~\ref{cor:SDdegbound}, $\P_{\lambda,\zeta}(\overline{\Omega_{\text{deg}}})\leq n^{-\omega(1)}$. Next observe that if $G\in \Omega_{\text{deg}}$, then the complement graph of $G$ has diameter at most $2$. Therefore, if $G\in \Omega_{\text{deg}}$ and $\Delta(G) - \delta(G) \geq 4\sqrt{n\lam}\log n$ then there exists $xy\notin G$ such that $|d_G(x)-d_G(y)|\geq 2\sqrt{n\lam}\log n$\, .
Letting $A_{xy} : = \{|d_G(x) - d_G(y)| \geq 2\sqrt{\lambda n} \log n \}$ for $x,y\in[n]$ it therefore suffices to show that $\mathbb{P}_{\lambda,\zeta}(\cup_{xy\notin G}A_{xy}\wedge \Omega_{\text{deg}})=n^{-\omega(1)}$. For $x,y\in [n]$ distinct let $B_{xy}=\{xy\notin G\}$. By a union bound we have
\begin{align}
\mathbb{P}_{\lambda,\zeta}(\cup_{xy\notin G}A_{xy}\wedge \Omega_{\text{deg}})  
= \mathbb{P}_{\lambda,\zeta}\left(\cup_{xy\in\binom{[n]}{2}}A_{xy}\wedge B_{xy}\wedge \Omega_{\text{deg}}\right) 
&\leq \sum_{xy \in \binom{[n]}{2}}\mathbb{P}_{\lambda,\zeta}(A_{xy} \wedge B_{xy}\wedge \Omega_{\text{deg}})\nonumber \\
& \leq  \binom{n}{2}\mathbb{P}_{\lambda,\zeta}(A_{uv} \wedge \Omega_{\text{deg}}|B_{uv})\, ,\label{eqDegNeq}
\end{align}
where $uv\in\binom{[n]}{2}$ is an arbitrary pair. It therefore suffices to show that $\mathbb{P}_{\lambda,\zeta}(A_{uv} \wedge \Omega_{\text{deg}}|B_{uv})=n^{-\omega(1)}$.
Let $G_{uv}$ denote the graph $G$ with vertices $u,v$ deleted. The proof of Lemma~\ref{lemLocalCond} shows that conditioned on the event $B_{uv}\wedge \{G_{uv}=H\}$, the pair of degrees $(d_G(u), d_G(v))$ has the same distribution as $(|S_1|, |S_2|)$ where $S_1, S_2$ are two independent samples from $\nu_{H,\lambda,\zeta}$. We therefore have
\begin{align}\label{eq:GuvH}\\
\mathbb{P}_{\lambda,\zeta}(A_{uv} \land \Omega_{\text{deg}}|B_{uv}, G_{uv}=H)\leq  \mathbb{P}_{H,\lambda,\zeta}(||S_1| - |S_2|| \geq 2\sqrt{\lambda n}\log n)\cdot \1_{\{\Delta(H)\leq \lambda n + \sqrt{\lambda n}\log n/100\}}\, .
\end{align}
We note that if $\Delta(H) \leq \lambda n + \sqrt{\lambda n}\log n/100$ then by Condition~\ref{condition2},
\[
\lambda \zeta \Delta(H) \leq (1+o(1))\zeta c^2< 1/2\, .
\]
We may therefore apply Lemma~\ref{lem:concmuH}, to conclude that  
\[
\mathbb{P}_{H,\lambda,\zeta}(||S_1| - |S_2|| \geq 2\sqrt{\lambda n}\log n) \leq n^{-\omega(1)}\, ,
\]
and so 
\(
\mathbb{P}(A_{uv} \land \Omega_{\text{deg}}|B_{uv}) \leq n^{-\omega(1)}
\) by~\eqref{eq:GuvH}. This completes the proof of item \eqref{degconcentration}.

Item (\ref{edgeconcentration}) follows immediately from Lemma~\ref{lem:Lipschitzconcentration} by choosing $f:\Omega_n \rightarrow \mathbb{R}$ given by $f(G) = |G|$ (a $1$-Lipschitz function) and recalling that $G\in \Omega_{\text{deg}}$ wvhp. 

We now turn to item~(\ref{triangleconcentration}). 
We again aim to apply Lemma~\ref{lem:Lipschitzconcentration}, however a naive application fails since the function $X(G)$ is not sufficiently Lipschitz on $\Omega=\Omega_n$.
To get around this we define a function $\widehat X$ that approximates $X$ and has smaller Lipschitz constant. Set $L:=\lambda^2 n\log^2n$ and 
let
\[
\Omega_{\text{Lip}} = \left\{G \in \Omega_{\text{deg}} : d_G(\{u,v\}) \leq L ~\forall~\text{distinct}~u,v\in [n]\right\} \,,
\]
which imposes a codegree condition in addition to the degree condition of $\Omega_{\text{deg}}$. 
We will show that the restriction of $X(\cdot)$ to $\Omega_{\text{Lip}}$ is $L$-Lipschitz i.e. for any $G,H \in \Omega_{\text{Lip}}$ such that $\dist(G,H) = s$, we have that $|X(G) - X(H)| \leq L s$. To see this, consider a shortest-path sequence $G = G_0,G_1,\ldots,G_s = H$ where each $G_i$ is either a subgraph of $G$ or a subgraph of $H$ and $\dist(G_i, G_{i+1})=1$  for all $i$. Since $\Omega_{\text{Lip}}$ is closed under taking subgraphs we have $G_i \in \Omega_{\text{Lip}}$ for all $i$, and the codegree condition ensures that $|X(G_i) - X(G_{i - 1}| \leq L$. We then have
\[
|X(G) - X(H)| \leq \sum_{i = 1}^{s}|X(G_i) - X(G_{i-1})| \leq L\cdot s\, .
\]
We now extend $X$ restricted to $\Omega_{\text{Lip}}$ to a function $\widehat X$ that is $L$-Lipschitz on the whole space $\Omega$. This is a standard exercise (any real-valued Lipschitz function on a subset of a metric space can be extended to a Lipschitz function on the whole space with the same constant) and  is known as the McShane--Whitney theorem~\cite{mcshane1934extension}. For completeness, we present the full argument. Define the function $\widehat{X}:\Omega \rightarrow \mathbb{R}$ where
\[
\widehat{X}(G) = \min_{G' \in \Omega_{\text{Lip}}}\left(X(G) + L\cdot \dist(G,G')\right).
\]
Since $X$ is $L$-Lipschitz on $\Omega_{\text{Lip}}$, one may observe that for $G \in \Omega_{\text{Lip}}$ we have $\widehat{X}(G) = X(G)$. Now fix a graph $G \in \Omega$ and let $G' \in \Omega_{\text{Lip}}$ be such that $\widehat{X}(G) = X(G') + L\cdot \dist(G,G')$. For any $H \in \Omega$ and $H' \in \Omega_{\text{Lip}}$, we have $\widehat{X}(H) \leq X(H') +L\cdot \dist(H,H')$. So in particular,
\begin{align*}
\widehat{X}(H) & \leq X(G') + L\cdot \dist(H,G') \\
& \leq X(G') + L\cdot (\dist(H,G) + \dist(G,G'))\\
& = \widehat{X}(G) + L\cdot \dist(H,G),
\end{align*}
where the second inequality uses the triangle inequality. This shows that $\widehat{X}(\cdot)$ is $L$-Lipschitz on $\Omega$. In particular, $\max_{G \in \Omega}\widehat{X}(G) \leq Ln^2\leq n^3$ and $\max_{G \in \Omega}X(G)\leq n^3$. 
It follows that
\begin{equation}\label{eqn:approxexpectation}
|\mathbb{E}_{\lambda,\zeta}[\widehat{X}(G)] - \mathbb{E}_{\lambda,\zeta}[X(G)]| \leq 2\mathbb{P}_{\lambda,\zeta}(\overline{\Omega_{\text{Lip}}})\cdot n^3 \leq 1,
\end{equation}
where we used the upper bound $\mathbb{P}_{\lambda,\zeta}(\overline{\Omega_{\text{Lip}}}) \leq n^{-\omega(1)}$, which follows from Lemma~\ref{lemStochDom}. Letting $\mathcal{Q} = \left\{\left|\widehat{X}(G) - \mathbb{E}_{\lambda,\zeta}[\widehat{X}(G)]\right| \geq n^2\lambda^{3/2}\log^2 n -1\right\}$ and recalling~\eqref{eqdegdef}, the definition of $\Omega_{\text{deg}}$, Lemma~\ref{lem:Lipschitzconcentration} gives us that 
\[
\mathbb{P}_{\lambda,\zeta}\left(\mathcal{Q}\land \Omega_{\text{deg}}\right) \leq n^{-\omega(1)}.
\]
Recall that if $G\in \Omega_{\text{Lip}}$ then $\widehat{X}(G) = X(G)$. Thus using~\eqref{eqn:approxexpectation}, we have
\begin{align*}
\mathbb{P}_{\lambda,\zeta}(|X(G) - \mathbb{E}_{\lambda,\zeta}[X(G)]| \geq n^2\lambda^{3/2}\log^2 n) & \leq \mathbb{P}_{\lambda,\zeta}(\mathcal{Q} \land\Omega_{\text{deg}}\land \Omega_{\text{Lip}}) + \mathbb{P}_{\lambda,\zeta}(\overline{\Omega_{\text{deg}}}) + \mathbb{P}_{\lambda,\zeta}(\overline{\Omega_{\text{Lip}}}) \\
& \leq \mathbb{P}_{\lambda,\zeta}\left(\mathcal{Q}\land \Omega_{\text{deg}}\right) + \mathbb{P}_{\lambda,\zeta}(\overline{\Omega_{\text{deg}}}) + \mathbb{P}_{\lambda,\zeta}(\overline{\Omega_{\text{Lip}}}) \\
&\leq n^{-\omega(1)} \,. \qedhere
\end{align*}
\end{proof}

It remains to prove Lemmas~\ref{lem:concmuH} and~\ref{lem:Lipschitzconcentration}, which we do this in the following two subsections. 

\subsection{Proof of Lemma~\ref{lem:concmuH}}
Recall from Definition~\ref{defn:PHlamzeta} the Markov chain $P=P_{H,\lam,\zeta}$ on state space $\Omega=2^{V(H)}$ which has stationary distribution $\nu=\nu_{H,\lam,\zeta}$. Let $(Y_i)_{i\in \N}$ be a run of chain $P$. Recall from Section~\ref{subsecMCMC} that for $y\in \Omega$ we let $\mathbb{P}_y(\cdot) = \mathbb{P}(\cdot~|Y_0 = y)$ and $\mathbb{E}_y[\cdot] = \mathbb{E}[\cdot~|Y_0 = y]$. Recall also that we let 
$\mathbb{P}_{\nu}(\cdot) = \mathbb{E}[\mathbb{P}_{Y_0}(\cdot)]$ and $\mathbb{E}_{\nu}[\cdot] = \mathbb{E}\left[\mathbb{E}_{Y_0}[\cdot]\right]$ where $Y_0$ is distributed according to $\nu$.

We aim to apply Theorem~\ref{thm:barbour2019longterm} with $f:\Omega\to \R$ given by $f(S)=|S|$. We note that $P$ updates a single vertex at a time and so $f$ is $1$-Lipschitz and item~\eqref{MC1} of Theorem~\ref{thm:barbour2019longterm} is satisfied. 

We set $\Omega_{\text{con}}=\Omega$ and $\Omega_{\text{typ}} = \{x \in \Omega : |x| \leq \lambda n + \sqrt{\lambda n}\log n\}$, and so item~\eqref{MC3} is trivially satisfied. For item~\eqref{MC2} we note that to transition from a state $y$ to a state $z\neq y$ we must either $(i)$ select a vertex $v\in y$ and then delete $v$ from $y$ or $(ii)$ select a vertex $v\notin y$ and then add $v$ to $y$. The first occurs with probability at most $|y|/n$ (the probability of selecting $v\in y$) and the second occurs with probability at most $\lam$ (an upper bound for the probability of removing $v$). Therefore, for $y\in \Omega_{\text{typ}}$
\[
\P(Py \neq y) \leq \frac{|y|}{n} + \lam\leq 3\lam\, .
\]
For item~\eqref{MC4} we note that by Lemma~\ref{lem:contractionH} and the assumption that $1-\lam\zeta\Delta\geq \theta$ and $|V(H)|=(1+o(1))n$, we have that $P$ is $(\theta/(2n))$-contractive on $\Omega$.
Recall that for $k\geq 1$ we let $\mathcal{A}_k = \{Y_i \in \Omega_{\text{typ}}~\forall\,  i<k\}$. Moreover we trivially have $e_k=0$ for all $k\geq 1$ where $e_k$ is as defined in Theorem~\ref{thm:barbour2019longterm}.
We may therefore apply Theorem~\ref{thm:barbour2019longterm} and conclude that for $y\in\Omega_{\text{typ}}$, $m>0$ and $k$ a positive integer
\begin{align}\label{eq:YkConc}
\mathbb{P}_y(\{||Y_k| - \mathbb{E}_y|Y_k|| > m\}\land \mathcal{A}_k) \leq 2\exp\left(-\frac{m^2}{24\lam n/\theta  + 4m}\right)\, .
\end{align}
Set $k=n^2$ and let $\kappa:=\E_{H,\lam,\zeta}|S|$ the expected size of a sample $S$ from $\nu=\nu_{H,\lam,\zeta}$.  By Lemma~\ref{lem:mixingtime} we have for $y\in\Omega_{\text{typ}}$,
\begin{align}
\left|\kappa - \mathbb{E}_{y}|Y_k|\right| \leq  \textup{diam}(\Omega)\left((1 - \theta/(2n))^{k} + \nu(\Omega\setminus \Omega_{\text{typ}}\right)<1\, ,
\end{align}
where for the last inequality we note that $\textup{diam}(\Omega)\leq |V(H)|=(1+o(1))n$ and apply Corollary~\ref{cor:SDdegbound} to bound $\nu(\Omega\setminus \Omega_{\text{typ}})=n^{-\omega(1)}$.
Returning to~\eqref{eq:YkConc} and setting $m=\sqrt{\lambda n}\log n-1$, it follows that for $y\in\Omega_{\text{typ}}$,
\[
\mathbb{P}_y(\{||Y_k| - \kappa| > m+1\}\land \mathcal{A}_k)\leq \mathbb{P}_y(\{||Y_k| - \mathbb{E}_y|Y_k|| > m\}\land \mathcal{A}_k)=n^{-\omega(1)}\, .
\]
Taking an expectation over $y$ distributed according to $\nu$ we then have
\[
\mathbb{P}_\nu(\{||Y_k| - \kappa| > m+1\}\land \mathcal{A}_k)\leq n^{-\omega(1)}\, .
\]
 Finally we note that by Corollary~\ref{cor:SDdegbound} applied to $Y_i$ (distributed according to $\nu$ under $\mathbb{P}_\nu$) and a union bound $\mathbb{P}_\nu(\overline{\mathcal{A}_k})\leq k\cdot \nu(\Omega\setminus \Omega_{\text{typ}})=n^{-\omega(1)}$. 
It follows that 
\[
\P_{H,\lam,\zeta}(\left ||S| - \mathbb{E}|S| \right| \geq \sqrt{\lambda n}\log n)=\mathbb{P}_\nu(||Y_k| - \mu| > m+1)=n^{-\omega(1)}
\]
as desired. \qed

\subsection{Proof of Lemma~\ref{lem:Lipschitzconcentration}}
The proof follows similar lines to that of 
Lemma~\ref{lem:concmuH}.
Recall from Definition~\ref{defn:Plamzeta} the Markov chain $P=P_{\lam,\zeta}$ on state space $\Omega=\Omega_n$, the set of all graphs on vertex set $[n]$. Recall also that $P$ has stationary distribution $\mu=\mu_{\lam,\zeta}$. Let $(Y_i)_{i\in \N}$ be a run of chain $P$. 

We again aim to apply Theorem~\ref{thm:barbour2019longterm}. We note that $P$ updates a single edge at a time and so item~\eqref{MC1} of Theorem~\ref{thm:barbour2019longterm} is satisfied.  
We set 
\[
\Omega_{\text{typ}}=\{y : \Delta(y)\leq \lam n + \sqrt{\lam n}\log n/10\} \text{ and }\Omega_{\text{con}}=\{y : \Delta(y)\leq \lam n + \sqrt{\lam n}\log n\}
\]
and note that item~\eqref{MC3} is trivially satisfied. For item~\eqref{MC2} we note that to transition from a state $y$ to a state $z\neq y$ we must either $(i)$ select an edge $e\in y$ and then delete $e$ from $y$ or $(ii)$ select an edge $e\notin y$ and then add $e$ to $y$. The first occurs with probability at most $|y|/\binom{n}{2}$ (the probability of selecting $e\in y$) and the second occurs with probability at most $\lam$ (an upper bound for the probability of removing $e$). Therefore, for $y\in \Omega_{\text{typ}}$
\[
\P(Py \neq y) \leq \frac{|y|}{\binom{n}{2}} + \lam\leq 3\lam\, .
\]
For item~\eqref{MC4} we note that by Lemma~\ref{lemma:contraction}, $P$ is $(\xi/n^2)$-contractive on $\Omega_{\text{con}}$.

Recall that for $k\geq 1$ we let $\mathcal{A}_k = \{Y_i \in \Omega_{\text{typ}}~\forall\,  i<k\}$ and we let 
\[e_k = \max_{y\in \Omega_{\text{typ}}^+}\mathbb{P}_{y}(Y_i\not \in \Omega_{\text{con}} \textup{ for some }i<k)\, .\]
Theorem~\ref{thm:barbour2019longterm} then shows that for  $y\in\Omega_{\text{typ}}$, $m>0$ and $k$ a positive integer
\begin{align}\label{eq:GkConc}
\mathbb{P}_y(\{|f(Y_k) - \mathbb{E}_y[f(Y_k)]| > m\}\land \mathcal{A}_k) \leq 2\exp\left(-\frac{m^2}{4L^2(3\lam n^2/\xi + 12k^3e_k^2) + 4mL(1 + 6ke_k)}\right)\, .
\end{align}

Our next goal is to upper bound $e_k$. We do so in the following claim which is stated in slightly more generality than immediately needed since it will come in handy again later. 

\begin{claim}\label{claim:exittime}
Let $0 \leq c_1 < c_2$ be constants such that $c_2 > 6c_1$ and let $k=O(n^3)$. If $y\in \Omega$ is such that $\Delta(y) \leq \lambda n + c_1\sqrt{\lambda n}\log n$. Then
\[
\mathbb{P}_y(\Delta(Y_i) \geq \lambda n + c_2\sqrt{\lambda n}\log n \textup{ for some }i<k) \leq kn^{-\omega(1)}\, . 
\]
\end{claim}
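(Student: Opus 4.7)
The plan is to couple $(Y_i)$ with a simpler dominating process $(\tilde Y_i)$ and analyze the latter one time step at a time. Define $(\tilde Y_i)$ with $\tilde Y_0 = y$ by the rule: at step $i$, pick the same edge $e$ and the same $U\in[0,1]$ used by $P=P_{\lam,\zeta}$, and set $\mathbf{1}[e\in \tilde Y_i]=\mathbf{1}[U\le p]$ with $p=\lam/(1+\lam)$ (ignoring triangles in the update rule), leaving all other edges unchanged. Since the actual Glauber acceptance probability $p_{Y_{i-1}}(e)\le p$, a one-step coupling argument shows that if $Y_{i-1}\subseteq \tilde Y_{i-1}$ then $Y_i\subseteq \tilde Y_i$, so inductively $\Delta(Y_i)\le \Delta(\tilde Y_i)$ for all $i$. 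It therefore suffices to control $\Delta(\tilde Y_i)$.

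By a union bound over $v\in[n]$ and $i<k$, what must be shown is that for each fixed $v$ and $i$,
\[
\mathbb{P}_y\bigl(d_{\tilde Y_i}(v)\ge \lam n+c_2\sqrt{\lam n}\log n\bigr)\le n^{-\omega(1)}.
\]
Fix $v,i$ and let $R\subseteq[n]\setminus\{v\}$ be the (random) set of $u$ such that $\{v,u\}$ is picked at least once during steps $1,\dots,i$. Conditional on $R$, the indicators $\mathbf{1}[\{v,u\}\in\tilde Y_i]$ are mutually independent: for $u\in R$ each is $\mathrm{Bernoulli}(p)$ (it is determined by the \emph{last} $U$-value at which $\{v,u\}$ is picked), while for $u\notin R$ it equals $\mathbf{1}[\{v,u\}\in y]$. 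Hence, given $|R|=j$, by symmetry $R$ is a uniform random $j$-subset of $[n]\setminus\{v\}$ and
\[
d_{\tilde Y_i}(v)\stackrel{d}{=}\mathrm{Bin}(j,p)+d_y(v)-\mathrm{Hyp}(j;n-1,d_y(v)),
\]
with the two terms independent. A direct calculation gives
\[
\mathbb{E}\bigl[d_{\tilde Y_i}(v)\mid j\bigr]= \tfrac{j}{n-1}(n-1)p + \bigl(1-\tfrac{j}{n-1}\bigr)d_y(v)\le \max\bigl((n-1)p,\,d_y(v)\bigr)\le \lam n+c_1\sqrt{\lam n}\log n,
\]
so the conditional mean never exceeds the lower threshold. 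The conditional variance is bounded by a multiple of $\lam n$ (both the Binomial and the Hypergeometric contribute $O(jp)\le O(\lam n)$). Applying Bernstein's inequality conditional on $j$ to the sum of independent indicators yields
\[
\mathbb{P}\bigl(d_{\tilde Y_i}(v)\ge \lam n+c_2\sqrt{\lam n}\log n\mid j\bigr)\le \exp\bigl(-\Omega((c_2-c_1)^2\log^2 n)\bigr)=n^{-\omega(1)},
\]
uniformly in $j$. Taking expectation over $j$ and combining with the union bound delivers the estimate $kn^{-\omega(1)}$ claimed.

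The main obstacle is a mismatch of scales: the chain runs for $k=O(n^3)$ steps while the gap between the starting and target thresholds is only $(c_2-c_1)\sqrt{\lam n}\log n=O(n^{1/4}\log n)$, so direct martingale or Azuma-type arguments applied to the Glauber trajectory accumulate fluctuations of order $\sqrt{k}=n^{3/2}$ and cannot distinguish these scales. Passing to $\tilde Y_i$ breaks this barrier: conditioning on which edges have been picked produces independence across edges at a fixed time, so the degree at time $i$ can be analyzed with a Chernoff bound that does not depend on $i$, after which a union bound over the $\le kn$ pairs $(v,i)$ closes out the argument. The slack $c_2>6c_1$ in the statement is more than enough for this Bernstein concentration; the extra factor merely absorbs small-order terms in the conditional mean computation and will be convenient in the later applications of the claim.
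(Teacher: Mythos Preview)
Your argument is correct and follows the same route as the paper: dominate $(Y_i)$ by the Glauber dynamics for $G(n,p)$, fix a vertex $v$ and a time $t$, split $d_{\tilde Y_t}(v)$ into the contribution from edges incident to $v$ that have been resampled (a Binomial) and those not yet touched (the surviving original neighbors), apply Chernoff-type concentration to each piece, and union-bound over all $(v,t)$. The only cosmetic differences are that the paper bounds $|U_t|$, $|V_t|$, and $\mathrm{Bin}(|V_t|,p)$ separately via balls-in-bins estimates instead of conditioning on $|R|=j$, and that in your write-up the phrase ``sum of independent indicators'' is slightly off (given $j$ the term $d_y(v)-H$ is hypergeometric, not a sum of independent Bernoullis), though since hypergeometric variables satisfy the same Chernoff/Bernstein tails as the matching binomial your concentration step goes through unchanged.
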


\begin{proof}
We condition on the event $Y_0=y$ so that $Y_t=P^ty$. Let $v \in [n]$ and note that $d_{y}(v) \leq \lambda n + c_1\sqrt{\lambda n}\log n$. Let $\{u_1,\ldots,u_{d_{y}(v)}\}$ be the neighbors of $v$ in $y$. Choose a step $t \leq k$. Let 
\[
U_t = \{i\in [d_{y}(v)]~|~(v,u_i)~\text{was not chosen for update in the first $t$ steps}\},
\]
and 
\[
V_t = \{u \in [n]~|~(v,u)~\text{was chosen for update in the first $t$ steps}\}.
\]
By Lemma~\ref{lemStochDom}, we first observe that 
\[
d_{Y_t}(v) \leq |U_t| + \text{Bin}\left(|V_t|,\frac{\lambda}{1 + \lambda}\right).
\]
Let $\mathcal{E}_t(v)$ denote the event $\left\{d_{Y_t}(v) \geq \lambda n + c_2\sqrt{\lambda n} \log n\right\}$. If $\mathcal{E}_t(v)$ occurs, then one of the following must have happened.
\begin{enumerate}
\item $|U_t| > e^{-t/\binom{n}{2}}\lambda n + (c_2/3)\sqrt{\lambda n}\log n$,
\item $|V_t| > (1-e^{-t/\binom{n}{2}})n + (c_2/3)\sqrt{n/\lambda}\log n $, or
\item $|V_t| \leq (1-e^{-t/\binom{n}{2}})n + (c_2/3)\sqrt{n/\lambda}\log n$ and $\text{Bin}\left(|V_t|,\frac{\lambda}{1 + \lambda}\right) > \lambda |V_t| + (c_2/3)\sqrt{\lambda n}\log n$.
\end{enumerate}
We bound the probabilities of each of these events separately. Consider the following setting: there are $m$ balls and $N$ bins out of which $N_0 < N$ are colored red. Each ball is placed uniformly and independently in one of the $N$ bins. Let $J(m,N,N_0)$ denote the number of red empty bins. We have the following that may be derived from Azuma's inequality (similar to e.g.~\cite{dubhashi2009concentration}, section $7.2$). For any $z \geq 0$ we have
\begin{equation}\label{eqn:ballsinbins}
\mathbb{P}\left(|J(m,N,N_0) - \mathbb{E}[J(m,N,N_0)]| > z\right) \leq \exp\left(-\frac{z^2}{10N_0}\right).
\end{equation}
We have that $|U_t|$ and $n -1- |V_t|$ are distributed as $J\left(t, \binom{n}{2},d_y(v)\right)$ and $J\left(t,\binom{n}{2},n-1\right)$, respectively. Moreover, since $t = O(n^3)$, we have have 
\[
\mathbb{E}[|U_t|] = \left(1 - \frac{1}{\binom{n}{2}}\right)^td_y(v) \leq e^{-t/\binom{n}{2}}\cdot d_{y}(v)
\]
and
\[
\mathbb{E}[|V_t|] =(n-1)\left[1-\left(1 - \frac{1}{\binom{n}{2}}\right)^t\right] \leq n(1-e^{-t/\binom{n}{2}}) + O(1).
\]
Thus~\eqref{eqn:ballsinbins} and the Chernoff bound give us
\begin{align*}
\mathbb{P}(|U_t| \geq e^{-t/\binom{n}{2}}\cdot \lambda n + (c_2/3)\sqrt{\lambda n}\log n) & \leq \exp\left(-\frac{(c_2/3 - c_1)^2\lambda n \log^2 n}{10 d_y(v) }\right)
\leq n^{-\omega(1)}, \\
\mathbb{P}(|V_t| \geq (1-e^{-t/\binom{n}{2}})n + (c_2/3)\sqrt{n/\lambda}\log n ) & \leq \exp\left(-\frac{(c_2/4)^2(n/\lambda)\log^2 n}{10 n}\right)
\leq n^{-\omega(1)}\, ,
\end{align*}
and
\begin{align*}
\mathbb{P}\left(\text{Bin}\left(|V_t|,\frac{\lambda}{1 + \lambda}\right) > \lambda |V_t| + (c_2/3)\sqrt{\lambda n}\log n\right) & \leq \exp\left(-\frac{(c_2/3)^2(\lambda n)\log^2 n}{3\lambda n}\right) \leq n^{-\omega(1)}.
\end{align*}
Therefore by union bound,
\[
\mathbb{P}_y(\Delta(Y_i) \geq \lambda n + c_2\sqrt{\lambda n}\log n \textup{ for some }i<k) \leq \sum_v\sum_{t \in [k-1]}\mathbb{P}(\mathcal{E}_t(v)) \leq kn^{-\omega(1)} \,. \qedhere 
\]
\end{proof}
Setting $k=n^3$ it follows that $e_k=n^{-\omega(1)}$ and so returning to~\eqref{eq:GkConc} we conclude that for $y\in\Omega_{\text{typ}}$ and $m>0$,
\begin{align}\label{eq:GkConcAgain}
\mathbb{P}_y(\{|f(Y_k) - \mathbb{E}_y[f(Y_k)]| > m\}\land \mathcal{A}_k) \leq 2\exp\left(-\frac{m^2}{24L^2\lam n^2/\xi + 8mL}\right)\, .
\end{align}

Let $\kappa:=\E_{\lam,\zeta}[f(G)]$ the expectation of $f(G)$ where $G$ is sampled from $\mu=\mu_{\lam,\zeta}$.  By Lemma~\ref{lem:mixingtime} we have for $y\in\Omega_{\text{typ}}$,
\begin{align}
\left|\kappa - \mathbb{E}_{y}[f(Y_k)]\right| \leq  L\cdot \textup{diam}(\Omega)\left((1 - \xi/n^2)^{k} + 2k e_k + \nu(\Omega\setminus \Omega_{\text{typ}}\right)<1,
\end{align}
where for the last inequality we note that $L=n^{O(1)}$ by assumption, $\textup{diam}(\Omega)\leq n^2$, $e_k=n^{-\omega(1)}$,  and we apply Corollary~\ref{cor:SDdegbound} to bound $\nu(\Omega\setminus \Omega_{\text{typ}})=n^{-\omega(1)}$. 
It follows that for $y\in\Omega_{\text{typ}}$ and $m>1$,
\[
\mathbb{P}_y(\{|f(Y_k) - \kappa| > m\}\land \mathcal{A}_k)\leq \mathbb{P}_y(\{|f(Y_k) - \mathbb{E}_y[f(Y_k)]| > m-1\}\land \mathcal{A}_k)\, .
\]
Applying the bound at ~\eqref{eq:GkConcAgain} and
taking an expectation over $y$ distributed according to $\mu$ we have
\begin{align}\label{eq:penConc}
\mathbb{P}_\mu(\{|f(Y_k) - \kappa| > m\}\land \mathcal{A}_k)\leq 2\exp\left(-\frac{m^2}{48L^2\lam n^2/\xi + 16mL}\right)\, .
\end{align}
Finally we apply Lemma~\ref{lem:Gibbsprobability} with 
\[
\tilde\Omega_{\text{typ}}=\{y : \Delta(y)\leq \lam n + \sqrt{\lam n}\log n/100\}
\]
and 
$\mathcal{Q}=\{|f(y)-\kappa|>m\}$ to conclude that
\[
\mathbb{P}_{\lambda,\zeta}\left\{\left(|f(G) - \mathbb{E}(f)| \geq m\right\}\land  \tilde{\Omega}_{\text{typ}}\right)=\mu\left( \mathcal{Q}\wedge \tilde{\Omega}_{\text{typ}}\right) \leq \frac{\mathbb{P}_\mu(\{|f(Y_k) - \kappa| > m\}\land \mathcal{A}_k)}{\min_{y \in \tilde{\Omega}_{\text{typ}}}\mathbb{P}_y(\mathcal{A}_k)}.
\]

Claim~\ref{claim:exittime} (applied with $c_1=1/100, c_2=1/10$) shows that $\min_{y \in \tilde{\Omega}_{\text{typ}}}\mathbb{P}_y(\mathcal{A}_k)=1-o(1)$. The result now follows from~\eqref{eq:penConc}.

\section{Applying the cluster expansion}
\label{secRegularity}

In this section we prove Lemma~\ref{lemOccTree}.   To do this we will show that if a graph $H$ is approximately regular with few short cycles, then the leading order terms in the cluster expansion of $\log \Xi_{H}(\lam,\zeta)$ (and thus also of its derivatives $\alpha_{H,\lam,\zeta}$ and $\rho_{H,\lam, \zeta}$) come from subgraph counts $T(H)$ where $T$ is a tree. The key point here is that for approximately regular graphs the number of copies of a small tree $T$ it contains is determined to first order by its (maximum) degree. This is the content of the next lemma.
 \begin{lemma}\label{lemtreecount}
 Let $H$ be a graph on $n$ vertices such that $\Delta =\Delta(H)=\omega(1)$ and $\delta= \delta(H) = (1+o(1))\Delta$.
 There exists $k=\omega(1)$ such that the following holds. If $\ell \leq k$ and $v\in V(G)$, then the number of trees on $\ell$ vertices in $H$ that contain $v$ is 
 \[
(1+o(1))\frac{(\Delta \ell)^{\ell-1}}{\ell!}\, .
 \]
 \end{lemma}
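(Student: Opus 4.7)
My plan is to count via labeled rooted trees and injective embeddings, using the approximate regularity of $H$ to estimate the number of embeddings for each rooted tree type, and then Cayley's formula to sum over tree types.

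Let $N$ be the number of pairs $(T,\phi)$, where $T$ is a labeled tree on $[\ell]$ with distinguished root $1$, and $\phi:[\ell]\to V(H)$ is an injective map with $\phi(1)=v$ sending every edge of $T$ to an edge of $H$. Each tree subgraph of $H$ on $\ell$ vertices containing $v$ arises from exactly $(\ell-1)!$ such pairs (one for each bijective labeling of its remaining $\ell-1$ vertices by $\{2,\dots,\ell\}$), so the desired count equals $N/(\ell-1)!$.

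To estimate $N$, fix a rooted labeled tree $T$ on $[\ell]$ and count injective embeddings $\phi$ with $\phi(1)=v$ by extending $\phi$ in a parent-first (BFS) order. Each new vertex must be mapped to a neighbor in $H$ of its parent's image, distinct from the at most $\ell-2$ previously placed vertices. The number of available choices at each step is therefore at least $\delta(H)-(\ell-2)$ and at most $\Delta(H)$. Hence the number of embeddings of $T$ lies between $(\delta-\ell)^{\ell-1}$ and $\Delta^{\ell-1}$. By Cayley's formula, the number of labeled trees on $[\ell]$ (each of which is viewed as rooted at $1$) is $\ell^{\ell-2}$, giving
\[
\ell^{\ell-2}(\delta-\ell)^{\ell-1}\le N\le \ell^{\ell-2}\Delta^{\ell-1}.
\]

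Next, I choose $k=\omega(1)$ slowly enough that the lower and upper bounds above match to a $1+o(1)$ factor uniformly in $\ell\le k$. Setting $\varepsilon=\varepsilon(n):=1-\delta/\Delta=o(1)$, one has $(\delta-\ell)^{\ell-1}/\Delta^{\ell-1}=(1-\varepsilon-\ell/\Delta)^{\ell-1}=1-O(\ell\varepsilon+\ell^2/\Delta)$. Taking, say, $k=\min\{\lfloor\varepsilon^{-1/2}\rfloor,\lfloor\Delta^{1/3}\rfloor\}$ (which tends to infinity since $\varepsilon=o(1)$ and $\Delta=\omega(1)$) makes this ratio $1-o(1)$ uniformly for $\ell\le k$. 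Hence $N=(1+o(1))\ell^{\ell-2}\Delta^{\ell-1}$, and dividing by $(\ell-1)!$ and using $\ell^{\ell-2}/(\ell-1)!=\ell^{\ell-1}/\ell!$ yields the claimed asymptotic $(1+o(1))(\Delta\ell)^{\ell-1}/\ell!$.

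The only nontrivial point is Step 4, ensuring the uniform control in $\ell\le k$; this is where the assumption $\delta=(1+o(1))\Delta$ is used essentially, together with $\Delta=\omega(1)$. The rest of the argument is a standard BFS-count plus Cayley.
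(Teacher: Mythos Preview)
Your proof is correct and follows essentially the same approach as the paper: both count injective embeddings of labeled trees rooted at a fixed vertex, bound each by $(\delta-\ell)^{\ell-1}$ and $\Delta^{\ell-1}$, sum via Cayley's formula, and divide out the labeling. Your bookkeeping (fixing the root at $1$ and dividing by $(\ell-1)!$) is equivalent to the paper's (summing over all root positions and dividing by $\ell!$), and you give a more explicit choice of $k$ than the paper does.
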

\begin{proof}
 Let $T$ be a labeled tree on the vertex set $[\ell]$ and let $x\in [\ell]$. Let $h_{T,x}$ denote the number of injective homomorphisms $f: T\to H$ such that $f(x)=v$. A simple induction shows that 
\[
(\delta-\ell)^{\ell-1}\leq h_{T,x}\leq \Delta^{\ell-1}\, .
\]
It follows that
if $k\to\infty$ sufficiently slowly then by the assumption on degrees 
\[
h_{T,x}= (1+o(1))\Delta^{\ell-1}\, .
\]
By Cayley's formula, there are $\ell^{\ell-2}$ possible trees $T$.
Summing over all $x\in [\ell]$ and all possible $T$ then dividing by $\ell!$ to account for the labeling completes the proof. 
\end{proof}

We will also need the following lemma.
\begin{lemma}[\cite{galvin2004phase}, Lemma 2.1]\label{lemConCount}
In a graph $H$ of maximum degree at most $\Delta$, the number of connected, induced subgraphs of order $t$ containing a fixed vertex $v$ is at most $(e\Delta)^{t-1}$.
\end{lemma}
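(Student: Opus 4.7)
The plan is to encode each connected induced subgraph of order $t$ containing $v$ as a subtree of size $t$ in the universal cover $\tilde H$ of $H$ based at $v$, and then bound the number of such subtrees by a Fuss--Catalan-type identity. Recall that vertices of $\tilde H$ correspond to non-backtracking walks in $H$ starting at $v$, with the trivial walk at $v$ as the root.

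Specifically, I would first fix a canonical ordering on $V(H)$ and, for each connected induced subgraph $S$ of order $t$ with $v\in V(S)$, let $T_S$ be the canonical DFS spanning tree of $S$ rooted at $v$ (with ties among candidate neighbors broken by vertex label). Distinct subgraphs $S$ produce distinct canonical spanning trees, since $V(T_S)=V(S)$. Then I would lift $T_S$ to a rooted subtree $\tilde T_S$ of $\tilde H$ by mapping each $u\in V(T_S)$ to the vertex of $\tilde H$ representing the unique $v$-to-$u$ path in $T_S$ (automatically non-backtracking, since $T_S$ is a tree). This lift is injective: $S$ is recovered by projecting $\tilde T_S$ back to $H$. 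Hence it suffices to bound the number of subtrees of $\tilde H$ of size $t$ containing the root.

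Since $H$ has maximum degree at most $\Delta$, every vertex of $\tilde H$ has degree at most $\Delta$ (the root has $\deg_H(v)\leq\Delta$ children, and each non-root vertex has one parent and at most $\Delta-1$ children). The number of subtrees of size $t$ of $\tilde H$ containing the root is therefore bounded above by the analogous count for the infinite rooted tree in which every vertex has $\Delta$ children. By Lagrange inversion applied to the generating function $A(x)=x(1+A(x))^\Delta$, that count equals $\tfrac{1}{t}\binom{\Delta t}{t-1}$. A routine Stirling estimate, using $(t-1)!\geq((t-1)/e)^{t-1}$ and $\bigl(\tfrac{t}{t-1}\bigr)^{t-1}\leq e$, then gives
\[
\frac{1}{t}\binom{\Delta t}{t-1} \leq \frac{(\Delta t)^{t-1}}{t\cdot(t-1)!} \leq \frac{(e\Delta)^{t-1}}{t}\left(\frac{t}{t-1}\right)^{t-1} \leq (e\Delta)^{t-1},
\]
which is the claimed bound (the $t=1$ case is trivial).

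The main technical step is the Fuss--Catalan identity for the count of subtrees of the $\Delta$-ary tree; this is standard but needs a short verification via the generating-function recursion. The universal-cover construction is essentially the observation that paths in a spanning tree are non-backtracking, and the closing Stirling bound is routine. An alternative, avoiding the universal cover entirely, is to encode each canonical spanning tree directly by a DFS walk of length $2(t-1)$ in $H$ and count valid walks via the same Fuss--Catalan estimate; the resulting bound is identical.
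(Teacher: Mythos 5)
The paper does not prove this lemma; it is cited from Galvin--Kahn (their Lemma 2.1), so there is no internal argument to compare against. Your proof is correct in substance and is, up to phrasing, the standard argument that Galvin and Kahn give: pass from connected induced subgraphs to canonical spanning trees rooted at $v$, observe that every such tree embeds (as a rooted tree) into the infinite $\Delta$-branching tree, count $t$-vertex rooted subtrees of that tree by the Fuss--Catalan number $\frac{1}{t}\binom{\Delta t}{t-1}=\frac{1}{(\Delta-1)t+1}\binom{\Delta t}{t}$, and finish with Stirling. Your universal-cover framing is a clean way to express the embedding step, but it is doing exactly the same combinatorics.

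One small slip in the closing estimate: you invoke $\bigl(\tfrac{t}{t-1}\bigr)^{t-1}\leq e$ to deduce
\[
\frac{(e\Delta)^{t-1}}{t}\left(\frac{t}{t-1}\right)^{t-1}\leq (e\Delta)^{t-1},
\]
but $\tfrac{e}{t}>1$ when $t=2$, so the cited bound does not close that case. The displayed inequality is nonetheless true for all $t\geq 2$, because the sharper bound $\bigl(\tfrac{t}{t-1}\bigr)^{t-1}\leq t$ holds (with equality at $t=2$), and $t=1$ is trivial. Either replace the invocation of $\leq e$ by $\leq t$, or handle $t\in\{1,2\}$ directly. With that fixed, the proof is complete.
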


Now define the function
\begin{equation}
    f(\Delta, \lam,\zeta):= \frac{1}{2 \zeta \Delta}W(\zeta \Delta \lambda) (2+W(\zeta \Delta \lambda)) \,,
\end{equation}
where we always consider $\Delta>0$ but allow $\lam, \zeta$ to take on complex values.
The following  facts about the function $f$ follow from simple calculations and the definitions of the functions $\alpha, \rho$ from~\eqref{eqalphadef},\eqref{eqrhodef}.
\begin{lemma}
\label{lemFderivs}
    The partial derivatives of $f(\Delta,\lam,\zeta)$ satisfy
    \begin{align}
       \lam  \frac{\partial}{\partial \lam} f(\Delta, \lam,\zeta) &= \alpha(\Delta,\lam,\zeta) \\
       -(1-\zeta) \frac{\partial}{\partial \zeta} f(\Delta, \lam,\zeta) &= \rho(\Delta,\lam,\zeta) \, . 
    \end{align}
    Moreover, if $|e\lam \zeta \Delta|< 1$ then
    \begin{equation}
f(\Delta,\lam,\zeta) = \sum_{\ell=1}^{\infty}\frac{(-\zeta)^{\ell-1}\ell^{\ell-2}}{\ell!} \Delta^{\ell-1}\lam^\ell \,.
    \end{equation}
\end{lemma}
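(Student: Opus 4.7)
The plan is to prove the two derivative identities by direct computation using the defining property of the Lambert--$W$ function, and then to derive the power series either by integrating a series for $\alpha(\Delta,\lam,\zeta)/\lam$ or, equivalently, by combining the Taylor expansion of $W$ at the origin with the closed form of $f$.

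First I would record the standard identity $W'(x)=W(x)/(x(1+W(x)))$, obtained by implicitly differentiating $W(x)e^{W(x)}=x$. Writing $W := W(\zeta\Delta\lam)$ this gives
\begin{equation}
\lam\,\frac{\partial W}{\partial \lam} = \frac{W}{1+W}, \qquad \zeta\,\frac{\partial W}{\partial \zeta} = \frac{W}{1+W}.
\end{equation}
Now $f = W(2+W)/(2\zeta\Delta)$, so
\begin{equation}
\lam\frac{\partial f}{\partial \lam} = \frac{1}{2\zeta\Delta}\,\lam\frac{\partial W}{\partial \lam}\,(2+2W) = \frac{1}{2\zeta\Delta}\cdot\frac{W}{1+W}\cdot 2(1+W) = \frac{W}{\zeta\Delta} = \alpha(\Delta,\lam,\zeta).
\end{equation}
For the $\zeta$-derivative the extra factor of $1/\zeta$ in $f$ contributes a compensating term: a short computation gives $\partial_\zeta[W(2+W)] = 2W/\zeta$, so
\begin{equation}
-(1-\zeta)\frac{\partial f}{\partial \zeta} = -(1-\zeta)\left[\frac{W}{\zeta^2 \Delta} - \frac{W(2+W)}{2\zeta^2\Delta}\right] = \frac{(1-\zeta)W^2}{2\zeta^2\Delta} = \tfrac{1}{2}\Delta(1-\zeta)\alpha^2 = \rho(\Delta,\lam,\zeta).
\end{equation}

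For the series representation, I would use the standard Taylor expansion $W(x) = \sum_{\ell\ge 1}\frac{(-\ell)^{\ell-1}}{\ell!}x^\ell$, which converges absolutely for $|x|<1/e$ (this is classical; the ratio test gives radius $1/e$). Since $f(\Delta,0,\zeta)=0$ and $\lam\,\partial_\lam f = \alpha = W(\zeta\Delta\lam)/(\zeta\Delta)$, we have
\begin{equation}
f(\Delta,\lam,\zeta) = \int_0^\lam \frac{\alpha(\Delta,\theta,\zeta)}{\theta}\,d\theta = \int_0^\lam \frac{W(\zeta\Delta\theta)}{\zeta\Delta\,\theta}\,d\theta = \sum_{\ell\ge 1}\frac{(-\ell)^{\ell-1}}{\ell!}(\zeta\Delta)^{\ell-1}\frac{\lam^\ell}{\ell},
\end{equation}
and simplifying $(-\ell)^{\ell-1}/\ell = (-1)^{\ell-1}\ell^{\ell-2}$ and absorbing the sign into $(-\zeta)^{\ell-1}$ yields the claimed series. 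A ratio-test check $(1+1/\ell)^{\ell-2}\to e$ shows the series converges exactly when $|e\lam\zeta\Delta|<1$, matching the stated hypothesis.

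No step looks like a substantive obstacle; the only mildly delicate point is ensuring the algebraic cancellations for the $\zeta$-derivative are written out carefully, and (for complex $\lam,\zeta$) noting that $W$ is analytic on $|x|<1/e$ so all the manipulations above are justified on the disk $|e\lam\zeta\Delta|<1$.
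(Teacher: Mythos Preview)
Your proof is correct and follows essentially the same approach as the paper: both use the identity $W'(x)=W(x)/(x(1+W(x)))$ for the derivative computations, then integrate the Taylor series of $W$ term by term to obtain the power series for $f$. You actually write out the $\zeta$-derivative calculation explicitly, whereas the paper just says it is ``similar'' to the $\lam$-derivative, so your argument is slightly more detailed but otherwise identical.
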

\begin{proof}
The derivative of the Lambert-W function is $W'(x) = \frac{W(x)}{x(1+W(x))} $.  Applying this along with the chain rule gives
\begin{align}
  \lam   \frac{\partial}{\partial \lam} f(\Delta,\lam,\zeta) &= \frac{ W(\zeta \Delta \lam) }{\zeta \Delta(1+ W(\zeta \Delta \lam))   } + \frac{ W(\zeta \Delta \lam)^2} {\zeta \Delta (1+W(\zeta \Delta \lam))}  \\
    &=\frac{W(\zeta \Delta \lam)}{\zeta \Delta } = \alpha(\Delta, \lam,\zeta) \,.
\end{align}
The calculation for $ -(1-\zeta) \frac{\partial}{\partial \zeta} f(\Delta, \lam,\zeta) $ is similar.

Now recall that the Taylor series for the Lambert-W function around $x=0$ is
\begin{equation}
   W(x) = \sum_{\ell=1}^{\infty}\frac{(-1)^{\ell-1}\ell^{\ell-1}}{\ell!}x^{\ell} \,,
\end{equation}
and it is convergent for $|x|<e^{-1}$.
This gives, for $|\lam \zeta \Delta|<e^{-1}$,
\begin{align}
     \frac{\partial}{\partial \lam} f(\Delta,\lam,\zeta) &=\sum_{\ell=1}^{\infty}\frac{(-1)^{\ell-1}\ell^{\ell-1}}{\ell!}(\lam \Delta \zeta)^{\ell-1} \,, 
\end{align}
and integrating term by term gives
\begin{align}
    f(\Delta,\lam,\zeta) &=\sum_{\ell=1}^{\infty}\frac{(-\zeta)^{\ell-1}\ell^{\ell-2}}{\ell!} \Delta ^{\ell-1} \lam^\ell \,,
\end{align}
also valid for $|\lam \Delta \zeta|<e^{-1}$.
\end{proof}

The next lemma shows that $f(\Delta,\lam,\zeta)$ approximates the normalized log partition function $ \frac{1}{|V(H)|}\log \Xi_H(\lambda,\zeta)$ when $H$ satisfies the conditions of Lemma~\ref{lemOccTree} and 
$\lambda,\zeta$ are in a regime where cluster expansion can be applied. 

Recall from~\eqref{eqRxDef} that for $\gamma,\Delta>0$ we let
\[
     R(\gamma,\Delta)=\{(\lam,\zeta)\in \C \times \C: |1-\zeta|< 1 \textup{ and } e|\lam|(1+|\zeta|\Delta)< \gamma\}\, .
\]
\begin{lemma}\label{lem:Free}
Let $H$ be a graph satisfying the conditions of Lemma~\ref{lemOccTree}. Fix $\gamma\in[0,1)$ and let $(\lam,\zeta)\in R(\gamma,\Delta)$ where $\zeta$ is fixed. Then 
\[
\frac{1}{n}\log \Xi_H(\lambda,\zeta)= f(\Delta,\lam,\zeta)+o(|\lam|)\, .
\]
\end{lemma}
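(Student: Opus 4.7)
The plan is to apply the cluster expansion from Lemma~\ref{lemClusterTail} and show that the dominant contribution at each level $\ell$ comes from ``tree terms'' in the Ursell sum, with all remaining contributions bounded by $o(n|\lambda|)$. First I would truncate the series at a level $k = k(n) = \omega(1)$ growing slowly enough that Lemma~\ref{lemtreecount} applies uniformly for $\ell \leq k$. By Lemma~\ref{lemClusterTail}, the tail is bounded by $en|\lambda|\gamma^{k-1}/(1-\gamma) = o(n|\lambda|)$, so
\[
\log \Xi_H(\lambda,\zeta) = \sum_{|\Gamma| \leq k} \phi_\zeta(\Gamma) \lambda^{|\Gamma|} + o(n|\lambda|).
\]

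Within the truncated sum, I would decompose the Ursell contribution at each level $\ell$ as follows: (A) terms from clusters with distinct vertices in which $A$ is a spanning tree of $H[\{v_1,\dots,v_\ell\}]$, each contributing $\frac{(-\zeta)^{\ell-1}}{\ell!}\lambda^\ell$; (B) terms from clusters with distinct vertices where $A$ has at least $\ell$ edges (i.e., a non-tree spanning connected subgraph); and (C) terms from clusters with repeated vertices. For (A), the total contribution at level $\ell$ equals $(-\zeta)^{\ell-1}\lambda^\ell$ times the number of (subset $S$, spanning tree of $H[S]$) pairs with $|S|=\ell$. Lemma~\ref{lemtreecount} (summed over $v\in V(H)$ and divided by $\ell$) gives this count as $(1+o(1))n\Delta^{\ell-1}\ell^{\ell-2}/\ell!$ uniformly for $\ell \leq k$. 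Summing over $\ell$ and comparing with the Taylor series for $f$ in Lemma~\ref{lemFderivs} (whose tail beyond $\ell=k$ is $o(|\lambda|)$ inside $R(\gamma,\Delta)$), the contribution from (A) becomes $nf(\Delta,\lambda,\zeta) + o(n|\lambda|)$, matching the claim.

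To bound (B) and (C), I would use that each non-tree spanning connected subgraph $A$ on $\ell$ vertices with $\geq \ell$ edges contains a cycle of some length $j \leq \ell$: combining $C_j(H) \leq nM^j\Delta^{j-2}$ with Lemma~\ref{lemConCount} (which bounds the number of connected extensions of a given cycle to an $\ell$-subset by $(e j\Delta)^{\ell-j}$), and using that each such $A$ carries weight $|\zeta|^{|A|} \leq |\zeta|^\ell$ --- one extra factor of $|\zeta|$ relative to the tree case --- shows that the contribution from (B) at level $\ell$ is smaller than that from (A) by a factor of $O(1/\Delta)$, hence summable to $o(n|\lambda|)$. For (C), each cluster with $\ell$ positions but only $r < \ell$ distinct vertices requires an assignment of positions to vertices with every vertex used, and those $r$ distinct vertices must form a connected subset of $H$; by Lemma~\ref{lemConCount} this count is bounded by a term of order $(e\Delta)^{r-1}$ rather than $\Delta^{\ell-1}$, yielding the desired $1/\Delta$ saving per repetition. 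The main obstacle will be making these error bounds uniform in $\ell \leq k$, especially controlling $|E(H[S])|$ for connected $\ell$-subsets $S$ whose induced subgraph has cycles; the few-short-cycles hypothesis is precisely what ensures the bounds are strong enough to be summed over $\ell$ to $o(n|\lambda|)$.
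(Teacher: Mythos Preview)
Your approach is essentially the same as the paper's: truncate the cluster expansion at a slowly growing level $k$ via Lemma~\ref{lemClusterTail}, identify the tree contribution using Lemma~\ref{lemtreecount}, and bound the remaining terms via the few-short-cycles hypothesis combined with Lemma~\ref{lemConCount}. The only organizational difference is that the paper classifies clusters by whether the \emph{induced} subgraph $H[\{v_1,\dots,v_\ell\}]$ is a tree (in which case $\phi_\zeta(\Gamma)=(-\zeta)^{\ell-1}/\ell!$ exactly), whereas you split the Ursell sum itself into spanning-tree terms (your (A)) and non-tree terms (your (B)); these two decompositions differ only on clusters whose induced subgraph is cyclic, which both arguments show contribute $O_\ell(1)\,n\Delta^{\ell-2}|\lambda|^\ell$ and hence $o(n|\lambda|)$ after summing and choosing $k$ slowly enough.

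One small correction: your remark that ``each such $A$ carries weight $|\zeta|^{|A|}\le|\zeta|^\ell$'' is not valid in general, since the hypothesis $(\lambda,\zeta)\in R(\gamma,\Delta)$ only gives $|1-\zeta|<1$, so $|\zeta|$ may exceed $1$. Fortunately this inequality is not needed: the $1/\Delta$ saving for (B) and (C) comes entirely from the rarity of cyclic (or repeated-vertex) subsets, and one can simply bound $|\phi_\zeta(\Gamma)|$ and the number of spanning connected subgraphs $A$ by $O_\ell(1)$ constants, absorbed into the choice of $k$.
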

\begin{proof}
Let $k=\omega(1)$ be as in Lemma~\ref{lemOccTree} and note that we may assume the $k$ grows sufficiently slowly.
Applying  Lemma~\ref{lemClusterTail} we have
\begin{align}
    \label{eqClusterExpandLogXi}
   \frac{1}{n}\log \Xi_H(\lambda,\zeta) = \frac{1}{n} \sum_{\Gamma\in \cC(H)} \phi_{\zeta}(\Gamma) \lam^{|\Gamma|}  &= \frac{1}{n}\sum_{\Gamma\in \cC(G) : |\Gamma|<k}  \phi_{\zeta}(\Gamma) \lam^{|\Gamma|} + 
    O\left ( |\lam| \gamma^{k-1} \right) \\ 
    \label{eqSumTrunc}
    &= \frac{1}{n}\sum_{\Gamma\in \cC(G) : |\Gamma|<k} \phi_{\zeta}(\Gamma) \lam^{|\Gamma|} +o(|\lam|)  \,.
\end{align}
We now want to compare the sum in~\eqref{eqSumTrunc} to $f(\Delta, \lam,\zeta)$.

  Let $\ell\leq k$.
  Call a graph \emph{cyclic} if it contains at least one cycle. By Lemma~\ref{lemConCount} and the assumption on cycle counts in $H$ we conclude that 
  \begin{align}\label{eq:Hcycle}
  \text{$H$ has at most $n (M')^{\ell} \Delta^{\ell-2}$
  connected cyclic induced subgraphs on $\ell$ vertices}
  \end{align}
  for some $M'>0$ (first choose the cycle and then choose remaining subgraphs rooted at the cycle). It then follows from Lemma~\ref{lemtreecount}, the assumption $\Delta= \omega(1)$, and assuming $k$ grows sufficiently slowly compared to $\Delta$, that there are at least 
  \[
   (1+o(1))n\frac{\Delta^{\ell-1}\ell^{\ell-2}}{\ell!}- n(M')^{\ell}\Delta^{\ell-2} = (1+o(1))n\frac{\Delta^{\ell-1}\ell^{\ell-2}}{\ell!}
  \]
  induced subgraphs of $H$ on $\ell$ vertices that are trees. Clearly there are also at most $(1+o(1))n{\Delta^{\ell-1}\ell^{\ell-2}}/{\ell!}$ such subgraphs.

    We first estimate the contribution to the  sum in~\eqref{eqSumTrunc} from those clusters $\Gamma=(v_1,\ldots,v_\ell)$ such that $v_1,\ldots, v_\ell$ are all distinct and the induced subgraph $H[\{v_1,\ldots,v_\ell\}]$ is a tree; it will turn out that this is the dominant contribution. Let $\cC^\ast_\ell$ denote the collection of all such clusters. By the above estimate on induced tree counts, and noting that each tree corresponds to $\ell!$ clusters, we have
    \[
|\cC^\ast_\ell|=(1+o(1))n \Delta^{\ell-1}\ell^{\ell-2}.
    \]
 Additionally, if $\Gamma \in \cC^\ast_\ell$ then the incompatibility graph $H_\Gamma=H[\{v_1,\ldots,v_\ell\}]$ is a tree and therefore its only spanning subgraph is itself. Since $v_1,\ldots, v_\ell$ are distinct it follows that $\phi_{\zeta}(\Gamma)={(-\zeta)^{\ell-1}}/{\ell!}$ for all $\Gamma\in\cC^\ast_\ell $. Letting $\cC^{\ast}=\bigcup_\ell \cC^\ast_\ell$, we conclude that 
\begin{align}
    \frac{1}{n}\sum_{\Gamma\in \cC^\ast : |\Gamma|<k}  \phi_{\zeta}(\Gamma) \lam^{|\Gamma|}
 & = \sum_{\ell=1}^{k-1}\frac{(-\zeta)^{\ell-1}\ell^{\ell-2}}{\ell!}\Delta^{\ell-1}\lam^\ell +o(1)\cdot \sum_{\ell=1}^{k-1}\frac{|\zeta|^{\ell-1}\ell^{\ell-2}}{\ell!}\Delta^{\ell-1}|\lam|^\ell\\
    &=\sum_{\ell=1}^{k-1}\frac{(-\zeta)^{\ell-1}\ell^{\ell-2}}{\ell!}\Delta^{\ell-1}\lam^\ell +o(|\lam|)\, ,
\end{align}
where for the second equality we used that $\ell^\ell/\ell!\leq e^\ell$ and $|e\lam \zeta\Delta|\leq \gamma<1$.
On the other hand if $\Gamma=(v_1,\ldots,v_\ell)\in  \cC\backslash\cC^\ast$ then either $H[\{v_1,\ldots,v_\ell\}]$ is cyclic or $|\{v_1,\ldots,v_\ell\}|<\ell$. By ~Lemma~\ref{lemConCount} and \eqref{eq:Hcycle} we conclude that the number of such clusters is at most $O_\ell(1)n\Delta^{\ell-2}$ and so
\begin{align}
    \frac{1}{n}\sum_{\Gamma\in \cC\backslash\cC^\ast : |\Gamma|<k}  |\phi_{\zeta}(\Gamma)| |\lam|^{|\Gamma|}&=
    O_k(1) \sum_{\ell=2}^{k-1}\Delta^{\ell-2}|\lam|^\ell = O_k(1) |\lam|^2 =o(|\lam|)
\end{align}
where for the final equality we again assume that $k$ grows sufficiently slowly noting that $|\lam|=O(\Delta^{-1})$. 
Putting everything together and letting $k$ tend to infinity sufficiently slowly we conclude that
\begin{align}\label{eqLogXiExp}
    \frac{1}{n}\log \Xi_H(\lambda,\zeta)= \sum_{\ell=1}^{k-1}\frac{(-\zeta)^{\ell-1}\ell^{\ell-2}}{\ell!} \Delta^{\ell-1}\lam^\ell + o(|\lam|)\, . 
\end{align}
From Lemma~\ref{lemFderivs}, we have
\begin{align}
    f(\Delta,\lam,\zeta) = \sum_{\ell=1}^{\infty}\frac{(-\zeta)^{\ell-1}\ell^{\ell-2}}{\ell!} \Delta^{\ell-1}\lam^\ell  = \sum_{\ell=1}^{k}\frac{(-\zeta)^{\ell-1}\ell^{\ell-2}}{\ell!} \Delta^{\ell-1}\lam^\ell + o(|\lam|)\,,
\end{align}
and so the result follows. 
\end{proof}

    We now prove Lemma~\ref{lemOccTree}.
\begin{proof}[Proof of Lemma~\ref{lemOccTree}]
We are given that  $\Delta = \Delta(H) \to \infty$, that $\delta(H) = (1+o(1)) \Delta$, and that for some fixed $\theta>0$, $\gamma\in [0,1)$, $\zeta\in (0,1]$ we have $\theta\leq  e\lam\zeta \Delta \leq \gamma$.

We will now consider complex pairs $(\bl, \bz)$, using the bold font to denote complex parameters, and reserving $\lam,\zeta$ for the real parameters in the statement of the lemma.

Fix $\gamma<\gamma_1<1$ and let
 \begin{align}
     R=R(\Delta,\gamma_1)=\{(\bl,\bz): |1-\bz|< 1 \textup{ and } e|\bl|(1+|\bz|\Delta)< \gamma_1\}\, .
 \end{align}
Now let
\begin{align}
  r_\lam &= \frac{1}{2}\left[\frac{\gamma_1}{e(1+\zeta\Delta)}-\lam\right]  \quad \text{ and } \quad
  r_{\zeta} = \frac{1}{2}\min\left\{\zeta, \frac{\gamma_1}{e\lam \Delta}-\frac{1}{\Delta}-\zeta\right\} \,.
\end{align}
These will be radii for disks in the complex plane around the parameters $\lam$ and $\zeta$.  Since $\gamma,\zeta$ are both fixed constants and $\lam = \Theta(\Delta^{-1})$ by the assumptions of the lemma, $r_{\lam}, r_{\zeta}>0$ and $r_{\lam} =\Theta(\lam)$ and $r_{\zeta} =\Theta(1)$. Recall that  $\overline B_{r}(z)$ denotes the closed disk of radius $r$ centered at $z$. By the choices above, 
\[
B:=(\{\lam\}  \times \overline B_{r_{\zeta}}(\zeta)) \cup (\overline B_{r_\lam}(\lam) \times\{\zeta\}) \subseteq R\, 
\]
for $n$ sufficiently large. Let 
\begin{equation}
    g_H(\bl,\bz)= \frac{1}{|V(H)|}\log \Xi_H(\bl,\bz) \,.
\end{equation}
We will show that the following hold:
\begin{enumerate}
    \item $g_H(\bl,\bz)$ is an analytic function of both $\bl$ and $\bz$ on $R$.
    \item 
    \begin{equation}
    \label{eqfgclose}
        \sup_{(\bl,\bz)\in B}|g_H(\bl,\bz) - f(\Delta,\bl,\bz)|  = o(\lam) \,.
    \end{equation}
\end{enumerate}
The first follows immediately  from Lemma~\ref{lemClusterTail}. The second follows from Lemma~\ref{lem:Free} noting that if $(\bl,\bz)\in B$ then $|\bl|=O(\lam)$. 

Given these two facts, we write, using Lemmas~\ref{lemDerivativeIdentities} and~\ref{lemFderivs} for the first line and Lemma~\ref{lemAnalyticDerivativesBound} and~\eqref{eqfgclose} for the second,
\begin{align}
    |\alpha_{H,\lam,\zeta} - \alpha(\Delta,\lam, \zeta) | &= \left| \lam \frac{\partial}{\partial \lam} g_H(\lam,\zeta) -  \lam \frac{\partial}{\partial \lam} f(\Delta,\lam,\zeta)  \right | \\
    &\le  \frac{\lam}{r_\lam} \cdot o(\lam) =o(\lam) \,.
\end{align}
Since $\alpha(\Delta,\lam, \zeta) =  \Theta(\Delta^{-1}) = \Theta(\lam)$, we have $\alpha_{H,\lam,\zeta}=(1+o(1))\alpha(\Delta,\lam, \zeta)$.

Similarly,
\begin{align}
    |\rho_{H,\lam,\zeta} - \rho(\Delta,\lam, \zeta) | &= \left| -(1-\zeta) \frac{\partial}{\partial \zeta} g_H(\lam,\zeta) + (1-\zeta)  \frac{\partial}{\partial \zeta} f(\Delta,\lam,\zeta)  \right | \\
    &\le \frac{1-\zeta}{r_{\zeta}} \cdot o(\lam) = o(\lam) \,.
\end{align}
Since $\rho(\Delta,\lam, \zeta) = \Theta(\Delta^{-1})= \Theta(\lam)$, we have $\rho_{H,\lam,\zeta}=(1+o(1))\rho(\Delta,\lam, \zeta)$. 
This completes the proof.
\end{proof}

\section{Lower tails via  partition functions}
\label{secLowerTailPartition}

\newcommand{\open}{{\mathcal{X}}}

In this section we prove Lemma~\ref{lem:LTtoZ}, relating the lower-tail probability $\P_p( X \le \eta \E X)$ to the partition function $Z(\lam,\zeta)$.   We then prove our results for lower tails in $G(n,m)$ in a similar way.

\subsection{Point probability estimates}
\label{secPointProbs}

We first will need some weak lower bounds on the probability that a sample from $\mu_{\lam,\zeta}$ has exactly $M$ edges and at most $T$ triangles, when these statements typically hold to first order.

\begin{lemma}
    \label{lemTransferGnm}
    Fix $c >0$, $\zeta \in[0, 1]$. Let $\eps>0$. Suppose $\lam = (1+o(1))c/\sqrt{n}$ and that whp for $G \sim \mu_{\lam,\zeta}$,
    \begin{align}
        |G| &= (1+o(1)) M, &
        X(G) = (1+o(1)) T,\\
    \end{align}
    where $M = \Theta(n^{3/2})$ and either $\zeta = 1$ and $T=0$ or else $T = \Theta(n^{3/2})$. Set $T_0 = T - \varepsilon n^{3/2}$. Then
    \begin{equation}
        \P_{\lam,\zeta} \left(\left(|G| = M\right) \wedge\left( T_0 \leq X(G) \leq T\right) \right) = \exp( - o(n^{3/2})  ) \,.
    \end{equation}
\end{lemma}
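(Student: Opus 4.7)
The plan is to combine a pigeonhole over edge counts with an edge-swap identity to transfer point probabilities between adjacent values of $|G|$. Only the lower bound $\ge \exp(-o(n^{3/2}))$ needs proof, since the probability is trivially at most $1$.

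\emph{Step 1 (pigeonhole).} From the whp hypotheses and the concentration estimates of Lemma~\ref{lem:fullconcentration}, $|G|$ is concentrated in a window $W \subseteq \mathbb Z$ of width $O(n^{3/4}\log n)$ around $\E_{\lam,\zeta}|G|$, and $X(G)$ concentrates at scale $O(n^{3/4}\log^3 n)$ around $\E_{\lam,\zeta}X$. Since $|G|=(1+o(1))M$ whp, $M$ lies within $o(n^{3/2})$ of $W$; and since $[T_0,T]$ has width $\varepsilon n^{3/2}$, much larger than the concentration scale of $X$, we have $\P_{\lam,\zeta}(|G|\in W\wedge T_0\le X\le T)=\Omega(1)$ (implicitly assuming, as holds in the intended applications, that $\zeta$ is chosen so that $\E_{\lam,\zeta}X$ falls inside $[T_0,T]$). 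By pigeonhole, there is some $m^\ast\in W$ with
\[
p_{m^\ast} \;:=\; \P_{\lam,\zeta}(|G|=m^\ast \wedge T_0\le X\le T) \;\ge\; \frac{\Omega(1)}{|W|} \;=\; \exp(-o(n^{3/2})).
\]

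\emph{Step 2 (ratio transfer).} The identity $\mu_{\lam,\zeta}(H\cup\{e\})=\lam(1-\zeta)^{t_H(e)}\mu_{\lam,\zeta}(H)$ for $e\notin H$, combined with a double-counting over pairs $(G,e)$ with $|G|=m+1$ and $e\in E(G)$, yields
\[
(m+1)\,p_{m+1} \;=\; \lam\sum_{H:\,|H|=m}\mu_{\lam,\zeta}(H)\sum_{e\notin H}(1-\zeta)^{t_H(e)}\,\1\{T_0\le X(H)+t_H(e)\le T\}.
\]
The trivial bound $\sum_e(1-\zeta)^{t_H(e)}\le\binom{n}{2}$ gives $p_{m+1}/p_m=O(1)$. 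For the matching lower bound, Lemma~\ref{lemStochDom} combined with a Chernoff estimate shows that under $\mu_{\lam,\zeta}$, any fixed pair has codegree $O(1)$ wvhp; hence for typical $H$ all but $o(n^2)$ non-edges $e$ have $t_H(e)=O(1)$. Since $t_H(e)=O(1)\ll\varepsilon n^{3/2}$, the indicator equals $1$ whenever $X(H)$ lies at distance $\omega(1)$ from the endpoints of $[T_0,T]$, which by concentration is the case for all but $O(n^{-3/4})$ fraction of typical $H$. This gives $p_{m+1}/p_m\ge c_1>0$ provided the conditional measure on $\{|G|=m,\,T_0\le X\le T\}$ inherits these concentration properties. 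We secure this by a bootstrap that iterates outward from $m^\ast$: at each step $p_m\ge\exp(-o(n^{3/2}))\gg n^{-\omega(1)}$, so wvhp events under $\mu_{\lam,\zeta}$ retain overwhelming conditional probability. Multiplying the ratio bound across $|M-m^\ast|=o(n^{3/2})$ steps yields
\[
p_M \;\ge\; c_1^{|M-m^\ast|}\,p_{m^\ast} \;=\; \exp(-o(n^{3/2})).
\]
In the case $\zeta=1$ the triangle-freeness constraint is automatic, $(1-\zeta)^{t_H(e)}$ is interpreted as $\1\{t_H(e)=0\}$, and the required lower bound on $\sum_e(1-\zeta)^{t_H(e)}$ is the open-pair count, which is $\Omega(n^2)$ by Lemma~\ref{lemDegLB}.

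\emph{Main obstacle.} The bootstrap in the lower bound on $p_{m+1}/p_m$ is the main technical hurdle. A naive approach that bounds the conditional expectation directly using $\mu_{\lam,\zeta}$-concentration fails once the conditioning event has probability substantially smaller than the wvhp failure scale, since bad events could then dominate conditionally. The resolution is to verify iteratively that the scale $\exp(-o(n^{3/2}))$ of $p_m$ remains much larger than the wvhp failure scale $n^{-\omega(1)}$ of the ``good graph'' event (bounded codegrees and $X$-value far from the endpoints of $[T_0,T]$), so that concentration survives conditioning at each step. A secondary point is the boundary effect in the $X$-window: $H$ with $X(H)$ within $O(1)$ of $T$ may lose the indicator, but the affected conditional measure is negligible because the $X$-window has macroscopic width $\varepsilon n^{3/2}$ compared to the $O(n^{3/4}\log^3 n)$ concentration scale of $X$.
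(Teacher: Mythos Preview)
Your edge-swap identity is a natural idea, but the argument as written has two genuine gaps.

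\textbf{The implicit assumption in Step 1 is not granted by the hypotheses.} The lemma only asserts $X(G)=(1+o(1))T$ whp, which allows the typical value of $X$ to sit at $T(1+\delta_n)$ for some $\delta_n\to 0^+$. In that case $\P_{\lambda,\zeta}(T_0\le X\le T)$ can be $o(1)$ and your pigeonhole yields nothing. The paper addresses this head-on: after pigeonholing to $(M',T')$ with $T'=(1+o(1))T$ and $\P(|G|=M',\,X=T')\ge n^{-5}$, if $T'>T$ it removes $k=\lfloor M(T'-T+n^{1.1})/T'\rfloor$ random edges to drop the triangle count into $[T-o(n^{3/2}),T]$ (Claim~\ref{clmLoweringTriangleCount}). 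Some explicit triangle-reducing step of this kind is unavoidable.

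\textbf{The bootstrap in Step 2 fails quantitatively.} Since $M$ is only $(1+o(1))\E|G|$, the distance $|M-m^\ast|$ can be any $o(n^{3/2})$, say $n^{1.2}$. Each transfer step loses a constant factor (the ratio $\lambda\cdot\Omega(n^2)/(m{+}1)$ is $\Theta(1)$, not $\ge 1$, and in the direction away from the mode of $|G|$ it is strictly below $1$), so after $k$ steps $p_m$ can be as small as $\exp(-\Theta(k))$. But the ``bad mass'' you subtract at each step is bounded only by $n^{-\omega(1)}$ (for codegrees, via stochastic domination this is $\exp(-O(\log n\log\log n))$) plus the boundary term $\P(X\in(T{-}K,T])$, for which you have no anticoncentration. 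Once $k\gg \log n$, the bad mass dominates $p_m$ and the inequality $p_{m+1}\ge c_1 p_m$ no longer follows. Your key claim ``$\exp(-o(n^{3/2}))\gg n^{-\omega(1)}$'' is simply false: $\exp(-n)$ is of the first form and is far smaller than $n^{-\log n}$.

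The paper avoids iteration entirely by a one-shot construction: from a class $\mathcal S$ of graphs at $(M',T')$ with $\P(G\in\mathcal S)\ge n^{-6}$, it manufactures $\exp(-o(n^{3/2}))\cdot|\mathcal S|\cdot\lambda^{M'-M}$ graphs at $(M,[T_0,T])$ by first deleting $o(M)$ random edges to fix triangles (Claim~\ref{clmLoweringTriangleCount}), then either deleting edges (Claim~\ref{clmDownwardsConstruction}) or adding \emph{open} edges (Claims~\ref{clmManyOpenEdges},~\ref{clmUpwardsConstruction}) to reach exactly $M$ edges without creating new triangles. Comparing $\mu_{\lambda,\zeta}$-weights directly then gives the bound in one stroke.
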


\begin{remark}
    The assumed bounds on $M$ and $T$ are superfluous. Indeed, it is not too difficult to show that for every fixed $c$ and $\zeta$ the number of edges in $G$ is typically on the order of $n^{3/2}$. Similarly, the number of triangles $X(G)$ is also on the order of $n^{3/2}$ unless $\zeta=1$ in which case  $X(G)=0$ with probability $1$. This paper already contains a proof of these facts whenever $c < \bar c(\eta)$ (where $\eta,\zeta,c$ satisfy \eqref{eqZetaDef}). Since we do not rely on this lemma in any other setting we omit the proof for other choices of $c,\zeta$.
\end{remark}

To prove Lemma \ref{lemTransferGnm} we first show that there are many graphs with the desired number of triangles and approximately the desired number of edges.

\begin{claim}\label{clmLoweringTriangleCount}
    Suppose that $G$ is an $n$-vertex graph with $X(G) = (1+o(1))T$ where $X(G)>T$ and $|G| = (1+o(1))M$. Set $k = \lfloor M\cdot (X(G)-T + n^{1.1})/X(G) \rfloor$. There are at least $(1-o(1))\binom{|G|}{k}$ graphs $H \subseteq G$ satisfying $T - o(n^{3/2}) \leq X(H) \leq T$.
\end{claim}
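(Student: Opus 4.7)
The plan is to sample $H$ by choosing $K$ uniformly at random among $k$-element subsets of $E(G)$ and setting $H := G \setminus K$; I will show that with probability $1 - o(1)$ over this choice, $X(H) \in [T - o(n^{3/2}), T]$, which yields the claim. Writing $Z := X(G) - X(H) = \sum_{t \in \mathcal{T}(G)} Y_t$ with $Y_t := \mathbf{1}[E(t) \cap K \neq \emptyset]$, the argument reduces to computing $\mathbb{E} Z$ precisely and controlling $\operatorname{Var}(Z)$ well enough for a Chebyshev estimate.

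For the expectation, $\mathbb{E} Y_t = 1 - \binom{|G|-3}{k}/\binom{|G|}{k}$ exactly. Writing $\beta := k/|G|$, the choice of $k$ gives $\beta = (1+o(1))(X(G) - T + n^{1.1})/X(G) = o(1)$, and a short Taylor expansion yields $\mathbb{E} Y_t = 3\beta + O(\beta^2) + O(1/|G|)$. Summing, $\mathbb{E} Z = 3(X(G) - T + n^{1.1}) + o(n^{3/2})$ and hence $\mathbb{E} X(H) = T - 2(X(G) - T) - 3 n^{1.1} + o(n^{3/2})$, which sits comfortably inside the target window since $X(G) - T = o(n^{3/2})$.

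For the variance, decompose $\operatorname{Var}(Z) = \sum_t \operatorname{Var}(Y_t) + \sum_{t \neq t'} \operatorname{Cov}(Y_t, Y_{t'})$ and split the covariance sum by $|E(t) \cap E(t')| \in \{0, 1\}$ (two distinct triangles cannot share two edges). The indicator variables $\{\mathbf{1}[e \in K]\}_{e \in E(G)}$ of a uniform $k$-subset are negatively associated, so $\operatorname{Cov}(Y_t, Y_{t'}) \le 0$ whenever $E(t) \cap E(t') = \emptyset$ and these pairs may be dropped. For pairs sharing exactly one edge, a direct computation with falling factorials gives $\operatorname{Cov}(Y_t, Y_{t'}) = (1-\beta)^5 \beta + O(1/|G|) = O(\beta)$, while the number of ordered such pairs equals $\sum_e t_G(e)(t_G(e) - 1) \le 3 \Delta_2(G) \cdot X(G) = O(n^{5/2})$, using only the trivial bound $\Delta_2(G) \le n$ on the maximum codegree. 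Altogether, $\operatorname{Var}(Z) = O(\beta n^{5/2}) = O\bigl((X(G) - T + n^{1.1}) \cdot n\bigr)$.

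The main worry is whether this variance bound survives Chebyshev, since we have no a priori control on the maximum codegree $\Delta_2(G)$; the crucial point is that the $n^{1.1}$ inflation built into $k$ is there precisely to create the needed slack. The binding constraint is $Z \ge X(G) - T$ (equivalently $X(H) \le T$), whose failure requires $Z - \mathbb{E} Z \le -\bigl(2(X(G) - T) + 3 n^{1.1}\bigr)$, with deviation tolerance at least $3 n^{1.1}$. Chebyshev then gives failure probability at most $O\bigl(\operatorname{Var}(Z)/(X(G) - T + n^{1.1})^2\bigr) = O\bigl(n/(X(G) - T + n^{1.1})\bigr) \le O(n^{-0.1}) = o(1)$, where the last inequality uses $X(G) - T + n^{1.1} \ge n^{1.1}$ together with $1.1 > 1$. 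The slacker lower tail $X(H) \ge T - c_n$ is met by taking $c_n := 2(X(G) - T) + 3 n^{1.1} + n^{5/4} \log n = o(n^{3/2})$, which yields an analogous $o(1)$ Chebyshev bound; a union bound over the two tails completes the proof.
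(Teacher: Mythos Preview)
Your proof is correct and essentially identical to the paper's: delete $k$ uniformly random edges and run a second-moment calculation on $Z = X(G) - X(H)$, using that triangles sharing an edge contribute the dominant covariance and that the $n^{1.1}$ padding in $k$ makes Chebyshev go through. Your variance treatment (negative association for edge-disjoint pairs, direct computation for overlapping pairs) is in fact more detailed than the paper's, which just asserts $\var[Z] \le 2n\,\E[Z]$ from the observation that each triangle shares an edge with fewer than $3n$ others.

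One small imprecision worth flagging: your explicit choice $c_n = 2(X(G)-T) + 3n^{1.1} + n^{5/4}\log n$ for the lower-tail window gives a buffer of only $n^{5/4}\log n$ over your \emph{approximation} to $\E Z - (X(G)-T)$, but the $o(1)$ multiplicative error in $\E Z$ (from $|G| = (1+o(1))M$) can be as large as $o(X(G)-T+n^{1.1})$ and may swallow that buffer when $X(G)-T$ is close to $n^{3/2}$. The paper handles both tails at once with the multiplicative window $|Z - \E Z| < \tfrac12\E Z$, giving $\var Z/(\E Z)^2 = O(n/\E Z) = O(n^{-0.1})$; equivalently, taking $c_n$ to be a fixed constant multiple of $X(G)-T+n^{1.1}$ (still $o(n^{3/2})$) fixes your argument with no further change.
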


\begin{proof}
    Let $H \subseteq G$ be a uniformly random subgraph of $G$ with $|G|-k$ edges (equivalently, $H$ is obtained by deleting a uniformly random set of $k$ edges). The parameter $k$ was chosen so that whp slightly more than $X(G)-T$ triangles are deleted. Indeed, let $Y = X(G) - X(H)$ be the number of deleted triangles. We prove this with a second-moment calculation. We have
    \[
    \E [Y] = (1+o(1)) X(G) \frac{3k}{|G|} = (1+o(1)) 3(X(G)-T+n^{1.1}).
    \]
    Since every triangle in $G$ shares an edge with fewer than $3n$ other triangles, we have
    \[
    \var[Y] \leq (1+o(1))X(G) 3n \frac{k}{|G|} = (1+o(1)) 3n (X(G)-T+n^{1.1}) \leq 2n\E [Y].
    \]
    Chebychev's inequality now implies
    \[
    \P \left(|Y-\E [Y]| \geq \frac{1}{2}\E [Y]\right) \leq \frac{4\var [Y]}{(\E [Y])^2} \leq \frac{8n\E [Y]}{(\E[Y])^2} \leq \frac{8}{n^{0.1}}.
    \]
    Hence, whp, $5(X(G)-T+n^{1.1}) \geq Y \geq X(G)-T$ and $T \geq X(H) \geq (1-o(1))T$.
    
    As a consequence there are $(1-o(1)) \binom{|G|}{k}$ graphs $H \subseteq G$ satisfying $T \geq X(H) \geq T-o(n^{3/2})$, as claimed. 
\end{proof}

\begin{claim}\label{clmDownwardsConstruction}
    Suppose that $G$ is an $n$-vertex graph with $X(G) = (1+o(1))T$ triangles and $M' = (1+o(1))M$ edges. There are at least $(1-o(1))\binom{M'}{M}$ graphs $H \subseteq G$ with $M$ edges that also satisfy
    $X(H) \geq X(G) - o(n^{3/2})$.
\end{claim}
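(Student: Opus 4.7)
\textbf{Proof plan for Claim~\ref{clmDownwardsConstruction}.} The natural approach is a first-moment / Markov argument on a uniformly random $M$-edge subgraph of $G$. Write $k := M'-M = o(M) = o(n^{3/2})$, since $M' = (1+o(1))M$ and $M = \Theta(n^{3/2})$. Let $H$ be obtained from $G$ by deleting a uniformly random set of $k$ edges, so that $H$ is a uniformly random $M$-edge subgraph of $G$, and set $Y := X(G) - X(H)$.

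First I would bound $\mathbb{E}[Y]$ by linearity. A triangle $\Delta$ of $G$ contributes to $Y$ iff at least one of its three edges is deleted, and by a union bound the probability of this is at most $3k/M'$. Hence
\begin{equation}
\mathbb{E}[Y] \le X(G) \cdot \frac{3k}{M'} = O(T) \cdot o(1) = o(n^{3/2}),
\end{equation}
where I use $X(G) = (1+o(1))T = O(n^{3/2})$ (and the case $T=0$, which corresponds to $\zeta=1$, is trivial since then $X(G)=0$ and every subgraph $H$ satisfies $X(H)=0$). Write $\mathbb{E}[Y] \le \varepsilon_n n^{3/2}$ with $\varepsilon_n \to 0$.

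Next I would apply Markov's inequality: $\mathbb{P}(Y \ge \sqrt{\varepsilon_n}\, n^{3/2}) \le \sqrt{\varepsilon_n} = o(1)$. Hence with probability $1-o(1)$ we have $Y = o(n^{3/2})$, i.e.\ $X(H) \ge X(G) - o(n^{3/2})$. Since $H$ is uniform over the $\binom{M'}{M}$ subgraphs of $G$ with $M$ edges, at least a $(1-o(1))$-fraction of these subgraphs satisfy the desired triangle bound, which is the required count of at least $(1-o(1))\binom{M'}{M}$.

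There is no real obstacle here; the argument is a routine first-moment computation and the only thing to double-check is that the bound $X(G) = O(n^{3/2})$ applies uniformly in the hypothesized regime (which it does by the assumption $X(G) = (1+o(1))T$ with $T = \Theta(n^{3/2})$, or is trivial if $T=0$). The claim is essentially an averaging statement and is purely combinatorial, requiring no appeal to Gibbs measures.
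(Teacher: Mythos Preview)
Your proposal is correct and matches the paper's proof essentially line for line: the paper also takes $H$ to be a uniformly random $M$-edge subgraph of $G$, bounds $\E[X(G)-X(H)] \le 3X(G)(M'-M)/M' = o(n^{3/2})$, and finishes with Markov's inequality. Your explicit handling of the $T=0$ case and the Markov threshold $\sqrt{\varepsilon_n}\,n^{3/2}$ are minor elaborations of the same argument.
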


\begin{proof}
    Let $H$ be a uniformly random subgraph of $G$ with $M$ edges. Since there are $\binom{M'}{M}$ choices for $H$ it suffices to show that whp $X(G)-X(H) = o(n^{3/2})$.
    Indeed, the expected number of triangles in $G$ that are not in $H$ is at most $3X(G) (M'-M)/M' = o(n^{3/2})$. Markov's inequality gives us that whp $X(G)-X(H) = o(n^{3/2})$, as claimed. 
\end{proof}

Our next goal is to prove an analogue of Claim \ref{clmDownwardsConstruction} in the case $M' \leq M$. Here we must assume slightly more about the graph $G$. Namely, we assume that its degree is not too large and that a substantial fraction of potential edges would not form a triangle if added to the graph.

\begin{defn}
    For a graph $G$, we write $\open(G)$ for the number of \textit{open} potential edges in $G$, i.e., those vertex pairs $\{x,y\}$ that are not edges in $G$ and would not form a triangle if the edge $xy$ were added to $G$.
\end{defn}

We begin by observing that with high probability, a constant fraction of potential edges in a graph sampled from $\mu_{\lambda,\zeta}$ are open.
\begin{claim}\label{clmManyOpenEdges}
    Fix $c>0$ and $\zeta \in [0,1]$, and suppose that $\lam = (1+o(1))c/\sqrt{n}$ and $\alpha = \frac{1}{3}e^{-c^2}$. Then we have
    \[
    \mathbb{P}_{\lambda,\zeta}\left(\left(\open(G) \geq \alpha n^2\right)\land \left(\Delta(G) \leq 2\lambda n\right)\right) = 1 - \exp\left(-\Omega(\sqrt{n})\right).
    \]
\end{claim}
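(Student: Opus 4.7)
The plan is to reduce to the analogous statement for $G' \sim G(n,p)$ via stochastic domination and then establish concentration for $G'$ using McDiarmid's inequality. By Lemma~\ref{lemStochDom}, couple $G \sim \mu_{\lam,\zeta}$ with $G' \sim G(n,p)$, where $p = \lam/(1+\lam) = (1+o(1))c/\sqrt{n}$, so that $G \subseteq G'$ almost surely. Under this coupling, $\Delta(G) \leq \Delta(G')$, and any pair $\{u,v\}$ that is open in $G'$ is also open in $G$, since both defining conditions (that $\{u,v\}$ is not an edge and that $u,v$ have no common neighbor) are monotone under edge removal. Hence $\open(G) \geq \open(G')$, and it suffices to prove that
\[
\mathbb{P}\bigl(\open(G') \geq \alpha n^2 \wedge \Delta(G') \leq 2\lam n\bigr) \geq 1 - \exp(-\Omega(\sqrt{n})).
\]

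For the mean, a direct computation gives $\mathbb{E}\,\open(G') = \binom{n}{2}(1-p)(1-p^2)^{n-2} = (1+o(1))\tfrac{n^2}{2}e^{-c^2}$, which exceeds $\alpha n^2 = \tfrac{n^2}{3}e^{-c^2}$ by a constant factor for $n$ large. The degree tail $\mathbb{P}(\Delta(G') > 2\lam n) \leq \exp(-\Omega(\sqrt{n}))$ follows from a standard Chernoff bound for $\mathrm{Bin}(n-1,p)$, since $np = \Theta(\sqrt{n})$.

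The heart of the argument is the concentration of $\open(G')$. On $\Omega_\Delta := \{H \in \Omega_n : \Delta(H) \leq 2\lam n\}$, the function $\open$ is Lipschitz in the edge-Hamming metric with constant $L = 1 + 4\lam n = O(\sqrt{n})$: toggling a single edge $\{u,v\}$ can change the open status only of the pair $\{u,v\}$ itself and of pairs $\{u,y\}$ with $y \in N(v)\setminus\{u\}$ or $\{v,y\}$ with $y \in N(u)\setminus\{v\}$ (the only pairs for which $u$ or $v$ could become or cease to be a common neighbor), contributing at most $1 + d(u) + d(v) \leq 1 + 4\lam n$ on $\Omega_\Delta$. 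Using the McShane--Whitney extension (as in the proof of Lemma~\ref{lem:fullconcentration}), extend $\open|_{\Omega_\Delta}$ to a function $\widehat\open : \Omega_n \to \mathbb{R}$ that is $L$-Lipschitz globally and coincides with $\open$ on $\Omega_\Delta$. McDiarmid's inequality applied to $\widehat\open(G')$ as a function of the $\binom{n}{2}$ independent edge indicators gives
\[
\mathbb{P}\bigl(|\widehat\open(G') - \mathbb{E}\,\widehat\open(G')| \geq t\bigr) \leq 2\exp\!\left(-\frac{2t^2}{\binom{n}{2}L^2}\right) = 2\exp\bigl(-\Omega(t^2/n^3)\bigr).
\]
Since $\widehat\open$ agrees with $\open$ on $\Omega_\Delta$ and both are bounded by $n^2$, the expectations differ by $O(n^2)\,\mathbb{P}(G' \notin \Omega_\Delta) = o(1)$. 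Taking $t$ to be a sufficiently small constant multiple of $n^2$ (so that $\mathbb{E}\,\widehat\open(G') - t > \alpha n^2$) yields $\exp(-\Omega(n))$ concentration, which gives $\mathbb{P}(\open(G') < \alpha n^2 \wedge \Delta(G') \leq 2\lam n) \leq \exp(-\Omega(n))$. Combining with the degree tail yields the claimed bound. The only step requiring any real care is the Lipschitz analysis on $\Omega_\Delta$, which is a short combinatorial check; the rate $\exp(-\Omega(\sqrt{n}))$ is driven by the degree truncation and matches the statement exactly, so no substantive obstacle arises.
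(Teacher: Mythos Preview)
Your proof is correct and complete, but it takes a different route from the paper's. Both begin identically: reduce to $G'\sim G(n,p)$ by stochastic domination, compute $\E_p[\open(G')]=(1+o(1))\tfrac{n^2}{2}e^{-c^2}$, and handle $\Delta(G')\le 2\lam n$ by a Chernoff bound. The divergence is in the concentration of $\open(G')$. The paper simply notes that $\open$ is globally $2n$-Lipschitz in the edge-Hamming metric and applies Freedman's inequality, which picks up a factor of $p$ in the variance term and gives
\[
\P_p(\open(G')<\alpha n^2)\le \exp\!\left(-\frac{\Theta(n^4)}{\Theta(n^4 p)}\right)=\exp(-\Omega(\sqrt{n})).
\]
You instead refine the Lipschitz constant to $L=O(\sqrt{n})$ on the degree-truncated event $\Omega_\Delta$, use McShane--Whitney (already invoked elsewhere in the paper) to extend, and apply plain McDiarmid, which yields the stronger concentration $\exp(-\Omega(n))$ for $\open$; the overall rate is then set by the degree tail. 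Your approach trades Freedman for the more elementary McDiarmid, at the cost of the extension step; the paper's is shorter but leans on a sharper martingale inequality.

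One small imprecision: you assert that $\widehat\open$ is ``bounded by $n^2$'', but the McShane--Whitney extension is only a priori bounded by $\binom{n}{2}+L\binom{n}{2}=O(n^{5/2})$. This is harmless for your expectation comparison since $\P_p(G'\notin\Omega_\Delta)=\exp(-\Omega(\sqrt{n}))$ kills any polynomial factor; alternatively, you may replace $\widehat\open$ by $\min(\widehat\open,\,n^2)$, which remains $L$-Lipschitz and still agrees with $\open$ on $\Omega_\Delta$.
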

\begin{proof}
    Set $p = \lam/(1+\lam)$. Lemma~\ref{lemStochDom} asserts that $\mu_{\lam,\zeta}$ is stochastically dominated by $G(n,p)$, so it suffices to prove that 
    \[
    \P_p\left(\left(\open(G) \geq \alpha n^2\right)\land \left(\Delta(G) \leq 2\lambda n\right)\right) = 1 - \exp\left(-\Omega(\sqrt{n})\right).
    \]
    To begin, note that
    \[
    \E_p [\open(G)] = \binom{n}{2}(1-p^2)^{n-2} = (1 + o(1))\frac{n^2}{2}e^{-c^2}.
    \]
    Observe that $\open(G)$ is a $2n$-Lipschitz function of the edge set of $G$. Therefore, Freedman's inequality (see e.g. \cite{freedman1975tail} Proposition $2.1$)  gives us
    \[
    \P_{\lambda,\zeta}\left( \open(G)< \alpha n^2 \right) \leq \P_p \left( \open(G)< \alpha n^2 \right) \leq \exp \left(- \frac{(n^2 e^{-c^2}(1/2-1/3))^2}{12n^2pn^2} \right) = \exp(-\Omega(\sqrt{n})).
    \]
Corollary~\ref{cor:SDdegbound}  gives us that $\P_{\lambda,\zeta}\left(\Delta(G) > 2\lambda n\right) \leq \exp(-\Omega(\sqrt{n}))$. A union bound gives our desired claim.
\end{proof}

\begin{claim}\label{clmUpwardsConstruction}
    Let $\alpha,D>0$ be fixed. Suppose that $G$ is an $n$-vertex graph with $M' = (1-o(1))M$ edges. Suppose further that $\open(G) \geq \alpha n^2$ and that $\Delta(G) \leq D \sqrt{n}$. Then there are at least $(1-o(1))(\alpha n^2\!/2)^{M-M'}\!/(M-M')!$ graphs $H \supseteq G$ satisfying $V(H)=V(G)$, $|H| = M$, and $X(H) \leq X(G)$.
\end{claim}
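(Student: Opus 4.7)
The plan is to build $H$ greedily by adding edges one at a time so that no new triangle is ever created. Set $k := M - M'$; since $\Delta(G) \le D\sqrt{n}$ forces $M' = |E(G)| = O(n^{3/2})$, the hypothesis $M' = (1-o(1))M$ gives $k = o(M) = o(n^{3/2})$. Let $\mathcal{O}(G)$ denote the set of open pairs of $G$, a set of size at least $\alpha n^2$ by hypothesis, and set $\Delta' := \lceil \sqrt{n} \rceil$. I will count sequences $(e_1, \ldots, e_k)$ of elements of $\binom{V(G)}{2} \setminus E(G)$ satisfying \textbf{(i)} each $e_i$ lies in $\mathcal{O}(G_{i-1})$, where $G_{i-1} := G \cup \{e_1,\ldots,e_{i-1}\}$, and \textbf{(ii)} every vertex has degree at most $\Delta'$ in $\{e_1,\ldots,e_k\}$. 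Any such sequence produces $H := G_k$ with $V(H)=V(G)$, $|H|=M$, and $X(H)=X(G)$. Moreover, a new triangle in $G \cup \{e_1,\ldots,e_k\}$ would force the last-added edge of that triangle to violate (i); consequently if (i) holds in some ordering of a candidate set then it holds in every ordering, so each valid unordered set arises from exactly $k!$ valid sequences.

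The main task is to lower-bound the number of valid choices for $e_i$ given $e_1,\ldots,e_{i-1}$. Observe that $\mathcal{O}(G_{i-1}) \subseteq \mathcal{O}(G) \setminus \{e_1,\ldots,e_{i-1}\}$, and a pair $\{x,y\} \in \mathcal{O}(G) \setminus \{e_1,\ldots,e_{i-1}\}$ can fail to lie in $\mathcal{O}(G_{i-1})$ only if some vertex $z$ becomes a new common neighbour, which forces at least one of $xz,yz$ to be among the $e_j$'s. I split this into two sub-cases. If exactly one of $xz,yz$ is added, then the other lies in $G$, so $z$ is in the $G$-neighbourhood of an added-edge endpoint; using $\Delta(G) \le D\sqrt{n}$, this costs at most $2(i-1)D\sqrt{n}$ pairs. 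If both $xz,yz$ are added, then $(xz,yz)$ forms a ``cherry'' in the added-edge graph, and invoking (ii) together with $\sum_v d'_v = 2(i-1)$ (where $d'_v$ denotes the added-edge degree of $v$) gives $\sum_v \binom{d'_v}{2} \le (i-1)\Delta'$. Finally, to preserve (ii) at step $i+1$ I must also exclude candidate edges incident to any vertex of degree $\Delta'$ in the current added set; there are at most $2(i-1)/\Delta'$ such vertices, costing at most $2(i-1)n/\Delta'$ candidates.

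Summing these losses, the number of valid choices at step $i$ is at least $\alpha n^2 - O(k\sqrt{n})$, which is at least $\alpha n^2/2$ for $n$ large enough, since $k = o(n^{3/2})$. Multiplying over the $k$ steps gives at least $(\alpha n^2/2)^k$ valid sequences, hence at least $(\alpha n^2/2)^k/k!$ distinct graphs $H$, which is the required bound.

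The main obstacle is the cherry count above: without the auxiliary degree cap in (ii), this quantity could be as large as $\Theta(k^2)$, which is not $o(n^2)$ once $k$ approaches $n^{3/2}$. The balancing choice $\Delta' \asymp \sqrt{n}$ works because the two competing constraints $\Delta' = \omega(k/n)$ (to afford the exclusion cost for preserving (ii)) and $\Delta' = o(n^2/k)$ (to afford the cherry cost) are simultaneously satisfiable precisely under the hypothesis $k = o(n^{3/2})$.
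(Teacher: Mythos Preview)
Your argument is correct and in fact slightly cleaner than the paper's. Both proofs add edges one at a time while avoiding new triangles, but the mechanisms differ.

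The paper runs a \emph{random} process: at each step it adds a uniformly random open edge (falling back to a uniformly random non-edge if fewer than $\alpha n^2/2$ open edges remain), and then argues probabilistically that the fallback case is never triggered. The key step is a Chernoff-type bound showing that whp every vertex receives only $o(\sqrt{n})$ new edges, so $\Delta(H_i)$ stays $O(\sqrt{n})$ throughout; since each added edge can close at most $O(\Delta(H_i))$ open pairs, the open-edge count never drops below $\alpha n^2/2$. The $(1-o(1))$ prefactor in the claim arises from converting this ``whp'' statement back into a count of good outcomes.

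You instead \emph{enforce} the degree control deterministically via the auxiliary cap $\Delta'=\lceil\sqrt n\rceil$ on the added-edge graph, and then count directly: the two ``loss'' terms (mixed cherries through $G$, pure cherries among added edges) together with the cost of preserving the cap are each $O(k\sqrt n)=o(n^2)$, so at least $\alpha n^2/2$ open choices remain at every step. Your observation that both (i) and (ii) are properties of the unordered set makes the division by $k!$ immediate, and you obtain the bound without any $(1-o(1))$ loss.

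The two approaches buy the same conclusion from the same underlying fact---that $k=o(n^{3/2})$ keeps new degrees at $o(\sqrt n)$---but yours avoids the probability-to-counting conversion and is self-contained. A minor point: your phrasing ``$z$ is in the $G$-neighbourhood of an added-edge endpoint'' has the roles swapped (it is $y$, not $z$, that lies in $N_G(z)$), and you should in principle also subtract the $i-1$ pairs that became edges; both are harmless for the final bound.
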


\begin{proof}
    Consider the following process that constructs an $M$-edge graph $H_{M} \supseteq G$. Set $H_{M'} = G$. For each $i=M',M'+1,\ldots,M-1$, given $H_i$ construct $H_{i+1}$ as follows. If $\open(H_i) \geq \alpha n^2\!/2$ then let $e_{i+1}$ be a uniformly random open potential edge in $H_i$. Otherwise let $e_{i+1}$ be a uniformly random edge not in $E(H_i)$. Set $H_{i+1} = H_i + e_{i+1}$. We note that since $M=O(n^{3/2})$ there are at least $\alpha n^2\!/2$ choices for the edge $e_{i+1}$ at each step and so the number of graphs $H_M$ constructed in this way is at least $(\alpha n^2\!/2)^{M-M'}\!/(M-M')!$.

    We claim that whp $X(H_{M}) \leq X(G)$. We first observe that $H_{M}$ contains a triangle that is not in $G$ only if for some $M' \leq i < M$ we have $\open(H_i) < \alpha n^2\!/2$. We further observe that for every such $i$, $\open(H_{i+1}) \geq \open(H_i) - (1+2\Delta(H_i))$. Proceeding inductively we conclude that $\open(H_{i+1}) \geq \open(H_{M'}) - (M-M')(1 + 2\Delta(H_{M}))$. By assumption $\open(H_{M'}) = \open(H) \geq \alpha n^2$ and so $\open(H_{i+1}) < \alpha n^2\!/2$ only if $\Delta(H_{ M}) = \Omega(n^2\!/(M-M')) = \omega(\sqrt{n})$. Hence, it suffices to prove that whp $\Delta(H_{M})=O(\sqrt{n})$. Since by assumption $\Delta(G) = O(\sqrt{n})$, this will follow from the (stronger) claim that whp $\Delta(H_{M} \setminus G) = o(\sqrt{n})$.

    For every vertex $v$, let $Y_v$ be the number of edges $e_i$ such that $v \in e_i$. We note that $d_{H_{M} \setminus G}(v) = Y_v$. It suffices to prove that $Y_v = o(\sqrt{n})$ with probability at least $1-o(1/n)$. We begin by calculating $\E[Y_v]$. For $M' < i \leq M$ let $Y_{v,i}$ indicate the event $\{v \in e_i\}$. Then $Y_v = \sum_{i=M'+1}^{M}Y_{v,i}$. Since each $e_i$ is chosen uniformly at random from a set of $\Omega(n^2)$ edges, of which fewer than $n$ are incident to $v$, it follows that $\P(Y_{v,i}=1) = O(1/n)$. Therefore $Y_v$ is stochastically dominated by $\bin(M-M',O(1/n))$. A Chernoff  bound now implies that $\P(Y_v > 2\E[Y_v]+\log^2(n)) = o(1/n)$. Since $2\E[Y_v]+\log^2(n) = o(\sqrt{n})$ this implies the claim.
\end{proof}

We now prove Lemma \ref{lemTransferGnm}.
\begin{proof}
    By assumption whp $|G| = (1+o(1))M$ and $X(G) = (1+o(1))T$. Hence, by the pigeonhole principle, there exist $M' = (1+o(1))M$ and $T' = (1+o(1))T$ such that $\P[|G|=M' \land X(G) = T'] \geq n^{-5}$. Let $\mathcal S$ be the collection of $n$-vertex, $M'$-edge graphs $H$ satisfying $X(H) = T'$, $\open(H) \geq \alpha n^2$, and $\Delta(H) \leq 2c \sqrt{n}$. Claim \ref{clmManyOpenEdges} implies that $\P[G \in \mathcal S] \geq n^{-6}$.

    We now show  a lower bound on the number of graphs with their triangle count in $[T_0,T]$. We consider two cases. First, if $T' \leq T$ then set $\mathcal S' = \mathcal S$ and $M''=M'$. Note that since $T'=(1+o(1))T$ in this case all graphs in $\mathcal S'$ have a triangle count in $[T-o(n^{3/2}),T]$ and $M'' = (1+o(1))M$ edges.

    For the second case we consider $T' > T$. Set $k = \lfloor M(T'-T+n^{1.1})/T'\rfloor$ and $M''=M'-k$. By Claim \ref{clmLoweringTriangleCount} each graph in $\mathcal S$ contains at least $(1-o(1)) \binom{M'}{k}$ graphs with $M''$ edges and triangle count in $[T-o(n^{3/2}),T]$. Let $\mathcal S'$ be the collection of these graphs. Since each graph in $\mathcal S'$ is contained in fewer than $\binom{n^2\!/2}{k}$ graphs in $\mathcal S$ there holds
    \[
    |\mathcal S'| \geq (1-o(1)) |\mathcal S| \binom{M'}{k} / \binom{n^2\!/2}{k} = |\mathcal S| \lam^k \exp (-o(n^{3/2})).
    \]
    
    In either case we have now constructed $\mathcal S'$ as a collection of graphs with $M''=(1+o(1))M$ edges, triangle count in $[T-o(n^{3/2}),T]$, and maximum degree at most $2c\sqrt{n}$. Additionally in either case there holds
    \[
    |\mathcal S'| \geq |\mathcal S| \lam^{M'-M''}\exp(-o(n^{3/2})). 
    \]
    Now, let $\mathcal S''$ be the set of $n$-vertex $M$-edge graphs with triangle count in $[T_0,T]$. To obtain a lower bound on $|\mathcal S''|$, once again we consider two cases. If $M'' \geq M$ then, by Claim \ref{clmDownwardsConstruction} each graph in $\mathcal S'$ contains at least $(1-o(1))\binom{M''}{M''-M}$ graphs in $\mathcal S''$. Since each graph in $\mathcal {S''}$ is contained in at most $\binom{n^2\!/2}{M''-M}$ graphs in $\mathcal S'$, we conclude that
    \[
    |\mathcal S''| \geq (1-o(1)) |\mathcal S'| \binom{M''}{M''-M} / \binom{n^2\!/2}{M''-M} = |\mathcal S'| \lam^{M''-M} \exp(-o(n^{3/2})).
    \]
    Similarly, if $M>M''$ then Claim \ref{clmUpwardsConstruction} implies that
    \[
    |\mathcal S''| \geq (1-o(1))|\mathcal S'| \frac{(\alpha n^2\!/2)^{M-M''}}{(M-M'')!\binom{M}{M-M''}} = |\mathcal S'| \lam^{M''-M} \exp(-o(n^{3/2})).
    \]
    By a pigeonhole argument there exists some $T'' \in [T-o(n^{3/2}),T]$ such that there are at least $|\mathcal S''|/n^2$ graphs in $\mathcal S''$ with exactly $T''$ triangles. Let $\mathcal S''' \subseteq \mathcal S''$ be the collection of these graphs. By the calculations above there holds
    \[
    |\mathcal S'''| \geq \frac{1}{n^2}|\mathcal S''| \geq \lam^{M'-M}|\mathcal S|\exp(-o(n^{3/2})).
    \]
    To complete the proof we observe that
    \begin{align*}
        \P(|G| = M \land X(G) \in [T_0,T]) & \geq \P[G \in \mathcal S''']
        = \frac{\lam^M (1-\zeta)^{T''}}{Z(\lam,\zeta)} |\mathcal S'''|\\
        & \geq \frac{\lam^M (1-\zeta)^{T''}}{Z(\lam,\zeta)} \lam^{M'-M} |\mathcal S| \exp(-o(n^{3/2}))\\
        & \geq \frac{\lam^{M'} (1-\zeta)^{T''}}{Z(\lam,\zeta)} |\mathcal S| \exp(-o(n^{3/2}))\\
        & \geq \frac{\lam^{M'} (1-\zeta)^{T'}}{Z(\lam,\zeta)} |\mathcal S| \exp(-o(n^{3/2}))\\
        & = \P (G \in \mathcal S) \exp(-o(n^{3/2})) \geq \exp(-o(n^{3/2})),
    \end{align*}
    which implies the claim.
\end{proof}

\subsection{Lower tails for $G(n,p)$ via the partition function}
\label{subsecGnpLTpartition}

Here we prove Lemma~\ref{lem:LTtoZ}. 
\begin{proof}[Proof of Lemma~\ref{lem:LTtoZ}]
We fix $\eta \in (0,1)$ and let $\zeta \in (0, 1)$ be defined via~\eqref{eqZetaDef}. 

 We first prove the upper bound.  Let $T = \eta \E_p X$. Then
 \begin{align}
   \P_p \left(X \le \eta \E X\right) &= (1-p)^{\binom{n}{2}} \sum_{G : X(G) \leq T} \lam ^{|G|}  \\
      &\le  (1-p)^{\binom{n}{2}} (1-\zeta)^{- T} \sum_{G : X(G)\leq T} \lam ^{|G|} (1-\zeta)^{ X(G)}\\
      &\le
       (1-p)^{\binom{n}{2}} (1-\zeta)^{- T} Z(\lam,\zeta),
\end{align}
and so
\begin{align}
   \frac{1}{n^{3/2}} \log  \P_p \left(X \le \eta \E X\right)  &\le  - \frac{c}{2} - \log(1-\zeta)   \eta \frac{c ^3}{6}  + \frac{1}{n^{3/2}} \log Z(\lam, \zeta) + o(1) \,.
\end{align}
 Now we prove the lower bound. As before let $T = \eta \E_p X$. Fix $\eps>0$  and let $T_0 = T - \eps n^{3/2}$. 
 \begin{align}
      \P_p \left(X \le \eta \E X\right) &\ge  (1-p)^{\binom{n}{2}} \sum_{G : X(G) \in [T_0,T]} \lam ^{|G|}  \\
      &\ge  (1-p)^{\binom{n}{2}} (1-\zeta)^{- T_0} \sum_{G : X(G) \in [T_0,T]} \lam ^{|G|} (1-\zeta)^{ X(G)} \\
      &= (1-p)^{\binom{n}{2}} (1-\zeta)^{- T_0} Z(\lam,\zeta) \mu_{\lam,\zeta}(X(G) \in [T_0,T]) \\
      &= (1-p)^{\binom{n}{2}} (1-\zeta)^{- T} Z(\lam,\zeta) e^{- \eps n^{3/2} - o(n^{3/2})} \,,
 \end{align}
where the last line follows from Lemma~\ref{lemTransferGnm},  whose conditions are satisfied by  Lemma~\ref{lemDensityEstimate} under Condition~\ref{ConditionThm1}.  This gives
\begin{align}
   \frac{1}{n^{3/2}} \log  \P_p \left(X \le \eta \E X\right)  &\ge  - \frac{c}{2} - \log(1-\zeta)   \eta \frac{c ^3}{6}  + \frac{1}{n^{3/2}} \log Z(\lam, \zeta) -\eps  + o(1) \,,
\end{align}
and since $\eps$ was arbitrarily small, this completes the lower bound.

For the second assertion of the lemma, note that 
\begin{align}
 \P_{p} ( \mathcal{E} | X(G) \le T )
 &=
 \frac{(1-p)^{\binom{n}{2}}}{\P_{p}(X(G) \le T)}\sum_{G\in \mathcal{E}: X(G)\leq T}\left(\frac{p}{1-p} \right)^{|G|}\\
 &\leq 
  \frac{(1-p)^{\binom{n}{2}}}{\P_{p}(X(G) \le T)}(1-\zeta)^{-T}\sum_{G\in \mathcal{E}: X(G)\leq T}\lam ^{|G|}(1-\zeta)^{X(G)}\\
  &\leq
   \frac{(1-p)^{\binom{n}{2}}}{\P_{p}(X(G) \le T)}(1-\zeta)^{-T} Z(\lam,\zeta)\mu_{\lam,\zeta}(\mathcal{E})\\
   &\leq 
   e^{o(n^{3/2})}\cdot e^{-\eps n^{3/2}}\, ,
\end{align}
where for the final inequality we used~\eqref{eq:PFnTransfer} and the assumed upper bound on $\mu_{\lam,\zeta}(\mathcal{E})$. The result follows.
\end{proof}

\subsection{Lower tails for $G(n,m)$ via the partition function}
\label{subsecGnmLowerTailpartition}

To prove Theorem~\ref{thmLowerTailGnm} on the lower-tail rate function for $G(n,m)$ we will proceed similarly to Section~\ref{subsecGnpLTpartition}, only we now must choose $\lam$ and $\zeta$ simultaneously  to achieve the desired number of edges and triangles in expectation.

\begin{proof}[Proof of Theorem~\ref{thmLowerTailGnm}]
We are given $b$ and $\eta$ satisfying $b^2(1-\eta)<W(2/e)$.  Let $c = b e^{(1-\eta)b^2}$ and $\zeta= 1-\eta$. We will assume $m =(1+o(1) \frac{b}{2} n^{3/2}$ and $\lam = (1+o(1))c/\sqrt{n}$. Note that $c, \eta$ satisfy $c < \overline c(\eta)$ (Condition~\ref{ConditionThm1}) and so we will be able to apply Lemma~\ref{lemZlambetaEst}.

We note a couple of identities we will use below. From the choices of parameters, we have 
\begin{equation}
\label{eqGnmExpId}
    \eta \E_m X = (1+o(1)) \eta \frac{b^3}{6} n^{3/2} \,.
\end{equation}
We also have
\begin{equation}
\label{eqBinomCoeffLamid}
    \frac{1}{n^{3/2}} \log\left( \frac{\lam^{-m}}{ \binom{\binom{n}{2}}{m}}\right) =  -\frac{b}{2} \log(ec/b) + o(1)  =  - (1-\eta)\frac{b^3}{2} -\frac{b}{2} +o(1)\, .
\end{equation}
Next, we have 
\begin{equation}
    \label{eqLogZinGnmAsym}
    \frac{1}{2} \cdot\left[  \frac{W(2 \zeta c^2)^{3/2} +3W(2\zeta c^2)^{1/2}}{3\sqrt{2\zeta}}\right] = \frac{b^3(1-\eta)}{3} + \frac{b}{2}  \,.
\end{equation}

We first prove the upper bound for Theorem~\ref{thmLowerTailGnm}.  Let $T = \eta \E _m X$.  
\begin{align}
    \P_m ( X \le T) &\le   \frac{\lam^{-m} (1-\zeta)^{- T}}{ \binom{\binom{n}{2}}{m}} Z(\lam,\zeta)  \mu_{\lam,\zeta} ( |G| =m \wedge X(G) \le T)  \\
    &\le  \frac{\lam^{-m} (1-\zeta)^{- T}}{ \binom{\binom{n}{2}}{m}} Z(\lam,\zeta)  \, ,
\end{align}
and so, applying Lemma~\ref{lemZlambetaEst} and the identities~\eqref{eqGnmExpId},~\eqref{eqBinomCoeffLamid}, and~\eqref{eqLogZinGnmAsym}, 
\begin{align}
    \frac{1}{n^{3/2}} \log \P_m ( X \le T) &\le -(1-\eta) \frac{b^3}{2} -\frac{b}{2}    -\log (1-\zeta)   \eta \frac{b^3}{6}  +\frac{b^3(1-\eta)}{3} + \frac{b}{2}   +o(1) \\
    &=  - \frac{b^3}{6} (1- \eta+\eta\log \eta)     +o(1) \,.
\end{align}

For the lower bound we proceed in a similar way.  Let $\eps>0$ be fixed and set $T_0 = T - \eps n^{3/2}$.  Then
\begin{align}
     \P_m ( X \le T) &\ge   \frac{\lam^{-m} (1-\zeta)^{- T_0}}{ \binom{\binom{n}{2}}{m}} Z(\lam,\zeta)  \mu_{\lam,\zeta} \left( |G| =m \wedge X(G) \in[T_0, T] \right) \,, 
\end{align}
and so, applying Lemma~\ref{lemTransferGnm} for a lower bound on $\mu_{\lam,\zeta} ( |G| =m \wedge X(G) \in[T_0, T])$, and applying the same identities as above, we have 
\begin{align}
     \frac{1}{n^{3/2}} \log \P_m ( X \le T) &\ge - \frac{b^3}{6} (1- \eta+\eta\log \eta)  - \eps + o(1) \,.
\end{align}
Since $\eps$ was arbitrary, we have proved the lower bound.  
\end{proof}

\section{Convergence in cut norm}
\label{secCutNorm}

In this section we  prove Theorem~\ref{thmLowerTailStructure}.  To do so, we first  prove a result  for the measure $\mu_{\lam, \zeta}$ then transfer to the lower-tail conditional measure by applying Lemma~\ref{lem:LTtoZ} (or Lemma~\ref{lem:LTtoZhardcore} in the case $\eta=0$). 

We are given $\eta \in [0,1)$ and  $c < \overline c(\eta) $. We define $\zeta$ via~\eqref{eqZetaDef}.  We assume $p = (1+o(1)) c/\sqrt{n}$, set $\lam = p/(1-p)$, and set $q= \sqrt{\frac{W( 2 \zeta c^2)}{2 \zeta } }\cdot n^{-1/2} $ as in~\eqref{eqQform}. Note that this implies $q = \Theta(p) = \Theta(\lam)$.

For a graph $G$, let $A_G$ be its adjacency matrix, and let $M = \mathbb{E}_{\lambda,\zeta}[A_G]$. Note that by Lemma~\ref{lemDensityEstimate} and symmetry, each off-diagonal entry of $M$ is $(1+o(1))q$ and each diagonal entry is $0$. It follows that $\|M-qJ\|_{\boxempty}\leq nq+o(qn^2)=o(qn^2)$. By the triangle inequality, it therefore suffices to prove Theorem~\ref{thmLowerTailStructure} with $M$ in place of $qJ$.

Fix $\eps>0$ and let
\[
\mathcal{Q}=\mathcal{Q}_n : = \left\{\|A_G - M\|_{\boxempty}  \geq \eps qn^2\right\} .
\]
For brevity, let us denote the lower-tail event by $\mathcal{T} = \left\{X(G) \leq \eta\mathbb{E}_p X(G)\right\}$. To prove the theorem it suffices, by the Borel--Cantelli Lemma, to show that
\begin{align}\label{eq:BC}
\sum_{n=1}^\infty \P_p(\mathcal{Q}_n \mid \mathcal{T})<\infty \, .
\end{align}
Let
\begin{align*}
\mathcal{B} & : = \left\{\Delta(G) \leq \lambda n + \sqrt{\lambda n}\log n/100\right\}\,.
\end{align*}
We will show for $\lambda$ and $\zeta$ satisfying Condition~\ref{ConditionThm1}, 
\begin{equation}\label{eqn:cutnormETS}
\mathbb{P}_{\lambda, \zeta}\left(\mathcal{Q} \wedge \mathcal{B}\right) = \exp\left(-\Omega_{\eps}(n^{3/2})\right).
\end{equation}
 Indeed, since Lemma~\ref{lem:LTtoZ} would then give us
\begin{align*}
\mathbb{P}_p\left(\mathcal{Q} \wedge \mathcal{B}\mid \mathcal{T}\right) \leq \exp\left(-\Omega_{\eps}(n^{3/2})\right) \,,
\end{align*}
and Corollary~\ref{cor:SDdegbound} gives us
\[
\mathbb{P}_p(\mathcal{Q}\mid\mathcal{T}) \leq \mathbb{P}_p\left(\mathcal{Q} \wedge \mathcal{B}|\mathcal{T}\right) + \mathbb{P}_p(\overline{\mathcal{B}}\mid\mathcal T) = \exp\left(-\Omega_{\eps}(\log^2 n)\right)\,,
\]
this would imply~\eqref{eq:BC} and thereby complete the proof. We now proceed to proving~\eqref{eqn:cutnormETS}.

For any fixed $x,y \in \{0,1\}^n$, $x^T(A_G - M)y$ is $2$-Lipschitz in $G$ with respect to the Hamming metric, with $\mathbb{E}_{\lambda,\zeta}[x^T(A_G - M)y] = 0$, so Lemma~\ref{lem:Lipschitzconcentration} gives us that 
\begin{equation}\label{eqn:x,yinP}
\mathbb{P}_{\lambda,\zeta}\left(\left(|x^T(A_G - M)y| \geq m\right)\land \mathcal {B}\right) \leq \exp\left(-\Omega\left(\frac{m^2}{n^2\lam + m}\right)\right).
\end{equation}
Setting $m = \eps qn^2$ gives us that this probability is at most $\exp\left(-\Omega_\eps( n^{3/2})\right)$. Taking a union bound we have
\begin{align*}
\mathbb{P}_{\lambda,\zeta}\left(\mathcal{Q}\land \mathcal{B}\right) & \leq \sum_{x,y\in \{0,1\}^n} \mathbb{P}_{\lambda,\zeta}\left(\left(|x^T(A_G - M)y| \geq m\right)\land \mathcal {B}\right)
\leq \exp\left(-\Omega_\eps( n^{3/2})\right)\, .
\end{align*}
as desired.

\section{Phase transitions}
\label{secPhaseTransition}

In this section we prove Corollaries~\ref{corPhaseTransitionGnp} and~\ref{corGnmPhase}.  In both cases, the proof structure is the same: we have  formulas for the rate functions $\varphi_\eta(c)$ and $\widehat \varphi_\eta(b)$ given in Theorems~\ref{thmLowerTailAsymp} and~\ref{thmLowerTailGnm}, valid for small enough $c$ and $b$ respectively.  These formulas are analytic functions of their arguments on the entire positive real axis.  We then prove lower bounds on $\varphi_\eta(c)$ and $\widehat \varphi_\eta(b)$ that contradict the above formulas for large enough $c, b$.  By uniqueness of analytic continuation, there must be a non-analytic point of $\varphi_\eta$ and $\widehat \varphi_\eta$.

The lower bound for $\widehat \varphi_\eta(b)$ is very simple:  the lower-tail probability is at least the probability $G(n,m)$ is bipartite. This suffices to prove a phase transition occurs for every $\eta \in [0,1)$. 
\begin{proof}[Proof of Corollary~\ref{corGnmPhase}]
   By Theorem~\ref{thmLowerTailGnm}, we have $\widehat\varphi_\eta(b) = -\frac{b^3}{6}( 1-\eta +\eta \log \eta)$ for small enough $b$.  On the other hand, a simple lower bound on the lower-tail probability is the probability $G(n,m)$ is bipartite which is at least $2^{- (1+o(1))m}$;  this gives the lower bound $\widehat \varphi_\eta(b) \ge - \frac{\log 2}{2} b $.  Since the first formula is a cubic in $b$ and the second is linear, the functions must cross at some $b \in (0, \infty)$ and thus $\widehat\varphi_\eta(b)$  must have a non-analytic point.  
\end{proof}

We need a slightly more sophisticated lower bound on $\varphi_\eta(c)$ to prove the existence of a phase transition in the $G(n,p)$ model for as large a range of $\eta$ as possible.  We will use the `mean-field' bound (see e.g.~\cite{kozma2023lower,jain2019mean}):  lower bounds on the lower-tail event given by random graphs in which edges are independent but may have different probabilities.  Roughly, in our setting the mean-field bound states that $\log \P_p( X \le \eta \E X)$ is at least the negative of the relative entropy of a random graph $\mathbf G$ with respect to $G(n,p)$, where $\mathbf G$ has independent edges and the expected number of triangles of $\mathbf G$ is at most the target $\eta \E_p X$. 

To prove Corollary~\ref{corPhaseTransitionGnp} we will state a convenient form of the general mean-field lower bound from~\cite{kozma2023lower}, then use a result from~\cite{zhao2017lower} to show that a simple choice of random graph $\mathbf G$ (a two-part block model) gives a bound that contradicts the formula from Theorem~\ref{thmLowerTailAsymp} for large values of $c$.

Define the mean-field bound $\Phi_{n,p}(\eta)$ for lower tails for triangles as follows.  We  consider vectors $\mathbf q$ of probabilities indexed by the $\binom{n}{2}$ edges of the complete graph on $n$ vertices; further let $i_p(q) = q \log \frac{q}{p} + (1-q) \log \frac{1-q}{1-p}$ denote the Bernoulli relative entropy between $q$ and $p$. Then 
\begin{align}
    \label{eqMeanField}
  \Phi_{n,p}(\eta) := \min \left  \{ \sum_{e \in \binom{n}{2}} i_p(\mathbf q_e) : \mathbf q\in [0,1]^{\binom{n}{2}}, \E_{\mathbf q} X \le \eta \E_p X    \right \}\,, 
\end{align}
where $\E_{\mathbf q}$ denotes expectation relative to the random graph  in which each potential edge $e\in\binom{[n]}{2}$ is included independently with probability $\mathbf q_e$. Kozma and Samotij~\cite{kozma2023lower} state the following straightforward mean-field lower bound (their main contribution is a matching upper bound in the $p=\omega(n^{-1/2})$ regime). 
\begin{lemma}[\cite{kozma2023lower}]
\label{lemMeanFieldBound}
 Fix $\eta \in [0,1)$. For every $\eps>0$, there exists $C>0$ so that if $p \ge C/\sqrt{n}$,
 \begin{align}
     \log \P_p(X \le \eta \E X) \ge  -(1+\eps) \Phi_{n,p}(\eta -\eps) \,.
 \end{align}
\end{lemma}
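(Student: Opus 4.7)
\emph{Plan.} I would use a standard change-of-measure (tilting) argument. Fix $\eta \in [0,1)$ and $\eps > 0$; assume $\eta - \eps > 0$ (otherwise the constraint in~\eqref{eqMeanField} is infeasible, so $\Phi_{n,p}(\eta-\eps) = +\infty$ and the bound is vacuous). Let $\mathbf{q}^\ast$ be a minimizer of the variational problem defining $\Phi_{n,p}(\eta - \eps)$, which exists by compactness and continuity, and let $\nu$ denote the product measure on $n$-vertex labelled graphs under which each edge $e$ is included independently with probability $\mathbf{q}^\ast_e$. By construction $\E_\nu X \le (\eta - \eps)\E_p X$ and the Kullback--Leibler divergence satisfies $D(\nu\|\P_p) = \sum_e i_p(\mathbf{q}^\ast_e) = \Phi_{n,p}(\eta-\eps)$. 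Since $i_p$ is increasing on $[p,1]$ and triangle counts are monotone in the edge probabilities, I would first truncate by replacing $\mathbf{q}^\ast_e$ with $\tilde{\mathbf{q}}_e := \min(\mathbf{q}^\ast_e, Kp)$ for a large constant $K = K(\eta)$, which only decreases both the KL divergence and $\E_\nu X$. Write $\tilde\nu$ for the resulting product measure.

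\emph{Concentration under $\tilde\nu$.} With all entries of $\tilde{\mathbf{q}}$ at most $Kp$, a direct second-moment calculation, dominated by pairs of triangles sharing exactly one edge (pairs sharing two edges coincide), gives $\var_{\tilde\nu}(X) = O(n^4 p^5 + n^3 p^3)$. Since $\E_p X = \Theta(n^3 p^3)$, the hypothesis $p \ge C/\sqrt{n}$ with $C = C(\eta,\eps)$ sufficiently large forces this to be $o((\eps\E_p X)^2)$, so Chebyshev's inequality yields
\[
\tilde\nu(A^c) \;\le\; \P_{\tilde\nu}\!\bigl(X - \E_{\tilde\nu} X > \eps \E_p X\bigr) \;=\; o(1),
\]
where $A := \{X \le \eta \E_p X\}$.

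\emph{Change of measure.} Let $\tilde\nu_A := \tilde\nu(\cdot\mid A)$. Jensen's inequality applied to the Radon--Nikodym derivative $d\P_p/d\tilde\nu$ under $\tilde\nu_A$ gives
\begin{align*}
\log \P_p(A) \;=\; \log \tilde\nu(A) + \log \E_{\tilde\nu_A}\!\left[\frac{d\P_p}{d\tilde\nu}\right] \;\ge\; \log \tilde\nu(A) + \E_{\tilde\nu_A}\!\left[\log \frac{d\P_p}{d\tilde\nu}\right].
\end{align*}
The last expectation equals $-\tilde\nu(A)^{-1}\int_A \log(d\tilde\nu/d\P_p)\,d\tilde\nu$. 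Decomposing
\[
D(\tilde\nu\|\P_p) \;=\; \int_A \log(d\tilde\nu/d\P_p)\,d\tilde\nu + \int_{A^c}\log(d\tilde\nu/d\P_p)\,d\tilde\nu
\]
and using the elementary bound $\int_{A^c}\log(d\tilde\nu/d\P_p)\,d\tilde\nu \ge -1/e$ (from $x\log x \ge -1/e$ applied to $x = d\tilde\nu/d\P_p$ integrated against $\P_p$), I obtain
\[
\log \P_p(A) \;\ge\; \log \tilde\nu(A) - \frac{D(\tilde\nu\|\P_p) + 1/e}{\tilde\nu(A)}.
\]
Since $\tilde\nu(A) = 1-o(1)$ and $D(\tilde\nu\|\P_p) \le \Phi_{n,p}(\eta-\eps)$, which grows at least like $\Omega(\sqrt n)$ (easily more than enough to absorb the $1/e$), the right-hand side equals $-(1+o(1))\Phi_{n,p}(\eta-\eps)$, which is at least $-(1+\eps)\Phi_{n,p}(\eta-\eps)$ for all $n$ sufficiently large.

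\emph{Main obstacle.} The only non-routine ingredient is the variance bound for $X$ under $\tilde\nu$; a priori, the optimizer $\mathbf{q}^\ast$ could place large weight on individual edges and inflate $\var(X)$, but the truncation step reduces this to the standard edge-sharing-triangles calculation, after which the remaining steps are essentially bookkeeping.
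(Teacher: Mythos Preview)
The paper does not prove this lemma; it cites it from \cite{kozma2023lower} and describes it as a ``straightforward mean-field lower bound''. So there is no proof in the paper to compare against. Your tilting argument is the standard one and is essentially correct.

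Two remarks. First, the truncation step is superfluous: any minimizer $\mathbf q^\ast$ of $\Phi_{n,p}(\eta-\eps)$ automatically satisfies $\mathbf q^\ast_e \le p$ for every $e$, since replacing an entry $\mathbf q^\ast_e>p$ by $p$ strictly decreases both $i_p(\mathbf q^\ast_e)$ and the expected triangle count. With that observation the variance bound $\var_{\nu}(X)=O(n^4p^5+n^3p^3)$ follows directly. Second, the assertion that $\Phi_{n,p}(\eta-\eps)$ grows like $\Omega(\sqrt n)$ deserves a line of justification; one clean way is to set $s_e=p-\mathbf q^\ast_e\ge 0$, use $p^3-\prod_{e\in T}\mathbf q^\ast_e\le p^2\sum_{e\in T}s_e$ to deduce $\sum_e s_e=\Omega(n^2p)$ from the triangle constraint, then combine $i_p(\mathbf q^\ast_e)\ge s_e^2/(2p)$ with Cauchy--Schwarz to get $\Phi_{n,p}(\eta-\eps)=\Omega(n^2p)=\Omega(n^{3/2})$, which is more than enough to absorb the additive constants.
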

Next, a simple upper bound on $\Phi_{n,p}(\eta)$ (and hence a lower bound on the lower-tail probability) comes from taking $\mathbf q_e = \eta^{1/3} p$ for all $e$, yielding 
\begin{align}
    \Phi_{n,p}(\eta) \le \binom{n}{2} \left( \frac{p}{3} \eta^{1/3} \log \eta +(1- \eta^{1/3} p) \log \frac{1- \eta^{1/3} p}{1-p}  \right)  =: \Phi_{n,p}^{\mathrm{RS}}(\eta) \, ,
\end{align} where the notation RS stands for `replica symmetry', following the terminology of~\cite{lubetzky2017variational,zhao2017lower}.  Letting $p = (1+o(1)) c/\sqrt{n}$, and defining $h(x) := x \log x - x +1$,
we have 
\begin{align}
    \frac{1}{n^{3/2}}\Phi_{n,p}^{\mathrm{RS}}(\eta) = \frac{c}{2} h \left ( \eta^{1/3} \right)    +o(1) \,. 
\end{align}

Corollary~\ref{corPhaseTransitionGnp} will now follow from two facts. The first states that the formula in  Theorem~\ref{thmLowerTailAsymp} approaches $-\frac{c}{2} h \left ( \eta^{1/3} \right) $ as $c \to \infty$.  

\begin{lemma}
\label{lemClimtBound}
   Let $G(\eta,c)$ denote the formula on the RHS of~\eqref{eqMainThmAsymptotics}.  Then
   \begin{align}
       \lim_{c \to \infty} \frac{G(\eta,c)}{c} = -\frac{1}{2} h \left ( \eta^{1/3} \right)   \, .
   \end{align}
\end{lemma}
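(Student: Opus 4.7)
The plan is to substitute the asymptotic form of $\zeta=\zeta(\eta,c)$ as $c\to\infty$ into the formula $G(\eta,c)$ term by term. The proof of Claim~\ref{claim:zeta} already contains almost everything we need: $\zeta c^2$ is monotonically increasing in $c$ and has a finite limit $\ell=\lim_{c\to\infty}\zeta c^2$ that satisfies $(W(2\ell)/(2\ell))^{3/2}=\eta$. Setting $y=W(2\ell)$, so that $ye^y=2\ell$, the relation $y/(2\ell)=\eta^{2/3}$ yields $e^y=\eta^{-2/3}$ and hence
\[
W(2\ell)=-\tfrac{2}{3}\log\eta,\qquad \ell=-\frac{\log\eta}{3\eta^{2/3}}.
\]
In particular $\zeta=O(c^{-2})\to 0$, so $1-\zeta\to 1$ and $\log(1-\zeta)=-\zeta(1+o(1))$ as $c\to\infty$.

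Next I would divide $G(\eta,c)$ by $c$ and analyse the three contributions inside the bracket. Writing $u=2\zeta c^2$ and $L=W(u)$, the first term becomes
\[
\frac{1}{c}\cdot\frac{L^{3/2}+3L^{1/2}}{3\sqrt{2\zeta}}=\frac{L^{1/2}(L+3)}{3\sqrt{u}},
\]
and the defining equation $(1-\zeta)(L/u)^{3/2}=\eta$, rewritten as $L^{1/2}/\sqrt{u}\to\eta^{1/3}$, combined with $L\to-\tfrac{2}{3}\log\eta$, identifies the limit of this term as $\eta^{1/3}\bigl(3-\tfrac{2}{3}\log\eta\bigr)/3$. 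The middle contribution, divided by $c$, is $\log(1-\zeta)\eta c^2/3\to-\ell\eta/3=\eta^{1/3}(\log\eta)/9$, which gives $-\eta^{1/3}(\log\eta)/9$ after the minus sign in front. The last term contributes $-1$.

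Summing these three pieces and multiplying by $\tfrac12$ yields
\[
\lim_{c\to\infty}\frac{G(\eta,c)}{c}=\frac{1}{2}\left[-\frac{\eta^{1/3}\log\eta}{3}+\eta^{1/3}-1\right]=-\frac{1}{2}h(\eta^{1/3}),
\]
where the final equality uses $\eta^{1/3}\log\eta^{1/3}=\tfrac13\eta^{1/3}\log\eta$. There is no real obstacle: all limits are elementary once $\zeta c^2\to\ell$ is established, and the only item requiring a touch of care is the Taylor expansion $\log(1-\zeta)=-\zeta+O(\zeta^2)$, whose quadratic error contributes $O(c^{-4})\cdot c^2=O(c^{-2})$ after multiplication by $\eta c^2/3$ and is therefore negligible when divided by $c$. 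The entire argument is a routine bookkeeping exercise around the Lambert-$W$ identity $W(x)e^{W(x)}=x$.
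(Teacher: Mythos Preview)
Your proof is correct and follows essentially the same route as the paper: both rely on the limit $\ell=\lim_{c\to\infty}\zeta c^2$ established in Claim~\ref{claim:zeta} and then substitute into each term of $G(\eta,c)/c$, using $\log(1-\zeta)\sim -\zeta$ to handle the middle term. Your write-up is somewhat more explicit in spelling out the algebra (introducing $u,L$ and tracking each of the three contributions separately), whereas the paper compresses the substitution into a single displayed identity, but the content is the same.
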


The second fact is that for $\eta$ small enough, in the $c \to \infty$ limit with $p= c/\sqrt{n}$, the mean-field bound $ \Phi_{n,p}(\eta) $ is strictly less than  the replica symmetric bound. This is a result of Zhao~\cite[Theorem 2.6]{zhao2017lower}.
\begin{lemma}[\cite{zhao2017lower}]
\label{lemZhaoBound}
   For $0 \le \eta< \eta_\ell $, 
   \begin{equation}
       \lim_{c \to \infty} \frac{\Phi_{n,c/\sqrt{n}}(\eta)}{c n^{3/2}} < -\frac{1}{2} h \left ( \eta^{1/3} \right)  \,, 
   \end{equation}
   where $\eta_\ell \approx .0091$ is given explicitly in~\cite{zhao2017lower} as the maximum of a single variable function on the unit interval.
\end{lemma}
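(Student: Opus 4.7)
The plan is to upper bound $\Phi_{n,c/\sqrt n}(\eta)$ by exhibiting an explicit two-class block choice of $\mathbf q$ whose normalized relative entropy is strictly less than the replica-symmetric value $\tfrac12 h(\eta^{1/3})$ for every $\eta<\eta_\ell$. This is the mean-field incarnation of Zhao's symmetry-breaking graphon construction from~\cite{zhao2017lower}, and the verification boils down to a constrained optimization in two real variables.

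\medskip
\noindent\textbf{Construction.}
Partition $[n]$ into halves $V_1,V_2$ with $|V_1|=|V_2|=\lfloor n/2\rfloor$. For constants $u,t\ge 0$ (to be chosen), set $\mathbf q_{ij}=up$ whenever $i,j$ lie in the same class and $\mathbf q_{ij}=tp$ whenever they lie in different classes; here $p=c/\sqrt n$, and for $c$ sufficiently large we have $\mathbf q\in[0,1]^{\binom n2}$ automatically since $u,t$ will be bounded. Stratifying each triple $\{i,j,k\}$ by its class composition yields
\begin{equation}
\E_{\mathbf q}X=(1+o(1))\binom{n}{3}p^3\cdot\tfrac14\bigl(u^3+3ut^2\bigr),
\end{equation}
so the constraint $\E_{\mathbf q}X\le \eta\,\E_p X$ becomes $u^3+3ut^2\le 4\eta$. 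For the objective, the Taylor expansion $i_p(xp)=p\cdot h(x)+O(p^2)$ applied at each of the $\sim n^2/4$ within-class and $\sim n^2/4$ cross-class edge slots gives
\begin{equation}
\sum_{e} i_p(\mathbf q_e)=(1+o(1))\cdot \frac{cn^{3/2}}{4}\bigl(h(u)+h(t)\bigr),
\end{equation}
so that $\tfrac{1}{cn^{3/2}}\Phi_{n,c/\sqrt n}(\eta)\le(1+o(1))\cdot \tfrac14\bigl(h(u)+h(t)\bigr)$.

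\medskip
\noindent\textbf{Optimization.}
It remains to choose $(u,t)$ with $u^3+3ut^2=4\eta$ so that $\tfrac14(h(u)+h(t))$ is strictly smaller than $\tfrac12 h(\eta^{1/3})$. Lagrange stationarity, using $h'(x)=\log x$, reduces to $\log u/\log t=(u^2+t^2)/(2ut)$; writing $a=-\log u$ and $b=-\log t$ turns this into the single equation $a/b=\cosh(a-b)$. This admits the trivial replica-symmetric branch $a=b$ (giving $u=t=\eta^{1/3}$ and value $\tfrac12 h(\eta^{1/3})$), together with a one-parameter family of symmetry-broken solutions $(u_\star,t_\star)$ with $u_\star<t_\star$ indexed by $d:=a-b>0$; along this branch both $\eta$ and the rate are explicit single-variable functions of $d$. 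This is exactly Zhao's symmetry-breaking family, and his analysis in~\cite{zhao2017lower} (which identifies $\eta_\ell$ as the maximum over $d$ of a single-variable function derived from this parameterization) shows that the gap $\tfrac14(h(u_\star)+h(t_\star))-\tfrac12 h(\eta^{1/3})$ is strictly negative for every $\eta\in(0,\eta_\ell)$. Plugging such an $(u_\star,t_\star)$ into the construction and letting $n\to\infty$ and then $c\to\infty$ (the latter trivially, since $c$ has cancelled from the normalized quantity) gives the claim.

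\medskip
\noindent\textbf{Main obstacle.}
The block reduction, the Riemann-sum approximation, and the feasibility of $\mathbf q$ for $c$ large are all routine. The real content lies in verifying that the symmetry-broken branch is a \emph{global} (not merely local) minimizer and yields a strict improvement over RS throughout $(0,\eta_\ell)$, and that no unrelated stationary point of the two-variable optimization undercuts it. This is exactly the univariate calculus carried out in~\cite{zhao2017lower}, which we would invoke rather than reprove; the characterization of $\eta_\ell$ as the maximum of a single-variable function on the unit interval is a direct consequence of that analysis.
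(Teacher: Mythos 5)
Your construction and reduction are correct, and this matches the paper's approach exactly: the paper cites Zhao's Theorem~2.6 without proof, remarking afterwards that it is ``proved by taking $\mathbf q$ supported on two values'' --- precisely the two-block choice $(up,tp)$ you exhibit --- and, like you, it leaves the symmetry-breaking univariate analysis and the identification of $\eta_\ell$ to Zhao. One small flag unrelated to your argument: the minus sign on the right-hand side of the lemma's displayed inequality is a typo (since $\Phi_{n,p}(\eta)\ge 0$ while $-\tfrac12 h(\eta^{1/3})\le 0$, the literal inequality would be vacuous); the intended statement is $\lim_{c\to\infty}\Phi_{n,c/\sqrt n}(\eta)/(cn^{3/2}) < \tfrac12 h(\eta^{1/3})$, which is what you establish and what the deduction of Corollary~\ref{corPhaseTransitionGnp} actually uses.
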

Zhao proves Lemma~\ref{lemZhaoBound} is  by taking $\mathbf q$ supported on two values; that is, the random graph $\mathbf G$ is a block model with two parts and a different edge probability for crossing and interior edges,  and further conjectures that this two-part block model is indeed optimal for this range of $\eta$.

With these lemmas we now prove Corollary~\ref{corPhaseTransitionGnp}.  
\begin{proof}
    We need to show that for $\eta \in [0, \eta_\ell)$, there is $c$ large enough so that $\varphi_\eta(c) > G(\eta,c)$, which establishes the existence of a non-analyticity of $\varphi_\eta(c)$ via the uniqueness of analytic continuation.  By Lemma~\ref{lemMeanFieldBound}, Lemma~\ref{lemZhaoBound}, and the continuity of $h(x)$, we have that for some $\eps>0$ and all $c$ large enough,
    \begin{equation}
        \frac{\varphi_\eta(c)}{c} > \eps - \frac{1}{2}h \left ( \eta^{1/3} \right)  \,. 
    \end{equation}
    On the other hand, Lemma~\ref{lemClimtBound} shows that for every $\eps>0$ and $c$ large enough,
    \begin{equation}
        \frac{G(\eta,c)}{c} < \eps - \frac{1}{2} h (\eta^{1/3}) \, , 
    \end{equation}
    and this completes the proof. 
\end{proof}

We end by proving Lemma~\ref{lemClimtBound}.
\begin{proof}
In the proof of Claim~\ref{claim:zeta} we showed that the limit $\ell:=\lim_{c\to\infty}\zeta c^2$ exists and satisfies 
\[
\left( \frac{ W(2 \ell)}{2 \ell} \right)^{3/2}= \eta \iff \ell=-\frac{\log (\eta^{1/3})}{ \eta^{2/3}}\, .
\]
We conclude that
\begin{align}
 \lim_{c \to \infty} \frac{G(\eta,c)}{c}&=\lim_{c \to \infty}\frac{1}{2} \left[  \frac{W(2 \zeta c^2)^{3/2} +3W(2\zeta c^2)^{1/2}}{3\sqrt{2\zeta c^2}}-\frac{\log(1-\zeta) \eta c^2}{3}-1\right]\\
 &=
 \frac{1}{2} \left[  \frac{W(2\ell)^{3/2} +3W(2\ell)^{1/2}}{3\sqrt{2\ell}}+\frac{\eta\ell}{3}-1\right]= -\frac{1}{2} h \left ( \eta^{1/3} \right) \,. \qedhere
\end{align}
\end{proof}

\section*{Acknowledgments}

The authors thank Wojciech Samotij for many enlightening conversations. MJ is supported by a UK Research and Innovation Future Leaders Fellowship MR/W007320/2.  WP supported in part by NSF grant DMS-2348743. AP is supported by an NSERC Discovery grant. MS is supported by NSF grant DMS-2349024.

\appendix
\section{Proof of Lemma~\ref{lemClusterTail}}
\label{secPinnedCluster}

We use the tree--graph bound of Penrose~\cite{penrose1967convergence}; see~\cite[Section 4]{faris2010combinatorics} for a  discussion.  

\begin{lemma}[\cite{penrose1967convergence}] \label{lempenrosetree}
Consider a graph $H=(V,E)$ equipped with complex edge weights $\{w_e\}_{e\in E}$ satisfying $|1+w_e|\leq 1$ for all $e$. Then 
\[
\left|  \sum_{\substack{A \subseteq E:\\ \textup{$(V,A)$ connected}}} \prod_{e\in A} w_e \right| \leq  \sum_{\substack{A \subseteq E:\\ \textup{$(V,A)$ tree}}} \prod_{e\in A} |w_e| \, .
\]
\end{lemma}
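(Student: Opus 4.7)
The plan is to derive the inequality as an immediate consequence of \emph{Penrose's identity}, which rewrites the sum over connected spanning subgraphs as a sum over spanning trees times a product of ``free'' factors. The core combinatorial input is a partition of the set of connected spanning subgraphs indexed by spanning trees.

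First I would fix an arbitrary root $v_0 \in V$ and a total order $\prec$ on $V$. For any connected spanning subgraph $(V,A)$, I would perform breadth-first search from $v_0$ to obtain distances $d_A(v)$, and define a canonical spanning tree $T(A) \subseteq A$ by declaring, for each $v \neq v_0$, the parent of $v$ to be the $\prec$-minimum vertex $u$ among the neighbors of $v$ in $A$ satisfying $d_A(u) = d_A(v)-1$. By construction $T(A)$ is a spanning tree.

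The key claim is that the fibers $\{A : T(A) = T\}$ are Boolean intervals $[T, T \cup P(T)]$, with $P(T) \subseteq E \setminus T$ determined solely by $T$. Computing BFS distances $d_T$ and canonical parents $\pi_T$ in $T$, one defines $P(T)$ to be the set of edges $e = \{u,v\} \in E \setminus T$ that are either ``horizontal'' ($d_T(u) = d_T(v)$) or ``non-canonical descending'' (say $d_T(u) = d_T(v)-1$ and $\pi_T(v) \prec u$). Once this is established, the identity follows by direct expansion:
\[
\sum_{\substack{A \subseteq E:\\ (V,A)\text{ connected}}} \prod_{e\in A} w_e
\;=\; \sum_{T\text{ spanning tree}} \prod_{e\in T} w_e \sum_{B \subseteq P(T)} \prod_{e \in B} w_e
\;=\; \sum_{T} \prod_{e\in T} w_e \prod_{e \in P(T)}(1+w_e),
\]
and the stated inequality is immediate upon taking absolute values, applying the triangle inequality, and invoking the hypothesis $|1+w_e|\leq 1$ to discard the factor $\prod_{e \in P(T)}|1+w_e|$.

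The main obstacle is the verification that $P(T)$ depends only on $T$, which breaks into two checks. For the ``$\supseteq$'' direction, I would show that adding any subset $B \subseteq P(T)$ to $T$ leaves all BFS distances $d_T$ and all canonical parent choices $\pi_T$ unchanged, so that $T(T \cup B) = T$; the distance preservation reduces to a triangle-inequality argument in the tree $T$ showing that a horizontal or one-level-descending edge cannot produce a shortcut from $v_0$ to any vertex, and the parent preservation follows directly from the $\prec$-minimality condition in the definition of $P(T)$. For the ``$\subseteq$'' direction, I would verify that any edge $e \in E\setminus (T \cup P(T))$ either spans at least two BFS levels (so adding it to $T$ strictly decreases some distance) or is a ``canonical-overriding'' descending edge (so adding it to $T$ changes some $\pi_T(v)$), in either case forcing $T(T \cup \{e\}) \neq T$. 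These verifications are routine case analyses, but constitute the technical heart of the argument.
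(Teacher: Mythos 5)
The paper does not prove this lemma; it states it with a citation to Penrose (1967) and points the reader to Faris (2010, Section~4) for discussion, so there is no internal proof to compare against. Your reconstruction is correct and follows the standard ``Penrose partition'' route: fix a root and a tie-breaking total order, assign to each connected spanning subgraph $A$ a canonical BFS tree $T(A)$, show that the fiber $T^{-1}(T)$ is exactly the Boolean interval $[T,\,T\cup P(T)]$ where $P(T)$ is the set of horizontal and non-canonical descending edges determined by $T$ alone, sum over the interval to get the factor $\prod_{e\in P(T)}(1+w_e)$, and then discard it using $|1+w_e|\le 1$. The one spot worth tightening is the ``$\subseteq$'' verification: what you actually need is that for \emph{any} connected $A\supseteq T$ containing some $e\notin T\cup P(T)$ one has $T(A)\ne T$, not merely that $T(T\cup\{e\})\ne T$. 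The same two cases go through --- a two-level shortcut in $T\cup\{e\}$ persists in any $A\supseteq T\cup\{e\}$ and violates $d_A=d_T$, and once one notes that $T(A)=T$ forces $d_A=d_T$ (since $T(A)$ is a BFS tree of $A$), a canonical-overriding edge still changes some parent in $A$ --- so the proof is sound, but the statement should be phrased at that level of generality. Everything else (the BFS-tree well-definedness, the Boolean-interval claim, the expansion, and the final triangle-inequality step) is correct.
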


With this we prove Lemma~\ref{lemClusterTail}. Given a graph $H$, let $\cT(H)$ denote the set of labeled spanning trees of $H$.  Moreover we let $\cT_k$ denote the set of all labeled trees on the vertex set $\{1,\ldots, k\}$. 
\begin{proof}[Proof of Lemma~\ref{lemClusterTail}]
Fix $k\geq 1$. We will show that
\begin{align}\label{fixedclusterphibd}
 \sum_ {\substack{\Gamma:  |\Gamma|= k}} |\phi_\zeta(\Gamma)|  \leq  n e^k k^{-2} (1+|\zeta|\Delta)^{k-1}\, . 
\end{align}
Given a cluster $\Gamma$ of size $k$, we identify the vertex set of $H_\Gamma$ with $\{1,\ldots, k\}$.
By Lemma~\ref{lempenrosetree}, we have
\begin{align}
\sum_ {\substack{\Gamma:  |\Gamma|= k}} |\phi_\zeta(\Gamma)|
 &=
  \frac{1}{k!} 
  \sum_ {\substack{\Gamma:  |\Gamma|= k}}
\left|  \sum_{\substack{A \subseteq E(H_\Gamma):\\(V(H_\Gamma), A) \textup{ connected}}}  \prod_{e\in A} w_e
  \right|\\
   &\leq
  \frac{1}{k!} 
  \sum_ {\substack{\Gamma:  |\Gamma|= k}}
\sum_{T\in \cT_k}\mathbf 1_{T\in \cT(H_\Gamma)} \prod_{e\in T}|w_e| \\
 &=
 \frac{1}{k!} 
  \sum_{T\in \cT_k}
  \sum_ {\substack{\Gamma:  |\Gamma|= k}}\mathbf 1_{T\in \cT(H_\Gamma)} \prod_{e\in T}|w_e| \, .\label{eqpenroseapp}
 \end{align}
 Fix $T\in \cT_k$ and $T'\subseteq T$.
 We will construct a cluster $\Gamma$ such that $T\in \cT(H_\Gamma)$ (so in particular $|\Gamma|=k$) and $w_e=-\zeta$ if $e\in T'$ and $w_e=-1$ otherwise. We do so iteratively as follows. First we select a vertex $v\in V(G)$ and place it in the first coordinate of $\Gamma$ (there are $n$ choices for $v$). Now suppose we have filled coordinates $i_1=1, \ldots, i_j$ of $\Gamma$ with vertices $v_{i_1}=v, \ldots, v_{i_j}\in V(G)$ respectively. There exists $i_{j+1}\in [k]\backslash \{i_1, \ldots, i_j\}$ such that $i_{j+1}$ is adjacent to one of $\{i_1, \ldots, i_j\}$ in the graph $T$. Without loss of generality assume $e=\{i_1,i_{j+1}\}\in T$. 
 
 If $e\in T'$, we must select $v_{i_{j+1}}\in V(G)$ such that $v_{i_{j+1}}$ is adjacent to $v_{i_1}$ in $H$ and place it in the $i_{j+1}$th coordinate of $\Gamma$. There are at most $\Delta$ choices for such a $v_{i_{j+1}}$ in $V(H)$. If $e\notin T'$ then we must set $v_{i_{j+1}}=v_{i_1}$. Continuing iteratively we see that
 \[
 \sum_ {\substack{\Gamma:  |\Gamma|= k}}\mathbf 1_{T\in \cT(H_\Gamma)} \prod_{e\in T}|w_e|\leq \sum_{j=0}^{k-1}\binom{k-1}{j}|\zeta|^j\Delta^j=(1+|\zeta|\Delta)^{k-1}\, ,
 \]
 where the binomial coefficient $\binom{k-1}{j}$ accounts for the number of choices $T'\subseteq T$ such that $|T'|=j$.

 By Cayley's formula $|\cT_k|=k^{k-2}$ and so we conclude from~\eqref{eqpenroseapp}
 \[
 \sum_ {\substack{\Gamma: |\Gamma|= k}} |\phi_\zeta(\Gamma)|
 \leq 
  n\frac{1}{k!}  k^{k-2} (1+|\zeta|\Delta)^{k-1}
  \leq n e^k k^{-2} (1+|\zeta|\Delta)^{k-1}\, ,
 \]
 where for the final inequality we used that $k!\geq(k/e)^k$ for all $k\geq 1$.
This establishes~\eqref{fixedclusterphibd}.
We conclude that
 \begin{align*}
\sum_ {\substack{\Gamma\in\cC(G): \\ |\Gamma|\geq k}} \left|\phi_\zeta(\Gamma) \lam^{|\Gamma|}\right| 
\leq 
en|\lam|\sum_{j\geq k}  \left|e(1+|\zeta|\Delta)\lam \right|^{j-1} \leq en|\lam| \gamma^{k-1}(1-\gamma)^{-1}\, .
\end{align*}
 where for the final inequality we used that $e|\lam|(1+|\zeta|\Delta))< \gamma$ where $\gamma<1$. 
\end{proof}


\begin{thebibliography}{10}

\bibitem{augeri2020nonlinear}
F.~Augeri.
\newblock Nonlinear large deviation bounds with applications to {W}igner
  matrices and sparse {E}rd{\H{o}}s--{R}{\'e}nyi graphs.
\newblock {\em The Annals of Probability}, 48(5):2404--2448, 2020.

\bibitem{balogh2015independent}
J.~Balogh, R.~Morris, and W.~Samotij.
\newblock Independent sets in hypergraphs.
\newblock {\em Journal of the American Mathematical Society}, 28(3):669--709,
  2015.

\bibitem{balogh2018method}
J.~Balogh, R.~Morris, and W.~Samotij.
\newblock The method of hypergraph containers.
\newblock In {\em Proceedings of the International Congress of Mathematicians:
  Rio de Janeiro 2018}, pages 3059--3092. World Scientific, 2018.

\bibitem{barbour2019longterm}
A.~D. Barbour, G.~R. Brightwell, and M.~J. Luczak.
\newblock Long-term concentration of measure and cut-off.
\newblock {\em Stochastic Processes and their Applications}, 2019.

\bibitem{bayati2013combinatorial}
M.~Bayati, D.~Gamarnik, and P.~Tetali.
\newblock Combinatorial approach to the interpolation method and scaling limits
  in sparse random graphs.
\newblock {\em The Annals of Probability}, 41(6):4080--4115, 2013.

\bibitem{bhattacharya2017upper}
B.~B. Bhattacharya, S.~Ganguly, E.~Lubetzky, and Y.~Zhao.
\newblock Upper tails and independence polynomials in random graphs.
\newblock {\em Advances in Mathematics}, 319:313--347, 2017.

\bibitem{bhattacharya2020upper}
B.~B. Bhattacharya, S.~Ganguly, X.~Shao, and Y.~Zhao.
\newblock Upper tail large deviations for arithmetic progressions in a random
  set.
\newblock {\em International Mathematics Research Notices}, 2020(1):167--213,
  2020.

\bibitem{biskup2007large}
M.~Biskup, L.~Chayes, and S.~A. Smith.
\newblock Large-deviations/thermodynamic approach to percolation on the
  complete graph.
\newblock {\em Random Structures \& Algorithms}, 31(3):354--370, 2007.

\bibitem{bollobas1981threshold}
B.~Bollob{\'a}s.
\newblock Threshold functions for small subgraphs.
\newblock In {\em Mathematical Proceedings of the Cambridge Philosophical
  Society}, volume~90, pages 197--206. Cambridge University Press, 1981.

\bibitem{bollobas2001scaling}
B.~Bollob{\'a}s, C.~Borgs, J.~T. Chayes, J.~H. Kim, and D.~B. Wilson.
\newblock The scaling window of the 2-{SAT} transition.
\newblock {\em Random Structures \& Algorithms}, 18(3):201--256, 2001.

\bibitem{borgs2001birth}
C.~Borgs, J.~T. Chayes, H.~Kesten, and J.~Spencer.
\newblock The birth of the infinite cluster: finite-size scaling in
  percolation.
\newblock {\em Communications in Mathematical Physics}, 224:153--204, 2001.

\bibitem{borgs1990rigorous}
C.~Borgs and R.~Koteck{\`y}.
\newblock A rigorous theory of finite-size scaling at first-order phase
  transitions.
\newblock {\em Journal of Statistical Physics}, 61:79--119, 1990.

\bibitem{bubley1997path}
R.~Bubley and M.~Dyer.
\newblock Path coupling: A technique for proving rapid mixing in {M}arkov
  chains.
\newblock In {\em Proceedings 38th Annual Symposium on Foundations of Computer
  Science}, pages 223--231. IEEE, 1997.

\bibitem{chatterjee2012missing}
S.~Chatterjee.
\newblock The missing log in large deviations for triangle counts.
\newblock {\em Random Structures \& Algorithms}, 40(4):437--451, 2012.

\bibitem{chatterjee2017large}
S.~Chatterjee.
\newblock Large deviations for random graphs.
\newblock {\em Lecture Notes in Mathematics}, 2197, 2017.

\bibitem{chatterjee2016nonlinear}
S.~Chatterjee and A.~Dembo.
\newblock Nonlinear large deviations.
\newblock {\em Advances in Mathematics}, 299:396--450, 2016.

\bibitem{chatterjee2011large}
S.~Chatterjee and S.~S. Varadhan.
\newblock The large deviation principle for the {E}rd{\H{o}}s-{R}{\'e}nyi
  random graph.
\newblock {\em European Journal of Combinatorics}, 32(7):1000--1017, 2011.

\bibitem{chin2023structure}
B.~Chin.
\newblock Structure of lower tails in sparse random graphs.
\newblock {\em arXiv preprint arXiv:2312.12673}, 2023.

\bibitem{cook2020large}
N.~Cook and A.~Dembo.
\newblock Large deviations of subgraph counts for sparse
  {E}rd{\H{o}}s--{R}{\'e}nyi graphs.
\newblock {\em Advances in Mathematics}, 373:107289, 2020.

\bibitem{cook2024regularity}
N.~A. Cook, A.~Dembo, and H.~T. Pham.
\newblock Regularity method and large deviation principles for the
  {E}rd{\H{o}}s--{R}{\'e}nyi hypergraph.
\newblock {\em Duke Mathematical Journal}, 173(5):873--946, 2024.

\bibitem{davies2017independent}
E.~Davies, M.~Jenssen, W.~Perkins, and B.~Roberts.
\newblock Independent sets, matchings, and occupancy fractions.
\newblock {\em Journal of the London Mathematical Society}, 96(1):47--66, 2017.

\bibitem{davies2018average}
E.~Davies, M.~Jenssen, W.~Perkins, and B.~Roberts.
\newblock On the average size of independent sets in triangle-free graphs.
\newblock {\em Proceedings of the American Mathematical Society},
  146(1):111--124, 2018.

\bibitem{demarco2012tight}
R.~DeMarco and J.~Kahn.
\newblock Tight upper tail bounds for cliques.
\newblock {\em Random Structures \& Algorithms}, 41(4):469--487, 2012.

\bibitem{dembo2013factor}
A.~Dembo, A.~Montanari, and N.~Sun.
\newblock Factor models on locally tree-like graphs.
\newblock {\em The Annals of Probability}, pages 4162--4213, 2013.

\bibitem{duminil2017lectures}
H.~Duminil-Copin.
\newblock Lectures on the {I}sing and {P}otts models on the hypercubic lattice.
\newblock In {\em PIMS-CRM Summer School in Probability}, pages 35--161.
  Springer, 2017.

\bibitem{dyer2003randomly}
M.~Dyer and A.~Frieze.
\newblock Randomly coloring graphs with lower bounds on girth and maximum
  degree.
\newblock {\em Random Structures \& Algorithms}, 23(2):167--179, 2003.

\bibitem{eldan2018gaussian}
R.~Eldan.
\newblock Gaussian-width gradient complexity, reverse log-{S}obolev
  inequalities and nonlinear large deviations.
\newblock {\em Geometric and Functional Analysis}, 28(6):1548--1596, 2018.

\bibitem{erdosasymptotic}
P.~Erd\H{o}s, D.~Kleitman, and B.~Rothschild.
\newblock Asymptotic enumeration of {$K_n$}-free graphs.
\newblock {\em Colloquio Internazionale sulle Teorie Combinatorie (Rome,
  1973)}, (17):19--27, 1973.

\bibitem{erdos1960evolution}
P.~Erd{\H{o}}s and A.~R{\'e}nyi.
\newblock On the evolution of random graphs.
\newblock {\em Publ. Math. Inst. Hung. Acad. Sci}, 5(1):17--60, 1960.

\bibitem{faris2010combinatorics}
W.~G. Faris.
\newblock Combinatorics and cluster expansions.
\newblock {\em Probability Surveys}, 7:157--206, 2010.

\bibitem{freedman1975tail}
D.~A. Freedman.
\newblock {On Tail Probabilities for Martingales}.
\newblock {\em The Annals of Probability}, 3(1):100 -- 118, 1975.

\bibitem{friedli2017statistical}
S.~Friedli and Y.~Velenik.
\newblock {\em Statistical Mechanics of Lattice Systems: a Concrete
  Mathematical Introduction}.
\newblock Cambridge University Press, 2017.

\bibitem{galvin2004phase}
D.~Galvin and J.~Kahn.
\newblock On phase transition in the hard-core model on $\mathbb {Z}^d$.
\newblock {\em Combinatorics, Probability and Computing}, 13(2):137--164, 2004.

\bibitem{galvin2024zeroes}
D.~Galvin, G.~McKinley, W.~Perkins, M.~Sarantis, and P.~Tetali.
\newblock On the zeroes of hypergraph independence polynomials.
\newblock {\em Combinatorics, Probability and Computing}, 33(1):65--84, 2024.

\bibitem{guerra2002thermodynamic}
F.~Guerra and F.~L. Toninelli.
\newblock The thermodynamic limit in mean field spin glass models.
\newblock {\em Communications in Mathematical Physics}, 230:71--79, 2002.

\bibitem{harel2022upper}
M.~Harel, F.~Mousset, and W.~Samotij.
\newblock Upper tails via high moments and entropic stability.
\newblock {\em Duke Mathematical Journal}, 171(10):2089--2192, 2022.

\bibitem{jain2019mean}
V.~Jain, F.~Koehler, and A.~Risteski.
\newblock Mean-field approximation, convex hierarchies, and the optimality of
  correlation rounding: a unified perspective.
\newblock In {\em Proceedings of the 51st Annual ACM SIGACT Symposium on Theory
  of Computing}, pages 1226--1236, 2019.

\bibitem{janson1990poisson}
S.~Janson.
\newblock Poisson approximation for large deviations.
\newblock {\em Random Structures \& Algorithms}, 1(2):221--229, 1990.

\bibitem{janson1987uczak}
S.~Janson, T.~{\L}uczak, and A.~Ruci\'nski.
\newblock An exponential bound for the probability of nonexistence of a
  specified subgraph in a random graph.
\newblock In J.~J. M.~Karon\'nski and A.Ruci\'nski, editors, {\em Random graphs
  87, Pozn\'an}, pages 73--87, 1987.

\bibitem{janson2004upper}
S.~Janson, K.~Oleszkiewicz, and A.~Ruci{\'n}ski.
\newblock Upper tails for subgraph counts in random graphs.
\newblock {\em Israel Journal of Mathematics}, 142:61--92, 2004.

\bibitem{janson2002infamous}
S.~Janson and A.~Ruci{\'n}ski.
\newblock The infamous upper tail.
\newblock {\em Random Structures \& Algorithms}, 20(3):317--342, 2002.

\bibitem{janson2016lower}
S.~Janson and L.~Warnke.
\newblock The lower tail: {P}oisson approximation revisited.
\newblock {\em Random Structures \& Algorithms}, 48(2):219--246, 2016.

\bibitem{jenssen2023evolution}
M.~Jenssen, W.~Perkins, and A.~Potukuchi.
\newblock On the evolution of structure in triangle-free graphs.
\newblock {\em arXiv preprint arXiv:2312.09202}, 2023.

\bibitem{jenssen2024sampling}
M.~Jenssen, W.~Perkins, A.~Potukuchi, and M.~Simkin.
\newblock Sampling, counting, and large deviations for triangle-free graphs
  near the critical density.
\newblock In {\em Foundations of Computer Science (FOCS 2024)}, 2024.

\bibitem{kim2000concentration}
J.~H. Kim and V.~H. Vu.
\newblock Concentration of multivariate polynomials and its applications.
\newblock {\em Combinatorica}, 20(3):417--434, 2000.

\bibitem{kozma2023lower}
G.~Kozma and W.~Samotij.
\newblock Lower tails via relative entropy.
\newblock {\em The Annals of Probability}, 51(2):665--698, 2023.

\bibitem{liu2021upper}
Y.~P. Liu and Y.~Zhao.
\newblock On the upper tail problem for random hypergraphs.
\newblock {\em Random Structures \& Algorithms}, 58(2):179--220, 2021.

\bibitem{lovasz2012large}
L.~Lov{\'a}sz.
\newblock {\em Large Networks and Graph Limits}, volume~60.
\newblock American Mathematical Soc., 2012.

\bibitem{lubetzky2017variational}
E.~Lubetzky and Y.~Zhao.
\newblock On the variational problem for upper tails in sparse random graphs.
\newblock {\em Random Structures \& Algorithms}, 50(3):420--436, 2017.

\bibitem{luczak2008concentration}
M.~Luczak.
\newblock Concentration of measure and mixing for {M}arkov chains.
\newblock In {\em Discrete Mathematics \& Theoretical Computer Science}, pages
  95--120, 2008.

\bibitem{luczak2000triangle}
T.~{\L}uczak.
\newblock On triangle-free random graphs.
\newblock {\em Random Structures \& Algorithms}, 16(3):260--276, 2000.

\bibitem{mcshane1934extension}
E.~McShane.
\newblock Extension of range of functions.
\newblock {\em Bulletin of the American Mathematical Society}, 40(12):837--842,
  1934.

\bibitem{mousset2020probability}
F.~Mousset, A.~Noever, K.~Panagiotou, and W.~Samotij.
\newblock On the probability of nonexistence in binomial subsets.
\newblock {\em Annals of Probability}, 48(1):493--525, 2020.

\bibitem{ollivier2007curvature}
Y.~Ollivier.
\newblock Ricci curvature of metric spaces.
\newblock {\em Comptes Rendus Mathematique}, 345(11):643--646, 2007.

\bibitem{osthus2003densities}
D.~Osthus, H.~J. Pr{\"o}mel, and A.~Taraz.
\newblock For which densities are random triangle-free graphs almost surely
  bipartite?
\newblock {\em Combinatorica}, 23(1):105--150, 2003.

\bibitem{paulin2015concentration}
D.~Paulin.
\newblock Concentration inequalities for {M}arkov chains by {M}arton couplings
  and spectral methods.
\newblock {\em Electronic Journal of Probability}, 20:1 -- 32, 2015.

\bibitem{penrose1967convergence}
O.~Penrose.
\newblock Convergence of fugacity expansions for classical systems.
\newblock {\em Statistical Mechanics: Foundations and Applications}, page 101,
  1967.

\bibitem{promel1996asymptotic}
H.~J. Pr{\"o}mel and A.~Steger.
\newblock On the asymptotic structure of sparse triangle free graphs.
\newblock {\em Journal of Graph Theory}, 21(2):137--151, 1996.

\bibitem{ruelle1999statistical}
D.~Ruelle.
\newblock {\em Statistical Mechanics: Rigorous Results}.
\newblock World Scientific, 1999.

\bibitem{saxton2015hypergraph}
D.~Saxton and A.~Thomason.
\newblock Hypergraph containers.
\newblock {\em Inventiones mathematicae}, 201(3):925--992, 2015.

\bibitem{scott2005repulsive}
A.~D. Scott and A.~D. Sokal.
\newblock The repulsive lattice gas, the independent-set polynomial, and the
  {L}ov{\'a}sz local lemma.
\newblock {\em Journal of Statistical Physics}, 118(5-6):1151--1261, 2005.

\bibitem{SS14}
A.~Sly and N.~Sun.
\newblock Counting in two-spin models on d-regular graphs.
\newblock {\em Annals of Probability}, 42(6):2383--2416, 2014.

\bibitem{dubhashi2009concentration}
A.~Srinivasan.
\newblock Concentration of measure for the analysis of randomized algorithms by
  devdatt p. dubhashi and alessandro panconesi cambridge university press,
  2009.
\newblock {\em SIGACT News}, 41(1):28–30, Mar. 2010.

\bibitem{stark2018probability}
D.~Stark and N.~Wormald.
\newblock The probability of non-existence of a subgraph in a moderately sparse
  random graph.
\newblock {\em Combinatorics, Probability and Computing}, 27(4):672--715, 2018.

\bibitem{vu2001large}
V.~H. Vu.
\newblock A large deviation result on the number of small subgraphs of a random
  graph.
\newblock {\em Combinatorics, Probability and Computing}, 10(1):79--94, 2001.

\bibitem{wormald1996perturbation}
N.~C. Wormald.
\newblock The perturbation method and triangle-free random graphs.
\newblock {\em Random Structures \& Algorithms}, 9(1-2):253--270, 1996.

\bibitem{yang1952statistical}
C.-N. Yang and T.-D. Lee.
\newblock Statistical theory of equations of state and phase transitions. {I}.
  theory of condensation.
\newblock {\em Physical Review}, 87(3):404, 1952.

\bibitem{zhang2023note}
S.~Zhang.
\newblock A note on the zero-free region of hypergraph independence
  polynomials.
\newblock {\em arXiv preprint arXiv:2305.17822}, 2023.

\bibitem{zhao2017lower}
Y.~Zhao.
\newblock On the lower tail variational problem for random graphs.
\newblock {\em Combinatorics, Probability and Computing}, 26(2):301--320, 2017.

\end{thebibliography}
\end{document}